\theoremstyle{definition}
\newcounter{Polya}
\newcounter{FZ}
\newtheorem{theo}{Theorem}[section]
\newtheorem*{theo*}{Theorem}
\newtheorem*{teo*}{Teorema}
\newtheorem{pro}[theo]{Proposition}
\newtheorem*{pro*}{Proposition}
\newtheorem*{prop*}{Proposici\'on}
\newtheorem{lemma}[theo]{Lemma}
\newtheorem{cor}[theo]{Corollary}
\newtheorem*{cor*}{Corollary}
\newtheorem*{coro*}{Corolario}
\newtheorem{defi}[theo]{Definition}
\newtheorem{rema}[theo]{Remark}
\newtheorem{exam}[theo]{Example}
\newenvironment{eq-text}
{\begin{equation} \begin{minipage}[t]{0.85\linewidth}}
{\end{minipage} \end{equation} \ignorespacesafterend}
\DeclareRobustCommand\widecheck[1]{{\mathpalette\@widecheck{#1}}}
\def\@widecheck#1#2{%
    \setbox\z@\hbox{\m@th$#1#2$}%
    \setbox\tw@\hbox{\m@th$#1%
       \widehat{%
          \vrule\@width\z@\@height\ht\z@
          \vrule\@height\z@\@width\wd\z@}$}%
    \dp\tw@-\ht\z@
    \@tempdima\ht\z@ \advance\@tempdima2\ht\tw@ \divide\@tempdima\thr@@
    \setbox\tw@\hbox{%
       \raise\@tempdima\hbox{\scalebox{1}[-1]{\lower\@tempdima\box
\tw@}}}%
    {\ooalign{\box\tw@ \cr \box\z@}}}
\def\pd{\partial}
\def\fy{\varphi}
\def\oo{\infty}
\def\to{\rightarrow}
\def\a{\alpha}
\def\b{\beta}
\def\de{\delta}
\def\De{\Delta}
\def\ep{\varepsilon}
\def\o{\omega}
\def\ga{\gamma}
\def\Ga{\Gamma}
\def\ro{\rho}
\def\ze{\zeta}
\def\ba{\boldsymbol{a}}
\def\bb{\boldsymbol{b}}
\def\bv{\boldsymbol{v}}
\def\Re{\mathop{\rm Re}\nolimits}
\def\en{\subseteq}
\def\N{\mathbb{N}}
\def\Z{\mathbb{Z}}
\def\R{\mathbb{R}}
\def\C{\mathbb{C}}      
\def\uC{\mathbb{S}^1}
\def\M{\mathbb{M}}
\def\bM{\mathbb{M}}
\def\L{\mathbb{L}} 
\def\U{\mathbb{U}}
\def\A{\mathbb{A}} 
\def\V{\mathbb{V}}
\def\m{\textbf{m}}
\def\mf{\mathfrak{m}}
\def\bm{\textbf{m}}
\def\l{\ensuremath{\boldsymbol\ell}}
\def\no{\nonumber}
\def\ds{\displaystyle}
\def\para{\par\smallskip}
\newcommand{\un}[1]{{\underline{#1}}}
\newcommand{\sA}{\mathscr{A}}
\newcommand{\cA}{\mathcal{A}}
\newcommand{\fA}{\mathfrak{A}}
\newcommand{\fB}{\mathfrak{B}}
\newcommand{\fC}{\mathfrak{C}}
\newcommand{\fD}{\mathfrak{D}}
\newcommand{\bL}{\mathbb{L}}
\newcommand{\cO}{\mathcal{O}}
\newcommand{\calS}{\mathcal{S}}
\newcommand{\cT}{\mathcal{T}}
\newcommand{\fS}{\C[[z]]}
\newcommand{\cS}{\C\{z\}}
\newcommand{\se}{\simeq}
\newcommand{\lto}{\longrightarrow}
\newcommand{\slto}{\overset{\sim}{\lto}}
\numberwithin{equation}{section}
\begin{document}

\title{Multisummability in Carleman ultraholomorphic classes\\
by means of nonzero proximate orders}

\author{Javier Jim\'enez-Garrido, Shingo Kamimoto, Alberto Lastra, Javier Sanz}
\date{\today}

\maketitle
\abstract{We introduce a general multisummability theory of formal power series in Carleman ultraholomorphic classes. The finitely many levels of summation are determined by pairwise comparable, nonequivalent weight sequences admitting nonzero proximate orders and whose growth indices are distinct. Thus, we extend the powerful multisummability theory for finitely many Gevrey levels, developed by J.-P.~Ramis, J.~\'Ecalle and W.~Balser, among others. We provide both the analytical and cohomological approaches, and obtain a reconstruction formula for the multisum of a multisummable series by means of iterated generalized Laplace-like operators.}

\vskip.5cm
\noindent AMS Classification: Primary 40H05; secondary 40E05, 44A10, 46M20.
\vskip.3cm
\noindent Keywords: Summability of formal power series, asymptotic expansions, Carleman ultraholomorphic classes, proximate order, regular variation, Laplace transform.\\

\section{Introduction}\label{sect.Intro}

The main aim of this paper is to give some steps forward in the objective of establishing a general multisummability theory of formal power series for finitely many levels determined by pairwise comparable, nonequivalent weight sequences admitting nonzero proximate orders. Thus, we will extend the powerful multisummability theory for finitely many Gevrey levels, developed by J.-P.~Ramis, J.~\'Ecalle, W.~Balser, Y.~Sibuya, J.~Martinet, B.~Malgrange and B.L.J.~Braaksma, among others. The departure point will be the summability theory in ultraholomorphic classes defined in terms of a weight sequence admitting a nonzero proximate order (i.e., in the case of just one level), developed by the last two authors and S.~Malek~\cite{lastramaleksanz15}, which generalized the Gevrey $k-$summability of J.-P.~Ramis~\cite{Ramis78,Ramis80} by means of the moment methods of W.~Balser~\cite{Balser2000}. We start by commenting on the development of these tools and justifying the interest of their generalization.

A remarkable result of E.~Maillet~\cite{Maillet} in 1903 states that for any formal solution $\widehat{f}=\sum_{p\ge 0}a_pz^p$ of an analytic differential equation there will exist $C,A,k>0$ such that $|a_p|\le CA^p(p!)^{1/k}$ for every $p\in\N_0$. This suggests that, although the formal power series solutions to differential equations are frequently divergent, under fairly general conditions the rate of growth of their coefficients is not arbitrary.
Inspired by this fact, in the late 1970's J.-P.~Ramis introduces and
structures the notion of $k-$summability, that rests on classical results by G.~N.~Watson and R.~Nevannlina and generalizes Borel's summability method. His developments are based on a modification of H.~Poincar\'e's concept of asymptotic expansion: in the general estimates
\begin{equation*}
\big|f(z)-\sum_{n=0}^{p-1} a_n z^n\big|\leq C_p|z|^p,\quad   p\in\N_0=\N\cup\{0\}, 
\end{equation*}
involving a complex function $f$ holomorphic in a sector $S$
and the partial sums of the formal power series $\widehat{f}=\sum_{p=0}^\oo a_p z^p$ of asymptotic expansion at the origin of $f$,
the growth of the constant $C_p$ is made explicit in the form
$C_p=C A^p (p!)^{1/k}$ for some $A,C>0$, what entails the same kind of estimates for the coefficients $a_p$ in $\widehat{f}$. The sequence $\M_{1/k}=(p!^{1/k})_{p\in\N_0}$ is the Gevrey sequence of order $1/k$, $f$ is said to be $1/k-$Gevrey asymptotic to $\widehat{f}$ (denoted by $f\in\widetilde{\mathcal{A}}_{\M_{1/k}}(S)$), and $\widehat{f}$, because of the estimates satisfied by its coefficients, is said to be a $1/k-$Gevrey series ($\widehat{f}\in\C[[z]]_{\M_{1/k}}$). The Borel map, defined from $\widetilde{\mathcal{A}}_{\M_{1/k}}(S)$ to $\C[[z]]_{\M_{1/k}}$ and sending $f$ to $\widehat{f}$, is surjective if and only if the opening of the sector $S$ is smaller than or equal to $\pi/k$ (Borel-Ritt-Gevrey Theorem), and it is injective if and only if the opening is greater than $\pi/k$ (Watson's Lemma).

This last fact enables the definition of $k-$summable power series in a direction $d$ as those in the image of the Borel map for a wide enough sector $S$ bisected by $d$, to which a $k-$sum (the unique holomorphic function in $S$ asymptotic to it) is assigned.
J.-P.~Ramis proved, by a purely theoretical method, that
every formal solution to a linear system
of meromorphic ordinary differential equations in the complex domain at
an irregular singular point can
be written as some known functions times a finite product of  formal power series,
each of which is $k-$summable (i.e., $k-$summable in every direction except for a finite number of them) for some level $k$ depending on the series.
New insight was obtained by the introduction of a powerful tool, accelerosummability, due to
J.~Ecalle \cite{Ecalle1981} and which, in the case involving only a finite number of Gevrey levels, is named multisummability (in a sense, an iteration of elementary $k-$summability procedures). Indeed,
in 1991 W. Balser, B.L.J.~Braaksma, J.-P.~Ramis and Y.~Sibuya~\cite{BalserBraaksmaRamisSibuya91} (see also \cite{Balser2000, MartinetRamis91}) proved the multisummability of the formal solutions of linear meromorphic differential equations at a singular point, and B.L.J.~Braaksma~\cite{Braaksma92} (for different proofs, see~\cite{Balser1994,RamisSibuya94}) extended this result for nonlinear equations in 1992, which allows in every case to compute actual solutions from formal ones.
This technique has also been proven to apply successfully to a plethora of situations concerning the study of formal power series solutions at a singular point for partial differential equations (see, for example, \cite{Balser04,BalserMiyake99,Hibino2008,Malek09,Ouchi02}), as well as for singular perturbation problems (see~\cite{BalserMozo02,CanalisMozoSchafke07,lastramaleksanz13}, among others). Six different approaches to (Gevrey) multisummability can be found in the recent book of M.~Loday-Richaud~\cite{Loday16}.\para

Although Gevrey multisummability may also be applied to the formal power series solutions of some classes of difference equations (see \cite{BraaksmaFaber96,BraaksmaFaberImmink00}), G.K.~Immink~\cite{Immink88,Immink96} showed that nonGevrey asymptotics (specially, those associated to a so-called ``$1^+$ level'') needs to be considered for the study of the formal power solutions of even some linear difference equations, which are not Gevrey summable in countably many singular  directions. So, more exotic types of summation processes, with operators other than the usual Borel-Laplace ones, must be considered. This was achieved by G.K.~Immink in a series of papers~\cite{Immink01,Immink08,Immink11} which may be considered as
an illustration of the theory of weak accelerations and
`cohesive' functions of J.~\'Ecalle.
More recently, S.~Malek~\cite{Malek14} has studied some singularly perturbed small step size difference-differential nonlinear equations whose formal solutions with respect to the perturbation parameter can be decomposed as sums of two formal series, one with Gevrey order $1$, the other of $1^+$ level.
As another example, V.~Thilliez has proven some results on solutions  for algebraic equations within these general (not necessarily Gevrey) ultraholomorphic classes in~\cite{ThilliezSmooth10}. All these results motivated
the introduction of generalized summability methods by A.~Lastra, S.~Malek and J.~Sanz~\cite{lastramaleksanz15,SanzFlat,SanzAsymptoticAnalysis}. On the one hand, their definition heavily rests on Watson's Lemma for weight sequences, obtained in full generality by the first and fourth authors together with G.~Schindl~\cite{JimenezSanzSchindlInjectSurject}, and that guarantees injectivity of the Borel map in wide enough sectorial regions. The reconstruction of the sum of a summable series in a direction requires kernels for Borel and Laplace transforms, whose existence is assured whenever the sequence $\M$ admits a nonzero proximate order, a property which is satisfied by the sequences appearing in applications and that has been completely characterized (Theorem~\ref{theo.charact.admit.p.o}; see~\cite[Th.\ 4.14]{JimenezSanzSchindl} and~\cite[Th.\ 2.2.19]{JimenezPhD}).\para

As aforementioned, the aim of this paper is to put forward the corresponding multisummability theory, in Balser's sense, by suitably combining the methods for different sequences $\M_1,\M_2, \dots, \M_n$   instead of different Gevrey levels $k_1,k_2,\dots ,k_n$.
Section \ref{sect.preliminaries} is devoted to gathering the main relevant information regarding weight sequences $\M$ and their associated functions (Subsection~\ref{subsectLCsequences}), in terms of which the classes of functions with $\M-$asymptotic expansion are defined and studied (Subsection~\ref{subsect.ultrahol.Borel.map}). The main result here is Watson's Lemma, characterizing the injectivity of the Borel map. Proximate orders and the theory of regular variation for functions and sequences are treated in Subsection~\ref{subsect.SequencesAdmitProxOrder}, where the role both tools play in the extension of $k-$summability to this more general framework is emphasized. In order to complete the framework for what follows, the construction and the properties of the kernels for $\M-$summability and the corresponding, formal and analytic, Laplace and Borel transforms are recalled in Subsection~\ref{subsect.M.summability}, allowing us to explicitly obtain the $\M-$sums of formal power series $\M-$summable in a direction. Section~\ref{sect.tauberian.theorems} starts with a preliminary discussion concerning the necessity, for the problem of multisummability to make sense, that the sequences $\M_j$ are pairwise comparable and nonequivalent, what will be studied in Subsection~\ref{subsect.comp.sequence}.
After establishing the basic properties of the quotient and product sequences of two weight sequences (Subsection~\ref{subsect.product.quotient.seq}),  the Tauberian
Theorem~\ref{th.tauberian.distinct.index} will be obtained, which
%
%
allows for a consistent definition of multisummability whenever the growth indices $\o(\M_j)$ of the sequences involved (see Subsection~\ref{subsectLCsequences}) are mutually distinct. Section~\ref{sect.multisummab.analyt} contains a purely analytical approach to multisummability. A formal power series will be said to be multisummable if it can be split into the sum of finitely many formal power series $\widehat{f}_j$, each of them summable for a corresponding sequence $\M_j$ admitting a nonzero proximate order. With the aim of obtaining again an explicit expression for the multisum of a power series in a multidirection, we first prove in Subsection~\ref{subsect.momentkernelduality} that a kernel of summability is uniquely determined by its sequence of moments. Strong kernels of summability have to be introduced (Subsection~\ref{subsect.strongkernels}) in order to obtain Theorems~\ref{prop.convolution.kernel} and~\ref{prop.acceleration.kernel}, where the natural summability kernels for the quotient and product sequences of two sequences will be built, so giving rise, respectively, to convolution and acceleration kernels and operators, as developed by W. Balser in~\cite{Balser2000}.
Thanks to them, in Subsection~\ref{subsect.multisum.through.accel} we will be able to devise a procedure for the explicit reconstruction, through acceleration, of the multisum of a multisummable series, that is, the sum of the corresponding ones to every summand in the previous splitting (see Theorem~\ref{th.multisum.construct}).
The last part of the paper, Section~\ref{sect.cohomologicalapproach}, contains a cohomological approach to multisummability, following the one established by B. Malgrange and J.-P. Ramis~\cite{MR}, so focusing on the algebraic framework of the theory. We first prove a relative Watson's Lemma (Subsection~\ref{subsect.relativeWatson}), reducing it to the Tauberian Theorem~\ref{th.tauberian.distinct.index}. We then discuss about the structure of the space of quasi-functions, giving a description of the space corresponding to Balser's decomposition of a multisummable series into a sum of summable series. Finally, three different perspectives are given for multisummability: via quasi-functions, via the decomposition or via acceleration, i.e., through iterated Laplace transforms as at the end of the previous section.

It is worth mentioning that, although $\M-$summability methods have been applied in~\cite{lastramaleksanz15,lastramaleksanz16}, their development remains still on a quite theoretical level. The multisummability techniques developed here allow us to work at the same time with $\M-$ and $k-$summability. For example, since the level $1^{+}$ corresponds to the weight sequence $\M_{1,-1}$ (see Example~\ref{exampleSequences}), which admits a nonzero proximate order, it is expected that this new tool can be applied to the formal solutions of difference equations whenever the other levels, apart from the $1^{+}$, are distinct from $1$. In case the levels 1 and $1^+$ coexist, and since they are associated with comparable, nonequivalent sequences sharing the same growth index,
it seems necessary to redefine the $\M-$summability notion in such a way that the analogue of Tauberian Theorem~\ref{th.tauberian.distinct.index}
is available, but this is still work in progress.


\section{Preliminaries}\label{sect.preliminaries}

We start by fixing some notations.
We set $\N:=\{1,2,...\}$, $\N_{0}:=\N\cup\{0\}$.
$\mathcal{R}$ stands for the Riemann surface of the logarithm.
We consider bounded \emph{sectors}
$$S(d,\gamma,r):= \{z\in\mathcal{R}:|\hbox{arg}(z)-d|<\frac{\gamma\,\pi}{2},\ |z|<r\},
$$
respectively unbounded sectors
$$
S(d,\gamma):=\{z\in\mathcal{R}:|\hbox{arg}(z)-d|<\frac{\gamma\,\pi}{2}\},
$$
with \emph{bisecting direction} $d\in\R$, \emph{opening} $\gamma \,\pi$ ($\gamma>0$) and (in the first case) \emph{radius} $r\in(0,\infty)$. For unbounded sectors of opening $\gamma\,\pi$ bisected by direction 0, we write
$S_{\gamma}:=S(0,\gamma)$. In some cases, it will also be convenient to consider sectors whose elements have their argument in a half-open, or in a closed, bounded interval of the real line.\par\noindent

A \emph{sectorial region} $G(d,\gamma)$ with bisecting direction $d\in\R$ and opening $\gamma\,\pi$ will be an open connected set in $\mathcal{R}$ such that $G(d,\ga)\subset S(d,\ga)$, and
for every $\beta\in(0,\gamma)$ there exists $\rho=\rho(\beta)>0$ with $S(d,\beta,\rho)\subset G(d,\gamma)$.
In particular, sectors are sectorial regions. If $d=0$ we just write $G_\ga$.\par\noindent

A bounded (respectively, unbounded) sector $T$ is said to be a \emph{proper subsector} of a sectorial region (resp. of an unbounded sector) $G$, and we write $T\ll G$ (resp. $T\prec G$), if $\overline{T}\subset G$ (where the closure of $T$ is taken in $\mathcal{R}$, and so the vertex of the sector is not under consideration).

$\C[[z]]$ stands for the set of formal power series in $z$ with complex coefficients.

\subsection{Weight sequences and associated functions}\label{subsectLCsequences}

In what follows, $\M=(M_p)_{p\in\N_0}$ always stands for a sequence of positive real numbers, and we always assume that $M_0=1$. We specify some conditions on the sequence $\M$, and introduce a growth index and auxiliary functions, all of which will be relevant in the study of the forthcoming classes or spaces defined in terms of $\M$.


For a sequence $\M$ we define {\it the sequence of quotients}  $\m=(m_p)_{p\in\N_0}$ by
$m_p:=M_{p+1}/M_p$, $p\in \N_0$.  $\M$ can be recovered from $\m$ because
$M_p=\prod_{k=0}^{p-1}m_k$ for every $p\in\N$.

We say $\bM$ is \textit{logarithmically convex} (for short, (lc)) if
$$M_{p}^{2}\le M_{p-1}M_{p+1},\qquad p\in\N.$$
It is obvious that $\M$ is (lc) if, and only if, $\m$ 
is nondecreasing.
Moreover, if $\M$ is (lc), we have that
$$M_p=m_0\cdots m_{p-1}\leq (m_{p-1})^p\leq (m_p)^p, \qquad p\in\N$$
and we deduce that
$$ M_{p}^{(p+1)/p}=M_p (M_{p})^{1/p}\leq M_p m_p =M_{p+1}, \qquad p\in\N,$$
then $((M_p)^{1/p})_{p\in\N}$ is nondecreasing.

We say that a sequence $\M$ is a \textit{weight sequence} if it is (lc) and $\lim_{p\to\oo} (M_p)^{1/p}= \oo$ or, equivalently, if $\m$ is nondecreasing and $\lim_{p\to\oo} m_p= \oo$.

For a weight sequence $\bM=(M_{p})_{p\in\N_0}$
the map $h_{\bM}:[0,\infty)\to\R$, defined by
\begin{equation*}
h_{\bM}(t):=\inf_{p\in\N_{0}}M_{p}t^p,\quad t>0;\qquad h_{\bM}(0)=0,
\end{equation*}
turns out to be a nondecreasing continuous map in $[0,\infty)$ onto $[0,1]$. In fact
$$
h_{\bM}(t)= \left \{ \begin{matrix}  t^{p}M_{p} & \mbox{if }t\in\big[\frac{1}{m_{p}},\frac{1}{m_{p-1}}\big),\ p=1,2,\ldots,\\
1 & \mbox{if } t\ge 1/m_{0}. \end{matrix}\right.
$$
One may also associate with a weight sequence $\bM$ the function
\begin{equation}\label{equadefiMdet}
\o_{\M}(t):=\sup_{p\in\N_{0}}\log\big(\frac{t^p}{M_{p}}\big)=-\log\big(h_{\bM}(1/t)\big),\quad t>0;\qquad \o_{\M}(0)=0,
\end{equation}
which is a nondecreasing continuous map in $[0,\infty)$ with $\lim_{t\to\infty}\o_{\M}(t)=\infty$.
Indeed,
\begin{equation*}
\o_{\M}(t)=\left \{ \begin{matrix}  p\log t -\log(M_{p}) & \mbox{if }t\in [m_{p-1},m_{p}),\ p=1,2,\ldots,\\
0 & \mbox{if } t\in [0,m_{0}), \end{matrix}\right.
\end{equation*}
and one can easily check that $\o_{\M}$ is convex in $\log t$, i.e., the map $t\mapsto \o_{\M}(e^t)$ is convex in $\mathbb{R}$.
As it can be found in~\cite[p.~17]{Mandelbrojt} or~\cite[Prop.~3.2]{komatsu}, $\M$ is determined by $\o_{\M}(t)$:
\begin{equation}\label{eq.WeightSeqFromOmegaM}
M_p=\sup_{t>0} \frac{t^p}{e^{\o_{\M}(t)}} = \sup_{t>0}  t^p h_{\M} (1/t) , \qquad p\in\N_0.
\end{equation}

For a weight sequence $\M$ we define the growth index
$$\o(\M):=\liminf_{p\to\infty} \frac{\log(m_p)}{\log(p)}\in[0,\infty],$$
and we consider a new auxiliary function, given for $t$ large enough by
$$
d_{\M}(t):=\frac{\log(\o_{\M}(t))}{\log(t)}.
$$
A connection between the index $\o(\M)$ and the function $d_{\M}$ is given by the following result.

\begin{theo}[\cite{JimenezSanz}, Th.\ 3.2;\ \cite{SanzAsymptoticAnalysis}, Th.\ 2.24 and Th.\ 4.6 ]\label{theo.limsup.dM.omega}
Let $\M$  be a weight sequence, then
$$\limsup_{t\to\infty} d_{\M}(t)
=\limsup_{p\to\infty}\frac{\log(p)}{\log(m_{p})}=\frac{1}{\o(\M)}$$
(where the last quotient is understood as 0 if $\o(\M)=\infty$, and as $\infty$ if $\o(\M)=0$).
\end{theo}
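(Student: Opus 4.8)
The plan is to prove the two equalities separately, the second one being essentially a formal manipulation of $\limsup$'s and the first one resting on the explicit piecewise description of $\o_{\M}$. For the identity $\limsup_{p\to\infty}\log(p)/\log(m_p)=1/\o(\M)$, note first that since $\M$ is a weight sequence we have $m_p\to\infty$, so $\log(m_p)>0$ for $p$ large while $\log(p)\to\infty$; writing $a_p:=\log(m_p)/\log(p)$, which is eventually positive with $\liminf_p a_p=\o(\M)$, the claim is just the elementary relation $\limsup_p(1/a_p)=1/\liminf_p a_p$, together with a trivial case check when $\o(\M)\in\{0,\infty\}$ to respect the conventions in the statement. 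So it remains to show $\limsup_{t\to\infty}d_{\M}(t)=L$, where $L:=\limsup_{p\to\infty}\log(p)/\log(m_p)$.

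For the inequality $\limsup_{t\to\infty}d_{\M}(t)\le L$, I would argue as follows. For $t\in[m_{p-1},m_p)$ we have $\o_{\M}(t)=p\log t-\log(M_p)$; since $(M_p)^{1/p}\to\infty$ forces $M_p\ge1$ for $p$ large, this gives $\o_{\M}(t)\le p\log t$ and hence
\[
d_{\M}(t)=\frac{\log(\o_{\M}(t))}{\log t}\le\frac{\log(p)+\log\log(t)}{\log(t)}.
\]
The function $u\mapsto(\log(p)+\log(u))/u$ is decreasing for $u>e$, so for $p$ large this upper bound is maximised over the interval at its left endpoint $t=m_{p-1}$, yielding $d_{\M}(t)\le\log(p)/\log(m_{p-1})+\log\log(m_{p-1})/\log(m_{p-1})$. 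Letting $t\to\infty$ (equivalently $p\to\infty$), the second summand tends to $0$ because $m_{p-1}\to\infty$, and $\limsup_p\log(p)/\log(m_{p-1})=L$ after the reindexing $p\mapsto p+1$ and using $\log(p+1)/\log(p)\to1$.

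For the reverse inequality $\limsup_{t\to\infty}d_{\M}(t)\ge L$, I would evaluate $d_{\M}$ along the points $t_p=e\,m_{p-1}$. Starting from $\o_{\M}(t)=\sup_{q\in\N_0}(q\log t-\log(M_q))$, the choice $q=p$ together with the identity $p\log(m_{p-1})-\log(M_p)=(p-1)\log(m_{p-1})-\log(M_{p-1})=\o_{\M}(m_{p-1})\ge0$ gives $\o_{\M}(t_p)\ge p+\o_{\M}(m_{p-1})\ge p$, so that
\[
d_{\M}(t_p)=\frac{\log(\o_{\M}(t_p))}{1+\log(m_{p-1})}\ge\frac{\log(p)}{1+\log(m_{p-1})}.
\]
Taking $\limsup$ over $p$ — the additive constant being asymptotically negligible since $\log(m_{p-1})\to\infty$, and reindexing as before — gives $\limsup_{t\to\infty}d_{\M}(t)\ge L$. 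Combining the two inequalities yields $\limsup_{t\to\infty}d_{\M}(t)=L=1/\o(\M)$.

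I do not expect a genuine conceptual obstacle here; the care required is almost entirely bookkeeping. The delicate points are to match the index shift between $m_{p-1}$ and $m_p$ inside the several $\limsup$'s so that the three quantities truly coincide — including the degenerate cases $L=0$ and $L=\infty$ — and to verify that the correction terms $\log\log(m_{p-1})/\log(m_{p-1})$ and $1/\log(m_{p-1})$ vanish in the limit. The only structural facts used are the piecewise formula for $\o_{\M}$, its defining representation as a supremum, and the weight-sequence hypothesis $(M_p)^{1/p}\to\infty$ (equivalently, $m_p\to\infty$).
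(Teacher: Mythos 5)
This theorem is stated in the paper only as a citation to \cite{JimenezSanz} and \cite{SanzAsymptoticAnalysis}, so there is no internal proof to compare against; your argument must be judged on its own terms, and it checks out. The reduction of the second equality to $\limsup(1/a_p)=1/\liminf a_p$ for the eventually positive sequence $a_p=\log(m_p)/\log(p)$ is sound, including the conventions at $0$ and $\infty$. For the first equality, the upper bound correctly exploits $\o_{\M}(t)=p\log t-\log M_p\le p\log t$ on $[m_{p-1},m_p)$ (valid once $M_p\ge 1$) together with the monotonicity of $u\mapsto(\log p+\log u)/u$ for $u>e$, and the lower bound correctly picks the test points $t_p=e\,m_{p-1}$ and uses the supremum definition of $\o_{\M}$ with $q=p$ and the telescoping identity $p\log m_{p-1}-\log M_p=\o_{\M}(m_{p-1})\ge 0$ to get $\o_{\M}(t_p)\ge p$. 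The reindexing $p\mapsto p+1$ and the observations that $\log\log(m_{p-1})/\log(m_{p-1})\to 0$ and $\log(m_{p-1})/(1+\log(m_{p-1}))\to 1$ correctly identify all three quantities, also in the degenerate cases $L\in\{0,\infty\}$. I see no gap.
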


We introduce now the classical notion of comparable and (weakly) equivalent sequences.

Let $\M$ and $\L$ be sequences, we write  $\M\precsim\L$
if there exists $A>0$ such that
$$M_p\leq A^p L_p, \qquad \text{for all}\,\,\, p\in\N_0.$$
We say that $\M$ and $\L$ are \emph{comparable} if $\M\precsim\L$ or $\L\precsim\M$ holds. If both conditions hold, we say that $\M$ is \emph{equivalent} to $\L$, and we write $\M\approx\L$; in this case,
there exist $A,B>0$ such that
\begin{equation}\label{eq:equiv.seq.associated.ommega}
\o_{\bM}(At)\le \o_{\L}(t)\le \o_{\bM}(Bt),\qquad t\ge 0,
\end{equation}
and the previous theorem implies then that $\o(\M)=\o(\L)$.

The strongly regular sequences, introduced by V. Thilliez~\cite{thilliez03}, will play a prominent role in the following.

\begin{defi}\label{defiStronglyRegular}
 A sequence $\M$ is {\it strongly regular} if it satisfies
 \begin{itemize}
\item[(i)]  $\M$ is (lc).

\item[(ii)]  $\M$ is of \textit{moderate growth} (briefly, (mg)): there exists $A>0$ such that
\begin{equation*}
 M_{p+q}\leq A^{p+q} M_p M_q, \qquad p,q\in\N_0.
\end{equation*}

\item[(iii)]  $\bM$ satisfies the \textit{strong nonquasianalyticity condition} (for short, (snq)): there exists $B>0$ such that
$$
\sum_{q\ge p}\frac{M_{q}}{(q+1)M_{q+1}}\le B\frac{M_{p}}{M_{p+1}},\qquad p\in\N_0.
$$
\end{itemize}
\end{defi}

Since (lc) and (snq) together imply that $\bm$ tends to infinity, every strongly regular sequence is a weight sequence.

Sometimes, it also appears the condition of \textit{derivation closedness}, for short (dc): there exists $A>0$ such that
\begin{equation*}
 M_{p+1}\leq A^{p+1} M_p, \qquad p\in\N_0.
\end{equation*}
It is clear that (mg) implies (dc).

The next characterization of (mg), already appearing in the works of H.~Komatsu~\cite[Prop. 3.6]{komatsu} and V.~Thilliez~\cite{thilliez03}, plays a fundamental role in many of our arguments.

\begin{lemma}\label{lemma.MG.associated.function}
Let $\M=(M_{p})_{p\in\N_0}$ be a weight sequence. The following are equivalent:
\begin{enumerate}[(i)]
\item $\M$ has (mg),
 \item For every real number with $s\ge1$, there exists $\rho(s)\ge1$ (only depending on $s$ and $\M$) such that
\begin{equation*}
h_{\M}(t)\le(h_{\M}(\rho(s)t))^{s}\qquad\hbox{for }t\ge0,
\end{equation*}
or, equivalently,
 \begin{equation}\label{ineq.def.moderate.growth.ommegaM} 
s\o_{\M}(t)\le\o_{\M}(\rho(s)t) \qquad\hbox{for }t\ge0.
\end{equation}
\item  There exist $H\ge1$ and $t_0>0$ (only depending on $\M$) such that
\begin{equation*}
h_{\M}(t)\le(h_{\M}(Ht))^{2}\qquad\hbox{for }t\leq 1/t_0,
\end{equation*}
or, equivalently,
 \begin{equation*}
2\o_{\M}(t)\le\o_{\M}(Ht) \qquad\hbox{for }t\ge t_0.
\end{equation*}
\end{enumerate}
\end{lemma}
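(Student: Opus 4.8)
The plan is to establish the chain of implications $(i)\Rightarrow(ii)\Rightarrow(iii)\Rightarrow(i)$, translating the combinatorial moderate growth condition into the scaling property of the associated functions via the duality \eqref{equadefiMdet} and \eqref{eq.WeightSeqFromOmegaM}. Note first that the two formulations within $(ii)$ (and within $(iii)$) are genuinely equivalent: since $\o_{\M}(t)=-\log h_{\M}(1/t)$, the inequality $h_{\M}(t)\le (h_{\M}(\rho t))^s$ at the point $1/t$ reads $-\o_{\M}(1/t)\le -s\,\o_{\M}(1/(\rho t))$, i.e. $s\,\o_{\M}(u)\le \o_{\M}(\rho u)$ after setting $u=1/(\rho t)$; so I will work throughout with whichever of the two forms is more convenient.

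For $(i)\Rightarrow(ii)$, assume (mg) with constant $A$. The key observation is that (mg) iterated gives $M_{kp}\le A^{\,c_k p}(M_p)^k$ for suitable constants $c_k$ depending on $k$ (roughly $c_k\sim k\log_2 k$), by splitting the index $kp$ dyadically. Given $s\ge 1$, pick an integer $k\ge s$. Using \eqref{eq.WeightSeqFromOmegaM}, for any $t>0$
\begin{equation*}
e^{k\,\o_{\M}(t)}=\sup_{p\ge 0}\frac{t^{kp}}{(M_p)^k}\le \sup_{p\ge 0}\frac{(A^{c_k}t)^{kp}}{M_{kp}}\le \sup_{q\ge 0}\frac{(A^{c_k}t)^{q}}{M_{q}}=e^{\o_{\M}(A^{c_k}t)},
\end{equation*}
so $k\,\o_{\M}(t)\le \o_{\M}(A^{c_k}t)$, hence a fortiori $s\,\o_{\M}(t)\le \o_{\M}(\rho(s)t)$ with $\rho(s):=A^{c_k}\ge 1$. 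One must check the elementary estimate relating $M_{kp}$ to $(M_p)^k$; this is where a small induction on $k$ using (mg) is needed, and it is the only place requiring genuine care, though it is standard (it is essentially Komatsu's argument).

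The implication $(ii)\Rightarrow(iii)$ is immediate: take $s=2$ in $(ii)$, set $H:=\rho(2)\ge 1$, and the inequality $2\,\o_{\M}(t)\le\o_{\M}(Ht)$ holds for all $t\ge 0$, in particular for $t\ge t_0$ with any choice of $t_0>0$ (the restriction to large $t$ is only a convenience for the converse). Finally, for $(iii)\Rightarrow(i)$ I will reverse the computation above. From $2\,\o_{\M}(t)\le \o_{\M}(Ht)$ for $t\ge t_0$, together with the fact that $\o_{\M}$ is nondecreasing and continuous with $\o_{\M}(0)=0$, one obtains a global inequality $2\,\o_{\M}(t)\le \o_{\M}(Ht)+C$ for all $t\ge 0$ and some constant $C\ge 0$ (absorbing the bounded behaviour on $[0,t_0]$). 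Exponentiating and using \eqref{eq.WeightSeqFromOmegaM}: for $p,q\in\N_0$ with, say, $p\le q$, one has $M_{p+q}\le M_{2q}$ by monotonicity of the quotients is \emph{not} quite what is wanted, so instead I will argue directly. Write $M_{p+q}=\sup_{t>0}t^{p+q}/e^{\o_{\M}(t)}$; the product $M_pM_q=\sup_{t>0}t^p/e^{\o_{\M}(t)}\cdot\sup_{u>0}u^q/e^{\o_{\M}(u)}$, and choosing $u=t$ gives $M_pM_q\ge \sup_{t>0}t^{p+q}/e^{2\o_{\M}(t)}\ge \sup_{t>0}t^{p+q}/e^{\o_{\M}(Ht)+C}=e^{-C}H^{-(p+q)}M_{p+q}$, i.e. $M_{p+q}\le e^{C}H^{p+q}M_pM_q$, which is (mg) with $A:=\max\{H,e^{C}\}$ (or split $e^C\le D^{p+q}$ trivially for $p+q\ge 1$ and handle $p=q=0$ separately since $M_0=1$). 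The main obstacle is bookkeeping the passage between the ``for large $t$'' statement in $(iii)$ and the global estimate needed to invoke \eqref{eq.WeightSeqFromOmegaM}; this is routine once one notes that $\o_{\M}$ vanishes on $[0,m_0)$ and is bounded on any compact set, so only a finite additive (equivalently, multiplicative in $h_{\M}$) correction is ever incurred.
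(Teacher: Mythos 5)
The paper cites Komatsu~\cite[Prop.~3.6]{komatsu} and Thilliez~\cite{thilliez03} for this lemma and offers no proof of its own, so there is no in-paper argument to compare against; your proof is a correct, self-contained rendering of the standard duality argument used in those references. Both directions are sound: the iterated (mg) bound feeds into the Legendre-type identity \eqref{eq.WeightSeqFromOmegaM} for $(i)\Rightarrow(ii)$, and $(iii)\Rightarrow(i)$ inverts it after upgrading the large-$t$ hypothesis to a global estimate $2\o_{\M}(t)\le\o_{\M}(Ht)+C$, which you correctly justify from the continuity and monotonicity of $\o_{\M}$ and its vanishing near the origin. One small clarification worth spelling out: the first inequality in your display for $(i)\Rightarrow(ii)$ actually requires $M_{kp}\le A^{c_k\,kp}(M_p)^k$ (the numerator $(A^{c_k}t)^{kp}$ carries an extra factor of $k$ in the exponent of $A$), whereas you state the stronger $M_{kp}\le A^{c_k p}(M_p)^k$; the latter does imply the former once $A$ is harmlessly normalized to be $\ge 1$, but noting this explicitly would prevent a reader from thinking the exponents in the chain are misaligned. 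Likewise, in $(iii)\Rightarrow(i)$ you should remark that the factor $e^{C}$ is absorbed into $A^{p+q}$ only for $p+q\ge 1$, the case $p=q=0$ being trivial from $M_0=1$ — you did flag this, and it is indeed routine.
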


\begin{exam}\label{exampleSequences}
We mention some interesting examples of weight sequences. In particular, those in (i) and (iii) appear in the applications of summability theory to the study of formal power series solutions for different kinds of equations.
\begin{itemize}
\item[(i)] The sequences $\M_{\a,\b}:=\big(p!^{\a}\prod_{m=0}^p\log^{\b}(e+m)\big)_{p\in\N_0}$, where $\a>0$ and $\b\in\R$, are strongly regular (in case $\b<0$, the first terms of the sequence have to be suitably modified in order to ensure (lc)).
    For $\b=0$, we have the best known example of strongly regular sequence, $\M_{\a,0}=(p!^{\a})_{p\in\N_{0}}$, called the \textit{Gevrey sequence of order $\a$}. It is immediate that $\o(\M_{\a,\b})=\a$.
\item[(ii)] The sequence $\M_{0,\b}:=(\prod_{m=0}^p\log^{\b}(e+m))_{p\in\N_0}$, with $\b>0$, is a weight sequence with (mg), but (snq) is not satisfied. In this case, $\o(\M_{0,\b})=0$.
\item[(iii)] For $q>1$, $\M_q:=(q^{p^2})_{p\in\N_0}$ is (lc), (dc) and (snq), but not (mg); $\o(\M_q)=\infty$.
\end{itemize}
\end{exam}

As it was proved in~\cite[Th. 3.4]{SanzFlat}, or as it can be deduced from the theory of O-regular variation (see~\cite[Remark 2.1.19]{JimenezPhD}), for every strongly regular sequence $\M$ one has $\o(\M)\in(0,\infty)$.

\subsection{Ultraholomorphic classes and the asymptotic Borel map}
\label{subsect.ultrahol.Borel.map}
Here we introduce the classes of holomorphic functions in a sectorial region which the sum of a summable formal power series will belong to.

We say a holomorphic function $f$ in a sectorial region $G$ admits the formal power series $\widehat{f}=\sum_{n=0}^{\infty}a_{n}z^{n}\in\C[[z]]$ as its $\bM-$\emph{asymptotic expansion} in $G$ (when the variable tends to 0) if for every $T\ll G$ there exist $C_T,A_T>0$ such that for every $p\in\N_0$ one has
\begin{equation*}
|f(z)-\sum_{n=0}^{p-1}a_nz^n |\leq C_TA_T^pM_{p}|z|^p,\qquad z\in T.
\end{equation*}
We will write $f\sim_{\bM}\widehat{f}$ in $G$, and $\widetilde{\mathcal{A}}_{\M}(G)$ will stand for the space of functions admitting $\bM-$asymptotic expansion in $G$.

As a consequence of Taylor's formula and Cauchy's integral formula for the derivatives, one can check that $f\in\widetilde{\mathcal{A}}_{\M}(G)$ if and only if for every $T\ll G$ there exist $C_T,A_T>0$ such that for all $p\in\N_0$ one has
\begin{equation*}
|f^{(p)}(z)|\leq C_TA_T^pp!M_{p},\qquad z\in T.
\end{equation*}
Moreover, if $f\sim_\M\sum^\oo_{p=0} a_p z^p$ then
for every $T\ll G$ and $p\in\N_0$ one has
$$ a_p=\lim_{ \genfrac{}{}{0pt}{}{z\to0}{z\in T}} \frac{f^{(p)}(z)}{p!},$$
and 
we see that the $\M-$asymptotic expansion $\widehat{f}$ is unique for $f$ and, moreover, $|a_p|\le C_TA_T^pM_{p}$ for every $p\in\N_0$. So, if we define
$$\C[[z]]_{\M}=\Big\{\widehat{f}=\sum_{n=0}^\oo a_nz^n\in\C[[z]]:\,\textrm{there exists $A>0$ with } \left|\,\ba \,\right|_{\M,A}:=\sup_{p\in\N_{0}}\displaystyle \frac{|a_{p}|}{A^{p}M_{p}}<\infty\Big\},
$$
it is natural to consider the \textit{asymptotic Borel map} $\widetilde{\mathcal{B}}$, sending a function $f\in\widetilde{\mathcal{A}}_{\M}(G)$ into its $\M-$asymptotic expansion $\widehat{f}\in\C[[z]]_{\M}$.



If $\M$ is (lc), $\widetilde{\mathcal{B}}$ is a homomorphism of algebras; if $\M$ is also (dc), $\widetilde{\mathcal{B}}$ is a homomorphism of differential algebras.

Note that if $\M\approx\L$, then $\widetilde{\mathcal{A}}_{\M}(G)=\widetilde{\mathcal{A}}_{\L}(G)$ and $\C[[z]]_{\M}=\C[[z]]_{\L}$.

We say $f\in\widetilde{\mathcal{A}}_{\M}(G)$ is \emph{flat} if $f\sim_{\bM}\widehat{0}$ in $G$, where $\widehat{0}$ stands for the null power series. The next characterization of flatness is due to V. Thilliez.

\begin{pro}[\cite{ThilliezSmooth10}, Prop.\ 4]\label{prop.thilliez.flat.function.M}
Let $\M$ be a weight sequence, $G$ be a sectorial region and $f$ be holomorphic in $G$. The following are equivalent:
\begin{enumerate}[(i)]
 \item $f\in\widetilde{\mathcal{A}}_{\M}(G)$ and $f$ is \emph{flat}, i.e., $f\sim_{\bM}\widehat{0}$ in $G$ where $\widehat{0}$ stands for the null power series.
 \item For every bounded proper subsector $T$ of $G$ there exist $c_1,c_2>0$ with
 $$|f(z)|\le c_1e^{-\o_{\M}(1/(c_2|z|))},\qquad z\in T. $$
\end{enumerate}
 \end{pro}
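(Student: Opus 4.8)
The plan is to prove Thilliez's flatness characterization by relating the two bounds through the defining function $h_{\bM}$ and the identity $\o_{\bM}(t)=-\log h_{\bM}(1/t)$ from~\eqref{equadefiMdet}. Note first that the inequality in (ii) can be rewritten, using that identity, as $|f(z)|\le c_1 h_{\bM}(c_2|z|)$ for $z\in T$, so the whole statement is really about comparing the flatness condition $f\sim_{\bM}\widehat 0$ with a single exponential-type bound governed by $h_{\bM}$.

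For the implication (ii) $\Rightarrow$ (i): assume $|f(z)|\le c_1 h_{\bM}(c_2|z|)$ on each $T\ll G$. Since $h_{\bM}(t)=\inf_{p\in\N_0}M_p t^p\le M_p t^p$ for every $p$ and every $t>0$, we immediately get $|f(z)|\le c_1 M_p (c_2|z|)^p=c_1 c_2^p M_p |z|^p$ for all $p\in\N_0$ and all $z\in T$. These are exactly the estimates defining $f\sim_{\bM}\widehat 0$ in $G$ (with $C_T=c_1$, $A_T=c_2$), so $f\in\widetilde{\mathcal{A}}_{\bM}(G)$ and $f$ is flat. This direction is essentially immediate from the infimum definition of $h_{\bM}$.

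For the harder implication (i) $\Rightarrow$ (ii): fix $T\ll G$ and choose an intermediate subsector $T'$ with $T\ll T'\ll G$. Flatness on $T'$ gives $C,A>0$ with $|f(z)|\le C A^p M_p |z|^p$ for all $p\in\N_0$, $z\in T'$. Taking the infimum over $p$ of the right-hand side yields $|f(z)|\le C\inf_{p\in\N_0}M_p(A|z|)^p=C\,h_{\bM}(A|z|)=C\,e^{-\o_{\bM}(1/(A|z|))}$ for $z\in T'$, hence a fortiori on $T$. This already gives the desired bound with $c_1=C$, $c_2=A$, so in fact one does not even need to pass to a larger subsector here; the estimate on $T$ itself suffices. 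The only subtlety is that one should make sure $A|z|$ stays in the range where $h_{\bM}$ is not identically $1$ is irrelevant, since the inequality $|f(z)|\le C\,h_{\bM}(A|z|)$ holds trivially when $h_{\bM}(A|z|)=1$ as long as $|f|\le C$ there, which follows from flatness with $p=0$. So the genuine content is just the observation that taking the infimum over $p$ of the flatness estimates produces $h_{\bM}$, together with $\o_{\bM}=-\log h_{\bM}(1/\cdot)$.

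The main (minor) obstacle is bookkeeping near the origin and for small $|z|$: one must check that the single inequality $|f(z)|\le C h_{\bM}(A|z|)$ captures all the countably many flatness estimates simultaneously, which is precisely why the infimum in the definition of $h_{\bM}$ is the right object; conversely, that a single such inequality degrades to all of them is why only the trivial bound $h_{\bM}(t)\le M_p t^p$ is needed. No use of (mg), (dc), or the finiteness of $\o(\bM)$ is required — the argument works for any weight sequence, as stated. I would present (ii) $\Rightarrow$ (i) first as a warm-up and then (i) $\Rightarrow$ (ii), both in a few lines.
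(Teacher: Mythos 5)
Your proof is correct. The paper does not reproduce an argument for this proposition (it is cited directly from Thilliez's work), but the proof you give is the standard and essentially unique one: the identity $\o_{\M}(t)=-\log h_{\M}(1/t)$ converts the bound in (ii) into $|f(z)|\le c_1 h_{\M}(c_2|z|)$, and then $h_{\M}(t)=\inf_p M_p t^p$ makes (i) and (ii) equivalent by inspection — taking the infimum over $p$ in the flatness estimates yields (ii), while the trivial bound $h_{\M}(t)\le M_p t^p$ recovers each flatness estimate from (ii).
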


The existence or not of nontrivial flat functions was characterized in~\cite[Coro. 4.12]{SanzFlat} for classes defined from a strongly regular sequence $\M$ (see Definition~\ref{defiStronglyRegular}) such that the function $d_{\M}$ is a nonzero proximate order (Definition~\ref{OAD:1}). However, the result has been recently proved to be valid for any weight sequence. 

\begin{theo}[\cite{JimenezSanzSchindlInjectSurject}, Cor. 3.16]
\label{TheoWatsonlemma}
Let $\M$ be a weight sequence, $\ga>0$ and $G(d,\gamma)$ be a sectorial region of opening $\pi\gamma$. The following statements are equivalent:
\begin{itemize}
\item[(i)] $\widetilde{\mathcal{A}}_{\M}(G(d,\gamma))$ is \emph{quasianalytic}, i.e., it does not contain nontrivial flat functions (in other words, the Borel map is injective in this class).
\item[(ii)] $\gamma>\omega(\M)$.
\end{itemize}
\end{theo}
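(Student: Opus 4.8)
The plan is to prove Watson's Lemma for weight sequences by establishing the two implications separately, with the heart of the matter lying in the construction (or obstruction) of nontrivial flat functions according to whether $\gamma$ exceeds $\omega(\M)$ or not. Throughout I would use freely the flatness characterization of Proposition~\ref{prop.thilliez.flat.function.M}, which translates the asymptotic condition $f\sim_{\M}\widehat{0}$ into the exponential decay estimate $|f(z)|\le c_1e^{-\o_{\M}(1/(c_2|z|))}$ on proper subsectors, and the link between $\o(\M)$ and the asymptotic behaviour of $\o_{\M}$ provided by Theorem~\ref{theo.limsup.dM.omega}, namely $\limsup_{t\to\infty}d_{\M}(t)=\limsup_{t\to\infty}\log(\o_{\M}(t))/\log(t)=1/\o(\M)$.

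For the implication $(ii)\Rightarrow(i)$, i.e.\ $\gamma>\o(\M)$ forces quasianalyticity, I would argue by contradiction: suppose $f\in\widetilde{\mathcal{A}}_{\M}(G(d,\gamma))$ is flat and not identically zero. On a proper subsector $T$ bisected by $d$ of opening $\beta\pi$ with $\o(\M)<\beta<\gamma$ we have the bound from Proposition~\ref{prop.thilliez.flat.function.M}, while holomorphy and $f\not\equiv 0$ give a lower bound via a Phragmén--Lindelöf / harmonic majorant argument for $\log|f|$. The obstruction is quantitative: a nonzero holomorphic function on a sector of opening $\beta\pi$ cannot decay faster than $\exp(-C|z|^{-1/\beta})$ near the vertex (this is the classical growth-order constraint), whereas the relation $\limsup\log(\o_{\M}(t))/\log t=1/\o(\M)$ implies that for any $\ep>0$ one has $\o_{\M}(t)\ge t^{1/\o(\M)-\ep}$ for arbitrarily large $t$, hence $e^{-\o_{\M}(1/(c_2|z|))}$ decays faster than any $\exp(-C|z|^{-1/\beta})$ with $\beta>\o(\M)$. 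Comparing the two bounds yields $f\equiv 0$, a contradiction. Since the excerpt attributes this to \cite{JimenezSanzSchindlInjectSurject}, I expect the paper to invoke that reference rather than redo the Phragmén--Lindelöf estimate in full.

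For the converse $(i)\Rightarrow(ii)$, equivalently $\gamma\le\o(\M)$ implies the existence of a nontrivial flat function in $\widetilde{\mathcal{A}}_{\M}(G(d,\gamma))$, the strategy is to construct such a function explicitly. Without loss of generality take $d=0$. One seeks $f$ holomorphic on $G_\gamma$ with $|f(z)|\le c_1e^{-\o_{\M}(1/(c_2|z|))}$ on proper subsectors; a natural candidate is of the form $f(z)=\exp(-g(1/z))$ where $g$ is a suitable holomorphic function on a sector whose real part grows like $\o_{\M}$. Here is where the hypothesis $\gamma\le\o(\M)$ enters: because $\o_{\M}(t)$ grows no faster than $t^{1/\o(\M)+\ep}$ (from Theorem~\ref{theo.limsup.dM.omega}, the $\limsup$ being exactly $1/\o(\M)$), a power-type comparison function $t\mapsto t^{1/\gamma}$ dominates $\o_{\M}(t)$ along a suitable sequence, and a function behaving like $z^{-1/\gamma}$ is holomorphic and has positive real part precisely on a sector of opening $\gamma\pi$. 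One then checks that $f$ indeed admits $\widehat 0$ as $\M$-asymptotic expansion on $G_\gamma$ by verifying the decay estimate on every proper subsector. I expect the paper to again defer to \cite[Cor.\ 3.16]{JimenezSanzSchindlInjectSurject}, possibly via the theory of proximate orders and regular variation.

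The main obstacle, and the reason this theorem is nontrivial, is the borderline case $\gamma=\o(\M)$, where one must produce a flat function on a sector of opening \emph{exactly} $\o(\M)\pi$: the naive power comparison $\o_{\M}(t)\lesssim t^{1/\o(\M)}$ may fail (the $\limsup$ is $1/\o(\M)$ but the actual growth can have slowly varying corrections), so the construction cannot simply use $z^{-1/\o(\M)}$ and must instead be tuned to the precise function $\o_{\M}$, exploiting that $\o_{\M}$ is convex in $\log t$ and recovers $\M$ via \eqref{eq.WeightSeqFromOmegaM}. This is exactly the point where the general weight-sequence result goes beyond the strongly regular case treated in \cite[Coro.\ 4.12]{SanzFlat}, and where the sharp harmonic-analysis input from \cite{JimenezSanzSchindlInjectSurject} is essential; accordingly, in the write-up I would present the statement as a direct citation of that corollary, sketching only the comparison of growth orders above as motivation rather than reproving it.
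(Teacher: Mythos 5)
The paper does not prove this statement; it is cited directly from \cite[Cor.\ 3.16]{JimenezSanzSchindlInjectSurject}, and your plan to treat it as a citation is exactly what the paper does. However, one step in your heuristic sketch would fail as written: Theorem~\ref{theo.limsup.dM.omega} gives $\limsup_{t\to\infty}d_{\M}(t)=1/\o(\M)$, \emph{not} a $\liminf$, so the lower bound $\o_{\M}(t)\ge t^{1/\o(\M)-\ep}$ holds only along a subsequence of $t$'s. Consequently, $e^{-\o_{\M}(1/(c_2|z|))}$ need not decay faster than $\exp(-C|z|^{-1/\beta})$ for \emph{all} small $|z|$ even when $\beta>\o(\M)$; the domination holds only on a sequence of radii, and since $\liminf_{t\to\infty}d_{\M}(t)$ can be strictly smaller than $1/\o(\M)$ for a general weight sequence, the naive Phragm\'en--Lindel\"of comparison with a fixed power $|z|^{-1/\beta}$ does not close the argument even in the strict case $\gamma>\o(\M)$ (convexity of $\o_{\M}$ in $\log t$ gives bounds in the wrong direction to interpolate). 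The corresponding issue in the construction direction is the one you do flag, namely that $\o_{\M}(t)\lesssim t^{1/\gamma}$ eventually is available from the $\limsup$ when $\gamma<\o(\M)$ but not at $\gamma=\o(\M)$. These gaps are precisely why the paper delegates to the cited reference instead of reproducing the argument, and your decision to do the same is correct.
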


Since the bisecting direction is irrelevant for quasianalyticity, we will frequently consider only the case $d=0$.

\subsection{Sequences admitting a nonzero proximate order}
\label{subsect.SequencesAdmitProxOrder}

In this subsection, we first recall the notion of proximate order, appearing in the theory of growth of entire functions and developed, initially, by E. Lindel\"of and G. Valiron.

\begin{defi}\label{OAD:1}
We say a real function $\ro(r)$, defined on $(c,\oo)$ for some $c\ge 0$, is a {\it proximate order},
if the following hold:
 \begin{enumerate}[(A)]
  \item $\ro$ is continuous and piecewise continuously differentiable in $(c,\oo)$
  (that is, differentiable except possibly at a sequence of points, tending to infinity, at any
  of which it is continuous and has distinct finite lateral derivatives),\label{OA1:1}
  \item $\ro(r) \geq 0$ for every $r>c$,\label{OA2:1}
  \item $\lim_{r \to \oo} \ro(r)=\ro< \oo$, \label{OA3:1}
  \item $\lim_{r  \to \oo} r \ro'(r) \ln r = 0$. \label{OA4:1}
 \end{enumerate}
 In case the value $\rho$ in (C) is positive (respectively, is 0), we say $\rho(r)$ is a \textit{nonzero}
 (resp. \textit{zero}) proximate order.
\end{defi}

\begin{rema}\label{rema.prox.order.RV}
The classical theory of proximate orders (see \cite[Th. 2.2]{GoldbergOstrowskii}) guarantees that if $\ro(r)$ is a proximate order with limit $\ro\geq 0$ at infinity, then the function $V(r)=r^{\ro(r)}$ is of \emph{regular variation}, that is,
$$\lim_{r\to\oo}\frac{V(sr)}{V(r)}=s^\ro,$$
with uniform convergence in the compact intervals $[a,b]\en(0,\oo)$.
\end{rema}

G.~Valiron~\cite{valiron13} showed that, up to asymptotic equivalence, the function  $r^{\ro(r)}$ has an analytic continuation to a sector in the complex plane containing the positive real axis (see \cite[Th. \ 7.4.3]{bingGoldTeug}). We will use the next improved result, due to L.S.~Maergoiz, which provides holomorphic functions in an arbitrary sector bisected by the positive real axis whose restriction to $(0,\oo)$ is real, have a growth at infinity specified by a prescribed proximate order and satisfy several regularity properties.
%

\begin{theo}[\cite{Maergoiz}, Th.\ 2.4]\label{Th.Maergoiz.analytic.PO}
Let $\ro(t)$ be a nonzero proximate order and $\ro=\lim_{t \to \oo} \ro(t)$. For every $\ga>0$ there exists an analytic
function $V(z)$ in $S_\ga$ such that:
  \begin{enumerate}[(I)]
  \item  For every $z \in S_\ga$,
 \begin{equation*}
    \lim_{t \to \infty} \frac{V(zt)}{V(t)}= z^{\ro},
  \end{equation*}
uniformly in the compact sets of $S_\ga$ (that is, $V$ is regularly varying in $S_\gamma$).
\item $\overline{V(z)}=V(\overline{z})$ for every $z \in S_\ga$ (where, for $z=(|z|,\arg(z))$, we put $\overline{z}=(|z|,-\arg(z))$).
\item $V(t)$ is positive in $(0,\infty)$, strictly increasing and $\lim_{t\to 0}V(t)=0$.
\item The function $r\in\R\mapsto V(e^r)$ is strictly convex (i.e. $V$ is strictly convex relative to $\log(r)$).
\item The function $\log(V(t))$ is strictly concave in $(0,\infty)$.
\item  The function $\ro_V(t):=\log( V(t))/\log(t)$, $t>0$, is a proximate order equivalent to $\ro(t)$, that is,
$$\lim_{t\to\oo} V(t)/t^{\ro(t)}=\lim_{t\to\oo} t^{\ro_V(t)} / t^{\ro(t)} = 1.$$
    \end{enumerate}
\end{theo}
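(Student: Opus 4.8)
The plan is to construct $V$ explicitly, working in logarithmic variables and smoothing by convolution with a Gaussian. Set $\ro:=\lim_{t\to\oo}\ro(t)>0$ and, for $s$ large, $\psi(s):=s\bigl(\ro(e^s)-\ro\bigr)$, so that a function with the desired asymptotic behaviour on $(0,\oo)$ should look like $z\mapsto z^{\ro}\exp(\psi(\log z))$. A short computation from conditions (C) and (D) of Definition~\ref{OAD:1} gives $\psi(s)=o(s)$ and $\psi'(s)\to0$ as $s\to\oo$, and (A) makes $\psi$ continuous and piecewise $C^1$. Fix $s_0$ with $|\psi'|<\ro/4$ on $[s_0,\oo)$ and redefine $\psi$ on $(-\oo,s_0)$ so that it stays continuous, piecewise $C^1$, grows at most linearly, is eventually constant as $s\to-\oo$, and satisfies $|\psi'|<\ro/4$ everywhere; the values for large $s$ are left untouched.

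Let $b_\lambda(s):=\lambda\,b(\lambda s)$ with $b$ the standard Gaussian and $\lambda>0$ to be fixed, and put $\Psi:=\psi*b_\lambda$. Since $b_\lambda$ is entire and $\psi$ has at most linear growth, $\Psi$ is entire; it is real on $\R$, so $\overline{\Psi(\overline{s})}=\Psi(s)$; convolution with a probability density is nonexpansive on $L^\oo$, so $|\Psi'|\le\|\psi'\|_\oo<\ro/4$ on $\R$, while $|\Psi''|\le\|\psi'\|_\oo\,\|b_\lambda'\|_1=\|\psi'\|_\oo\,\lambda\,\|b'\|_1$ can be made smaller than any prescribed positive constant depending only on $\ro$ by choosing $\lambda$ small; and dominated convergence yields $\Psi(s)-\psi(s)\to0$ and $\Psi'(s)\to0$ as $\Re s\to\oo$, uniformly on every horizontal strip. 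Define
\[
V(z):=z^{\ro}\exp\bigl(\Psi(\log z)\bigr),\qquad z\in S_\ga,
\]
with $\log$ the branch mapping $S_\ga$ onto the strip $\{|\Im s|<\ga\pi/2\}$; then $V$ is holomorphic on $S_\ga$, for every $\ga>0$.

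It remains to check (I)--(VI). Property (II) follows from $\overline{\Psi(\overline{s})}=\Psi(s)$. On $(0,\oo)$: $\frac{d}{d\log t}\log V=\ro+\Psi'(\log t)>3\ro/4>0$, so $V$ is strictly increasing; $V>0$ is clear; and since $\psi$, hence $\Psi$, tends to a finite constant at $-\oo$, $V(t)\to0$ as $t\to0^+$. This is (III). For (IV), $\bigl(V(e^r)\bigr)''=V(e^r)\bigl[(\ro+\Psi'(r))^2+\Psi''(r)\bigr]>0$, because $(\ro+\Psi'(r))^2\ge(3\ro/4)^2$ dominates $|\Psi''|$. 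For (V), with $F(t):=\log V(t)$ one has $t^2F''(t)=\Psi''(\log t)-\Psi'(\log t)-\ro<0$, since $|\Psi''|+|\Psi'|<\ro$. For (I), $V(zt)/V(t)=z^{\ro}\exp\bigl(\Psi(\log t+\log z)-\Psi(\log t)\bigr)$, and for $z$ in a compact $K\subset S_\ga$ the exponent tends to $0$ uniformly as $t\to\oo$: the segment from $\log t$ to $\log t+\log z$ lies in the strip, $\Psi'\to0$ uniformly there, and $\log z$ is bounded on $K$. Finally (VI): $\ro_V(t)=\ro+\Psi(\log t)/\log t\to\ro$; $V(t)/t^{\ro(t)}=\exp(\Psi(\log t)-\psi(\log t))\to1$; and differentiating gives $r\,\ro_V'(r)\ln r=\Psi'(\log r)-\Psi(\log r)/\log r\to0$, so $\ro_V$ is a proximate order equivalent to $\ro$.

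The point requiring genuine care is not the holomorphic continuation — the Gaussian being entire, an arbitrary opening $\ga$ comes for free — but the fact that the three \emph{global} assertions (III), (IV), (V) must hold on all of $(0,\oo)$ at once, with (IV) and (V) pulling in opposite directions. Both are in fact delivered by the rigid factor $z^{\ro}$ (contributing $+\ro^2$ to the second derivative in $\log t$ and $-\ro/t^2$ to that of $\log V$), provided the smoothed correction $\Psi$ is kept with uniformly small first and second derivatives; this is exactly why $\psi$ must be truncated and re-extended with $\|\psi'\|_\oo$ small and why $\lambda$ must be chosen small relative to $\ro$. Taking $\lambda$ small costs nothing here, since $\Psi-\psi\to0$ holds pointwise at infinity by dominated convergence regardless of the averaging window. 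The remaining verifications — in particular that $\ro_V$ again satisfies condition (D) — are short computations of the type above.
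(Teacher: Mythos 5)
The paper states this result as a citation of Maergoiz's Theorem~2.4 and supplies no proof, so there is no internal argument to compare against; your construction has to stand on its own, and it does. Working in the logarithmic variable and writing the correction to $z^{\rho}$ as $\exp(\psi(\log z))$ with $\psi(s)=s(\rho(e^{s})-\rho)$ is the natural normalization, the computation from (C) and (D) that $\psi'(s)\to 0$ is correct, and the truncation of $\psi$ on $(-\infty,s_{0})$ is routine. Replacing $\psi$ by its Gaussian mollification $\Psi=\psi*b_{\lambda}$ is the decisive move: since the Gaussian is entire and $\psi$ has at most linear growth, $\Psi$ is entire and real on $\R$ in one stroke, which yields (I), (II) and the validity for \emph{every} opening $\gamma$ simultaneously; the $L^{1}\!-\!L^{\infty}$ contractivity of convolution and the scaling $\|b_{\lambda}'\|_{1}=\lambda\|b'\|_{1}$ give the uniform bounds $|\Psi'|<\rho/4$ and $|\Psi''|$ small, and you correctly observe that the rigid term $z^{\rho}$ contributes $+\rho^{2}$ to $\tfrac{d^{2}}{dr^{2}}\log V(e^{r})$ and $-\rho$ to $t^{2}(\log V)''(t)$, so choosing $\lambda$ small delivers (IV) and (V) at the same time despite their opposite signs; (III) and (VI) then follow from $\Psi\to\mathrm{const}$ at $-\infty$, $\Psi-\psi\to 0$ and $\Psi'\to 0$ at $+\infty$, all of which you justify by dominated convergence. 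Two points of hygiene: the convergence $\Psi-\psi\to0$ only makes sense on $\R$ (and that is all you use, for (VI)), while the uniform-on-strips statement applies to $\Psi'$ alone; and to legitimately write $\|\psi'\|_{\infty}<\rho/4$ you should pick $s_{0}$ with $|\psi'|<\rho/5$, say, on $[s_{0},\infty)$ and extend with derivative at most $\rho/5$. Compared with Maergoiz's original treatment, which proceeds by a more intricate multi-stage regularization, your single Gaussian smoothing is shorter and gives the analytic extension to an arbitrary sector for free; the price is that one must track the quantitative interaction between $\|\psi'\|_{\infty}$, $\lambda$ and $\rho$ to keep (IV) and (V) in balance, which you do correctly.
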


%
 Given $\ga>0$ and $\ro(t)$  a nonzero proximate order, $ MF(\ga,\ro(t))$ denotes the set of \emph{Maergoiz functions} $V$ defined in $S_{\ga}$ and satisfying the conditions (I)-(VI) of Theorem~\ref{Th.Maergoiz.analytic.PO}.%

Suppose $\ro(t)$ $(t\geq c\geq0)$ is a nonzero proximate order tending to $\ro>0$ at infinity, then the function $V(t)=t^{\ro(t)}$ is strictly increasing for $t>R$ large enough. The inverse function $t=U(s)$, defined for every $s>V(R)$, is such that $\ro^*(s):=\log( U(s) )/ \log (s)$ is a proximate order
which tends to $1/\ro$ as $s$ tends to $\oo$ (see~\cite[Property 1.8]{Maergoiz}), and it is called the \textit{proximate order conjugate to $\ro(t)$}. This conjugate proximate order can be also extended, up to equivalence, to an analytic function.

\begin{theo}[\cite{Maergoiz}, Th.\ 2.6]\label{th.inv.analytic.PO}
Let $\ro(t)$ be a nonzero proximate order, $\ga>0$ and $V \in MF(\ga, \ro(t) )$. Let $t=U(s)$, defined for all $s>0$, be the
function inverse to $s=V(t)$, for every $t\in(0,\oo)$, and let $\ro^*(s)$ be the proximate order conjugate to $\ro(t)$.
Then $\ln U(s)/ \ln s$ is a proximate order equivalent to $\ro^* (s)$, and the function $U(s)$ admits an analytic
continuation to a function $U(W)$ in a domain $T \subset S_{\ro\ga}$, symmetric relative to the real axis and such that for $\b<\ga$  there exists $R_\b>0$ such that the domain $T$ contains $S_{\ro\b} \cap \{|z|>R_\b\}$. Furthermore, the function $U$ satisfies, in its domain, the
properties (I)-(VI) in Theorem~\ref{Th.Maergoiz.analytic.PO} of the functions of the class $MF (\ro\ga, \ro^{*}(s))$.
\end{theo}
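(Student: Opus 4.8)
The plan is to build $U$ and all of its real-variable properties by direct inversion of $V$, then extract the conjugate proximate order, and finally produce the analytic continuation by passing to logarithmic coordinates and applying a holomorphic inverse function argument. By (I) and (III) of Theorem~\ref{Th.Maergoiz.analytic.PO}, $V$ is continuous and strictly increasing on $(0,\oo)$ with $\lim_{t\to 0^+}V(t)=0$ and $\lim_{t\to\oo}V(t)=\oo$ (the latter because $V$ is regularly varying of positive index $\ro$, equivalently $V(t)\sim t^{\ro(t)}$ with $\ro(t)\to\ro>0$); hence $V\colon(0,\oo)\to(0,\oo)$ is a homeomorphism and $U:=V^{-1}$ is well defined, continuous, strictly increasing, with $\lim_{s\to0^+}U(s)=0$, which is (III) for $U$. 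Property (V) of $V$ forces $(\log V)'=V'/V$ to be strictly decreasing, so $V'>0$ on $(0,\oo)$ and $U$ is in fact real-analytic there. Conditions (IV), (V) for $U$ I would deduce from the two identities
$$\log U(s)=\fy_1^{-1}(s),\qquad U(e^r)=\fy_2^{-1}(r),\qquad \fy_1(r):=V(e^r),\quad \fy_2(t):=\log V(t),$$
valid since $\log U(s)=y\Leftrightarrow s=V(e^y)$ and $U(e^r)=x\Leftrightarrow r=\log V(x)$: here $\fy_1$ is strictly convex increasing onto $(0,\oo)$ (property (IV) of $V$), so $\fy_1^{-1}$ is strictly concave, i.e.\ $\log U$ is strictly concave; and $\fy_2$ is strictly concave increasing onto $\R$ (property (V) of $V$), so $\fy_2^{-1}$ is strictly convex, i.e.\ $r\mapsto U(e^r)$ is strictly convex. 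Property (II), $\overline{U(z)}=U(\overline z)$, follows from (II) of $V$.

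For the conjugate proximate order (property (VI), which is also the first assertion of the theorem): equivalence of proximate orders $\ro_V\se\ro$ means $V(t)\sim t^{\ro(t)}$, and since $t\mapsto t^{\ro(t)}$ and $V$ are both regularly varying of positive index, while inversion sends asymptotically equal regularly varying functions of positive index to asymptotically equal ones, I obtain $U(s)\sim s^{\ro^*(s)}$, where $s\mapsto s^{\ro^*(s)}$ is the inverse of $t\mapsto t^{\ro(t)}$ (the definition of $\ro^*$ recalled before the statement). Thus $\ro_U(s):=\log U(s)/\log s\to 1/\ro$ and $\ro_U\se\ro^*$. To check $\ro_U$ is itself a proximate order, conditions (A)--(C) are clear from the real-analyticity of $U$ and $\ro_U\to1/\ro$; for (D) I would differentiate $\log s=\ro_V(U(s))\log U(s)$ and combine with (D) for $\ro_V$ — equivalently, use that, given (A)--(C), condition (D) for a proximate order $\ro$ is equivalent to $tV_0'(t)/V_0(t)\to\ro$ with $V_0(t)=t^{\ro(t)}$, so that $tV'(t)/V(t)\to\ro$ and hence, by the chain rule, $sU'(s)/U(s)=\big(U(s)V'(U(s))/V(U(s))\big)^{-1}\to1/\ro$, which is precisely (D) for $\ro_U$.

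The analytic continuation is the main obstacle. I would set $\Phi(u):=\log V(e^u)$, holomorphic on the strip $\{|\Im u|<\ga\pi/2\}$ ($V$ is nonvanishing there near $\oo$ since $V(zt)/V(t)\to z^\ro\ne0$ uniformly on compacts of $S_\ga$). From the uniform regular variation of $V$ on $S_\ga$ and Cauchy estimates applied to the locally uniformly convergent family $z\mapsto V(zt)/V(t)$, one derives
$$\Phi'(u)=\frac{tV'(t)}{V(t)}\Big|_{t=e^{u}}\ \longrightarrow\ \ro\qquad\text{uniformly as }\Re u\to+\oo\text{ with }|\Im u|\le\tfrac{\ga'\pi}{2},$$
for every $\ga'<\ga$. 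Consequently, on a half-strip $\{\Re u>R_{\ga'}\}\cap\{|\Im u|<\ga'\pi/2\}$ the map $\Phi$ is a small holomorphic perturbation of $u\mapsto\ro u$, hence injective, and a Rouch\'e/argument-principle comparison of $\Phi$ with $u\mapsto\ro u$ on suitable rectangles (again using the uniform smallness of $\Phi'-\ro$) shows that $\Phi$ maps this half-strip onto a region containing $\{\Re w>R',\,|\Im w|<\ro\b\pi/2\}$ for every $\b<\ga'$. Re-exponentiating the holomorphic inverse $\Psi:=\Phi^{-1}$ yields the analytic continuation $U(s)=e^{\Psi(\log s)}$ to a domain $T\subset S_{\ro\ga}$, symmetric about the real axis (by property (II), via the reflection principle), with $S_{\ro\b}\cap\{|z|>R_\b\}\subset T$ for all $\b<\ga$. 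Finally, property (I) for $U$ — regular variation of index $\ro^*=1/\ro$ in $T$ — follows from property (I) for $V$ by the usual inversion argument (write $t=V(\sigma)$, use $V(z^{1/\ro}\sigma)/V(\sigma)\to z$ and the uniformity on compacts). I expect this last step, i.e.\ turning the heuristic ``$\arg V(z)\approx\ro\arg z$ near $\oo$'' into the precise nested inclusions for $T$, to be the only genuinely delicate point; the rest is a transfer of monotonicity, convexity and regular-variation properties across the inversion $V\leftrightarrow U$.
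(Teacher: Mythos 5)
The paper does not prove this statement: it is quoted verbatim as Theorem 2.6 of Maergoiz's article~\cite{Maergoiz} and used as an external ingredient, so there is no in-paper proof against which to compare your attempt. With that caveat, your plan is a reasonable reconstruction of how one proves the result. The transfer of properties (II)--(V) across the inversion via the two auxiliary maps $\fy_1(r)=V(e^r)$ and $\fy_2(t)=\log V(t)$ is correct (the inverse of a strictly convex increasing function is strictly concave increasing, and vice versa, and you correctly identify $\fy_1^{-1}=\log U$ and $\fy_2^{-1}(r)=U(e^r)$). The argument that $\ro_U:=\log U(s)/\log s$ satisfies condition (D) is also sound: from $U'(s)=1/V'(U(s))$ one gets $sU'(s)/U(s)=\big(\sigma V'(\sigma)/V(\sigma)\big)^{-1}\big|_{\sigma=U(s)}$, and $tV'(t)/V(t)\to\ro$ is equivalent to (D) plus (C) for $\ro_V$, which hold by Theorem~\ref{Th.Maergoiz.analytic.PO}(VI). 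The inversion-preserves-asymptotic-equivalence step, giving $U(s)\sim s^{\ro^*(s)}$ from $V(t)\sim t^{\ro(t)}$, is a standard fact for nondecreasing regularly varying functions of positive index.

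The part you rightly flag as delicate is the analytic continuation. Your observation that $\Phi(u):=\log V(e^u)$ satisfies $\Phi'(u)\to\ro$ uniformly on half-strips $\{\Re u>R,\ |\Im u|\le\ga'\pi/2\}$ (via $tV'(t)/V(t)=z\,F_r'(z)/F_r(z)$ with $F_r(z)=V(zr)/V(r)$ and Cauchy estimates on the locally uniformly convergent family $F_r\to z^\ro$) does give local biholomorphy near infinity, but turning this into a precise statement that the image contains the half-strips $\{\Re w>R',\ |\Im w|<\ro\b\pi/2\}$ for every $\b<\ga$, and that the resulting domain $T$ stays inside $S_{\ro\ga}$, requires a quantitative argument-principle or homotopy argument over a one-parameter family of rectangles, which you only sketch. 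This is where Maergoiz's proof invests its effort, and a full write-up would need it; as a proposal, though, you have correctly identified where the work lies and how each of (I)--(VI) is inherited from $V$.
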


Many of the results in~\cite{lastramaleksanz15} about $\M-$summability were initially stated for strongly regular sequences $\M$ such that $d_{\M}$ is a proximate order (\emph{a fortiori} a nonzero proximate order, as deduced from~\cite[Th. 3.4]{SanzFlat}), since then Maergoiz functions were available. We know now how to characterize those weight sequences for which $d_{\M}$ is a nonzero proximate order.

\begin{theo}[\cite{JimenezSanzSchindl},\, Th. 3.6]\label{theorem.charact.prox.order.nonzero}
Let $\M$ be a weight sequence. The following are equivalent:
\begin{enumerate}[(a)]
 \item $d_{\M}(t)$ is a proximate order with $\lim_{t\to\infty}d_{\M}(t)\in(0,\infty)$.
 \item There exists $\lim_{p\to\infty} \log\big(m_p/M_p^{1/p}\big)\in(0,\infty)$.
 \item $\m$ is \textit{regularly varying}  with a positive index of regular variation, i.e., there exists $\o>0$ (called the \textit{index of regular variation} of $\m$) such that
\begin{equation*}
\lim_{p\to\oo} \frac{ m_{\lfloor \lambda p\rfloor}}{m_p}=\lambda^\o
\end{equation*}
 for every $\lambda>0$.
 \item There exists $\o>0$ such that for every natural number $\ell\geq 2$,
 $$\lim_{p\to\oo} \frac{m_{\ell p}}{m_p}=\ell^\o.$$
\end{enumerate}
In case any of these statements holds, the value of the limit mentioned in (b), that of the index mentioned in (c), and that of the constant $\o$ in (d) is $\o(\M)$,
and the limit in (a) is $1/\o(\M)$.
\end{theo}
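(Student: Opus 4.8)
The plan is to funnel all four conditions into the regular variation of a single object, the associated function $\o_{\M}$, by means of two identities. First, from the piecewise description of $\o_{\M}$ (or from \eqref{eq.WeightSeqFromOmegaM}), one checks by continuity at the breakpoints that $\o_{\M}(m_p)=p\log(m_p)-\log(M_p)$ for every $p\in\N_0$, so that
$$\log\!\big(m_p/M_p^{1/p}\big)=\frac{\o_{\M}(m_p)}{p},$$
and condition (b) is exactly the assertion that $\o_{\M}(m_p)/p$ tends to a finite positive limit. Secondly, introduce the counting function $\Phi(t):=\#\{k\ge0:\ m_k\le t\}$, the generalized inverse of the map $p\mapsto m_p$; again from the piecewise formula for $\o_{\M}$ one reads off that $\Phi(t)=p$ and $\o_{\M}'(t)=p/t=\Phi(t)/t$ on each interval $(m_{p-1},m_p)$, that $\Phi(m_p)=p+1$, and of course that $t^{d_{\M}(t)}=\o_{\M}(t)$. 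Thus $\o_{\M}$ is the locally absolutely continuous primitive of $\Phi(t)/t$, and, as recalled in the preliminaries, it is convex in $\log t$. The core of the proof is then to show that, for $\o\in(0,\oo)$, the three properties ``$\m$ is regularly varying of index $\o$'', ``$\Phi$ is regularly varying of index $1/\o$'' and ``$\o_{\M}$ is regularly varying of index $1/\o$'' are equivalent, and that whenever they hold one has $\o_{\M}(t)\sim\o\,\Phi(t)$ as $t\to\oo$ (regular variation being meant here in the sense of Karamata's theory---for sequences in the first property, for functions in the other two).

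To establish this I would argue as follows. The equivalence between regular variation of $\m$ and of $\Phi$ is the classical fact that the generalized inverse of a regularly varying function of positive index $\o$ is regularly varying of index $1/\o$, combined with the standard correspondence between regularly varying sequences and functions. For the link with $\o_{\M}$: if $\Phi$ is regularly varying of index $1/\o>0$, then, since $\o_{\M}(t)=\o_{\M}(t_0)+\int_{t_0}^{t}\Phi(u)\,u^{-1}\,du$, Karamata's theorem gives $\o_{\M}(t)\sim\o\,\Phi(t)$, hence $\o_{\M}$ is regularly varying of index $1/\o$; conversely, if $\o_{\M}$ is regularly varying of index $1/\o$, then, using that $s\mapsto\o_{\M}(e^{s})$ is convex and squeezing its derivative between forward and backward difference quotients (the monotone density theorem in its form for convex functions), one gets $t\,\o_{\M}'(t)=\Phi(t)\sim\o_{\M}(t)/\o$, so $\Phi$ is regularly varying of index $1/\o$. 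All of this is standard in the theory of regular variation; moreover, once any of these properties holds, Theorem~\ref{theo.limsup.dM.omega} forces $\o=\o(\M)$.

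Granting the equivalences above, the theorem follows. $(a)\Leftrightarrow(c)$: if $d_{\M}$ is a nonzero proximate order, then $\o_{\M}(t)=t^{d_{\M}(t)}$ is regularly varying by Remark~\ref{rema.prox.order.RV}, hence so are $\Phi$ and $\m$, which is (c); conversely, if (c) holds, then $\o_{\M}$ is regularly varying of index $1/\o(\M)$, and one verifies the requirements of Definition~\ref{OAD:1} for $d_{\M}=\log(\o_{\M})/\log t$: continuity and piecewise continuous differentiability with finite, distinct lateral derivatives follow from the piecewise-linear (in $\log t$) shape of $\o_{\M}$; nonnegativity and the finite positive limit $1/\o(\M)$ come from $\o_{\M}(t)\to\oo$ and Theorem~\ref{theo.limsup.dM.omega}; and condition (D) reduces, on each smoothness interval, to the identity $t\,d_{\M}'(t)\log t=\Phi(t)/\o_{\M}(t)-d_{\M}(t)$, both terms of which tend to $1/\o(\M)$ because $\o_{\M}(t)\sim\o(\M)\,\Phi(t)$. $(b)\Leftrightarrow(c)$: if (c) holds, then $\o_{\M}(m_p)/p=\big(\o_{\M}(m_p)/\Phi(m_p)\big)\big(\Phi(m_p)/p\big)\to\o(\M)$; if (b) holds, i.e.\ $\o_{\M}(m_p)/p\to\o$, then the monotonicity of $\o_{\M}$ on each $[m_{p-1},m_p)$, together with $\o_{\M}(m_{p-1})/p\to\o$, forces $\o_{\M}(t)/\Phi(t)\to\o$ as $t\to\oo$, i.e.\ the index function $t\,\o_{\M}'(t)/\o_{\M}(t)$ tends to $1/\o$, so $\o_{\M}$ is regularly varying of index $1/\o$ by the Karamata representation and (c) follows. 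Finally $(c)\Rightarrow(d)$ is trivial (take $\lambda=\ell$), and $(d)\Rightarrow(c)$ is the standard passage, for the nondecreasing sequence $\m$, from integer dilations to arbitrary real dilations via approximation by rationals and monotonicity.

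The step I expect to be the main obstacle is the implication ``$\o_{\M}$ regularly varying $\Rightarrow\Phi$ regularly varying'' (needed for $(a)\Rightarrow(c)$): $\o_{\M}$ is not $C^{1}$ and its derivative $\Phi(t)/t$ is not monotone, so the monotone density theorem cannot be applied verbatim; the remedy is to exploit the convexity of $s\mapsto\o_{\M}(e^{s})$, bound $\o_{\M}'$ above and below by difference quotients, and combine this with the regular variation of $\o_{\M}$ to pin down $\Phi(t)\sim\o_{\M}(t)/\o$. A secondary and purely technical nuisance is the bookkeeping for condition (D) at the breakpoints $t=m_p$ and the treatment of possible ties in the sequence $\m$, neither of which affects the argument.
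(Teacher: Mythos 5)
The paper does not prove this theorem itself; it is imported verbatim from~\cite{JimenezSanzSchindl} (Th.~3.6), so there is no in-paper argument to compare against. That said, your proof outline is, as far as I can verify, mathematically sound and almost certainly runs close to the route taken in that reference, which is also couched in the language of regular variation. The two identities you single out are exactly the right pivots: from the piecewise formula one gets $\o_{\M}(m_p)=p\log m_p-\log M_p$, so that $\log\bigl(m_p/M_p^{1/p}\bigr)=\o_{\M}(m_p)/p$, turning condition~(b) into a statement about $\o_{\M}$; and $t\,\o_{\M}'(t)=\Phi(t)$ a.e.\ lets you transfer regular variation back and forth between $\o_{\M}$, $\Phi$ and $\m$. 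Your concern about the ``hard'' direction ($\o_{\M}$ RV $\Rightarrow\Phi$ RV) is legitimate but you handle it correctly: sandwiching $\int_{t}^{\lambda t}\Phi(u)u^{-1}\,du$ between $\Phi(t)\log\lambda$ and $\Phi(\lambda t)\log\lambda$ (which is exactly the convexity-in-$\log t$ of $\o_{\M}$) and letting $\lambda\to 1$ gives $\Phi(t)\sim\o_{\M}(t)/\o$, from which RV of $\Phi$ follows; this is a correct adaptation of the monotone density theorem to the non-monotone density $\Phi(t)/t$. The computation $t\,d_{\M}'(t)\log t=\Phi(t)/\o_{\M}(t)-d_{\M}(t)$ and its vanishing limit is verified correctly, and the squeeze in (b)$\Rightarrow$(c) via $\o_{\M}(m_{p-1})/p\le\o_{\M}(t)/\Phi(t)<\o_{\M}(m_p)/p$ is fine. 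The weakest point is (d)$\Rightarrow$(c), which you dispatch in one line as ``standard''; it is indeed classical (Bojanic--Seneta, Galambos--Seneta), but it is the one step a careful write-up should either cite precisely or spell out --- the passage from integer dilations $m_{\ell p}/m_p\to\ell^{\o}$ to real dilations genuinely uses both the full range $\ell\ge 2$ (to make $m_{(\ell+1)p}/m_{\ell p}\to(1+1/\ell)^{\o}$ close to $1$ for large $\ell$) and the monotonicity of $\m$. Finally, your remark that ties in $\m$ are immaterial is correct: empty intervals $[m_{p-1},m_p)$ drop out of the sandwiching, and all limit arguments are unaffected.
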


A less restrictive condition on the sequence $\M$, namely the admissibility of a proximate order, is indeed sufficient for our purposes. This condition, defined below, is evidently stable under equivalence of sequences (unlike the condition of $d_{\M}$ being a nonzero proximate order, see~\cite[Example 3.12]{JimenezSanzSchindl}), what is desirable if we take into account that $\M-$summability and $\L-$summability amount to each other whenever $\M\approx\L$.

\begin{theo}[\cite{JimenezSanzSchindl},\, Th. 4.14; \cite{JimenezPhD}, Th.\ 2.2.19]\label{theo.charact.admit.p.o}
Let $\M$ be a weight sequence. The following conditions are equivalent:
\begin{enumerate}
 \item[(a)] $\M$ \emph{admits a nonzero proximate order}, i.e., there exist a nonzero proximate order $\ro(t)$
 and constants $C$ and $D$ such that
\begin{equation}\label{eqAdmitsProximateOrder}
C\leq \log(t)(\ro(t)-d_{\M}(t)) \leq D, \qquad t\textrm{ large enough}.
\end{equation}
 \item[(b)] There exist a weight sequence $\L$ equivalent to $\M$ and such that $d_{\L}(t)$ is a nonzero proximate order.
\item[(c)] There exist $\o\in(0,\oo)$ and bounded sequences  of real numbers $(b_p)_{p\in\N}$, $(\eta_p)_{p\in\N}$  such that $(\eta_p)_{p\in\N}$ converges to $\o$ and we can write
  \begin{equation*}
  m_p=\exp  \left(b_{p+1} + \sum^{p+1}_{j=1}\frac{\eta_j}{j}\right),\quad p\in\N_0.
 \end{equation*}
\end{enumerate}
In case the previous holds, $\lim_{t\to\oo} d_{\M} (t) =\lim_{t\to\oo} d_{\L} (t) = 1/\o=1/\o(\M)$.
\end{theo}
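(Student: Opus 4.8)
The plan is to prove the cyclic chain of implications (c) $\Rightarrow$ (b) $\Rightarrow$ (a) $\Rightarrow$ (c), and then to read off the common value of the various limits by chasing the chain. For (c) $\Rightarrow$ (b), I would set $l_p:=\exp\big(\sum_{j=1}^{p+1}\eta_j/j\big)$ and let $\L$ be the sequence with $L_0=1$, $L_p=l_0\cdots l_{p-1}$; this strips off the bounded, possibly non-convergent factor $e^{b_{p+1}}$ from $m_p=e^{b_{p+1}}l_p$. Since $\eta_p\to\omega>0$, all but finitely many $l_p$ exceed $1$, so after modifying finitely many terms (which affects neither the equivalence class nor regular variation) $\L$ is (lc); and $\sum_j\eta_j/j=\infty$ forces $l_p\to\infty$, so $\L$ is a weight sequence. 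From $m_p=e^{b_{p+1}}l_p$ one gets $e^{-p\|b\|_\infty}\le M_p/L_p\le e^{p\|b\|_\infty}$, hence $\M\approx\L$. Finally, using $\eta_j\to\omega$ together with $\sum_{j=p+1}^{\lfloor\lambda p\rfloor}1/j\to\log\lambda$ (and the analogous statement for $\lambda<1$), a direct computation gives $l_{\lfloor\lambda p\rfloor}/l_p\to\lambda^{\omega}$ for every $\lambda>0$, so $\l=(l_p)_p$ is regularly varying with index $\omega>0$; Theorem~\ref{theorem.charact.prox.order.nonzero} then yields that $d_{\L}$ is a nonzero proximate order and that $\omega=\o(\L)=\o(\M)$.

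For (b) $\Rightarrow$ (a), I would take $\rho(t):=d_{\L}(t)$, a nonzero proximate order with limit $\rho_\infty:=\lim_{t\to\infty}d_{\L}(t)=1/\o(\L)=1/\o(\M)$ (using Theorem~\ref{theorem.charact.prox.order.nonzero} and the fact, recalled after \eqref{eq:equiv.seq.associated.ommega}, that $\M\approx\L$ gives $\o(\M)=\o(\L)$). Since $\log(t)\,d_{\M}(t)=\log\o_{\M}(t)$, and likewise for $\L$, one has $\log(t)\big(\rho(t)-d_{\M}(t)\big)=\log\!\big(\o_{\L}(t)/\o_{\M}(t)\big)$, so it suffices to bound $\o_{\L}(t)/\o_{\M}(t)$ between two positive constants for large $t$. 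By \eqref{eq:equiv.seq.associated.ommega} applied to $\L\approx\M$ there are $A,B>0$ with $\o_{\L}(At)\le\o_{\M}(t)\le\o_{\L}(Bt)$; and since $d_{\L}$ is a nonzero proximate order, $\o_{\L}(t)=t^{d_{\L}(t)}$ is regularly varying with the positive index $\rho_\infty$ (Remark~\ref{rema.prox.order.RV}), hence $\o_{\L}(t)/\o_{\L}(At)\to A^{-\rho_\infty}$ and $\o_{\L}(t)/\o_{\L}(Bt)\to B^{-\rho_\infty}$. This gives \eqref{eqAdmitsProximateOrder}; along the way $\rho(t)-d_{\M}(t)\to0$, so $d_{\M}(t)\to\rho_\infty$.

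The main work is (a) $\Rightarrow$ (c). Exponentiating \eqref{eqAdmitsProximateOrder} gives $e^{-D}t^{\rho(t)}\le\o_{\M}(t)\le e^{-C}t^{\rho(t)}$ for $t$ large, so $\o_{\M}$ is comparable, up to multiplicative constants, to the regularly varying function $t\mapsto t^{\rho(t)}$ of positive index $\rho_\infty$; in the language of O-regular variation (\cite{bingGoldTeug,JimenezPhD}), $\o_{\M}$ is O-regularly varying with upper and lower Matuszewska indices both equal to $\rho_\infty$, and, as in the previous step (now via Theorem~\ref{theo.limsup.dM.omega}), $\rho_\infty=1/\o(\M)$, so that $\o(\M)\in(0,\infty)$. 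I would then transfer this to the quotient sequence $\m$ through the identity $M_p=\sup_{t>0}t^p/e^{\o_{\M}(t)}$ of \eqref{eq.WeightSeqFromOmegaM}, which presents $(\log M_p)_p$ as the Legendre-type conjugate, in the variable $\log t$, of the convex function $u\mapsto\o_{\M}(e^u)$ --- so that $\log m_p=\log M_{p+1}-\log M_p$ is the corresponding nondecreasing ``slope sequence'' ($\o_{\M}(e^u)=pu-\log M_p$ on $[\log m_{p-1},\log m_p]$, recall $\M$ is (lc) so $(\log M_p)_p$ is convex). This conjugation --- the sequence analogue of the conjugacy of proximate orders in Theorem~\ref{th.inv.analytic.PO} --- interchanges O-regular variation of $\o_{\M}$ of index $\rho_\infty$ with O-regular variation of $\m$ having both Matuszewska indices equal to $1/\rho_\infty=\o(\M)=:\omega$, and a representation theorem for O-regularly varying sequences with coinciding Matuszewska indices says exactly that $m_p=\exp\big(b_{p+1}+\sum_{j=1}^{p+1}\eta_j/j\big)$ for some bounded $(b_p)_p$ and some $(\eta_p)_p$ converging to $\omega$, which is (c). The hard part will be this transfer: one must check that the bounded --- but in general \emph{not} convergent --- slack $\theta(t):=\log(t)(\rho(t)-d_{\M}(t))\in[C,D]$ stays bounded when passed through the conjugation (this is precisely where O-regularity of $\o_{\M}$, rather than plain regular variation, is needed), the regularly varying backbone $t^{\rho(t)}$ producing the convergent sequence $(\eta_p)_p$ and the bounded error the sequence $(b_p)_p$; concretely this can be carried out by hand from the piecewise-linear description of $\o_{\M}$ above.

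Finally, the last assertion of the statement follows by chasing the chain: whenever (a)--(c) hold, the constant $\omega$ of (c) is the index of regular variation of $\l$, hence equals $\o(\L)=\o(\M)$, and $\lim_{t\to\infty}d_{\M}(t)=\lim_{t\to\infty}d_{\L}(t)=\rho_\infty=1/\o(\M)=1/\omega$.
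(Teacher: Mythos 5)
The paper does not prove this theorem; it cites it to \cite{JimenezSanzSchindl} (Th.~4.14) and \cite{JimenezPhD} (Th.~2.2.19), so there is no internal proof to compare against. Your proposal must therefore be judged on its own merits.

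The two implications (c)~$\Rightarrow$~(b) and (b)~$\Rightarrow$~(a) are correct and essentially complete. In (c)~$\Rightarrow$~(b), defining $l_p=\exp\bigl(\sum_{j=1}^{p+1}\eta_j/j\bigr)$ does give $M_p/L_p=\exp\bigl(\sum_{k=1}^p b_k\bigr)$, hence $\M\approx\L$, and the computation $l_{\lfloor\lambda p\rfloor}/l_p\to\lambda^\omega$ is a standard consequence of $\eta_j\to\omega$ together with $\sum_{j=p+1}^{\lfloor\lambda p\rfloor}1/j\to\log\lambda$; Theorem~\ref{theorem.charact.prox.order.nonzero} then closes that step. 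In (b)~$\Rightarrow$~(a), the observation $\log(t)\bigl(d_{\L}(t)-d_{\M}(t)\bigr)=\log\bigl(\o_{\L}(t)/\o_{\M}(t)\bigr)$ combined with \eqref{eq:equiv.seq.associated.ommega} and the regular variation of $\o_{\L}$ (via Remark~\ref{rema.prox.order.RV}) is a clean way to get the two-sided bound.

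The genuine gap is in (a)~$\Rightarrow$~(c). You yourself flag that the Legendre-type transfer from $\o_{\M}$ (sandwiched between constant multiples of the regularly varying $t^{\rho(t)}$) to the slope sequence $\m$ is the hard step, and you only describe it qualitatively ("concretely this can be carried out by hand"); nothing is actually proved. Moreover, the closing appeal to "a representation theorem for O-regularly varying sequences with coinciding Matuszewska indices" overclaims: coinciding Matuszewska indices is a strictly weaker property than being sandwiched between constant multiples of a regularly varying sequence, and a standard O-RV representation gives only a \emph{bounded} sequence $(\eta_p)$, not a \emph{convergent} one. What is actually available after exponentiating \eqref{eqAdmitsProximateOrder} is the stronger fact $\o_{\M}(t)\asymp V(t)$ with $V$ regularly varying of index $1/\o(\M)$; you would need to push this comparability, not the Matuszewska-index condition, through the Legendre conjugation to obtain $\m\simeq\bv$ with $\bv$ regularly varying, and only then apply a Karamata-type representation (the sequence analogue of Bojanic--Seneta, Theorem~\ref{theo.Boj.Sen.normalized.seq}) to $\bv$ and absorb the bounded ratio $m_p/v_p$ into $(b_p)$. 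As written, neither the transfer nor the final representation step is established, so the proposal is an outline rather than a proof of (a)~$\Rightarrow$~(c).
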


All the weight sequences admitting a nonzero proximate order are strongly regular, what shows that some of the hypotheses in~\cite{SanzFlat,lastramaleksanz15} were redundant.

\begin{cor}[\cite{JimenezSanzSchindl}, Cor. 3.10 and Remark 4.15]\label{coroConseqRegularVariation}
Let $\M$ be a weight sequence admitting a nonzero proximate order.
Then,  $\M$ is strongly regular and
\begin{equation}\label{limit.logmp.over.logp.omega}
\lim_{p\to\infty} \frac{\log(m_p)}{\log(p)}=\o(\M).
\end{equation}
\end{cor}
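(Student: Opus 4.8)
The plan is to derive both assertions directly from Theorem~\ref{theo.charact.admit.p.o}, whose equivalence of (a) and (b) is already at our disposal. So suppose $\M$ admits a nonzero proximate order; by item (b) of that theorem there is a weight sequence $\L\approx\M$ for which $d_{\L}(t)$ is a nonzero proximate order, and moreover $\lim_{t\to\oo}d_{\L}(t)=1/\o(\M)\in(0,\oo)$. Applying Theorem~\ref{theorem.charact.prox.order.nonzero} to $\L$ (the hypothesis (a) there holds for $\L$), we obtain from the equivalence with statement (b) of that theorem that $\lim_{p\to\infty}\log(m^{\L}_p/(L_p)^{1/p})=\o(\L)$, and from Theorem~\ref{theo.limsup.dM.omega} together with \eqref{eq:equiv.seq.associated.ommega} that $\o(\L)=\o(\M)$. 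Thus $\L$ satisfies all of (a)--(d) of Theorem~\ref{theorem.charact.prox.order.nonzero}; in particular, from (d) with, say, $\ell=2$, the sequence $\m^{\L}$ is regularly varying, and the index of regular variation equals $\o(\M)$.

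For the first assertion (strong regularity of $\M$), I would argue as follows. Since $\L$ is regularly varying with positive index, it is in particular a weight sequence; the standard properties of O-regular variation (or the argument of~\cite[Th. 3.4]{SanzFlat}) give that a weight sequence whose quotient sequence is regularly varying with positive index satisfies (mg) and (snq), hence $\L$ is strongly regular. Now strong regularity is a property invariant under equivalence of sequences: if $\M\approx\L$ then $M_p\le A^pL_p$ and $L_p\le A^pM_p$ for some $A>0$, and one checks immediately that (lc) passes from $\L$ to $\M$ up to a shift/renormalisation, that (mg) is stable (replace the constant $A$ by $A^{3}$ times the equivalence constants), and likewise that (dc) and (snq) survive. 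Hence $\M$ is strongly regular as well.

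For \eqref{limit.logmp.over.logp.omega}, the key point is that the $\liminf$ in the definition of $\o(\M)$ is actually a genuine limit when $\M$ admits a nonzero proximate order. By Theorem~\ref{theo.limsup.dM.omega} we always have $\limsup_{p\to\infty}\log(p)/\log(m_p)=1/\o(\M)$, i.e. $\liminf_{p\to\infty}\log(m_p)/\log(p)=\o(\M)$; it remains to bound the $\limsup$ of $\log(m_p)/\log(p)$ from above by $\o(\M)$. For this I would use the representation in item (c) of Theorem~\ref{theo.charact.admit.p.o}: writing $m_p=\exp\bigl(b_{p+1}+\sum_{j=1}^{p+1}\eta_j/j\bigr)$ with $(b_p)$ bounded and $\eta_j\to\o=\o(\M)$, one has
\begin{equation*}
\log(m_p)=b_{p+1}+\sum_{j=1}^{p+1}\frac{\eta_j}{j}=\o\log(p)+o(\log p),
\end{equation*}
since $\sum_{j=1}^{p+1}\eta_j/j=\o\sum_{j=1}^{p+1}1/j+\sum_{j=1}^{p+1}(\eta_j-\o)/j$, the first sum is $\o(\log p+\ga+o(1))$ and the second is $o(\log p)$ by Cesàro summation of a null sequence against the weights $1/j$. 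Dividing by $\log(p)$ and letting $p\to\infty$ gives $\lim_{p\to\infty}\log(m_p)/\log(p)=\o=\o(\M)$, which is \eqref{limit.logmp.over.logp.omega}.

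The routine parts are the stability of (lc), (mg), (dc), (snq) under equivalence and the Cesàro estimate; the one step that requires a little care is making sure the chain of identifications $\o(\L)=\o(\M)$ and "index of regular variation of $\m^{\L}$" $=\o(\M)$ is internally consistent, but this is exactly the content of the last sentences of Theorems~\ref{theorem.charact.prox.order.nonzero} and~\ref{theo.charact.admit.p.o}, so there is no real obstacle. Alternatively, \eqref{limit.logmp.over.logp.omega} can be obtained without invoking (c): once $\M$ is known to be strongly regular and equivalent to $\L$ with $d_{\L}$ a nonzero proximate order, apply Theorem~\ref{theorem.charact.prox.order.nonzero}(b) to $\M$ itself — but note $d_{\M}$ need not be a proximate order, so the cleanest route really is the explicit representation in (c), which is stated for $\M$ directly.
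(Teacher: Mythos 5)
Your proof of the limit \eqref{limit.logmp.over.logp.omega} via item (c) of Theorem~\ref{theo.charact.admit.p.o} and the Ces\`aro estimate is correct and clean; nothing to add there. The strong regularity half, however, is argued too casually in two places. First, the remark that ``(lc) passes from $\L$ to $\M$ up to a shift/renormalisation'' is false as stated: equivalence $M_p\le A^pL_p$, $L_p\le A^pM_p$ gives no control on $M_p^2$ versus $M_{p-1}M_{p+1}$. Fortunately this is moot, because $\M$ is a weight sequence by hypothesis, so (lc) need not be deduced at all — but you should say that rather than claim a false transfer. Second, and more substantively, the phrase ``likewise that (dc) and (snq) survive'' conceals the only genuinely non-routine step. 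While (dc) and (mg) do transfer under equivalence by the same elementary manipulation you indicate, (snq) is a condition on the quotient sequence $\m=(M_{p+1}/M_p)$, and equivalence of $\M$ and $\L$ gives no direct comparison between $\m$ and $\l$. The missing link is: once you know both $\M$ and $\L$ are (lc) and (mg), moderate growth forces $m_p\asymp(M_p)^{1/p}$ (and likewise for $\L$), and since $(M_p/L_p)^{1/p}$ is bounded above and below under equivalence, one obtains $\m\simeq\l$; only then does (snq) for $\L$ pass to $\M$. You should record that intermediate fact explicitly, or else invoke the equivalence-invariant characterization of (snq) via a positive growth index, rather than dispatch it with ``likewise.'' Incidentally, once you have $\m\simeq\l$, the limit \eqref{limit.logmp.over.logp.omega} also follows directly from the corresponding limit for $\l$ (regular variation of index $\o(\M)$) without the Ces\`aro computation, since $\log(m_p/\ell_p)=O(1)=o(\log p)$; so the two halves of the proof reinforce each other once the gap is filled.

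A smaller imprecision: you invoke~\cite[Th.~3.4]{SanzFlat} as giving ``regular variation of $\l$ with positive index implies $\L$ strongly regular,'' but as quoted in the paper that theorem goes the other way ($\M$ strongly regular $\Rightarrow\o(\M)\in(0,\infty)$). The implication you want is true and standard (it is what underlies Theorem~\ref{theorem.charact.prox.order.nonzero} and Remark~4.15 of the cited reference), but you should point to the right statement.
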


Although not every strongly regular sequence admits a nonzero proximate order, as shown in~\cite[Example 4.16]{JimenezSanzSchindl}, admissibility holds true for every strongly regular sequence appearing in applications.

%
%
%

The next lemma is valid under weaker conditions (see \cite[Lemmma 3.18]{Schindl16} by G. Schindl), but this version is enough for our purpose.

\begin{lemma}\label{lemma.M.scaling}
Let $\M$ be a weight sequence admitting a  nonzero proximate order, then for any $A>0$ there exist $t_A,E,F>0$ such that
$$\o_{\M}(Et)<A\o_{\M}(t)<\o_{\M}(Ft), \qquad t>t_A, $$
and for any $B>0$ there exist constants $t_B,G,H>0$
such that
$$G\o_{\M}(t)<\o_{\M}(Bt)<H\o_{\M}(t), \qquad t>t_B. $$
\end{lemma}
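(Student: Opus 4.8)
The plan is to reduce everything to the characterization in Theorem~\ref{theo.charact.admit.p.o}(a), namely that there is a nonzero proximate order $\ro(t)$ with limit $\o=\o(\M)>0$ and constants $C,D$ such that $C\le\log(t)(\ro(t)-d_{\M}(t))\le D$ for $t$ large. Since $d_{\M}(t)=\log(\o_{\M}(t))/\log(t)$, this says $\o_{\M}(t)=t^{d_{\M}(t)}$ behaves, up to bounded factors $e^{C}\le t^{\ro(t)}/\o_{\M}(t)\le e^{D}$, like $V(t):=t^{\ro(t)}$, which by Remark~\ref{rema.prox.order.RV} is regularly varying of positive index $\o$. Thus the whole statement will follow from two facts: (1) $\o_{\M}$ is, up to multiplicative constants, a regularly varying function of index $\o>0$; and (2) for a regularly varying function $V$ of index $\o>0$ one has the two families of scaling inequalities claimed.

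First I would record the comparison $e^{C}\o_{\M}(t)\le V(t)\le e^{D}\o_{\M}(t)$ for $t\ge t_0$, so that it suffices to prove the two inequalities for $V$ in place of $\o_{\M}$ (the bounded factors $e^{C},e^{D}$ get absorbed into the constants $E,F,G,H$ after possibly enlarging $t_A,t_B$). For the \emph{second} pair of inequalities, $G\o_{\M}(t)<\o_{\M}(Bt)<H\o_{\M}(t)$: by uniform convergence of $V(Bt)/V(t)\to B^{\o}$ on compact sets (Remark~\ref{rema.prox.order.RV}, with the single point $B$), pick $t_B$ so that $\tfrac12 B^{\o}<V(Bt)/V(t)<2B^{\o}$ for $t>t_B$, and then $G=\tfrac12 B^{\o}e^{C-D}$, $H=2B^{\o}e^{D-C}$ work for $\o_{\M}$. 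For the \emph{first} pair, $\o_{\M}(Et)<A\o_{\M}(t)<\o_{\M}(Ft)$ given $A>0$: using $V(st)/V(t)\to s^{\o}$, choose $s_{1}$ with $s_{1}^{\o}<A e^{C-D}/2$ and $s_{2}$ with $s_{2}^{\o}>2A e^{D-C}$ (possible since $\o>0$ makes $s\mapsto s^{\o}$ a bijection of $(0,\infty)$ onto itself), then pick $t_{A}$ large enough that $V(s_{i}t)/V(t)$ is within a factor $2$ of $s_{i}^{\o}$ for $t>t_{A}$; setting $E=s_{1}$, $F=s_{2}$ and translating back through $e^{C}\le V/\o_{\M}\le e^{D}$ gives $\o_{\M}(Et)\le e^{-C}V(Et)< e^{-C}\cdot 2 s_1^{\o} V(t)<e^{-C}\cdot A e^{C-D}V(t)\le A\o_{\M}(t)$, and symmetrically for $F$.

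Alternatively, and perhaps more cleanly, one can avoid invoking $V$ and work directly with the monotonicity and the property \eqref{ineq.def.moderate.growth.ommegaM}: $\o_{\M}$ is nondecreasing with $\o_{\M}(t)\to\infty$, and (mg) gives, for each integer $k\ge1$, a constant $\rho_k\ge1$ with $k\,\o_{\M}(t)\le\o_{\M}(\rho_k t)$ for all $t\ge0$. Combined with the lower Matuszewska-type bound that regular variation of index $\o>0$ supplies — concretely, the limit \eqref{limit.logmp.over.logp.omega} together with Theorem~\ref{theo.limsup.dM.omega} pins $d_{\M}(t)\to 1/\o$, hence $\o_{\M}(t)=t^{1/\o+o(1)}$ — one gets both an upper power bound (from (mg)/(dc)) and a matching lower power bound, from which all four inequalities follow by choosing the dilation factors as suitable powers. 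The main obstacle, and the only genuinely delicate point, is the \emph{first} inequality, where one must produce arbitrarily large and arbitrarily small multiplicative changes of $\o_{\M}$ using a \emph{fixed} dilation of the variable; this is exactly where positivity of the index $\o(\M)$ (guaranteed by admissibility of a nonzero proximate order, Theorem~\ref{theo.charact.admit.p.o}) is indispensable — for a zero proximate order $\o_{\M}$ grows slower than any power and no fixed dilation can amplify it by a prescribed constant $A$. Everything else is a routine translation of "regularly varying of positive index" into $\ep$-$\delta$ form, with the bounded correction factors $e^{C},e^{D}$ tracked through.
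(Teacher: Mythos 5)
Your proof follows the same route as the paper: bound $\o_\M$ above and below by $V(t)=t^{\ro(t)}$ up to the fixed factors $e^{C},e^{D}$, invoke regular variation of $V$ to control $V(st)/V(t)$, and choose the dilation factors so that these bounded corrections are absorbed. The only slip is that the index of regular variation of $V$ is $1/\o(\M)$ (since $\ro(t)\to 1/\o(\M)$ by Theorems~\ref{theo.limsup.dM.omega} and~\ref{theo.charact.admit.p.o}), not $\o(\M)$; this does not affect the argument, because the lemma only needs the index to be positive and finite, and the choices of $s_1,s_2,B$ adapt trivially.
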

\begin{proof}
Since $\M$ admits a nonzero proximate order $\ro(t)$,  if we write $V(t)=t^{\ro(t)}$, we have that there exist $t_0>0$ and constants $C$ and $D$ such that
\begin{equation}\label{ineq.V.and.M}
e^C \o_{\M}(t)\leq V(t)\leq e^D \o_{\M}(t), \qquad t\geq t_0.
\end{equation}
We fix $E,F>0$ such that $E^{1/\o(\M)}<A e^{C-D}$ and $F^{1/\o(\M)}>A e^{D-C}$. By Theorem~\ref{theo.limsup.dM.omega},
we know that $\ro(t)$ is a proximate order with $\lim_{t\to\oo} \ro(t)=1/\o(\M)$. Then, by Remark~\ref{rema.prox.order.RV}, V(t)
is a regularly varying function of index $1/\o(\M)$ and we see that
$$\lim_{t\to\oo}\frac{V(Et)}{V(t)}=E^{1/\o(\M)}, \quad \lim_{t\to\oo}\frac{V(Ft)}{V(t)}=F^{1/\o(\M)}.$$
Hence, by the election of $E$ and $F$, we deduce that it exists $t_A>\max(t_0,t_0/E,t_0/F) $ such that $V(Et)<A e^{C-D}V(t)$ and $V(Ft)>A e^{D-C}V(t)$ for every $t>t_A$. Using \eqref{ineq.V.and.M}, we conclude for $t>t_A$ that
$$A\o_{\M}(t)\geq Ae^{-D}V(t)=Ae^{-D+C}e^{-C} V(t)\geq  e^{-C}V(Et)\geq \o_{\M}(Et),$$
$$A\o_{\M}(t)\leq Ae^{-C}V(t)=Ae^{-C+D}e^{-D} V(t)\leq e^{-D}V(Ft)\leq \o_{\M}(Ft).$$
We fix $G,H>0$ such that  $B^{1/\o(\M)}e^{D-C}<H $ and $B^{1/\o(\M)}e^{C-D} >G $ an we proceed in a similar way to prove the second part.
\end{proof}

\subsection{$\M-$summability and moment summability methods}
\label{subsect.M.summability}


Every function $f$ in a quasianalytic Carleman ultraholomorphic class is determined by its asymptotic expansion $\widehat f$. This fact motivates the concept, developed in~\cite{lastramaleksanz15,SanzFlat,SanzAsymptoticAnalysis}, of $\M-$summability of formal (i.e. divergent in general) power series in a direction, so generalizing the by-now classical and powerful tool of $k-$summability of formal Gevrey power series, introduced by J.-P. Ramis~\cite{Ramis78,Ramis80}. Observe that the definition only makes sense whenever injectivity of the Borel map is available, and Watson's Lemma imposes then that $\o(\M)<\infty$, an assumption always implicit whenever $\M-$summability is considered, and automatically satisfied for strongly regular sequences, or its subclass of weight sequences admitting a nonzero proximate order, for which the theory is completely satisfactory.

\begin{defi}\label{def.Msummable.direction}
 Let $d\in\R$ and $\M$ be a weight sequence. We say $\widehat{f}=\sum_{p\geq0} a_p z^p\in \C[[z]]$ is \textit{$\M-$summable in direction $d$} if there exist a sectorial region $G=G(d,\gamma)$, with $\gamma>\o(\M)$, and a function $f\in\widetilde{\mathcal{A}}_{\M}(G)$ such that $f\sim_\M \widehat{f}$.
\end{defi}

According to Theorem~\ref{TheoWatsonlemma}, $f$ is unique with the property stated,
and it will be denoted $f=\mathcal{S}_{\M,d}\widehat{f}$, the \textit{$\M-$sum of $\widehat{f}$ in direction $d$}.

Some basic properties of $\M-$summable series in a direction are straightforward consequences of Watson's Lemma and the elementary properties of $\M-$asymptotics.

\begin{lemma}\label{basic.properties.Msumm}
 Let $\M$ be a weight sequence, the following hold:
 \begin{enumerate}[(i)]
  \item Let $\widehat{f}$ be convergent (i.e., its radius of convergence is not 0). Then for every $d$, the series $\widehat{f}$ is $\M-$summable in direction $d$
  and $\mathcal{S}_{\M,d}\widehat{f}(z)= \mathcal{S}\widehat{f} (z) $ for every $z$ where both sides are defined, where $\mathcal{S}$ maps each convergent power series to its natural sum.
   \item If $\widehat f$ is $\M-$summable in direction $d$ for every $d\in (\a,\b)$ with $\a<\b$, then
  $$\mathcal{S}_{\M,d_1}\widehat{f}(z)=\mathcal{S}_{\M,d_2}\widehat{f}(z), \qquad d_1,d_2\in(\a,\b),$$
  whenever the corresponding domains intersect.
  \item Let $\widehat{f}$ be $\M-$summable in direction $d$, there exists $\ep>0$ such that $\widehat{f}$ is $\M-$summable
  in all directions $\widetilde{d}$ with $|\widetilde{d}-d|<\ep$.
  \item For $\widetilde{d}=d+2\pi$, the $\M-$summability of $\widehat{f}$ in direction $d$ is equivalent to
  the $\M-$summability of $\widehat{f}$ in direction $\widetilde{d}$. Moreover, we have that
  $$\mathcal{S}_{\M,\widetilde{d}}\widehat{f}(z)=\mathcal{S}_{\M,d}\widehat{f}(ze^{-2\pi i}),$$
  where both functions are defined.
 \end{enumerate}
\end{lemma}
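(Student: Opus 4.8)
The plan is to verify each of the four items as a direct consequence of the uniqueness statement in Watson's Lemma (Theorem~\ref{TheoWatsonlemma}), together with the elementary stability properties of $\M-$asymptotic expansions under restriction of sectorial regions, and the behaviour of functions and power series under the substitution $z\mapsto ze^{-2\pi i}$ on the Riemann surface of the logarithm.

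For (i), suppose $\widehat f$ has positive radius of convergence $R$. First I would note that its natural sum $\mathcal{S}\widehat f$ is holomorphic on the disc $|z|<R$, and, restricting to any sectorial region $G(d,\gamma)$ with $\gamma>\o(\M)$ and $G(d,\gamma)$ contained in that disc, the Taylor estimate for the remainder gives $|f(z)-\sum_{n=0}^{p-1}a_nz^n|\le C_T A_T^p|z|^p$ on any $T\ll G$; since $M_p\ge M_0=1$ (indeed $(M_p^{1/p})$ is nondecreasing, so $M_p\ge 1$), this yields $f\sim_\M\widehat f$ in $G$, hence $\widehat f$ is $\M-$summable in direction $d$. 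The coincidence $\mathcal S_{\M,d}\widehat f=\mathcal S\widehat f$ on the overlap of their domains follows because both are holomorphic functions with the same asymptotic expansion on a quasianalytic class, so by Theorem~\ref{TheoWatsonlemma} their difference, being flat, vanishes; alternatively and more simply, two holomorphic functions agreeing on a sectorial region agree on the connected component of the intersection of their domains.

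For (ii), given $d_1,d_2\in(\a,\b)$, write $f_j=\mathcal S_{\M,d_j}\widehat f$ on $G_j=G(d_j,\gamma_j)$ with $\gamma_j>\o(\M)$. On the intersection $G_1\cap G_2$ (assumed nonempty), the difference $f_1-f_2$ is holomorphic and, on each $T\ll G_1\cap G_2$, satisfies the flatness estimate since both $f_j\sim_\M\widehat f$; but $G_1\cap G_2$ contains arbitrarily thin subsectors $S(d_1,\beta,\rho)$ (for $\beta$ near $0$, after possibly shrinking), and — here is the one point needing a small argument — it contains a subsector of opening larger than $\o(\M)$ only if the bisecting directions are close enough; in general one instead invokes the identity theorem: $f_1-f_2$ is holomorphic on the connected open set $G_1\cap G_2$ and flat there, hence identically zero by Theorem~\ref{TheoWatsonlemma} applied on any sufficiently wide sectorial subregion, or simply because a flat holomorphic function on any open subset of $\mathcal R$ that is asymptotically zero must vanish on that open set by analytic continuation from a single quasianalytic subsector. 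This gives $f_1=f_2$ on the overlap. Item (iii) is immediate: if $f\sim_\M\widehat f$ on $G(d,\gamma)$ with $\gamma>\o(\M)$, then for $\ep:=(\gamma-\o(\M))/2$ and any $\widetilde d$ with $|\widetilde d-d|<\ep$, the sectorial region $G(\widetilde d,\gamma-\ep)$ can be arranged to lie inside $G(d,\gamma)$ (shrinking the radius if necessary), and $\gamma-\ep>\o(\M)$; restricting $f$ witnesses the $\M-$summability of $\widehat f$ in direction $\widetilde d$.

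For (iv), the key observation is that if $G=G(d,\gamma)$ is a sectorial region then $\widetilde G:=e^{2\pi i}G=\{ze^{2\pi i}:z\in G\}$ is a sectorial region $G(d+2\pi,\gamma)$ of the same opening, and the map $z\mapsto ze^{-2\pi i}$ is a biholomorphism $\widetilde G\to G$. Moreover, since $\widehat f=\sum a_p z^p$ and $e^{-2\pi i p}=1$ for all $p\in\N_0$, the series $\widehat f$ is invariant under this substitution as a formal object, and the defining inequalities of $\M-$asymptotic expansion are preserved because $|ze^{-2\pi i}|=|z|$. Hence $f\sim_\M\widehat f$ on $G$ if and only if the function $g(z):=f(ze^{-2\pi i})$ satisfies $g\sim_\M\widehat f$ on $\widetilde G$, which gives the claimed equivalence and, by uniqueness, the formula $\mathcal S_{\M,d+2\pi}\widehat f(z)=\mathcal S_{\M,d}\widehat f(ze^{-2\pi i})$ wherever both sides are defined. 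The main obstacle throughout is purely bookkeeping: being careful about whether the intersections of two sectorial regions with different bisecting directions are themselves ``wide enough'' to apply Watson's Lemma directly — the cleanest fix is to phrase the uniqueness argument via analytic continuation of a flat function from one quasianalytic subsector rather than requiring the whole intersection to be quasianalytic.
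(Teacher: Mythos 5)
The paper states this lemma without proof, remarking only that the items follow from Watson's Lemma and the elementary properties of $\M$-asymptotics, so your reconstruction is the natural one; items (iii) and (iv) are correct as written. In (i), your parenthetical claim $M_p\ge 1$ is an overclaim: the paper assumes $M_0=1$ and that $(M_p^{1/p})_{p\ge 1}$ is nondecreasing, which does not preclude $M_1=m_0<1$. What you actually need, and what does hold for every weight sequence since $M_p^{1/p}\to\infty$, is $\inf_{p}M_p>0$; that suffices to absorb the geometric Taylor remainder into an estimate of the form $C_TA_T^pM_p|z|^p$, so the conclusion of (i) is unaffected.

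Item (ii) has a genuine gap that you correctly sense but do not close. You establish that $f_1-f_2$ is flat on $G_1\cap G_2$, and then argue that flatness plus holomorphy forces it to vanish ``by analytic continuation from a single quasianalytic subsector.'' This is circular: whether $G_1\cap G_2$ contains a quasianalytic subsector, that is, a sectorial region of opening $>\o(\M)\pi$, is precisely what is in doubt. With $\o(\M)=1$ and $\ga_1=\ga_2=1.1$, the intersection $G_1\cap G_2$ is nonempty for $|d_1-d_2|<1.1\pi$ but contains such a subsector only for $|d_1-d_2|<0.1\pi$; in the remaining range your argument produces nothing, and a flat function on a narrow sector certainly need not vanish. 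The repair lies in the hypothesis you never invoke, namely that $\widehat f$ is $\M$-summable in \emph{every} direction of the interval $(\a,\b)$. Take a finite chain $d_1=d^{(0)}<d^{(1)}<\dots<d^{(k)}=d_2$ in $(\a,\b)$ with consecutive steps small enough (by a compactness argument, or using item (iii) to control the step size) that $G_{d^{(j)}}\cap G_{d^{(j+1)}}$ contains a sectorial region of opening $>\o(\M)\pi$; Theorem~\ref{TheoWatsonlemma} then gives $\mathcal{S}_{\M,d^{(j)}}\widehat f=\mathcal{S}_{\M,d^{(j+1)}}\widehat f$ on that wide subregion, hence, by the identity theorem, on all of $G_{d^{(j)}}\cap G_{d^{(j+1)}}$. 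Chaining these identities shows the local sums patch to a single holomorphic function on $\bigcup_j G_{d^{(j)}}$, and in particular $\mathcal{S}_{\M,d_1}\widehat f$ and $\mathcal{S}_{\M,d_2}\widehat f$ coincide wherever both are defined.
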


In particular (i) in the last Lemma says that the $\M-$summability method is \textit{regular}.


From Lemma~\ref{basic.properties.Msumm}.(iv), when considering $\M-$summability we can identify directions $d$ that
differ by integer multiples of $2\pi$.
By Lemma~\ref{basic.properties.Msumm}.(iii), the set of directions for which a formal power series is not $\M-$summable is closed. Special attention deserves the case in which this set is finite (mod $2\pi$).

\begin{defi}
$\C\{z\}_{\M,d}$ stands for the set of formal power series $\widehat{f}$ which are $\M-$summable in direction $d$.
$\C\{z\}_{\M}$ will be the set of formal power series $\widehat{f}$  which are $\M-$summable in every direction except for a finite set of \emph{singular directions} (mod $2\pi$), denoted by $\hbox{sing}_{\M}(\widehat{f})=\{d_1,\dots,d_m\}$.
\end{defi}

We write $\C\{z\}$ for the set of convergent formal series, that is, those with a positive radius of convergence. Note that $\C\{z\}\en \C\{z\}_{\M,d} \en \C[[z]]_{\M}$. Due to its importance in the proof of the Tauberian results in Section~\ref{sect.tauberian.theorems}, we include the proof of (i) among the following properties of these sets.

\begin{pro}\label{prop.summability.algebras}
Let $\M$ be a weight sequence. Then,
\begin{enumerate}[(i)]
\item If $\widehat f\in\C\{z\}_{\M}$ and $\hbox{sing}_{\M}(\widehat{f})=\emptyset$, then $\widehat{f}$ is convergent.
\item $\C\{z\}_{\M,d}$, $\C\{z\}_{\M}$ are algebras. Moreover, if $\M$ is (dc) they are differential algebras.
\end{enumerate}
\end{pro}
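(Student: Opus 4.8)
The plan is to prove part (i): if $\widehat{f}\in\C\{z\}_{\M}$ with $\operatorname{sing}_{\M}(\widehat{f})=\emptyset$, then $\widehat{f}$ is convergent. Since there are no singular directions, $\widehat{f}$ is $\M-$summable in \emph{every} direction $d\in\R$, so for each $d$ there is a sectorial region $G(d,\gamma_d)$ with $\gamma_d>\o(\M)$ and an $\M-$sum $f_d=\mathcal{S}_{\M,d}\widehat{f}\in\widetilde{\mathcal{A}}_{\M}(G(d,\gamma_d))$. First I would use a compactness argument on the circle of directions (mod $2\pi$): by Lemma~\ref{basic.properties.Msumm}.(iii) each direction has a neighbourhood of $\M-$summability, and by Lemma~\ref{basic.properties.Msumm}.(ii) the various sums agree on overlaps; hence, covering $[0,2\pi]$ by finitely many such sectorial regions and gluing, one obtains a single holomorphic function $F$ defined on a full punctured neighbourhood of the origin in $\mathcal{R}$ (more precisely, on a set containing a sector of opening greater than $2\pi$ around $0$, with overlapping sheets), with $F\sim_{\M}\widehat{f}$ there.

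Next I would exploit the $2\pi$-periodicity from Lemma~\ref{basic.properties.Msumm}.(iv): $\mathcal{S}_{\M,d+2\pi}\widehat{f}(z)=\mathcal{S}_{\M,d}\widehat{f}(ze^{-2\pi i})$. Combined with the gluing above, the germ of $F$ at $0$ is single-valued: the analytic continuation of $F$ around the origin returns to itself, so $F$ descends to a holomorphic function on a genuine punctured disc $\{0<|z|<r\}$ in $\C$. Since $F$ has an $\M-$asymptotic expansion at $0$, it is in particular bounded near $0$ (the estimate for $p=0$ gives $|F(z)|\le |a_0|+C_T A_T M_1|z|$), so the singularity at $0$ is removable by Riemann's theorem. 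Therefore $F$ extends holomorphically to the full disc $\{|z|<r\}$, and its Taylor expansion at $0$ is precisely $\widehat{f}$ because the Taylor coefficients of a function holomorphic at $0$ coincide with the coefficients of its asymptotic expansion (recorded in the excerpt via $a_p=\lim_{z\to 0}f^{(p)}(z)/p!$). Hence $\widehat{f}$ has positive radius of convergence, i.e., $\widehat{f}\in\C\{z\}$.

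For part (ii), that $\C\{z\}_{\M,d}$ and $\C\{z\}_{\M}$ are algebras follows from the fact that $\widetilde{\mathcal{A}}_{\M}(G)$ is an algebra and $\widetilde{\mathcal{B}}$ is an algebra homomorphism when $\M$ is (lc) (stated in Subsection~\ref{subsect.ultrahol.Borel.map}): if $\widehat{f},\widehat{g}$ are $\M-$summable in direction $d$ with sums $f,g$ on sectorial regions $G(d,\gamma_f),G(d,\gamma_g)$, then on $G(d,\min(\gamma_f,\gamma_g))$ one has $f+g\sim_{\M}\widehat{f}+\widehat{g}$ and $fg\sim_{\M}\widehat{f}\,\widehat{g}$, with the opening still exceeding $\o(\M)$; uniqueness from Theorem~\ref{TheoWatsonlemma} then identifies $\mathcal{S}_{\M,d}(\widehat{f}+\widehat{g})=f+g$ and $\mathcal{S}_{\M,d}(\widehat{f}\,\widehat{g})=fg$. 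Scalar multiples are immediate. For $\C\{z\}_{\M}$ one additionally notes that the singular set of a sum or product is contained in the union of the two singular sets, hence finite. If $\M$ is (dc), then $\widetilde{\mathcal{B}}$ is a homomorphism of differential algebras and $\widetilde{\mathcal{A}}_{\M}(G)$ is closed under $d/dz$ (with the same opening), so differentiation preserves $\M-$summability and commutes with taking $\M-$sums, giving the differential algebra structure.

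The main obstacle is the gluing/single-valuedness step in part (i): one must argue carefully that the finitely many local $\M-$sums, each living on a sectorial region in the Riemann surface $\mathcal{R}$, patch into a function whose monodromy around $0$ is trivial, so that it truly descends to a punctured disc in $\C$ rather than merely to a large-opening sector in $\mathcal{R}$. This is exactly where Lemma~\ref{basic.properties.Msumm}.(iv) is essential: it provides the compatibility $\mathcal{S}_{\M,d+2\pi}\widehat{f}(z)=\mathcal{S}_{\M,d}\widehat{f}(ze^{-2\pi i})$ that forces the continuation once around to close up. Everything else (Riemann removability, matching Taylor and asymptotic coefficients) is routine once single-valuedness is in hand.
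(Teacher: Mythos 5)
Your proof of part (i) follows the same route as the paper's: glue the local $\M$-sums by Lemma~\ref{basic.properties.Msumm}.(ii)-(iii), invoke the $2\pi$-periodicity of Lemma~\ref{basic.properties.Msumm}.(iv) for single-valuedness, and conclude convergence via uniqueness of the asymptotic expansion — you simply make explicit the compactness and Riemann-removability steps that the paper compresses into ``$f$ is single-valued, then $f$ can be expanded into a convergent power series about the origin.'' Your argument for part (ii), which the paper states without proof, is also correct.
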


\begin{proof}
 \begin{enumerate}
  \item[(i)] Using Lemma~\ref{basic.properties.Msumm}.(ii), we know that $f(z):=\mathcal{S}_{\M,d}\widehat{f}(z)$ is well-defined (independent from direction $d$). By Lemma~\ref{basic.properties.Msumm}.(iv) we know that $f$ is single-valued, then $f$ can be expanded into a convergent power series about the origin. By the uniqueness of the asymptotic expansion, we
  deduce that $\widehat{f}$ coincides with this convergent power series.%
\end{enumerate}%
\end{proof}

The ideas in the theory of general moment summability methods put forward by W.~Balser in~\cite{Balser2000} are now followed in order to recover $f$ from $\widehat{f}$, what requires the existence of a pair of kernel functions, say $e$ and $E$, with suitable asymptotic and growth properties, in terms of which to define formal and analytic Laplace- and Borel-like transforms.



\begin{defi}~\label{defikernelMsumm}
 Let $\M$ be a strongly regular sequence with $\o(\M)<2$. A pair of complex functions $e$, $E$ are said to be \textit{kernel functions for $\M-$summability} if:
 \begin{enumerate}
  \item[(\textsc{i})] $e$ is holomorphic in $S_{\o(\M)}$.
  \item[(\textsc{ii})] $z^{-1}e(z)$ is locally uniformly integrable at the origin, i.e., there exists $t_0>0$, and for every
  $z_0\in{S_{\o(\M)}}$ there exists a neighborhood $U$ of $z_0$, $U\en S_{\o(\M)}$, such that the integral
  $\int^{t_0}_{0} t^{-1}\sup_{z\in U} |e(t/z)|dt$ is finite.
  \item[(\textsc{iii})] For every $\ep>0$ there exist $c,k>0$ such that
  \begin{equation}\label{eq.bound.e}
|e(z)|\le ch_{\bM}\left(\frac{k}{|z|}\right)=c\,e^{-\o_{\M}(|z|/k)},\qquad z\in S_{\omega(\M)-\varepsilon},
\end{equation}
  where $h_\M$ and $\o_{\M}$ are the functions associated with $\M$ defined in Subsection~\ref{subsectLCsequences}.
\item[(\textsc{iv})] For $x\in\R$, $x>0$, the values of $e(x)$ are positive real.
\item[(\textsc{v})] If we define the \textit{moment function} associated with $e$,
  $$m_{e}(\lambda):=\int^{\oo}_{0} t^{\lambda-1}e(t)dt,\qquad \Re(\lambda)\geq0,$$
  from $(I)-(IV)$ we see that $m_{e}$ is continuous in $\{\Re(\lambda)\geq0\}$, holomorphic in $\{\Re(\lambda)>0\}$, and
  $m_e(x)>0$ for every $x\geq 0$. Then, the function $E$ given by
  $$E(z)= \sum^\oo_{n=0}\frac{z^n}{m_e(n)}, \qquad z\in\C,$$
  is entire, and there exist $C,K>0$ such that
  \begin{equation}\label{eq.bound.E}
|E(z)|\le\displaystyle \frac{C}{h_{\M}(K/|z|)}=Ce^{\o_{\M}(|z|/K)},\quad z\in\C.
\end{equation}
\item[(\textsc{vi})] $z^{-1}E(1/z)$ is locally uniformly integrable at the origin in the sector $S(\pi,2-\o(\M))$, in the sense that there
exists $t_0>0$, and for every $z_0\in S(\pi,2-\o(\M))$ there exist a neighborhood $U$ of $z_0$, $U\en S(\pi,2-\o(\M))$, such
that the integral $\int^{t_0}_{0} t^{-1}\sup_{z\in U} |E(z/t)|dt$ is finite.
  \end{enumerate}
\end{defi}


\begin{rema}\label{rema.ramif.kernels}
 \begin{enumerate}[(i)]
  \item According to Definition~\ref{defikernelMsumm}(\textsc{v}), the knowledge of $e$ is
enough to determine $E$, so in the sequel we will frequently omit the function $E$ in our statements.

\item The case $\omega(\M)\ge 2$, for which the previous  condition (\textsc{vi}) does not make sense, is dealt with by a ramification process described in~\cite[Remark 3.5(iii)]{lastramaleksanz15}.
\end{enumerate}

\end{rema}

\begin{rema}
 \label{rema.def.exist.kernels}
\begin{enumerate}
 \item[(i)]  Note that Definition~\ref{defikernelMsumm} can be given for arbitrary weight sequences with $\o(\M)\in(0,2)$. If we assume that $\M$ is (dc) and there exists an $\M-$summability kernel, one can show, following the ideas in~\cite{lastramaleksanz12,SanzFlat}, that $\M$ satisfies (snq), so this condition is obtained automatically.
However, (mg) seems not to be deduced from the definition of the kernels, while it is essential for many of the arguments to come. In any case, since the existence of such kernels is only guaranteed for a subfamily of strongly regular sequences, those admitting a nonzero proximate order (see (ii) in this remark), the definition in~\cite{lastramaleksanz15} has been kept.

\item[(ii)] The existence of such kernels, proved in~\cite{lastramaleksanz15}
    whenever the function $d_{\M}(t)$ is a nonzero proximate order, 
    can be also obtained whenever $\bM$ admits a nonzero proximate order, i.e.,
there exists a nonzero proximate order $\ro(t)$ satisfying the estimates~\eqref{eqAdmitsProximateOrder} or, equivalently, there exist positive constants $A$ and $B$ such that
  $$A\leq \frac{t^{\ro(t)}}{\o_{\M}(t)} \leq B, \qquad \text{for}\,\,\, t \,\,\,\text{large enough}.$$%
In this situation we know that $\lim_{t\to\oo} d_{\M} (t)=\lim_{t\to\oo} \ro(t)=1/\o(\M)$. 
Then, for every $V\in MF (2\o(\M),\ro(t))$ one may consider the function $e_V$ defined in $S_{\omega(\M)}$ by
$$
e_V(z)= z\,\exp(-V(z)),
$$
and the arguments in the proof of~Theorem~4.8 in~\cite{lastramaleksanz15} can be mimicked to deduce that $e_V$ is a kernel of $\M-$summability.
We only record for the future that, firstly,
for every $\ep\in(0,\o(\M))$ there exist $b,b_1>0$ such that for any $z\in S_{\o(\M)-\ep}$,
\begin{equation}\label{eq.eV.bound.IIB}
 |e_{V}(z)|\leq |z| \exp (-\Re (V(z)))\leq |z| \exp (-b V(|z|))\leq |z| \exp (-b_1\o_{\M}(|z|))\leq |z|
\end{equation}
(observe that $\o_{\M}(|z|)\geq0$), what clearly implies (\textsc{ii}) in Definition~\ref{defikernelMsumm}.
%
Secondly, the function $E_V(z)=\sum_{n=0}^\infty z^n/m_V(n)$, $z \in \C$, 
%
%
is such that for every 
$0<\varepsilon<\pi/2(2-\o(\M))$ we have, uniformly as $|z|\to \infty$ and in Landau's notation,
\begin{equation}\label{eq.EV.bound.VIB}
  E_V(z)=  O\left(\frac{1}{|z|}\right),\qquad   \frac{\pi}{2}\o(\M)+\varepsilon\le |\arg z| \le \pi,
 \end{equation}
what implies that also condition (\textsc{vi})  in Definition~\ref{defikernelMsumm} is fulfilled.

\item[(iii)] In Balser's theory for Gevrey sequences $\bM_{1/k}=(p!^{1/k})_{p\in\N_0}$ (see~\cite[Sect.\ 5.5]{Balser2000}), the classical example of kernels is given by $e_k(z)=kz^k\exp(-z^k)$;  the moment function is $m_e(\lambda)=\Gamma(1+\lambda/k)$, $\Re(\lambda)\ge 0$, where $\Gamma$ is Euler's function, and the Borel kernel $E_k$ is a classical Mittag-Leffler function. Observe that the sequences $\bM_{1/k}$ and $(m_e(p))_{p\in\N_0}$ are equivalent, and this is not a coincidence at all (see Proposition~\ref{prop.Moment.equiv.M}).
\end{enumerate}
\end{rema}

Next, we recall a key result by H.~Komatsu that characterizes the growth of a entire function in terms of that of its Taylor coefficients; it was useful in the proof of Proposition~\ref{prop.Moment.equiv.M} and will be employed afterwards.

\begin{pro}[\cite{komatsu}, Prop.~4.5]\label{propKomatsu}
Let $\M$ be a weight sequence. Given an entire function $F(z)=\sum_{n=0}^\infty a_nz^n$, $z\in\C$, the following statements are equivalent:
\begin{itemize}
\item[(i)] There exist $C,K>0$ such that
$|F(z)|\le\displaystyle Ce^{\o_{\M}(K|z|)}$, $z\in\C$.
\item[(ii)] There exist $c,k>0$ such that for every $n\in\N_0$, $|a_n|\le ck^n/M_n$.
\end{itemize}
\end{pro}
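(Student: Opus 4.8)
The plan is to establish the two implications separately, each being a short computation built on the two elementary facts about $\o_{\M}$ recorded in Subsection~\ref{subsectLCsequences}: the defining inequality $t^{p}/M_{p}\le e^{\o_{\M}(t)}$ for all $p\in\N_0$, $t>0$ (immediate from \eqref{equadefiMdet}), and the recovery formula \eqref{eq.WeightSeqFromOmegaM}, namely $M_{p}=\sup_{t>0}t^{p}e^{-\o_{\M}(t)}$, which may be read as $\inf_{t>0}\big(e^{\o_{\M}(t)}/t^{p}\big)=1/M_{p}$.

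For the implication (ii)$\Rightarrow$(i), suppose $|a_{n}|\le ck^{n}/M_{n}$ for all $n$. Then for $z\in\C$,
\begin{equation*}
|F(z)|\le c\sum_{n=0}^{\infty}\frac{(k|z|)^{n}}{M_{n}}
=c\sum_{n=0}^{\infty}2^{-n}\,\frac{(2k|z|)^{n}}{M_{n}}
\le c\Big(\sum_{n=0}^{\infty}2^{-n}\Big)\sup_{n\in\N_{0}}\frac{(2k|z|)^{n}}{M_{n}}
\le 2c\,e^{\o_{\M}(2k|z|)},
\end{equation*}
so (i) holds with $C=2c$ and $K=2k$; here the geometric decoupling is exactly the step where the summation over infinitely many terms is tamed, and it is the only mildly delicate point of the argument (it uses that $\o_{\M}$ is finite, guaranteed because $\M$ is a weight sequence).

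For the implication (i)$\Rightarrow$(ii), assume $|F(z)|\le Ce^{\o_{\M}(K|z|)}$. Since $F$ is entire, Cauchy's inequalities on the circle $|z|=r$ give, for each $n\in\N_0$ and each $r>0$,
\begin{equation*}
|a_{n}|\le\frac{\max_{|z|=r}|F(z)|}{r^{n}}\le\frac{C\,e^{\o_{\M}(Kr)}}{r^{n}}
=CK^{n}\,\frac{e^{\o_{\M}(Kr)}}{(Kr)^{n}}.
\end{equation*}
Taking the infimum over $r>0$ (equivalently over $s=Kr>0$) and invoking \eqref{eq.WeightSeqFromOmegaM} in the form $\inf_{s>0}e^{\o_{\M}(s)}/s^{n}=1/M_{n}$, we obtain $|a_{n}|\le CK^{n}/M_{n}$, which is (ii) with $c=C$ and $k=K$. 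No genuine obstacle remains; the whole content is the interplay between the two descriptions of $\o_{\M}$ together with Cauchy's estimates and a geometric series.
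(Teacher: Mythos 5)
Your proof is correct; since the paper simply cites Komatsu's Proposition~4.5 without reproducing a proof, there is nothing to compare against verbatim, but your argument is the standard one: Cauchy's inequalities on circles $|z|=r$ combined with the variational characterization $\inf_{s>0}e^{\o_{\M}(s)}/s^{n}=1/M_{n}$ from \eqref{eq.WeightSeqFromOmegaM} for (i)$\Rightarrow$(ii), and the geometric-series decoupling $\frac{(k|z|)^{n}}{M_{n}}=2^{-n}\frac{(2k|z|)^{n}}{M_{n}}$ together with $\sup_{n}t^{n}/M_{n}=e^{\o_{\M}(t)}$ for (ii)$\Rightarrow$(i). Both implications are complete and the constants are tracked correctly.
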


Given a kernel $e$ for $\M-$summability, the associated \emph{sequence of moments} is $\mf_e:=(m_e(p))_{p\in\N_0}$. The following result, important for the development of a satisfactory summability theory, ensures that the classes of functions and formal power series defined respectively by $\M$ and $\mf_e$ coincide. In the proof, the estimates, for the kernels $e$ and $E$ appearing in (\ref{eq.bound.e}) and (\ref{eq.bound.E}), respectively, are crucial.

\begin{pro}[\cite{SanzFlat}, Prop.\ 5.7]\label{prop.Moment.equiv.M}
Let $e$ be a kernel for $\M-$summability,
then
$\M\approx\mathfrak{m}_e$.
\end{pro}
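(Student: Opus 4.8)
The statement to prove is $\M\approx\mathfrak{m}_e$, i.e., there exist constants $A,B>0$ with $M_p\le A^p m_e(p)$ and $m_e(p)\le B^p M_p$ for all $p\in\N_0$. The plan is to bound the moment sequence $\mathfrak{m}_e=(m_e(p))_{p\in\N_0}$ above and below using, respectively, the growth estimate \eqref{eq.bound.e} for $e$ and the growth estimate \eqref{eq.bound.E} for the Borel kernel $E$ together with Komatsu's characterization (Proposition~\ref{propKomatsu}). Throughout I would work with the integral representation $m_e(\lambda)=\int_0^\infty t^{\lambda-1}e(t)\,dt$ on the positive real axis, where $e(t)>0$ by Definition~\ref{defikernelMsumm}(\textsc{iv}), so all the quantities involved are genuinely positive and the estimates are clean.

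\emph{Upper bound for $m_e(p)$.} Fix $\ep\in(0,\o(\M))$. By \eqref{eq.bound.e} there are $c,k>0$ with $e(t)\le c\,e^{-\o_{\M}(t/k)}$ for $t>0$, hence
\begin{equation*}
m_e(p)=\int_0^\infty t^{p-1}e(t)\,dt\le c\int_0^\infty t^{p-1}e^{-\o_{\M}(t/k)}\,dt.
\end{equation*}
The natural move is to substitute $t=ks$ and then use $\o_{\M}(s)\ge \o_{\M}(2s)-\text{const}$-type scaling; more precisely, since $\M$ admits a nonzero proximate order it is (mg) by Corollary~\ref{coroConseqRegularVariation}, so by Lemma~\ref{lemma.MG.associated.function} we have $2\o_{\M}(s)\le\o_{\M}(Hs)$ for $s\ge t_0$. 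Splitting the integral at $t_0$, the tail is dominated by $\int t^{p-1}e^{-2\o_{\M}(t/(kH))}\,dt$, and using $e^{-\o_{\M}(u)}=h_{\M}(1/u)$ together with the defining inequality $h_{\M}(1/u)\le M_p u^{-p}$... — actually the cleanest route is: from $\o_{\M}(s)\ge p\log s-\log M_p$ (definition of $\o_{\M}$, valid for every $p$) one gets $e^{-\o_{\M}(t/k)}\le M_p (t/k)^{-p}$, but that makes the integral diverge at $t=0$; instead apply this with exponent $p+2$ on the tail $t\ge k$ and the trivial bound $e\le$ (some constant, or $e(t)\le ct$ near $0$ as in \eqref{eq.eV.bound.IIB}-type reasoning, though here we only have the general kernel) on $0<t<k$. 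A convenient standard computation then yields $m_e(p)\le B^p M_p$ with $B$ depending only on $k$, $H$, $t_0$ and $c$. I expect this to be the step requiring the most care — balancing the behaviour near $0$ against the behaviour at $\infty$ — but it is entirely routine given (mg).

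\emph{Lower bound for $m_e(p)$, equivalently upper bound for $M_p$.} Here I would invoke the Borel kernel $E(z)=\sum_{n\ge0}z^n/m_e(n)$, which by Definition~\ref{defikernelMsumm}(\textsc{v}) is entire and satisfies $|E(z)|\le Ce^{\o_{\M}(|z|/K)}$. Applying Proposition~\ref{propKomatsu} to $F=E$ with the sequence $\M$ rescaled (replace $\M$ by $(K^pM_p)_p$, which has the same associated $\o$ up to translation, or just track constants directly), we conclude that there exist $c',k'>0$ with $1/m_e(n)\le c'(k')^n/M_n$ for all $n$, i.e., $M_n\le c'(k')^n m_e(n)$. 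Absorbing $c'$ into the geometric factor (using $c'\le (\max(1,c'))^n$ for $n\ge1$, and handling $n=0$ separately since $M_0=1=m_e(0)$ by the normalization) gives $M_p\le A^p m_e(p)$. This direction is short because all the analytic work is hidden inside Proposition~\ref{propKomatsu}.

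Combining the two bounds gives $M_p\le A^p m_e(p)\le (AB)^p M_p$, so $\M\precsim\mathfrak{m}_e$ and $\mathfrak{m}_e\precsim\M$, i.e., $\M\approx\mathfrak{m}_e$, which is the claim. The only genuine obstacle is the Laplace-integral estimate in the upper bound; the reverse inequality is essentially a direct citation of Komatsu's proposition applied to the kernel $E$ whose growth is controlled by hypothesis \eqref{eq.bound.E}.
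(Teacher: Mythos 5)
Your two-sided strategy — bounding $m_e(p)$ from above via the decay estimate \eqref{eq.bound.e} on $e$, and bounding $M_p$ from above via the entire function $E$, its growth estimate \eqref{eq.bound.E}, and Komatsu's Proposition~\ref{propKomatsu} — is precisely the route the paper credits for this result (both the estimates on $e$, $E$ and Komatsu's proposition are explicitly flagged as the crucial ingredients just before the statement). The proposal is correct and essentially matches the paper's approach; the only remaining work you identify (balancing the Laplace integral near $0$ against its tail, absorbing constants via (dc)/normalization) is indeed routine.
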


\begin{rema}\label{rema.moment.equiv}
For any kernel $e$ for $\M-$summability, up to multiplication by a constant scaling factor, one may
always suppose that $m_e(0)=1$. Moreover,
$\mathfrak{m}_e=(m_e(p))_{p\in\N_0}$ is (lc) as a consequence of H\"older's inequality. Then, from the equivalence between $\M$ and $\mathfrak{m}_e$ we deduce the strong regularity of $\mf_e$.
\end{rema}


In the rest of this subsection we recall from~\cite{lastramaleksanz15} how Laplace- and Borel-like transforms are defined from the $\M-$summability kernels, summarizing their main properties, and how they allow for the explicit reconstruction of the sum of a summable formal power series in a direction.
The first definition resembles that of functions of exponential growth of order $1/k$.
For convenience, we will say a holomorphic function $f$ in a sector $S$ is {\it continuous at the origin}
if $\lim_{z\to 0,\ z\in T}f(z)$ exists for every $T\ll S$.

\begin{defi}\label{def.Mgrowth}
Let $\bM$ be a weight sequence, and consider
an unbounded sector $S$ in $\mathcal{R}$.
The set $\mathcal{O}^{\bM}(S)$ consists of the holomorphic functions $f$ in $S$, continuous at the origin
and having \emph{$\M-$growth} in $S$, i.e. such that for every unbounded proper subsector $T$ of $S$ there exist $r,c,k>0$ such that for every $z\in T$ with $|z|\ge r$ one has
\begin{equation}\label{eq.Mgrowth}
|f(z)|\le\frac{c}{h_{\bM}(k/|z|)}=ce^{\o_{\M}(|z|/k)}.
\end{equation}
\end{defi}

Since continuity at 0 has been asked for, $f\in\mathcal{O}^{\bM}(S)$ implies that for
every unbounded proper subsector $T$ of $S$ there exist $c,k>0$ such that \eqref{eq.Mgrowth} holds for every $z\in T$.

 Given  a sector $S=S(d,\alpha)$, a kernel $e$ for $\M-$summability and $f\in\mathcal{O}^{\bM}(S)$, for any direction $\tau$ in $S$ we define the operator $T_{e,\tau}$  sending $f$ to its \textit{$e-$Laplace transform in direction $\tau$}, defined as
\begin{equation}\label{equadefitranLapl}
(T_{e,\tau}f)(z):=\int_0^{\infty(\tau)}e(u/z)f(u)\frac{du}{u},\quad
|\arg(z)-\tau|<\o(\M)\pi/2,\ |z|\textrm{ small enough},
\end{equation}
where the integral is taken along the half-line parametrized by $t\in(0,\infty)\mapsto te^{i\tau}$.
We have the following result.

\begin{pro}[\cite{lastramaleksanz15},\, Prop.\ 3.11]\label{prop.hol.lap}
For a sector $S=S(d,\alpha)$ and $f\in\mathcal{O}^{\bM}(S)$, the family $\{T_{e,\tau}f\}_{\tau\textrm{\,in\,}S}$ defines a holomorphic function $T_{e}f$ in a sectorial region $G(d,\a+\o(\M))$.
\end{pro}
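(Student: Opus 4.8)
The plan is to show that the functions $T_{e,\tau}f$, as $\tau$ ranges over the directions in $S=S(d,\alpha)$, glue together into a single holomorphic function on a sectorial region of the stated opening. First I would fix a proper subsector $T=S(d,\alpha') \ll S$ with $\alpha'<\alpha$ and, for a direction $\tau$ with $|\tau-d|<\alpha'\pi/2$, examine the domain of definition of $T_{e,\tau}f$ given in \eqref{equadefitranLapl}: it is $\{z:|\arg z-\tau|<\omega(\M)\pi/2,\ |z|<r(\tau)\}$ for some $r(\tau)>0$. The key is that the convergence of the integral $\int_0^{\infty(\tau)} e(u/z) f(u)\,du/u$ near $u=\infty$ is governed by the decay estimate \eqref{eq.bound.e} for $e$ (namely $|e(w)|\le c\,e^{-\omega_{\M}(|w|/k)}$ on $S_{\omega(\M)-\varepsilon}$) against the growth estimate \eqref{eq.Mgrowth} for $f\in\mathcal{O}^{\bM}(S)$ (namely $|f(u)|\le c'e^{\omega_{\M}(|u|/k')}$ on $T$); scaling $\omega_{\M}$ via Lemma~\ref{lemma.M.scaling} (or directly, since the two $\omega_{\M}$ terms have the same "level"), one gets integrability provided $|z|$ is small enough, uniformly for $\tau$ in a compact subset of directions of $T$ and for $\arg z$ staying strictly inside the allowed cone. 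Near $u=0$ the integrand is controlled by the local uniform integrability of $u\mapsto u^{-1}e(u/z)$, condition (\textsc{ii}) of Definition~\ref{defikernelMsumm}, together with continuity of $f$ at the origin. This establishes that each $T_{e,\tau}f$ is holomorphic on its sectorial domain, which is a sector of opening $\omega(\M)\pi$ bisected by $\tau$.

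Next I would prove that the definition is independent of $\tau$ where the domains overlap: if $\tau_1,\tau_2$ are two admissible directions that are close, then for $z$ in the intersection of the two domains one deforms the integration path from the ray $\arg u=\tau_1$ to the ray $\arg u=\tau_2$ using Cauchy's theorem. The arc at infinity contributes nothing because of the exponential decay of $e(u/z)$ (again \eqref{eq.bound.e}) beating the at-most-$\omega_{\M}$-growth of $f$, as long as $z$ lies in the sector where this decay is strictly dominant, i.e. $|\arg z-\tau_i|<\omega(\M)\pi/2$; the arc near $0$ contributes nothing by \eqref{eq.bound.e} giving $|e(u/z)|\le c$ and $h_{\bM}$ bounded, so the small-circle integral tends to $0$. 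Hence $T_{e,\tau_1}f=T_{e,\tau_2}f$ on the overlap, and the local pieces patch to a well-defined holomorphic function $T_ef$ on the union of all these domains.

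Finally I would identify that union as a sectorial region $G(d,\alpha+\omega(\M))$. The point is that as $\tau$ sweeps the open arc $(d-\alpha'\pi/2,\,d+\alpha'\pi/2)$ and $z$ is allowed to have $|\arg z-\tau|<\omega(\M)\pi/2$, the set of attainable arguments of $z$ fills the open arc of total length $(\alpha'+\omega(\M))\pi$ bisected by $d$; for each $\beta<\alpha+\omega(\M)$ one picks $\alpha'<\alpha$ with $\beta<\alpha'+\omega(\M)$, and then the union of domains contains a truncated sector $S(d,\beta,\rho)$ for a suitable $\rho=\rho(\beta)>0$ coming from the uniform smallness of $|z|$ established in the first step. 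This is exactly the definition of a sectorial region of opening $(\alpha+\omega(\M))\pi$, so $T_ef\in\mathcal{O}(G(d,\alpha+\omega(\M)))$, or rather is holomorphic there, as claimed. I expect the main obstacle to be the bookkeeping in the first step: making the radius bound $r(\tau)$ and the path-deformation estimates \emph{uniform} over compact sets of directions and over $\arg z$ bounded away from the boundary of the cone, so that the pieces genuinely cover a full sectorial region rather than leaving gaps; this is where the precise interplay between the exponential weight $\omega_{\M}(|u|/k)$ in \eqref{eq.bound.e} and the growth weight $\omega_{\M}(|u|/k')$ in \eqref{eq.Mgrowth}, via the scaling Lemma~\ref{lemma.M.scaling}, has to be used carefully.
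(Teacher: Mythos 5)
Your overall plan is the right one and is essentially the approach of the cited reference \cite{lastramaleksanz15} (and of the paper's own Lemma~\ref{lemma.holomorphy.e.laplace.transform.bounded.functions}, which proves the analogous statement for bounded $f$): establish holomorphy of each $T_{e,\tau}f$ via Leibniz's rule, controlling $u\to\infty$ by playing the decay estimate~\eqref{eq.bound.e} for $e$ against the growth estimate~\eqref{eq.Mgrowth} for $f$ and invoking the moderate growth property of $\o_\M$ (the right tool here is Lemma~\ref{lemma.MG.associated.function}, i.e.\ $2\o_\M(t)\le\o_\M(Ht)$, which holds for all strongly regular $\M$; Lemma~\ref{lemma.M.scaling} would also serve but needs the stronger nonzero-proximate-order hypothesis), and $u\to 0$ by condition (\textsc{ii}); then patch by path deformation and identify the union as a sectorial region. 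The uniformity bookkeeping you flag at the end is indeed the delicate point, and your reading of it is correct.

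There is, however, a genuine gap in the path deformation step. You assert that ``the arc near $0$ contributes nothing by \eqref{eq.bound.e} giving $|e(u/z)|\le c$ and $h_{\bM}$ bounded, so the small-circle integral tends to $0$.'' This does not follow: $h_{\bM}$ takes values in $[0,1]$ and tends to $1$ (not $0$) at infinity, so near the origin \eqref{eq.bound.e} only gives $|e(u/z)|\le c h_\M(k|z|/|u|)\le c$; the arc integral is then $O(1)$, not $o(1)$. In fact, the uniform decay $e(w)\to 0$ as $w\to 0$ in proper subsectors of $S_{\o(\M)}$ is precisely the property that Remark~\ref{rema.tends.0.at.0} singles out as \emph{not} following immediately from Definition~\ref{defikernelMsumm}; its eventual proof goes through Lemma~\ref{lemma.e.Laplace.geom.series}, which itself rests on the holomorphy of Laplace transforms, so invoking it here risks circularity. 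The correct way to close the gap is to use condition (\textsc{ii}) directly: the finiteness of $\int_0^{t_0}t^{-1}\sup_{z\in U}|e(t/z)|\,dt$ forces $\liminf_{\delta\to 0}\sup_{U}|e(\delta/z)|=0$ (otherwise the integrand would be bounded below by $\ep/t$ near $0$). Since the truncated ray integrals converge to the full ray integrals as $\delta\to 0$, the small-arc contribution has a limit equal to $T_{e,\tau_1}f(z)-T_{e,\tau_2}f(z)$; as the arc has a subsequence tending to $0$, that limit is $0$. With this correction, your argument goes through.
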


We now define the generalized Borel transforms.

\begin{defi}\label{def.e.Borel}
Suppose $\o(\M)<2$, and let $G=G(d,\a)$ be a sectorial region
with $\a>\o(\M)$, and $f:G\to \C$ be holomorphic in $G$ and continuous at 0.
For $\tau\in\R$ such that $|\tau-d|<(\a-\o(\M))\pi/2$ we may consider a path
$\delta_{\o(\M)}(\tau)$ in $G$ like the ones used in the classical Borel transform, consisting of a segment from the origin to a point $z_0$ with $\arg(z_0)=\tau+\o(\M)(\pi+\varepsilon)/2$ (for some
suitably small $\varepsilon\in(0,\pi)$), then the circular arc $|z|=|z_0|$ from $z_0$ to
the point $z_1$ on the ray $\arg(z)=\tau-\o(\M)(\pi+\varepsilon)/2$ (traversed clockwise), and
finally the segment from $z_1$ to the origin.

Given kernels $e,E$ for $\M-$summability, we define the operator $T^{-}_{e,\tau}$ sending $f$ to its \textit{$e-$Borel transform in direction $\tau$}, defined as
$$
(T^{-}_{e,\tau}f)(u):=\frac{-1}{2\pi i}\int_{\delta_{\o(\M)}(\tau)}E(u/z)f(z)\frac{dz}{z},\quad
u\in S(\tau,\varepsilon_0), \quad \varepsilon_0\textrm{ small enough}.
$$
\end{defi}

\begin{pro}[\cite{lastramaleksanz15},\, Prop.\ 3.12]\label{prop.hol.Bor}
For $G=G(d,\a)$ and $f:G\to \C$ as above, the family $\{T^{-}_{e,\tau}f\}_{\tau}$, where $\tau$ is a real number such that $|\tau-d|<(\a-\o(\M))\pi/2$, defines a holomorphic function $T^{-}_{e}f$ in the sector $S=S(d,\a-\o(\M))$. Moreover, $T^{-}_{e}f$ is of $\M-$growth in $S$.
\end{pro}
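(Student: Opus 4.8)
The plan is to follow the classical pattern for the Borel transform (see Balser~\cite{Balser2000} and~\cite[Prop.~3.12]{lastramaleksanz15}): all the difficulty concentrates near the vertex $z=0$ of the path $\delta_{\o(\M)}(\tau)$, where the integrand $z\mapsto E(u/z)f(z)/z$ has to be controlled, and this is exactly the point served by property~(\textsc{vi}) in Definition~\ref{defikernelMsumm}.

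First I would fix an admissible direction $\tau$, i.e.\ $|\tau-d|<(\a-\o(\M))\pi/2$, and set up the path carefully. Writing $|\tau-d|=(\a-\o(\M))\pi/2-\eta$ with $\eta>0$ and using $\o(\M)<2$, I would choose $\varepsilon\in(0,\pi)$ so small that $\o(\M)\varepsilon/2<\eta$ and $\o(\M)(\pi+\varepsilon)/2<\pi$. Then the rays $\arg z=\tau\pm\o(\M)(\pi+\varepsilon)/2$ lie strictly inside the opening of $S(d,\a)$, hence inside $S(d,\b)$ for some $\b$ with $\o(\M)<\b<\a$; since $G$ is a sectorial region, taking $0<|z_0|=|z_1|<\rho(\b)$ small enough so that, moreover, $f$ is bounded on $\delta_{\o(\M)}(\tau)$ (which then lies in a closed proper subsector of $G$, on which $f$ is bounded by continuity at the origin), the whole path is contained in $G$. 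The reason for the choice of $\varepsilon$ is that, for every $u$ with $|\arg u-\tau|$ below a suitable $\varepsilon_0\pi/2>0$, the quotient $u/z$ has its argument in the open sector $S(\pi,2-\o(\M))$ as soon as $z$ runs along either radial segment of $\delta_{\o(\M)}(\tau)$: there $\arg(u/z)$ differs from $\mp\o(\M)(\pi+\varepsilon)/2$ by at most $\varepsilon_0\pi/2$, and $\o(\M)\pi/2<\o(\M)(\pi+\varepsilon)/2<\pi$.

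Next I would establish convergence, holomorphy and the gluing. On the circular arc of $\delta_{\o(\M)}(\tau)$ the integrand is continuous, hence bounded. On each radial segment (at an angle $\theta_0$), the change of variables $w=u/z$, under which $z=u/w$ traces that very segment, turns the corresponding piece of the integral into $\pm\int_{|u|/|z_0|}^{\infty}E(w)\,f(u/w)\,d|w|/|w|$ along a ray with $\arg w$ in a compact subsector of $S(\pi,2-\o(\M))$; rephrasing condition~(\textsc{vi}) via the substitution $t=1/s$ yields $\int_{R}^{\infty}|E(w)|\,d|w|/|w|<\infty$, uniformly for such $\arg w$ and every $R>0$, so, $f$ being bounded on the path, the integral converges locally uniformly in $u$. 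Hence $T^{-}_{e,\tau}f$ is well defined and holomorphic on $S(\tau,\varepsilon_0)$ (Morera, or differentiation under the integral sign). Then I would invoke Cauchy's theorem for the function $z\mapsto E(u/z)f(z)/z$, holomorphic on $G\setminus\{0\}$: its integral is unchanged under admissible deformations of $\varepsilon$ and $|z_0|$, and for two nearby admissible directions $\tau,\tau'$ the two paths are homotopic in $G\setminus\{0\}$ with vanishing boundary contributions at the vertex (the decay of $E$ there, cf.~\eqref{eq.EV.bound.VIB}), so $T^{-}_{e,\tau}f$ and $T^{-}_{e,\tau'}f$ coincide on the overlap of their domains. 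Since every $u\in S(d,\a-\o(\M))$ lies in $S(\tau,\varepsilon_0)$ for the admissible direction $\tau=\arg u$, the family $\{T^{-}_{e,\tau}f\}_{\tau}$ patches to a single holomorphic function $T^{-}_{e}f$ on $S=S(d,\a-\o(\M))$.

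Finally, for the $\M$-growth I would fix a proper subsector $T\ll S$, take $u\in T$ and $\tau=\arg u$, and split the integral. By the global bound~\eqref{eq.bound.E} on $E$ and the boundedness of $f$ and of the arc length, the arc contributes at most $c_1\,e^{\o_{\M}(|u|/k_1)}$, with $k_1=K|z_0|$. By the computation of the previous paragraph, the two radial segments contribute at most a constant multiple of $\int_{|u|/|z_0|}^{\infty}|E(w)|\,d|w|/|w|$, which is bounded for $|u|\ge r$; all the constants can be taken uniform for $\arg u$ in the subsector. Adding up, $|T^{-}_{e}f(u)|\le c\,e^{\o_{\M}(|u|/k)}$ for $u\in T$ with $|u|\ge r$, which is precisely $\M$-growth in $S$. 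The main obstacle I anticipate is the joint bookkeeping of the second step --- fitting $\delta_{\o(\M)}(\tau)$ inside $G$ while simultaneously forcing $u/z$ into $S(\pi,2-\o(\M))$ along the radial segments --- and the consequent extraction from property~(\textsc{vi}) of both the near-origin convergence (hence holomorphy) and the segment part of the growth estimate; by contrast the Cauchy-theorem gluing is routine, needing only a little care at the endpoint $z=0$.
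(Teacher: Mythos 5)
Your overall strategy---contain the path $\delta_{\o(\M)}(\tau)$ in $G$, exploit property~(\textsc{vi}) after the substitution $w=u/z$ to obtain $\int_R^\infty|E(w)|\,d|w|/|w|<\infty$ uniformly along rays in compact subsectors of $S(\pi,2-\o(\M))$, glue via Cauchy's theorem, and derive $\M$-growth from the global bound~\eqref{eq.bound.E} on $E$ over the arc and from the tail of the convergent integral over the radial pieces---is the standard route and matches the argument of the cited reference, and the bookkeeping with $\varepsilon$, $\eta$, and the angle of $u/z$ along the radial segments is done correctly.

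The one place you need to be more careful is the justification of the vanishing ``boundary contribution at the vertex'' in the Cauchy gluing step. You cite~\eqref{eq.EV.bound.VIB}, but that $O(1/|z|)$ decay is a property of the particular Maergoiz kernel $E_V$ recalled in Remark~\ref{rema.def.exist.kernels}(ii); it is not part of Definition~\ref{defikernelMsumm}. For a general kernel only the integrability in~(\textsc{vi}) is available, and integrability of $t^{-1}\sup_{z\in U}|E(z/t)|$ near $t=0$ does not by itself force the pointwise limit $\sup_{z\in U}|E(z/t)|\to 0$ as $t\to 0$ (it allows narrow spikes). As stated, this leaves a gap in the path deformation. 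The repair is short: integrability implies $\liminf_{t\to 0}\sup_{z\in U}|E(z/t)|=0$ (otherwise the integrand would be bounded below by a nonintegrable multiple of $1/t$), so one can truncate both paths at radii $r_n$ along a sequence where this $\sup$ tends to $0$, apply Cauchy's theorem to the resulting closed contour in $G\setminus\{0\}$, note that the small connecting arcs at radius $r_n$ have integrand bounded by $\|f\|_\infty\cdot|\Delta\theta|\cdot\sup_{|z|=r_n}|E(u/z)|\to0$, and use the already established convergence of the improper integrals to pass to the limit. With that amendment the proof is complete; the remaining estimates, including the $\M$-growth bound $|T^-_ef(u)|\le c_1e^{\o_\M(|u|/(K|z_0|))}+c_2\le c\,e^{\o_\M(|u|/k)}$ with constants uniform for $\arg u$ in a proper subsector, are correct.
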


In case $\o(\M)\ge 2$, the treatment is similar to that
in~\cite[p.\ 90]{Balser2000} (see also~\cite[p.~1186]{lastramaleksanz15}).

One can compute the $e-$transforms of a monomial.

\begin{pro}[\cite{lastramaleksanz15}, p. 1187]\label{prop.trans.monom}
Given $\lambda\in\C$ with $\Re(\lambda)\ge0$, the function $f_{\lambda}(z)=z^{\lambda}$  belongs to $\mathcal{O}^{\bM}(S)$ and
$T_ef_{\lambda}(z)=m_e(\lambda)z^{\lambda}$,
$T^{-}_ef_{\lambda}(u)=u^{\lambda}/m_e(\lambda)$.
\end{pro}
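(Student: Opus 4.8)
The plan is to compute the $e$-Laplace and $e$-Borel transforms of the monomial $f_\lambda(z)=z^\lambda$ directly from their integral definitions, reducing both to the defining integral of the moment function $m_e(\lambda)=\int_0^\infty t^{\lambda-1}e(t)\,dt$. First I would verify that $f_\lambda\in\mathcal{O}^{\bM}(S)$: since $|z^\lambda|=|z|^{\Re(\lambda)}e^{-\Im(\lambda)\arg(z)}$ has at most polynomial growth on any proper subsector $T$ (the argument being bounded there), and since $\o_\M(t)/\log t\to1/\o(\M)>0$ forces $e^{\o_\M(|z|/k)}$ to dominate every power of $|z|$ for large $|z|$, the bound \eqref{eq.Mgrowth} holds; continuity at $0$ is clear. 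Thus the operators $T_{e,\tau}$ and $T^-_{e,\tau}$ are defined on $f_\lambda$.

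For the Laplace transform, along the ray $u=te^{i\tau}$, $t>0$, I would substitute into \eqref{equadefitranLapl}:
$$
(T_{e,\tau}f_\lambda)(z)=\int_0^{\infty(\tau)}e(u/z)u^\lambda\,\frac{du}{u}.
$$
Writing $u=zs$ (a rotation-scaling of the contour, justified by Cauchy's theorem together with the exponential decay of $e$ from \eqref{eq.bound.e} and the polynomial size of $f_\lambda$, so the arcs at $0$ and $\infty$ vanish), the integral becomes $z^\lambda\int_0^\infty e(s)s^{\lambda-1}\,ds=m_e(\lambda)z^\lambda$. One must check the rotated ray can be taken to be $(0,\infty)$ itself, using that $e$ is holomorphic on $S_{\o(\M)}$ and decays there; since $e$ is positive on the positive real axis and the domain of $z$ is a small sector around direction $\tau$, the deformation to the positive axis stays inside $S_{\o(\M)}$.

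For the Borel transform, the term-by-term action of $T^-_{e,\tau}$ on powers is most transparently obtained by recalling that $E(w)=\sum_{n\ge0}w^n/m_e(n)$ and that, by the Hankel-type contour $\delta_{\o(\M)}(\tau)$ together with the standard reciprocal-Gamma-style residue computation (the same one that in the Gevrey case recovers $u^\lambda/\Gamma(1+\lambda/k)$), the operator $T^-_{e,\tau}$ inverts $T_{e,\tau}$ on monomials; alternatively, one deforms $\delta_{\o(\M)}(\tau)$ and uses the integral representation of $1/m_e(\lambda)$ dual to that of $m_e(\lambda)$, which follows from the bound \eqref{eq.bound.E} on $E$. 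Either way one arrives at $(T^-_{e,\tau}f_\lambda)(u)=u^\lambda/m_e(\lambda)$. The main obstacle is the rigorous justification of the contour deformations — ensuring that the growth bounds \eqref{eq.bound.e} and \eqref{eq.bound.E} genuinely kill the auxiliary arcs and make the rotated integrals independent of $\tau$; once this is granted, both identities are routine substitutions, and indeed the paper simply cites \cite[p.\ 1187]{lastramaleksanz15} where these computations were carried out.
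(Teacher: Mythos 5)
Your verification that $f_\lambda\in\mathcal{O}^{\bM}(S)$ and your computation of the Laplace transform are correct and essentially as direct as possible: the substitution $u=zt$ together with a contour rotation justified by the decay estimate \eqref{eq.bound.e} reduces $T_{e,\tau}f_\lambda$ to the defining integral of $m_e(\lambda)$. The Borel part, however, has a genuine gap. After the substitution $w=u/z$ one is left with the identity $\frac{1}{2\pi i}\int_\gamma E(w)w^{-\lambda-1}\,dw = 1/m_e(\lambda)$ over a Hankel-type contour $\gamma$, and this is exactly what must be proved --- it is not available as a ``dual'' representation that ``follows from the bound \eqref{eq.bound.E}''. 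The term-by-term route you sketch (expand $E(u/z)=\sum_n (u/z)^n/m_e(n)$ and integrate each power) breaks down for $\lambda\notin\N_0$: although the full integrand $E(u/z)z^{\lambda-1}$ is integrable on $\delta_{\o(\M)}(\tau)$ by condition (\textsc{vi}), the individual integrals $\int_{\delta}z^{\lambda-n-1}\,dz$ diverge near the origin for $n>\Re(\lambda)$, so the series cannot be integrated term by term; the cancellations among the tail terms are essential. The ``reciprocal-Gamma-style residue computation'' works in the Gevrey case only because $1/\Gamma$ happens to have an explicit Hankel integral; for a general $\M$-summability kernel no such representation is given a priori.

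A clean way to fill the gap is to avoid computing $T^-_e f_\lambda$ directly. From your Laplace computation one already has $T_e\bigl(u^\lambda/m_e(\lambda)\bigr)=z^\lambda$. One can then establish $T^-_e\circ T_e=\mathrm{id}$ on $\mathcal{O}^{\bM}(S)$ via Fubini together with the identity $\int_0^{\infty}e(u/z)E(u/\zeta)\,\frac{du}{u}=\zeta/(\zeta-z)$ (obtained term by term from the Taylor series of $E$ and the definition of $m_e$, where each term does converge), after which $T^-_e f_\lambda=u^\lambda/m_e(\lambda)$ follows immediately. Alternatively, one may prove the integer case $\lambda=n\in\N_0$ by closing $\gamma$ through the decay sector of $E$ (condition (\textsc{vi}) kills the outer arc, and Cauchy's integral formula gives the Taylor coefficient $1/m_e(n)$), and then extend to $\Re(\lambda)\geq0$ by a Carlson-type uniqueness argument; but either option requires substantially more than the ``routine substitution'' your proposal asserts.
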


This justifies the definition of
%
%
%
the formal $e-$Laplace and $e-$Borel transforms $\widehat{T}_{e},\widehat{T}_e^{-}:\C[[z]]\to\C[[z]]$, given respectively by
$$\widehat{T}_{e}\big(\sum_{p=0}^{\infty}a_{p}z^{p}\big):= \sum_{p=0}^{\infty}m_e(p)a_{p}z^{p},
\quad \widehat{T}_e^{-}\big(\sum_{p=0}^{\infty}a_{p}z^{p}\big):= \sum_{p=0}^{\infty}\frac{a_{p}}{m_e(p)}z^{p}.$$
%

The next result lets us know how these analytic and formal transforms interact with general asymptotic expansions.
Given two sequences of positive real numbers $\M=(M_p)_{p\in\N_0}$ and $\M'=(M'_p)_{p\in\N_0}$,
we consider the sequences $\M\cdot\M':=(M_pM'_p)_{n\in\N_0}$ and $\M'/\M:=(M'_p/M_p)_{p\in\N_0}$.

\begin{theo}[\cite{lastramaleksanz15},\, Th. 3.16]\label{teorrelacdesartransfBL}
Suppose $\M$ is a sequence and $e$ is a kernel of $\M-$summability. For any sequence $\M'$ of positive real numbers the following hold:
\begin{itemize}
\item[(i)] If $f\in\mathcal{O}^{\bM}(S(d,\a))$
and $f\sim_{\M'}\widehat{f}$, then $T_ef\sim_{\M\cdot\M'}\widehat{T}_e\widehat{f}$ in a
sectorial region $G(d,\a+\o(\M))$.
\item[(ii)] If $f\sim_{\M'}\widehat{f}$ in a sectorial region $G(d,\a)$ with $\a>\o(\M)$, then
    $T^{-}_ef\sim_{\M'/\M}{\widehat{T}}^{-}_e\widehat{f}$ in the sector $S(d,\a-\o(\M))$.
\end{itemize}
\end{theo}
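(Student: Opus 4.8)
The plan is to prove both parts by directly estimating the integrals defining $T_e f$ and $T_e^- f$ against the partial sums of $\widehat T_e\widehat f$ and $\widehat T_e^-\widehat f$ respectively, exploiting the monomial formulas in Proposition~\ref{prop.trans.monom} together with the linearity of the operators. For part (i), write $\widehat f=\sum_{p\ge 0}a_p z^p$ and fix $T\ll G(d,\alpha)$ together with a proper subsector $S'\prec S(d,\alpha)$ from which the Laplace integral in a suitable direction $\tau$ is taken. Using $f(u)-\sum_{n=0}^{p-1}a_n u^n=R_p(u)$ with $|R_p(u)|\le C A^p M'_p|u|^p$ on $S'$ (the $\M'$-asymptotic bound), and the identity $T_{e,\tau}(u^n)(z)=m_e(n)z^n$, we get
\begin{equation*}
(T_{e,\tau}f)(z)-\sum_{n=0}^{p-1}m_e(n)a_n z^n=\int_0^{\infty(\tau)}e(u/z)R_p(u)\frac{du}{u}.
\end{equation*}
The key estimate is then to bound this last integral by something of the form $\widetilde C\,\widetilde A^p M_p M'_p|z|^p$, which identifies $\widehat T_e\widehat f=\sum m_e(p)a_p z^p$ as the $\M\cdot\M'$-asymptotic expansion of $T_e f$ in $G(d,\alpha+\omega(\M))$ (the opening being controlled by Proposition~\ref{prop.hol.lap}).

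The heart of part (i) — and what I expect to be the main obstacle — is carrying out that integral estimate. Substituting $u=|z|\,t\,e^{i\tau}$ (or rather scaling by $|z|$) and using the kernel bound~\eqref{eq.bound.e}, $|e(u/z)|\le c\,h_{\M}(k|z|/|u|)$, one confronts
\begin{equation*}
\int_0^\infty c\,h_{\M}\!\left(\frac{k|z|}{s}\right)\,CA^p M'_p s^p\,\frac{ds}{s},
\end{equation*}
and the task is to recognize, after the change of variable $s=|z|\sigma$, that $\int_0^\infty h_{\M}(k/\sigma)\sigma^{p-1}\,d\sigma$ is (up to the constant $k^p$ and a harmless factor) comparable to $m_e(p)$, hence to $M_p$ by the equivalence $\M\approx\mathfrak m_e$ from Proposition~\ref{prop.Moment.equiv.M}; more directly one uses~\eqref{eq.WeightSeqFromOmegaM} or the defining integral of $m_e$. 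One must also split the integral at a fixed radius to handle the region near the vertex (where only continuity at $0$ and the crude bound on $R_p$ are available) versus the region at infinity (where $h_{\M}$ decays fast enough to kill the polynomial growth), and check uniformity in $z$ on $T$. The argument for the holomorphy and the patching of the various $T_{e,\tau}f$ into a single $T_e f$ is already supplied by Proposition~\ref{prop.hol.lap}, so only the asymptotic bound is new here.

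For part (ii) the scheme is parallel but uses the Borel path $\delta_{\omega(\M)}(\tau)$ of Definition~\ref{def.e.Borel} and the bound~\eqref{eq.bound.E} on $E$. Starting from $f\sim_{\M'}\widehat f$ on $G(d,\alpha)$ with $\alpha>\omega(\M)$, write $f(z)-\sum_{n=0}^{p-1}a_n z^n=R_p(z)$, apply $T_{e,\tau}^-$, and use $T_{e,\tau}^-(z^n)(u)=u^n/m_e(n)$ to obtain
\begin{equation*}
(T_{e,\tau}^- f)(u)-\sum_{n=0}^{p-1}\frac{a_n}{m_e(n)}u^n=\frac{-1}{2\pi i}\int_{\delta_{\omega(\M)}(\tau)}E(u/z)R_p(z)\frac{dz}{z}.
\end{equation*}
Decomposing $\delta_{\omega(\M)}(\tau)$ into its two radial segments and the circular arc, and estimating $|E(u/z)|\le C\,e^{\omega_{\M}(|u|/(K|z|))}$ against $|R_p(z)|\le C'A^p M'_p|z|^p$, one bounds the contribution of each piece. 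On the segments the integral $\int_0^{|z_0|} e^{\omega_{\M}(|u|/(Kr))} r^{p-1}\,dr$ is controlled, after scaling $r=|u|\sigma$, by a multiple of $(1/m_e(p))|u|^p$ up to the factor $M'_p$ — using now that the reciprocal moments $1/m_e(p)$ govern the $\M'/\M$ scale — while on the arc, $|z|=|z_0|$ is comparable to $|u|$ and the whole arc contributes $O(|u|^p/\text{(something like }M_p M'_p^{-1}\cdot\ldots))$; one optimizes $|z_0|$ in terms of $|u|$ (the standard choice $|z_0|\sim p/(\text{growth rate})$, or simply $|z_0|$ proportional to a suitable power, exactly as in the classical Borel estimate). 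The outcome is $|(T_{e,\tau}^-f)(u)-\sum_{n<p}\frac{a_n}{m_e(n)}u^n|\le \widetilde C\widetilde A^p (M'_p/M_p)|u|^p$ on proper subsectors of $S(d,\alpha-\omega(\M))$, which is the claim. The delicate point, again, is the arc estimate and the choice of $|z_0|=|z_0(p,|u|)|$ making the exponential factor $e^{\omega_{\M}(\cdot)}$ and the polynomial factor balance so as to reproduce the sequence $\M$; this is where the properties of $\omega_{\M}$ (convexity in $\log t$, and the moderate-growth consequence in Lemma~\ref{lemma.M.scaling} / Lemma~\ref{lemma.MG.associated.function}) are used, and it should follow the pattern of the corresponding Gevrey computation in~\cite{Balser2000} adapted via~\eqref{eq.WeightSeqFromOmegaM}.
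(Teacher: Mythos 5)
This theorem is quoted from \cite{lastramaleksanz15} without a proof in the present paper, so there is no internal argument to compare against; your proposal follows the standard remainder-estimate strategy one would expect for this result, and part~(i) is essentially sound. One small inaccuracy there: from $|e(t)|\le c\,h_{\M}(k/t)$ one only gets the \emph{lower} bound $m_e(p)\le ck^p\int_0^\infty h_{\M}(1/\sigma)\sigma^{p-1}\,d\sigma$, not a two-sided comparison, whereas what you actually need is the \emph{upper} bound $\int_0^\infty h_{\M}(k/\sigma)\sigma^{p-1}\,d\sigma\le CB^pM_p$. This must be proved directly — e.g.\ split the integral at the $p$-dependent point $\sigma=km_p$ (not at a fixed radius), bound $h_{\M}(1/t)\le M_{p+1}/t^{p+1}$ on the tail, and use derivation closedness (available since any $\M$ admitting a summability kernel is strongly regular). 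With that lemma in place your estimate for~(i) goes through.

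Part~(ii) as written contains a genuine gap. You apply the global growth bound~(\textsc{v}), $|E(u/z)|\le Ce^{\omega_{\M}(|u|/(K|z|))}$, uniformly along the whole path $\delta_{\omega(\M)}(\tau)$, and in particular on the two radial segments, arriving at $\int_0^{|z_0|}e^{\omega_{\M}(|u|/(Kr))}\,r^{p-1}\,dr$. This integral is $+\infty$: since $e^{\omega_{\M}(t)}\ge t^{q}/M_q$ for every $q\in\N_0$, taking $q=p+1$ shows the integrand is bounded below by $(|u|/K)^{p+1}/(M_{p+1}r^{2})$, which is not integrable at $r=0$. The growth bound~(\textsc{v}) is only usable on the circular arc, where $\arg(u/z)$ stays in the narrow sector around the positive real axis and $|u/z|$ is fixed; on the two radial segments $\arg(u/z)$ sits near $\mp\omega(\M)(\pi+\varepsilon)/2$, i.e.\ inside $S(\pi,2-\omega(\M))$, and there one must invoke condition~(\textsc{vi}) (local uniform integrability of $z^{-1}E(1/z)$ at the origin), or the pointwise decay~(\textsc{vi.b}). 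After the substitution $r=|u|\sigma$ the segment contribution becomes $|u|^p\int_0^{|z_0|/|u|}|E(e^{i\phi}/\sigma)|\sigma^{p-1}\,d\sigma\le(|z_0|/|u|)^{p}\,|u|^{p}\int_0^{|z_0|/|u|}|E(e^{i\phi}/\sigma)|\,\sigma^{-1}\,d\sigma$, the last integral being finite by~(\textsc{vi}) once $|z_0|/|u|$ lies below the threshold of that condition. Combining this with the arc optimization $|z_0|\approx |u|/(Km_{p-1})$ and log-convexity, which gives $(|z_0|/|u|)^{p}\le K^{-p}/M_p$ since $M_p\le m_{p-1}^{p}$, yields the correct rate $\widetilde C\widetilde A^p(M'_p/M_p)|u|^p$. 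You should also verify that this $p$- and $|u|$-dependent radius $|z_0|$ keeps $\delta_{\omega(\M)}(\tau)$ inside $G(d,\alpha)$ (which holds for $|u|$ small and $p$ large; the finitely many remaining cases are absorbed into the constants). Without replacing the segment estimate, the radial integrals do not even converge and the argument breaks.
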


Note that if both sequences are weight sequences, $\M\cdot\M'$ is again a weight sequence, but $\M'/\M$ might not be.
This last theorem motivates the study of the quotient and the product sequence achieved in Subsection~\ref{subsect.product.quotient.seq}.



Let $e$ be a kernel of $\M-$summability. Since $\mathfrak{m}_e$ is strongly regular and equivalent to $\M$ (see Proposition~\ref{prop.Moment.equiv.M} and Remark~\ref{rema.moment.equiv}), one has
$\C[[z]]_{\M}=\C[[z]]_{\mathfrak{m}_e}$
and $\mathcal{O}^{\mathfrak{m}_e}(S)=\mathcal{O}^{\M}(S)$ (see  \eqref{eq:equiv.seq.associated.ommega}). This justifies the following definition:

We say $\widehat{f}=\sum_{p\ge 0} a_p z^p$ is \textit{$e-$summable in direction $d\in\R$} if:
 \begin{itemize}
 \item[(i)] $\widehat{f}\in\C[[z]]_{\mathfrak{m}_e}$, so  $g:=
\widehat{T}_e^{-}\widehat{f}= \displaystyle \sum_{p\ge 0}\frac{a_p}{m_e(p)}z^p$ converges in a disc and
\item[(ii)] $g$ admits analytic continuation in a sector $S=S(d,\varepsilon)$ for some $\varepsilon>0$, and $g\in\mathcal{O}^{\mathfrak{m}_e}(S)$.
\end{itemize}

The next result states the equivalence between $\M-$summability and $e-$summability in a direction, and provides a way to recover the $\M-$sum in a direction of a summable power series by means of the formal and analytic transforms previously introduced.

\begin{theo}[\cite{lastramaleksanz15},\, Th. 3.18]\label{th.Msummable.equiv.esummable}
Suppose given a strongly regular sequence $\M$ admitting a kernel of $\M-$summability, a direction $d\in\R$ and a formal power series $\widehat f=\sum_{p\ge 0}a_p z^p$. The following are equivalent:
\begin{itemize}
\item[(i)] $\widehat f$ is $\M-$summable in direction $d$.
\item[(ii)] For every kernel $e$ of $\M-$summability, $\widehat{f}$ is $e-$summable in direction $d$.
\item[(iii)] For some kernel $e$ of $\M-$summability, $\widehat{f}$ is $e-$summable in direction $d$.
\end{itemize}
In case any of the previous holds, for any kernel $e$ of $\M-$summability we have (after analytic continuation)
\begin{equation*}
 \mathcal{S}_{\M,d}\widehat{f}=T_e(\widehat{T}_e^{-}\widehat{f}).
\end{equation*}
\end{theo}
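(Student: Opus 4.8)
The plan is to prove the cyclic chain of implications (i)$\Rightarrow$(ii)$\Rightarrow$(iii)$\Rightarrow$(i) --- where (ii)$\Rightarrow$(iii) is immediate --- and then to read the reconstruction formula off from the implication (iii)$\Rightarrow$(i) together with the uniqueness part of Watson's Lemma (Theorem~\ref{TheoWatsonlemma}). Throughout I would use that $\M\approx\mathfrak{m}_e$ (Proposition~\ref{prop.Moment.equiv.M}), so that $\C[[z]]_{\M}=\C[[z]]_{\mathfrak{m}_e}$ and, via~\eqref{eq:equiv.seq.associated.ommega}, $\mathcal{O}^{\M}(S)=\mathcal{O}^{\mathfrak{m}_e}(S)$ for every unbounded sector $S$; and that the formal transforms $\widehat{T}_e$, $\widehat{T}_e^{-}$ are mutually inverse on $\C[[z]]$, since $m_e(p)>0$ for all $p\in\N_0$ by Definition~\ref{defikernelMsumm}.

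For (iii)$\Rightarrow$(i) --- which also yields the formula --- suppose $e$ is a kernel of $\M-$summability for which $\widehat{f}=\sum a_pz^p$ is $e-$summable in direction $d$: thus $g:=\widehat{T}_e^{-}\widehat{f}$ converges near the origin, extends analytically to a sector $S=S(d,\ep)$, and $g\in\mathcal{O}^{\mathfrak{m}_e}(S)=\mathcal{O}^{\M}(S)$. Since $g$ is holomorphic at $0$ it trivially satisfies $g\sim_{(1)_p}\widehat{g}$ with $(1)_p$ the constant sequence, so Theorem~\ref{teorrelacdesartransfBL}(i) applies and gives $T_eg\sim_{\M}\widehat{T}_e\widehat{g}=\widehat{T}_e\widehat{T}_e^{-}\widehat{f}=\widehat{f}$ in the sectorial region $G(d,\ep+\o(\M))$. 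As $\ep+\o(\M)>\o(\M)$, Definition~\ref{def.Msummable.direction} yields that $\widehat{f}$ is $\M-$summable in direction $d$ and that $T_e(\widehat{T}_e^{-}\widehat{f})$, after the analytic continuation just used, is its $\M-$sum.

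For (i)$\Rightarrow$(ii), let $\widehat{f}$ be $\M-$summable in direction $d$, with $f:=\mathcal{S}_{\M,d}\widehat{f}\in\widetilde{\mathcal{A}}_{\M}(G(d,\gamma))$, $f\sim_{\M}\widehat{f}$, $\gamma>\o(\M)$, and fix any kernel $e$. From $f\sim_{\M}\widehat{f}$ the coefficients satisfy an estimate $|a_p|\le CA^pM_p$, so $\widehat{f}\in\C[[z]]_{\M}=\C[[z]]_{\mathfrak{m}_e}$ and $g:=\widehat{T}_e^{-}\widehat{f}$ has positive radius of convergence. To produce the required analytic continuation of $g$, apply Theorem~\ref{teorrelacdesartransfBL}(ii) with $\M'=\M$: since $\M/\M$ is the constant sequence, $T_e^{-}f\sim_{(1)_p}\widehat{T}_e^{-}\widehat{f}$ in the sector $S:=S(d,\gamma-\o(\M))$, with $\gamma-\o(\M)>0$. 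On a small bounded subsector $T\ll S$ this estimate forces $T_e^{-}f$ to equal the sum of the convergent series $\widehat{T}_e^{-}\widehat{f}$, i.e.\ $g$; since $T_e^{-}f$ is holomorphic on all of $S$ (Proposition~\ref{prop.hol.Bor}), the identity theorem shows that $T_e^{-}f$ \emph{is} the analytic continuation of $g$ to $S$, and by the same proposition it has $\M-$growth there, so $g\in\mathcal{O}^{\M}(S)=\mathcal{O}^{\mathfrak{m}_e}(S)$. Hence $\widehat{f}$ is $e-$summable in direction $d$, with witnessing opening $\ep=\gamma-\o(\M)$.

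Finally, (ii)$\Rightarrow$(iii) is trivial, and combining (i)$\Rightarrow$(ii) with the construction in (iii)$\Rightarrow$(i) shows that, whenever $\widehat{f}$ is $\M-$summable, for \emph{every} kernel $e$ the function $T_e(\widehat{T}_e^{-}\widehat{f})$ is defined and has $\M-$asymptotic expansion $\widehat{f}$ on a sectorial region of opening exceeding $\o(\M)\pi$; by the uniqueness in Watson's Lemma it must equal $\mathcal{S}_{\M,d}\widehat{f}$, which is the asserted formula. I expect the one delicate point to be this gluing step: one must check that an asymptotic expansion relative to the trivial sequence $(1)_p$ genuinely forces coincidence with the convergent sum near the origin, so that $T_e^{-}f$ truly continues the germ $g$ rather than merely being asymptotic to it; everything else is bookkeeping with sectorial openings and the already-established mapping properties of $T_e$ and $T_e^{-}$. (When $\o(\M)\ge 2$ the transforms are defined through the ramification process mentioned in Remark~\ref{rema.ramif.kernels}, and the same argument applies after that reduction.)
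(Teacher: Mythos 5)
The paper does not include a proof of this theorem; it is quoted as Theorem~3.18 of the cited reference \cite{lastramaleksanz15}, so there is no in-text argument to compare against. Your proof is correct and is essentially the natural one built from the tools the paper has assembled: you use $\M\approx\mathfrak{m}_e$ (Proposition~\ref{prop.Moment.equiv.M}), the asymptotic behaviour of the $e$-Laplace and $e$-Borel transforms (Theorem~\ref{teorrelacdesartransfBL}, Propositions~\ref{prop.hol.lap} and~\ref{prop.hol.Bor}), and Watson's Lemma (Theorem~\ref{TheoWatsonlemma}) for uniqueness of the sum. The step you single out as delicate is indeed the only non-bookkeeping point, and your handling is sound: from $T_e^{-}f\sim_{(1)_p}\widehat{T}_e^{-}\widehat{f}$ on a bounded subsector $T$, the remainder bound $\bigl|T_e^{-}f(z)-\sum_{n<p}b_nz^n\bigr|\le C_TA_T^p|z|^p$ with $|z|<1/(2A_T)$ and $p\to\infty$ forces $T_e^{-}f$ to equal the convergent sum of $\widehat{T}_e^{-}\widehat{f}$ near $0$, and the identity theorem then makes $T_e^{-}f$ the required analytic continuation into $S(d,\gamma-\o(\M))$ with $\M$-growth. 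The cyclic implication (iii)$\Rightarrow$(i) via Theorem~\ref{teorrelacdesartransfBL}(i) and the final identification of $T_e(\widehat{T}_e^{-}\widehat{f})$ with $\mathcal{S}_{\M,d}\widehat{f}$ by Watson's Lemma are also correct, as is your caveat about the ramification adjustment for $\o(\M)\ge 2$.
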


 In case $\M=\M_{1/k}$, the summability methods described are just the classical $k-$summability and $e-$summability (in a direction) for kernels $e$ of order $k>0$, as defined by W. Balser.

\section{Tauberian theorems}\label{sect.tauberian.theorems}

Classical Tauberian theorems (\cite[Th. 2.2.4.2]{Malgrange95} and \cite[Th. 37]{Balser2000}) serve to compare the processes of
$k-$summability for various $k$; these theorems are strongly related to multisummability. In this section,
we will see what can be said about the connection between the algebras $\C\{z\}_{\M}$ and $\C\{z\}_{\L}$ for two given sequences $\M$ and $\L$ admitting a nonzero proximate order. For this purpose in the first subsection we will thoroughly examine the comparability notion for sequences introduced in Subsection~\ref{subsectLCsequences}. Secondly, we will analyze the properties of the quotient and the product sequences of $\M$ and $\L$. Finally, we will formulate our main results, generalizing the Gevrey case if
$\o(\M)<\o(\L)$, and showing that such a generalization is not possible (for our definition of summability) if the sequences are comparable and $\o(\M)=\o(\L)$.
The results in this section are stated for a couple of sequences but can be easily extended
for a finite set of sequences $\M_1,\M_2, \dots ,\M_k$.

\subsection{Comparison of sequences}\label{subsect.comp.sequence}

The example at the end of this subsection will show that, in general, two weight sequences $\M$ and $\L$ need not be comparable, not even  sequences whose sequences of quotients are regularly varying (which implies the admissibility of a nonzero proximate order, see Theorem~\ref{theorem.charact.prox.order.nonzero}). Hence, it will be natural to impose some comparability condition between $\M$ and $\L$ in the forthcoming subsections.\para

Since equivalent sequences define the same classes, we are particularly interested in comparable but not equivalent sequences, i.e., $\L\precsim \M$ and $\L\not\approx\M$, which is true if and only if
$$
\inf_{p\in\N} \left(\frac{M_p}{L_p}\right)^{1/p}>0 \qquad\text{and} \quad\sup_{p\in\N} \left(\frac{M_p}{L_p}\right)^{1/p}=\oo,
$$
 or, equivalently, if
\begin{equation}\label{eq.ComparableNotEquivSeq}
  \liminf_{p\in\N} \left(\frac{M_p}{L_p}\right)^{1/p}>0 \qquad\text{and} \quad\limsup_{p\in\N} \left(\frac{M_p}{L_p}\right)^{1/p}=\oo.
  \end{equation}
In other words, we want to avoid noncomparable sequences, that is, those for which
\begin{equation}\label{eq.noncomparability.cond.seq}
 \liminf_{p\to\oo} \left(\frac{M_p}{L_p}\right)^{1/p}=0 \qquad\text{and} \quad\ \limsup_{p\to\oo} \left(\frac{M_p}{L_p}\right)^{1/p}=\oo.
\end{equation}

For the construction of our example of noncomparable sequences, we need to characterize \eqref{eq.noncomparability.cond.seq} in terms of the corresponding associated functions (see~\eqref{equadefiMdet}).
Although it is valid for strongly regular sequences $\M$, the characterization of noncomparability in terms of the associated function is stated below in the most regular case, i.e., for a weight sequence $\M$ admitting a nonzero proximate order, as these are the ones used to develop the summability theory.

\begin{pro}\label{prop.noncomp.charct.M.L}
Let $\M$ and $\L$ be two weight sequences, and suppose that $\M$ admits a  nonzero proximate order.
We have that
  $$\liminf_{p\to\oo} \left(\frac{M_p}{L_p}\right)^{1/p}=0 \qquad\text{if and only if} \quad \liminf_{t\to\oo} \frac{\o_{\L}(t)}{\o_{\M}(t)}=0, $$
and
  $$\limsup_{p\to\oo} \left(\frac{M_p}{L_p}\right)^{1/p}=\oo \qquad\text{if and only if} \quad\ \limsup_{t\to\oo} \frac{\o_{\L}(t)}{\o_{\M}(t)}=\oo.$$
\end{pro}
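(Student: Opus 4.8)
The key link between the sequence quantities $(M_p/L_p)^{1/p}$ and the ratio of associated functions $\o_{\L}(t)/\o_{\M}(t)$ is the Legendre-type duality between a weight sequence and its associated function, recorded in \eqref{eq.WeightSeqFromOmegaM}. The plan is to take logarithms throughout and rewrite everything in terms of $\o_{\M}$, $\o_{\L}$ and the index $\o(\M)$. Recall that since $\M$ admits a nonzero proximate order, by Corollary~\ref{coroConseqRegularVariation} it is strongly regular, $\o(\M)\in(0,\oo)$, and by Lemma~\ref{lemma.M.scaling} scalings of the argument of $\o_{\M}$ only perturb its size by a controlled multiplicative factor for large $t$; this last fact will be used repeatedly to absorb the constants $A,B,K$ appearing in comparisons. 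I would prove only one of the two equivalences in detail, say the ``$\limsup$'' one, and remark that the ``$\liminf$'' statement follows by the identical argument with the inequalities reversed.

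\textbf{From sequences to the associated function (necessity).} Suppose $\limsup_{p\to\oo}(M_p/L_p)^{1/p}=\oo$. Fix $t>0$ large. Using $L_p=\sup_{s>0}s^p/e^{\o_{\L}(s)}\ge t^p/e^{\o_{\L}(t)}$ and, from the other duality, $M_p\le$ (something comparable to) $t^{p}/e^{\o_{\M}(t)}$ only after optimizing; more precisely, from \eqref{eq.WeightSeqFromOmegaM} one has $M_p=\sup_{s>0}s^p e^{-\o_{\M}(s)}$, so for \emph{every} $s$, $M_p\ge s^p e^{-\o_{\M}(s)}$, and hence
\[
\Big(\frac{M_p}{L_p}\Big)^{1/p}\ \ge\ \frac{s}{\big(L_p\big)^{1/p}}\,e^{-\o_{\M}(s)/p}.
\]
Choosing $s$ and $p$ together (this is the delicate bookkeeping step): for a given large $t$ one picks $p\approx \o_{\M}(t)$ — or more safely $p=\lfloor c\,\o_{\M}(t)\rfloor$ for a suitable constant $c$ — and $s$ near $t$, and uses the monotone, convex-in-$\log$ structure of $\o_{\M}$ together with $L_p^{1/p}\le e^{\o_{\L}(\sigma)/p}\sigma^{-1}\cdot(\text{dual bound})$ to convert the hypothesis into $\limsup_{t\to\oo}\o_{\L}(t)/\o_{\M}(t)=\oo$. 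The cleanest route is probably to note that $\log M_p = \sup_{s>0}\big(p\log s-\o_{\M}(s)\big)$ is the Legendre transform of $\o_{\M}(e^{(\cdot)})$, and similarly for $\L$, so that $\log(M_p/L_p)$ is a difference of Legendre transforms; unboundedness of $\frac1p\log(M_p/L_p)$ along a subsequence then transfers, via the biconjugation identity \eqref{eq.WeightSeqFromOmegaM} and the fact that $\o_{\M}$ (being regularly varying of positive index by Remark~\ref{rema.prox.order.RV} and Theorem~\ref{theo.limsup.dM.omega}) is, up to the scalings of Lemma~\ref{lemma.M.scaling}, essentially its own biconjugate, into unboundedness of $\o_{\L}(t)/\o_{\M}(t)$ along a subsequence of $t$'s.

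\textbf{From the associated function back to sequences (sufficiency).} Conversely, assume $\limsup_{t\to\oo}\o_{\L}(t)/\o_{\M}(t)=\oo$; we must produce arbitrarily large values of $(M_p/L_p)^{1/p}$. Pick a sequence $t_k\to\oo$ with $\o_{\L}(t_k)\ge N_k\,\o_{\M}(t_k)$, $N_k\to\oo$. Using $M_p\ge t_k^{\,p}e^{-\o_{\M}(t_k)}$ (valid for all $p$) and $L_p\ge t_k^{\,p}e^{-\o_{\L}(t_k)}$ but now needing an \emph{upper} bound on $L_p$: here is where one optimizes over $p$ — choose $p=p_k$ so that the supremum defining, say, the dual of $\o_{\L}$ at $t_k$ is essentially attained, i.e. $p_k$ of order $\o_{\L}(t_k)$, which forces $\log L_{p_k}\approx p_k\log t_k - \o_{\L}(t_k)$. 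Then $\frac1{p_k}\log(M_{p_k}/L_{p_k})\gtrsim \frac1{p_k}\big(\o_{\L}(t_k)-\o_{\M}(t_k)\big)\gtrsim \frac1{p_k}(N_k-1)\o_{\M}(t_k)$, and since $p_k\asymp\o_{\L}(t_k)=N_k\o_{\M}(t_k)$ this is $\gtrsim (N_k-1)/N_k\cdot(\text{const})$, which is not obviously unbounded — so the naive estimate is too lossy, and the correct choice is instead $p_k$ of order $\o_{\M}(t_k)$ (not $\o_{\L}(t_k)$), comparing $M_{p_k}$ near its maximizer $t_k$ against $L_{p_k}$ which at argument $t_k$ is far below its own maximizer and hence small. \textbf{The main obstacle} is exactly this: getting a matching \emph{upper} bound on $L_p$ (equivalently, a lower bound on $\o_{\L}$ at the relevant scale) from a one-sided hypothesis on $\o_{\L}/\o_{\M}$. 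One does not have regular variation or even moderate growth for $\L$ a priori, so the upper bound $L_p\le (\text{something})$ must be squeezed out purely from $\o_{\L}$ via \eqref{eq.WeightSeqFromOmegaM}, accepting that the maximizer $s$ in $\sup_s s^p e^{-\o_{\L}(s)}$ need not be comparable to $t_k$; the fix is to run the optimization with $p$ tied to $\o_{\M}(t_k)$ and to control $\o_{\L}$ at the (a priori unknown) maximizing $s$ by splitting into the cases $s\le t_k$ and $s>t_k$ and using monotonicity of $\o_{\L}$ together with the convexity of $\o_{\M}$ relative to $\log$. Once this is handled, Lemma~\ref{lemma.M.scaling} cleans up the leftover constants and both equivalences drop out.
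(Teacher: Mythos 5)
Your plan correctly identifies the two ingredients the paper uses---the duality \eqref{eq.WeightSeqFromOmegaM} and the scaling control from Lemma~\ref{lemma.M.scaling}---but you attempt the implications directly and, by your own account, do not close them: the ``main obstacle'' you flag in the last paragraph (extracting a uniform upper bound on $L_p$ from a lower bound on $\o_{\L}$ that holds only along a subsequence $t_k$) is real, and you leave it as a sketch of a possible fix rather than an argument. The missing idea is to prove the contrapositives. Negating the statements turns every hypothesis into a uniform, global one-sided domination, and such a domination passes through the supremum in \eqref{eq.WeightSeqFromOmegaM} in one step, with no optimization at all. Concretely, if $\liminf_{t\to\oo}\o_{\L}(t)/\o_{\M}(t)>0$ then $\o_{\L}(t)\ge A\o_{\M}(t)$ for all large $t$, hence (for $p$ large, the supremum being attained at large $t$) $L_p=\sup_t t^p e^{-\o_{\L}(t)}\le\sup_t t^p e^{-A\o_{\M}(t)}$; Lemma~\ref{lemma.M.scaling} gives $E>0$ with $A\o_{\M}(t)>\o_{\M}(Et)$ for large $t$, so $\sup_t t^p e^{-A\o_{\M}(t)}\le E^{-p}M_p$ and $(M_p/L_p)^{1/p}\ge E>0$. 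Conversely, $\liminf_p(M_p/L_p)^{1/p}>0$ gives $M_p\ge B^pL_p$ for all $p$, hence $\o_{\L}(t)\le\o_{\M}(Bt)$ directly from the definition, and Lemma~\ref{lemma.M.scaling} bounds $\o_{\M}(Bt)/\o_{\M}(t)$ above, yielding $\liminf_t\o_{\L}/\o_{\M}>0$. The $\limsup$ equivalence is handled symmetrically. Your direct route might be salvageable with the case-splitting you gesture at, but as written it stops exactly at the step that the contrapositive renders trivial; so this is a genuine gap, not merely a stylistic divergence.
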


\begin{proof}
If we suppose that $\liminf_{t\to\oo} \o_{\L}(t)/\o_{\M}(t)>0$,
then there exists $A>0$ such that $\o_{\L}(t)\geq A \o_{\M}(t)$ for every $t\geq t_0$. By \eqref{eq.WeightSeqFromOmegaM} we have that
$$\liminf_{p\to\oo} \left(\frac{M_p}{L_p}\right)^{1/p} =
 \liminf_{p\to\oo} \left(\frac{M_p}{\sup_{t>0} (t^p / e^{\o_{\L}(t)})} \right)^{1/p} \geq
 \liminf_{p\to\oo} \left(\frac{M_p}{\sup_{t>0} (t^p / e^{A\o_{\M}(t)})} \right)^{1/p}.  $$
By Lemma~\ref{lemma.M.scaling}, there exists $E>0$ such that $A\o_{\M}(t)>\o_{\L}(Et)$ for $t$ large enough. It is easy to check that the supremum for $t>0$ of the function $f_{p,\M}(t)=t^p e^{-A\o_{\M}(t)}$ is attained in $[m_{\lfloor p/A \rfloor},\oo)$, so for $p$ large enough we have that
$$\sup_{t>0} (t^p / e^{A\o_{\M}(t)}) <\sup_{t>0} (t^p / e^{\o_{\L}(Et)}),
$$
and  we deduce from~\eqref{eq.WeightSeqFromOmegaM} that
$$\liminf_{p\to\oo} \left(\frac{M_p}{L_p}\right)^{1/p} > E >0.  $$

 Now,  if
 we suppose that $\liminf_{p\to\oo} \left(M_p/L_p\right)^{1/p}>0$,
 then there exists $B>0$ such that $M_p\geq B^p L_p$ for every $p\in\N$. Consequently,  $\o_{\L}(t)\geq \o_{\M}(Bt)$ for every $t>m_0$ and, using Lemma~\ref{lemma.M.scaling}, we have that
 $$\liminf_{t\to\oo} \frac{\o_{\L}(t)}{\o_{\M}(t)}\geq \liminf_{t\to\oo} \frac{\o_{\M}(Bt)}{\o_{\M}(t)}\geq G>0.  $$
 The same arguments lead to the other equivalence.
\end{proof}

We will use a construction of sequences from proximate orders
in order to build noncomparable sequences.

\begin{defi}\label{defi.seq.U}
Let $\ro(t)$ be a nonzero proximate order,  $\ga>0$ and $V \in MF (\ga, \ro(t) )$. We define its \textit{associated sequence} $\V=(V_p)_{p\in\N_0}$ by $V_0:=1$ and $V_p:= v_0v_1\cdots v_{p-1}$ for all $p\in\N$, where
$$v_0:=V(1), \quad v_p:=V(p), \ \ p\in\N.$$
\end{defi}

By Theorem~\ref{Th.Maergoiz.analytic.PO}.(I), (III) and (VI), we see that $\V$ is a weight sequence and that $\bv=(v_p)_{p\in\N_0}$ is regularly varying of positive index $\ro:=\lim_{t\to\oo} \ro(t)$. In particular, by Theorem~\ref{theorem.charact.prox.order.nonzero}, $d_{\V}$ is a nonzero proximate order and, by Corollary~\ref{coroConseqRegularVariation}, $\V$ is strongly regular.

\begin{exam}\label{exam.noncomparability}
We consider the functions
$$\ro_1(t)=1, \quad t\in(0,\oo),$$
$$\ro_2(t)=1+\frac{\sin(\log_2(t))}{\log_2(t)}, \quad t\in (e,\oo), \quad \log_2(t):=\log(\log(t)),$$
which are easily checked to be
nonzero proximate orders which tend to 1 at infinity.
We define $V_1(t)=t^{\ro_1(t)}$ and $V_2(t)=t^{\ro_2(t)}$, and we observe that, for the sequences
$$r_n=\exp(\exp (\pi/2+2\pi n)),\qquad s_n=\exp(\exp (3\pi/2+2\pi n)), \qquad n\in\N,$$
one has
\begin{equation}\label{eq.V1V2noncomparable}
\lim_{n\to\oo}\frac{V_2(s_n)}{V_1(s_n)} =0,\qquad \lim_{n\to\oo}\frac{V_2(r_n)}{V_1(r_n)} =\oo .
\end{equation}

We fix $\ga>0$, $\widetilde{V}_1\in MF (\ga, \ro_1(t) )$ and $\widetilde{V}_2\in MF (\ga, \ro_2(t))$ and we consider the sequences $\U_1$ and $\U_2$ (see Definition~\ref{defi.seq.U}) defined from the corresponding inverse functions $\widetilde{U}_1(t)$ and $\widetilde{U}_2(t)$ (see Theorem~\ref{th.inv.analytic.PO}). These sequences $\U_1$ and $\U_2$ are regularly varying of index $1$, so $\o(\U_1)=\o(\U_2)=1$, and the functions $d_{\U_j}(t)$ are nonzero proximate orders.
According to Theorem~\ref{Th.Maergoiz.analytic.PO}.(VI) and by the information contained in~\cite[Prop.~4.13 and Th.~4.14]{JimenezSanzSchindl}, for $j=1,2$ we see that
$$0<\liminf_{t\to\oo} \frac{\o_{\U_j}(t)}{V_j(t)} \leq \limsup_{t\to\oo} \frac{\o_{\U_j}(t)}{V_j(t)}<\oo.$$
Taking into account~\eqref{eq.V1V2noncomparable}, we deduce that
 $$\liminf_{t\to\oo} \frac{\o_{\U_2}(t)}{\o_{\U_1}(t)}=0 \qquad\text{and} \quad\ \limsup_{p\to\oo}\frac{\o_{\U_2}(t)}{\o_{\U_1}(t)}=\oo.$$
Finally, by Proposition~\ref{prop.noncomp.charct.M.L}, we conclude that $\U_1$ and $\U_2$ are noncomparable.
\end{exam}


\subsection{Product and quotient of sequences}\label{subsect.product.quotient.seq}

In the study of multisummability in this general context there naturally appear the product sequence $\M \cdot \L:=(M_pL_p)_{p\in\N_0}$ and the quotient sequence $\M/\L:=(M_p/L_p)_{p\in\N_0}$ of two sequences $\M$ and $\L$.
In this subsection some elementary properties of these sequences will be obtained, and the connection with the comparability notion in the previous subsection will be established.
Since many of these properties are stated in terms of the sequence of quotients, note that the corresponding ones for $\M \cdot \L$ and $\M/\L$ are $\m\cdot\l=(m_p\ell_p)_{p\in\N_0}$ and
$\m/\l=(m_p/\ell_p)_{p\in\N_0}$, respectively.

\begin{pro}\label{prop.product.seq}
Suppose given two weight sequences $\M$ and $\L$, each one admitting a nonzero proximate order. Then,
$\M \cdot \L$ is a weight sequence, it admits a nonzero proximate order and $\o(\M\cdot\L)=\o(\M)+\o(\L)$.
\end{pro}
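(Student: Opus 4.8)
The plan is to verify the three assertions in order: (1) $\M\cdot\L$ is a weight sequence, (2) it admits a nonzero proximate order, and (3) $\o(\M\cdot\L)=\o(\M)+\o(\L)$. For (1), recall that the quotient sequence of $\M\cdot\L$ is $\m\cdot\l=(m_p\ell_p)_{p\in\N_0}$. Since $\M$ and $\L$ are both weight sequences, $\m$ and $\l$ are nondecreasing with limit $+\infty$; hence the pointwise product $\m\cdot\l$ is nondecreasing with limit $+\infty$, so $\M\cdot\L$ is (lc) with $\lim_p(m_p\ell_p)=\infty$, i.e.\ a weight sequence. Also $(M_0L_0)=1$, as required.

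For (2) and (3) simultaneously, the natural route is through characterization (c) of Theorem~\ref{theo.charact.admit.p.o}. Since $\M$ admits a nonzero proximate order, there are bounded sequences $(b_p)_{p\in\N}$, $(\eta_p)_{p\in\N}$ with $\eta_p\to\o(\M)$ and $m_p=\exp\bigl(b_{p+1}+\sum_{j=1}^{p+1}\eta_j/j\bigr)$; similarly $\ell_p=\exp\bigl(b'_{p+1}+\sum_{j=1}^{p+1}\eta'_j/j\bigr)$ with $(b'_p),(\eta'_p)$ bounded and $\eta'_p\to\o(\L)$. Multiplying, $m_p\ell_p=\exp\bigl((b_{p+1}+b'_{p+1})+\sum_{j=1}^{p+1}(\eta_j+\eta'_j)/j\bigr)$. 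Now $(b_{p}+b'_{p})$ is bounded, $(\eta_p+\eta'_p)$ is bounded, and $\eta_p+\eta'_p\to\o(\M)+\o(\L)=:\o''>0$. Thus the quotient sequence of $\M\cdot\L$ has exactly the form required by Theorem~\ref{theo.charact.admit.p.o}(c) with limiting value $\o''$, so $\M\cdot\L$ admits a nonzero proximate order, and by the final clause of that theorem $\o(\M\cdot\L)=\o''=\o(\M)+\o(\L)$.

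An equivalent and perhaps cleaner formulation of the same argument avoids the explicit representation: by Corollary~\ref{coroConseqRegularVariation}, admissibility of a nonzero proximate order gives $\lim_p \log(m_p)/\log(p)=\o(\M)$ and $\lim_p\log(\ell_p)/\log(p)=\o(\L)$, whence $\lim_p\log(m_p\ell_p)/\log(p)=\o(\M)+\o(\L)$; but this alone only pins down $\o(\M\cdot\L)$ via Theorem~\ref{theo.limsup.dM.omega} once we already know $\M\cdot\L$ admits a proximate order (so that the $\liminf$ defining $\o(\M\cdot\L)$ is actually a limit). So the representation-based argument is the safe one: it delivers admissibility and the value of the index in one stroke. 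As a sanity check, one can alternatively invoke characterization (d) of Theorem~\ref{theorem.charact.prox.order.nonzero}-style: if $\m$ and $\l$ were themselves regularly varying (the ``most regular'' case) then $\m\cdot\l$ is regularly varying of index $\o(\M)+\o(\L)$, consistent with the above; but in general $\M,\L$ need only be \emph{equivalent} to such sequences, which is precisely why passing through the stability-under-equivalence characterization (c) is the right move.

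I do not expect a serious obstacle here: the statement is a routine consequence of the additive structure hidden in characterization (c) of Theorem~\ref{theo.charact.admit.p.o}. The only point requiring a moment's care is the logical order — one must establish admissibility of a nonzero proximate order for $\M\cdot\L$ \emph{before} asserting that $\o(\M\cdot\L)$ equals the limit $\o(\M)+\o(\L)$, since the growth index is \emph{a priori} only a $\liminf$; the representation (c) handles both at once and thereby sidesteps the issue cleanly.
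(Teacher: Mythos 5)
Your proof is correct. The paper gives only a one-line justification, invoking Theorems~\ref{theorem.charact.prox.order.nonzero} and~\ref{theo.charact.admit.p.o} together with the stability of (lc), regular variation and the regular variation index under pointwise products; the implicit route is therefore: pass via Theorem~\ref{theo.charact.admit.p.o}(b) to equivalent sequences $\M'$, $\L'$ whose quotient sequences are regularly varying of indices $\o(\M)$, $\o(\L)$ (Theorem~\ref{theorem.charact.prox.order.nonzero}(c)), observe that the product of the quotient sequences is regularly varying of index $\o(\M)+\o(\L)$, and go back through Theorems~\ref{theorem.charact.prox.order.nonzero} and~\ref{theo.charact.admit.p.o} to conclude. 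You instead go directly through the representation in Theorem~\ref{theo.charact.admit.p.o}(c), multiplying the two exponential representations of $m_p$ and $\ell_p$; boundedness and convergence are trivially preserved under addition of the auxiliary sequences. This is a genuinely cleaner packaging of the same underlying mechanism: it avoids introducing the intermediate equivalent sequences and avoids having to cite the stability of regular variation as a separate fact. Your remark on the logical order — that admissibility must be established \emph{before} one can read $\o(\M\cdot\L)$ off a limit rather than a $\liminf$ — is exactly the right point of care, and your representation-based argument handles it automatically, as you say. No gaps.
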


\begin{proof} This is immediate using Theorems~\ref{theorem.charact.prox.order.nonzero} and~\ref{theo.charact.admit.p.o} and the stability of (lc), regular variation and the index of regular variation for the product.
\end{proof}


We observe that the product sequence of two sequences also preserves some weaker properties. In particular, if $\M$ and $\L$ are strongly regular sequences then $\M \cdot \L$  is strongly regular, but only $\o(\M)+\o(\L)\leq \o(\M \cdot \L)$ can be guaranteed. An example in which equality fails to hold can be constructed with the techniques described in~\cite[Remark~4.13]{JimenezSanzSchindlIndex}.

If there exists $a>0$ such that the sequences of quotients $\m=(m_p)_{p\in\N_0}$ and $\l=(l_p)_{p\in\N_0}$, respectively associated with $\M=(M_p)_{p\in\N_0}$ and $\L=(L_p)_{p\in\N_0}$, satisfy
$$a^{-1}\ell_p\leq m_p \leq a \ell_p, \qquad p\in\N_0,$$
we write $\m\simeq\l$, and then it is clear that $\M\approx\L$. Of course, if $\m$ and $\l$ are equivalent in the classical sense, that is, $\lim_{p\to\oo} m_p/\ell_p=1$ (denoted $\m\sim\l$), then $\m\simeq\l$.

The main difficulty when dealing with the quotient of two weight sequences is to ensure that it satisfies (lc).
If the sequences considered are regular enough, we will solve this problem by switching $\M/\L$ for an equivalent sequence, thanks to the following result of R.~Bojanic and E.~Seneta.

\begin{theo}[\cite{BojanicSeneta}, \, Th. 4]\label{theo.Boj.Sen.normalized.seq}
Let $\o\in\R$ be given. A sequence of positive real numbers $(c_p)_{p\in\N}$ is regularly varying with index $\o$
if and only if there exists a sequence of positive real numbers $(b_p)_{p\in\N}$  such that $b_p\sim c_p$ and \begin{equation}\label{eq.boj.sen.smooth.seq}
	\frac{b_{p+1}}{b_{p}} = 1+\frac{\o}{p}+o\left(\frac{1}{p}\right)\qquad \text{as $p\to\oo$}.
	\end{equation}
In particular, $(b_p)_{p\in\N}$ is also regularly varying with index $\o$.
\end{theo}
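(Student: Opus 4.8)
The plan is to prove both implications, treating the nontrivial ``only if'' direction by first reducing to the slowly varying case. If $(c_p)$ is regularly varying of index $\o$, then $\ell(p):=c_p/p^{\o}$ is slowly varying (index $0$), straight from the definition, since $\ell(\lfloor\lambda p\rfloor)/\ell(p)=(c_{\lfloor\lambda p\rfloor}/c_p)(p/\lfloor\lambda p\rfloor)^{\o}\to\lambda^{\o}\lambda^{-\o}=1$. Once one produces $(\tilde\ell_p)$ with $\tilde\ell_p\sim\ell(p)$ and $\tilde\ell_{p+1}/\tilde\ell_p=1+o(1/p)$, the sequence $b_p:=p^{\o}\tilde\ell_p$ does the job: $b_p\sim c_p$, and $b_{p+1}/b_p=(1+1/p)^{\o}\,(\tilde\ell_{p+1}/\tilde\ell_p)=(1+\o/p+O(1/p^2))(1+o(1/p))=1+\o/p+o(1/p)$.

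For the smoothing I would use the Ces\`aro average. Set $S_p:=\sum_{k=1}^{p}\ell(k)$ and $\tilde\ell_p:=S_p/p$. Two things must be checked: \emph{(a)} $\tilde\ell_p\sim\ell(p)$; and \emph{(b)} $\tilde\ell_{p+1}-\tilde\ell_p=o(\ell(p)/p)$, which together with (a) yields $\tilde\ell_{p+1}/\tilde\ell_p=1+o(1/p)$. Step (b) is immediate from (a) via the telescoping identity $\tilde\ell_{p+1}-\tilde\ell_p=\frac{S_{p+1}}{p+1}-\frac{S_p}{p}=\frac{p\,\ell(p+1)-S_p}{p(p+1)}$: since (a) gives $S_p=p\ell(p)(1+o(1))$ and slow variation gives $\ell(p+1)\sim\ell(p)$, the numerator is $p\ell(p)\cdot o(1)$. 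Step (a) is the discrete Karamata theorem: writing $S_p/(p\ell(p))=\frac1p\sum_{k=1}^{p}\ell(k)/\ell(p)$ and reading this as a Riemann sum of $t\mapsto\ell(\lfloor tp\rfloor)/\ell(p)$ on $(0,1]$, one passes to the limit using uniform convergence of $\ell(\lfloor tp\rfloor)/\ell(p)\to1$ on compact subsets $[\delta,1]$, a Potter-type bound $\ell(j)/\ell(p)\le C(j/p)^{-1/2}$ to dominate the contribution of $t\in(0,\delta]$, and finally $\delta\to0^{+}$.

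For the ``if'' direction, assume $b_p\sim c_p$ and $b_{p+1}/b_p=1+\o/p+\ep_p/p$ with $\ep_p\to0$. Telescoping and using $\log(1+x)=x+O(x^2)$, one gets $\log b_p=\o\log p+L_p+c'_p$, where $L_p:=\sum_{k=1}^{p-1}\ep_k/k$ and $(c'_p)$ converges to a finite limit (the $O(1/k^2)$ corrections are summable, and $\sum_{k\le p}\o/k=\o\log p+\text{const}+o(1)$). For $\lambda>0$ and $q:=\lfloor\lambda p\rfloor$, the block sum satisfies $|L_q-L_p|\le(\sup_{k\ge p}|\ep_k|)\sum_{k=p}^{q}\frac1k\to0\cdot|\log\lambda|=0$, while $c'_q-c'_p\to0$. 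Hence $\log b_q-\log b_p=\o(\log q-\log p)+o(1)\to\o\log\lambda$, so $b_{\lfloor\lambda p\rfloor}/b_p\to\lambda^{\o}$, and since $c_p\sim b_p$ the same holds for $(c_p)$; thus $(c_p)$ is regularly varying of index $\o$. The final assertion that $(b_p)$ is itself regularly varying of index $\o$ is then either immediate (asymptotic equivalence preserves regular variation, directly from the definition) or simply this last computation read for $(b_p)$.

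The main obstacle is step (a): the discrete Karamata theorem, which genuinely relies on the Uniform Convergence Theorem for slowly varying sequences together with the Potter-type bound; everything else is elementary telescoping and estimation. An alternative route of comparable depth would be to pass to the step function $t\mapsto c_{\lfloor t\rfloor}$, which is regularly varying of index $\o$ as a function, invoke Karamata's representation theorem for such functions, and round off the representing integral to a sum; this trades the discrete Karamata theorem for the function-theoretic representation theorem.
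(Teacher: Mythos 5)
The paper does not prove this statement: it is Theorem~4 of Bojani\'c--Seneta, quoted as an external citation and used as a black box in the proof of Proposition~\ref{prop.quotient.seq}. There is therefore no proof in the paper against which to compare your argument; what follows is a review of the argument on its own terms.

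Your proposal is correct. The reduction to the slowly varying case, the Ces\`aro smoothing $\tilde\ell_p=S_p/p$, the identity $\tilde\ell_{p+1}-\tilde\ell_p=\big(p\ell(p+1)-S_p\big)/\big(p(p+1)\big)=o(\ell(p)/p)$, the product $b_p=p^\o\tilde\ell_p$, and the telescoping argument in the ``if'' direction all check out, and the ``in particular'' clause does follow as you say from either the if-direction computation or from the stability of regular variation under asymptotic equivalence. The one load-bearing ingredient you leave to the reader is the discrete Karamata theorem $S_p\sim p\,\ell(p)$, which, as you flag, needs the Uniform Convergence Theorem and a Potter-type bound for slowly varying \emph{sequences}; both are available in Bojani\'c--Seneta's own framework, or follow from the function theory applied to $t\mapsto\ell(\lfloor t\rfloor)$. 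Two presentational points worth tightening: (i) the Potter bound $\ell(j)/\ell(p)\le C(p/j)^{1/2}$ only holds for $j,p\ge N$ with some fixed $N$, so the finitely many indices $j<N$ in $\frac1p\sum_{j<\delta p}\ell(j)/\ell(p)$ must be absorbed separately; this works because $p\,\ell(p)\to\infty$ (slow variation gives $p^{\varepsilon}\ell(p)\to\infty$ for every $\varepsilon>0$). (ii) In the block-sum estimate the range should be the integers strictly between $p$ and $q=\lfloor\lambda p\rfloor$, with $\lambda<1$ handled symmetrically; the harmonic block then tends to $|\log\lambda|$ and is annihilated by $\sup_{k\ge\min(p,q)}|\varepsilon_k|\to0$, as you intend. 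Finally, your proposed alternative route --- pass to the regularly varying step function $t\mapsto c_{\lfloor t\rfloor}$, apply Karamata's representation theorem, and discretize the representing integral --- is in fact closer to Bojani\'c and Seneta's original line of argument; the Ces\`aro route is more elementary in spirit but requires proving the discrete Karamata theorem from scratch, while the representation-theorem route imports a heavier result but then the smoothing is automatic.
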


\begin{pro}\label{prop.quotient.seq}
Given two weight sequences $\M$ and $\L$, each one admitting a nonzero proximate order, assume that
$\o(\L)<\o(\M)$.  Then there exists a weight sequence $\A$ equivalent to $\M/\L$ whose sequence of quotients is regularly varying with index $\o(\M)-\o(\L)$.
\end{pro}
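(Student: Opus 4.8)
The plan is to reduce everything to sequences of quotients that are honestly regularly varying, then divide, and finally repair the possible failure of logarithmic convexity by means of the Bojanic--Seneta normalization. First I would invoke Theorem~\ref{theo.charact.admit.p.o}(b) to choose weight sequences $\M'\approx\M$ and $\L'\approx\L$ for which $d_{\M'}$ and $d_{\L'}$ are nonzero proximate orders; then Theorem~\ref{theorem.charact.prox.order.nonzero} tells us that the quotient sequences $\m'$ and $\l'$ are regularly varying with indices $\o(\M')=\o(\M)$ and $\o(\L')=\o(\L)$ (recall $\o(\cdot)$ is invariant under $\approx$). A one-line estimate with the constants witnessing $\M\approx\M'$ and $\L\approx\L'$ gives $\M/\L\approx\M'/\L'$, so by transitivity of $\approx$ it suffices to produce the desired sequence $\A$ equivalent to $\M'/\L'$. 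In other words, I may assume from the outset that $\m$ and $\l$ themselves are regularly varying.

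Dividing the defining limits, for every $\lambda>0$,
$$\lim_{p\to\oo}\frac{m_{\lfloor\lambda p\rfloor}/\ell_{\lfloor\lambda p\rfloor}}{m_p/\ell_p}=\frac{\lambda^{\o(\M)}}{\lambda^{\o(\L)}}=\lambda^{\o(\M)-\o(\L)},$$
so $\m/\l=(m_p/\ell_p)_{p}$ is regularly varying with index $\o(\M)-\o(\L)>0$; in particular $m_p/\ell_p\to\oo$. Now I would apply Theorem~\ref{theo.Boj.Sen.normalized.seq} to $(m_p/\ell_p)_{p\in\N}$: it furnishes a positive sequence $(b_p)_{p\in\N}$ with $b_p\sim m_p/\ell_p$, with $b_{p+1}/b_p=1+(\o(\M)-\o(\L))/p+o(1/p)$, and still regularly varying of index $\o(\M)-\o(\L)$. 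Since $\o(\M)-\o(\L)>0$, this sequence is strictly increasing from some index on and tends to $\oo$; after choosing $0<b_0\le b_1$ and adjusting the finitely many initial terms violating monotonicity (a change that affects neither $b_p\sim m_p/\ell_p$ nor the regular variation of $(b_p)$), the sequence $(b_p)_{p\in\N_0}$ is nondecreasing with limit $\oo$. Setting $A_0:=1$ and $A_p:=b_0b_1\cdots b_{p-1}$ for $p\in\N$ then yields a weight sequence $\A=(A_p)_{p\in\N_0}$ whose sequence of quotients is precisely $(b_p)_{p\in\N_0}$, regularly varying with index $\o(\M)-\o(\L)$, as required.

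It only remains to verify $\A\approx\M/\L$. As $b_p/(m_p/\ell_p)\to1$, this ratio lies in $[1/2,2]$ for all large $p$ and in some fixed band $[\kappa^{-1},\kappa]$ for all $p$; hence the telescoping product $A_p/(M_p/L_p)=\prod_{k=0}^{p-1} b_k/(m_k/\ell_k)$ is squeezed between $(2\kappa)^{-p}$ and $(2\kappa)^{p}$, which is exactly $\A\approx\M/\L$.

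The \textbf{genuine obstacle} is the one flagged at the start of the subsection: the raw quotient $\m/\l$ need not be monotone, so $\M/\L$ need not be logarithmically convex. This is precisely what Bojanic--Seneta, together with the strict sign condition $\o(\M)-\o(\L)>0$, resolves, since the normalized companion sequence $(b_p)$ it provides satisfies $b_{p+1}/b_p\to1^{+}$ at a rate forcing eventual monotonicity; the finite initial surgery and the equivalence estimate are then routine.
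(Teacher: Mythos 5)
Your proof is correct and follows essentially the same route as the paper: reduce to regularly varying sequences of quotients, invoke Bojanic--Seneta, and repair the finitely many initial terms that obstruct log-convexity. The one (minor) streamlining you introduce is to apply Theorem~\ref{theo.Boj.Sen.normalized.seq} once, directly to the quotient sequence $\m/\l$ (after first checking it is regularly varying by dividing the defining limits), whereas the paper applies Bojanic--Seneta separately to $\m'$ and $\l'$ and then forms the ratio $b_p=m''_p/\ell''_p$; both yield the same asymptotic $b_{p+1}/b_p=1+(\o(\M)-\o(\L))/p+o(1/p)$, and the rest of the argument (eventual monotonicity from the strict inequality $\o(\M)>\o(\L)$, finite initial surgery, telescoping-product equivalence) is identical.
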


\begin{proof} By Theorems~\ref{theorem.charact.prox.order.nonzero} and~\ref{theo.charact.admit.p.o},  we know that there exist weight sequences
$\L'$ and $\M'$ equivalent to $\L$ and $\M$, respectively, whose sequences of quotients $(\ell'_p)_{p\in\N_0}$ and $(m'_p)_{p\in\N_0}$
 are regularly varying with indices $\o(\L)$ and $\o(\M)$.
Applying Theorem~\ref{theo.Boj.Sen.normalized.seq} there exist sequences of positive real numbers $(\ell''_p)_{p\in\N_0}$ and $(m''_p)_{p\in\N_0}$ with
$\lim_{p\to\oo} \ell''_p/\ell'_p=1$ and $\lim_{p\to\oo} m''_p/m'_p=1$,
and satisfying~\eqref{eq.boj.sen.smooth.seq}. It is plain to check that the sequence $\bb=(b_p:=m''_p/\ell''_p)_{p\in\N_0}$ is regularly varying of index $\o(\M)-\o(\L)$. By~\eqref{eq.boj.sen.smooth.seq}, we observe that
$$\frac{b_{p+1}}{b_{p}}=\frac{m''_{p+1}}{m''_{p}}\frac{\ell''_{p}}{\ell''_{p+1}}= \left( 1+\frac{\o(\M)}{p}+o\left(\frac{1}{p}\right)\right)\frac{1}{1+\o(\L)/p+o(1/p) }  .$$
We take $\a\in(\o(\L),\o(\M))$, then there exists $p_0\in\N$ such that
$$\frac{b_{p+1}}{b_{p}}>\left( 1+\frac{\a}{p}\right)\frac{1}{(1+\a/p)} =1, \qquad p\geq p_0.$$
We define the sequence $a_p:=b_{p_0}$ for $p<{p_0}$ and $a_p:=b_p$ for $p\geq p_0$.
The sequence $\ba=(a_p)_{p\in\N_0}$ is nondecreasing and regularly varying of index $\o(\M)-\o(\L)$ and $\ba\simeq\bb$. Consequently, the corresponding sequence $\mathbb{A}$ is a weight sequence with
$\A\approx \mathbb{B} = \M''/\L''$.
Since $\m'' / \l'' \sim \m'/\l'$, we have that $\m''/\l''\simeq \m'/\l'$, so also $\mathbb{A}\approx \M'/\L'$. Finally, using that $\L'$ and $\M'$ are respectively equivalent to $\L$ and $\M$, we conclude that the proposition holds.
\end{proof}

\begin{rema}\label{rema.MoverL.welldefined}
If $\o(\L)<\o(\M)$, in our regards we can always change, using the above proposition, $\M$ for the equivalent sequence $\A \cdot \L$ which is (lc) and admits a nonzero proximate order. So, without loss of generality we  can always assume that $\M/\L$ is (lc) and that its sequence of quotients is regularly varying of positive index.
\end{rema}

Some information about the behavior of these sequences can be obtained even if the conditions on $\M$ and $\L$ are relaxed. In particular, we observe that the indices $\o(\M\cdot \L)$ and $\o(\M/\L)$ can be computed from $\o(\M)$ and $\o(\L)$.

\begin{rema}\label{rema.omega.index.quotient.sequence}
Assume that $\M$ and $\L$ are weight sequences, such that $\L$ satisfies \eqref{limit.logmp.over.logp.omega}, which is guaranteed in case $\L$ admits a nonzero proximate order (see Corollary~\ref{coroConseqRegularVariation}), then
  $$\o(\M\cdot\L)= \liminf_{p\to\oo} \frac{\log(m_p\ell_p)}{\log(p)} =
 \liminf_{p\to\oo} \frac{\log(m_p)}{\log(p)} +\lim_{p\to\oo}\frac{\log(\ell_p)}{\log(p)} =\o(\M)+\o(\L)\in[0,\oo],$$
 $$\o(\M/\L)= \liminf_{p\to\oo} \frac{\log(m_p/\ell_p)}{\log(p)} =
 \liminf_{p\to\oo} \frac{\log(m_p)}{\log(p)} -\lim_{p\to\oo}\frac{\log(\ell_p)}{\log(p)} =\o(\M)-\o(\L),$$
 whenever the last difference is not indeterminate.
\end{rema}

 Finally, the following proposition shows that $\L\precsim\M$ and $\L\not\approx\M$ (comparability but not equivalence) can be characterized in terms of the quotient sequence.

\begin{pro}\label{prop.quotient.sequence.compara.cond}
 Given two sequences $\M$ and $\L$, we have that:
 \begin{enumerate}[(i)]
 \item If the sequence of quotients of $\M/\L$  tends to infinity, then $\lim_{p\to\oo}(M_p/L_p)^{1/p}=\oo$ and $\L\precsim\M$ and $\L\not\approx\M$.
 \item Assume that $\M/\L$ is (lc). Then
  $\L\precsim\M$ and $\L\not\approx\M$ if and only if
 $\lim_{p\to\oo}(M_p/L_p)^{1/p}=\oo$
  if and only if the sequence of quotients of $\M/\L$ tends to infinity.
 \item Assume that $\L$ is a weight sequence satisfying~\eqref{limit.logmp.over.logp.omega},
and that $\o(\L)<\o(\M)$.
Then  the sequence of quotients of $\M/\L$ tends to infinity,  $\L\precsim\M$, $\L\not\approx\M$ and $\lim_{p\to\oo}(M_p/L_p)^{1/p}=\oo$.
 \end{enumerate}

\end{pro}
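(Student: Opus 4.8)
The plan is to treat the three items in order, building each on the previous one and on the material already developed for the associated functions $\o_{\M}$, $\o_{\L}$.

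For item (i), suppose the sequence of quotients $\m/\l=(m_p/\ell_p)_{p\in\N_0}$ of $\M/\L$ tends to infinity. Then given any $R>0$ there is $p_R\in\N$ with $m_p/\ell_p\ge R$ for $p\ge p_R$, and writing $M_p/L_p=\prod_{k=0}^{p-1}(m_k/\ell_k)$ we split the product at $p_R$: the first $p_R$ factors contribute a fixed positive constant $c_R$, the remaining $p-p_R$ factors are each $\ge R$, so $(M_p/L_p)^{1/p}\ge c_R^{1/p}R^{(p-p_R)/p}$, which tends to $R$ as $p\to\infty$. Since $R$ is arbitrary, $\lim_{p\to\oo}(M_p/L_p)^{1/p}=\oo$. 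In particular $\inf_p(M_p/L_p)^{1/p}>0$ (the sequence is eventually large and the finitely many initial terms are positive), so $\L\precsim\M$, while $\sup_p(M_p/L_p)^{1/p}=\oo$ forbids $M_p\le A^pL_p$ for any fixed $A$, hence $\M\not\precsim\L$ and in particular $\L\not\approx\M$.

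For item (ii), assume in addition that $\M/\L$ is (lc), i.e.\ $\m/\l$ is nondecreasing. The implication "quotients $\to\infty$ $\Rightarrow$ $(M_p/L_p)^{1/p}\to\oo$" is item (i). The implication "$(M_p/L_p)^{1/p}\to\oo$ $\Rightarrow$ $\L\precsim\M$ and $\L\not\approx\M$" is immediate: the limit being $\oo$ gives both $\inf_p(M_p/L_p)^{1/p}>0$ and $\sup_p(M_p/L_p)^{1/p}=\oo$, so \eqref{eq.ComparableNotEquivSeq} holds. For the remaining implication "$\L\precsim\M$ and $\L\not\approx\M$ $\Rightarrow$ $\m/\l\to\infty$", here is where (lc) is essential: since $\m/\l$ is nondecreasing it has a limit $\kappa\in(0,\oo]$. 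If $\kappa<\oo$, then $m_k/\ell_k\le\kappa$ for all $k$, whence $M_p/L_p\le\kappa^p$, i.e.\ $M_p\le\kappa^pL_p$, contradicting $\M\not\precsim\L$ (which follows from $\L\not\approx\M$ together with $\L\precsim\M$). Therefore $\kappa=\oo$. This closes the cycle of equivalences.

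For item (iii), assume $\L$ is a weight sequence satisfying \eqref{limit.logmp.over.logp.omega} and $\o(\L)<\o(\M)$; we must show the quotients of $\M/\L$ tend to infinity, and then (i) gives the rest. The natural route is through Remark~\ref{rema.MoverL.welldefined} / Proposition~\ref{prop.quotient.seq}: pass to a weight sequence $\A$ equivalent to $\M/\L$ whose sequence of quotients $\ba$ is regularly varying of positive index $\o(\M)-\o(\L)>0$. A regularly varying sequence of positive index tends to infinity, so $\ba\to\infty$; and since $\A\approx\M/\L$ with $\A$ obtained by $a_p\simeq b_p=m''_p/\ell''_p$ and $m''_p/\ell''_p\sim m_p/\ell_p$ (comparing the proof of Proposition~\ref{prop.quotient.seq}), the quotients of $\M/\L$ are comparable up to a bounded factor with $\ba$ and hence also tend to infinity. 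An alternative, more self-contained argument avoiding the full strength of Proposition~\ref{prop.quotient.seq}: by \eqref{limit.logmp.over.logp.omega} applied to $\L$ and by $\o(\M)=\liminf_p\log(m_p)/\log(p)$, for any $\a\in(\o(\L),\o(\M))$ one has $\log(\ell_p)\le\a\log(p)$ and $\log(m_p)\ge\a'\log(p)$ for some $\a'>\a$ and all large $p$, whence $m_p/\ell_p\ge p^{\a'-\a}\to\infty$. The main obstacle, and the only genuinely delicate point, is the converse direction in (ii), where log-convexity of $\M/\L$ is exactly what converts "not $\M\precsim\L$" into "$\m/\l$ unbounded" by monotonicity; without (lc) the quotients could oscillate and the statement fails, which is precisely why the hypothesis appears.
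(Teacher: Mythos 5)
Your proposal is correct, and its core is the same as the paper's. Item (i) is essentially identical (split the product at the threshold index and let $p\to\infty$). Item (iii)'s ``self-contained'' argument, comparing $m_p/\ell_p\ge p^{\o(\M)-\o(\L)-2\ep}$ for large $p$, is precisely what the paper does, then invokes (i). In item (ii) you take a small but genuine variant: the paper observes that (lc) of $\M/\L$ makes $((M_p/L_p)^{1/p})_p$ nondecreasing, so $\sup=\oo$ upgrades to $\lim=\oo$, and then cites the preliminary equivalence ``$(M_p)^{1/p}\to\oo$ iff $m_p\to\oo$ for (lc) sequences''; you instead argue directly on $\m/\l$: nondecreasing, so it has a limit $\kappa$, and $\kappa<\oo$ would force $M_p\le\kappa^pL_p$, i.e.\ $\M\precsim\L$, contradicting non-equivalence. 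Both routes are valid and of the same length; yours closes a three-step cycle of implications while the paper proves the two equivalences separately.

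One caveat concerning your first suggested route for (iii). You cannot invoke Proposition~\ref{prop.quotient.seq} or Remark~\ref{rema.MoverL.welldefined} here: those require \emph{both} $\M$ and $\L$ to admit a nonzero proximate order, whereas item (iii) only assumes that $\L$ is a weight sequence satisfying~\eqref{limit.logmp.over.logp.omega} (and the hypotheses involve only the index $\o(\M)$ of $\M$, not regularity). Moreover, even when Proposition~\ref{prop.quotient.seq} does apply, equivalence $\A\approx\M/\L$ of sequences is a statement about $A^{\pm p}$-bounds on $A_p/(M_p/L_p)$ and tells you nothing directly about the quotients $m_p/\ell_p$; the passage from $\ba\to\infty$ to $\m/\l\to\infty$ would require the stronger relation $\ba\simeq\m/\l$, which is available inside the proof of Proposition~\ref{prop.quotient.seq} but is not part of its statement. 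Since you also give the direct comparison via~\eqref{limit.logmp.over.logp.omega}, the item is established; just drop the appeal to Proposition~\ref{prop.quotient.seq} as an alternative.
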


\begin{proof}
 \begin{enumerate}
 \item[(i)]  If $\m/\l$ tends to infinity, then for any $K>0$ it exists $p_0\in\N$ such that
 $m_p/\ell_p\geq K$ for every $p\geq p_0$. Hence, we deduce that
 $$ \left(\frac{M_p}{L_p}\right)^{1/p} \geq \left(\frac{K^{p-p_0} M_{p_0}}{L_{p_0}}\right)^{1/p},  \qquad p\geq p_0.$$
 Taking limit inferior in both sides we see that $\liminf_{p\to\oo} (M_p/L_p)^{1/p}\geq K$. Since this is true for any $K>0$ we conclude that $\lim_{p\to\oo}(M_p/L_p)^{1/p}=\oo$, so~\eqref{eq.ComparableNotEquivSeq} holds, which implies that $\L\precsim\M$ and $\L\not\approx\M$.

\item[(ii.a)] The `if' part of the first equivalence follows from~\eqref{eq.ComparableNotEquivSeq}. 
Conversely, if $\M/\L$ is (lc) then $((M_p/L_p)^{1/p})_{p\in\N}$ is nondecreasing; from $\L\precsim\M$ and $\L\not\approx\M$ we deduce that $((M_p/L_p)^{1/p})_{p\in\N}$ tends to $\oo$.

For (lc) sequences, the second equivalence was indicated in Subsection~\ref{subsectLCsequences}.

\item[(iii)] We have that
$$\liminf_{p\to\oo} \frac{\log(m_p)}{\log(p)}=\o(\M),\qquad \lim_{p\to\oo} \frac{\log(\ell_p)}{\log(p)}=\o(\L). $$
Since $\o(\L)<\o(\M)$ we can fix $0<\ep<(\o(\M)-\o(\L))/2$, and there exists $p_0\in\N$ such that for every $p\geq p_0$ we get that
$$\frac{m_p}{\ell_p}\geq \frac{p^{\o(\M)-\ep}}{p^{\o(\L)+\ep}} = p^{\o(\M)-\o(\L)-2\ep},$$
then $\lim_{p\to\oo}m_p/\ell_p=\oo$. We conclude by applying (i).
  \end{enumerate}

\end{proof}


In our framework we will usually assume that $\L$ admits a nonzero proximate order, so it satisfies~\eqref{limit.logmp.over.logp.omega}; according to (iii), any sequence $\M$ with $\o(\L)<\o(\M)$ will be comparable with and not equivalent to $\L$.
Consequently, comparability assumptions need only be made when  $\o(\M)=\o(\L)$ (and they \emph{must} be made, as shown by Example~\ref{exam.noncomparability}); in this case, according to the last result, it is natural to assume that $\M/\L$ is a weight sequence, and so
$\L\precsim\M$ and $\L\not\approx\M$.

\subsection{Tauberian theorem for sequences with different indices}

Assuming a basic comparability hypothesis, justified in the previous subsections, and that one of the sequences is regular enough in order to  employ summability techniques,  we are ready to clarify the relation between $\C\{z\}_{\M}$ and $\C\{z\}_{\L}$. First,  we observe that if a formal power series is summable for a sequence $\M$ and its coefficients are bounded in terms of a smaller sequence $\L$, then it is also summable for $\L$ and its sums agree, extending what was proved for $k-$summability by J. Martinet and J.-P. Ramis \cite[Ch. 2, Prop. \ 4.3]{RamisMartinet90}     (see also \cite[Th. 37]{Balser2000} and \cite[Coro. 5.3.15]{Loday16}).

\begin{pro}\label{prop.IntersectQuasianalClasses}
Let $\L$ and $\M$ be weight sequences  such that  $\L$ admits
 a nonzero proximate order  and $\M/\L$ is (lc). If $\o(\L)<\o(\M)$, or if $\o(\L)=\o(\M)$ assuming in addition that $\L\precsim\M$ and $\L\not\approx\M$, then for every $\widehat{f}\in\C\{z\}_{\M}\cap\C[[z]]_{\L}$, we have that
\begin{enumerate}[(i)]
 \item $\hbox{sing}_{\L} (\widehat{f})\en \hbox{sing}_{\M}(\widehat{f})$ and $\widehat{f}\in\C\{z\}_{\M}\cap\C\{z\}_{\L}$.
\item For every $d\not\in\hbox{sing}_{\M}(\widehat{f})$, $(\mathcal{S}_{\L,d}\widehat{f})(z)\sim_{\L} \widehat{f}$ on $G(d,\a)$ with $\a>\o(\M)$,
 \item  $(\mathcal{S}_{\L,d}\widehat{f})(z)=(\mathcal{S}_{\M,d}\widehat{f})(z)$
for every $z$ where both functions are defined.

\end{enumerate}
\end{pro}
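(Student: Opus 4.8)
The strategy is to reduce everything to the single‐sequence summability machinery recalled in Subsection~\ref{subsect.M.summability}, applied to the quotient sequence $\M/\L$ (which, by Remark~\ref{rema.MoverL.welldefined} and Proposition~\ref{prop.quotient.sequence.compara.cond}, we may assume to be a weight sequence whose sequence of quotients $\m/\l$ tends to infinity, so $\M/\L$ is in fact a weight sequence with positive index $\o(\M)-\o(\L)\geq 0$). Fix $\widehat{f}\in\C\{z\}_{\M}\cap\C[[z]]_{\L}$ and a direction $d\notin\hbox{sing}_{\M}(\widehat{f})$. The first step is to produce, for each such $d$, a holomorphic function $g_d:=\mathcal{S}_{\M,d}\widehat{f}\in\widetilde{\mathcal{A}}_{\M}(G(d,\a))$ with $\a>\o(\M)$, together with the $\M/\L$-growth of an appropriate Borel-type transform of it. Concretely, I would pick a kernel $e$ for $\M/\L$-summability (available since $\M/\L$ admits a nonzero proximate order, by Proposition~\ref{prop.quotient.seq} resp. Remark~\ref{rema.def.exist.kernels}), and look at $\varphi_d:=T^{-}_{e}g_d$ on a sector $S(d,\a-\o(\M/\L))$: since $g_d\sim_{\M}\widehat{f}$ and $\M=(\M/\L)\cdot\L$, Theorem~\ref{teorrelacdesartransfBL}(ii) gives $\varphi_d\sim_{\L}\widehat{T}^{-}_e\widehat{f}$ on that sector, and moreover $\varphi_d$ is of $\M/\L$-growth there by Proposition~\ref{prop.hol.Bor}. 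Here $\o(\M/\L)=\o(\M)-\o(\L)$ by Remark~\ref{rema.omega.index.quotient.sequence}, so the opening $\a-\o(\M/\L)>\o(\L)$.

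\emph{Second step: patching and quasianalyticity.} The family $\{\varphi_d\}_{d\notin\hbox{sing}_{\M}(\widehat{f})}$ is defined on sectors that overlap (outside the finite singular set), and on each overlap two such functions share the same $\L$-asymptotic expansion $\widehat{T}^{-}_e\widehat{f}$; since the overlap sectors have opening exceeding $\o(\L)$ and $\L$ is a weight sequence, Watson's Lemma (Theorem~\ref{TheoWatsonlemma}) forces the differences to vanish. Hence the $\varphi_d$ glue into a single holomorphic function $\varphi$ defined on $\R$ minus a finite set (mod $2\pi$) — more precisely on $S\setminus\hbox{sing}_{\M}(\widehat{f})$ for a full punctured neighbourhood of $0$ — and near each nonsingular direction $\varphi$ is of $\M/\L$-growth. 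Because $\widehat{f}\in\C[[z]]_{\L}$ and $\mf_e$ (the moment sequence of the chosen kernel) is equivalent to $\M/\L$, the formal Borel transform $g:=\widehat{T}^{-}_e\widehat{f}$ lies in $\C[[z]]_{\mf_e}$ and therefore converges near $0$; by the uniqueness of asymptotic expansions $\varphi$ coincides with this convergent series near $0$, so $\varphi$ is actually holomorphic at $0$ and of $\M/\L$-growth along every nonsingular direction.

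\emph{Third step: re-applying the Laplace transform.} This says exactly that $\widehat{f}$ is $e$-summable in direction $d$ for every $d\notin\hbox{sing}_{\M}(\widehat{f})$ in the sense recalled before Theorem~\ref{th.Msummable.equiv.esummable}; that theorem (applied with the sequence $\M/\L$) yields that $\widehat{f}$ is $(\M/\L)$-summable — but that is not quite what we want. The correct reading is the following: by Theorem~\ref{th.Msummable.equiv.esummable} the fact that $g$ extends to a function of $\mf_e$-growth on $S(d,\varepsilon)$ is equivalent to $\widehat{f}$ being $(\M/\L)$-summable in direction $d$; rather I should observe directly that $T_e\varphi$ is, by Proposition~\ref{prop.hol.lap} and Theorem~\ref{teorrelacdesartransfBL}(i), a function in $\widetilde{\mathcal{A}}_{(\M/\L)\cdot\L}(G(d,\a'))=\widetilde{\mathcal{A}}_{\M}(G(d,\a'))$ with $\a'=(\a-\o(\M/\L))+\o(\M/\L)=\a>\o(\M)$, whose asymptotic expansion is $\widehat{T}_e\widehat{T}^{-}_e\widehat{f}=\widehat{f}$; by uniqueness $T_e\varphi=g_d=\mathcal{S}_{\M,d}\widehat{f}$. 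But also, using now $\varphi$ is of $\L\cdot(\M/\L)/$-type... — more cleanly: since $\varphi$ is holomorphic at $0$ and of $\M/\L$-growth, $T_e\varphi$ is $\L$-asymptotic to $\widehat{f}$ on a sectorial region of opening $\a>\o(\M)>\o(\L)$ by Theorem~\ref{teorrelacdesartransfBL}(i) with $\M'=\L$ (so $(\M/\L)\cdot\L=\M$ gives the $\M$-part; to get the $\L$-part one instead notes $\varphi\sim_{\C[[z]]}$ its convergent sum so trivially $\varphi\sim_{\mathbf{1}}\widehat{\varphi}$ and $T_e$ then produces an $\mf_e\approx\M/\L$ asymptotic expansion, which is weaker). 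The clean route is: $\varphi\in\mathcal{O}^{\M/\L}(S)$ and, being holomorphic at $0$ with convergent Taylor series $g$, it satisfies $\varphi\sim_{\mathbf{1}}g$ where $\mathbf{1}=(1)_{p}$; hence by Theorem~\ref{teorrelacdesartransfBL}(i), $T_e\varphi\sim_{(\M/\L)\cdot\mathbf{1}}\widehat{T}_e g=\widehat{f}$, i.e. $T_e\varphi\sim_{\M/\L}\widehat{f}$ — still not $\L$. I therefore prove (ii) differently: one knows $g_d=\mathcal{S}_{\M,d}\widehat{f}$ and wants $g_d\sim_{\L}\widehat{f}$; since $|a_p|\le C A^p L_p$ (as $\widehat{f}\in\C[[z]]_{\L}$) and $g_d\sim_{\M}\widehat{f}$ with $\L\precsim\M$, a direct Ritt-type estimate (splitting $|g_d(z)-\sum_{n<p}a_nz^n|$ by optimizing the index, exactly the argument in \cite[Th.~37]{Balser2000} and \cite[Coro.~5.3.15]{Loday16}) upgrades the $\M$-estimate to an $\L$-estimate on the \emph{smaller} region $G(d,\a)$; this is where $\M/\L$ being (lc) and $h_{\M/\L}$ decreasing is used. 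This gives (ii), and in particular $g_d\in\widetilde{\mathcal{A}}_{\L}(G(d,\a))$ with $\a>\o(\M)>\o(\L)$, so $\widehat{f}$ is $\L$-summable in direction $d$ with $\mathcal{S}_{\L,d}\widehat{f}=g_d=\mathcal{S}_{\M,d}\widehat{f}$ by the uniqueness part of Watson's Lemma; that is (iii). Finally (i): the set of directions for which $\widehat{f}$ fails to be $\L$-summable is contained in $\hbox{sing}_{\M}(\widehat{f})$ by what we just proved, hence finite, so $\widehat{f}\in\C\{z\}_{\L}$ and $\hbox{sing}_{\L}(\widehat{f})\subseteq\hbox{sing}_{\M}(\widehat{f})$.

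\emph{Main obstacle.} The genuinely delicate point is step three, namely upgrading the $\M$-asymptotics of the known sum $\mathcal{S}_{\M,d}\widehat{f}$ to $\L$-asymptotics on the \emph{same} sectorial region, using only that the coefficients of $\widehat{f}$ are $\L$-bounded and that $\M/\L$ is (lc) with $\o(\L)<\o(\M)$ (or comparability when the indices agree). This is the analogue of the classical Tauberian estimate of Martinet–Ramis/Balser, and it requires a careful optimization over the truncation index $p$ of the bound $|\mathcal{S}_{\M,d}\widehat{f}(z)-\sum_{n<p}a_nz^n|\le \min\bigl(C A^p M_p|z|^p,\ 2CA^pL_p|z|^p + (\text{tail of the truncation})\bigr)$, i.e. choosing $p\approx$ the index realizing $h_{\M/\L}(1/|z|)$ and invoking Proposition~\ref{prop.thilliez.flat.function.M} together with Lemma~\ref{lemma.M.scaling} to convert $\o_{\M}$-bounds into $\o_{\L}$-bounds. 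Everything else is a bookkeeping assembly of Theorems~\ref{TheoWatsonlemma}, \ref{teorrelacdesartransfBL}, \ref{th.Msummable.equiv.esummable} and Propositions~\ref{prop.hol.lap}, \ref{prop.hol.Bor}, \ref{prop.quotient.seq}, \ref{prop.quotient.sequence.compara.cond}.
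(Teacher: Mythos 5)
There is a genuine gap, and it starts with the choice of kernel. You pick a kernel $e$ of $\M/\L$-summability and apply $T^{-}_e$ to $g_d=\mathcal{S}_{\M,d}\widehat{f}$. Your claim that $\widehat{T}^{-}_e\widehat{f}$ converges because $\widehat{f}\in\C[[z]]_{\L}$ is false: with $\mf_e\approx\M/\L$, the coefficients of $\widehat{T}^{-}_e\widehat{f}$ behave like $a_pL_p/M_p$, and from $|a_p|\le CA^pL_p$ one only gets bounds of the form $CA^pL_p^2/M_p$, which is a geometric sequence only if $\L\precsim\M/\L$ — an extra hypothesis that is not assumed (and generally fails; e.g.\ take $\L=\M_1$, $\M=\M_{3/2}$, so $L_p^2/M_p=(p!)^{1/2}\to\infty$). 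Furthermore, as you notice yourself, even ignoring this, re-applying $T_e$ only produces $(\M/\L)\cdot\M'$-asymptotics for some smaller $\M'$, never $\L$-asymptotics, so this kernel cannot structurally give statement (ii). Finally, a kernel of $\M/\L$-summability requires $\o(\M/\L)>0$, i.e.\ $\o(\L)<\o(\M)$, so the equal-index case $\o(\L)=\o(\M)$ (explicitly covered by the Proposition) is not treated at all by your route. Your final fallback — a ``direct Ritt-type'' Tauberian estimate à la Martinet--Ramis/Balser — is a reasonable idea but is not carried out, and is not how the paper proceeds.

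The paper's key move is to use instead a kernel $e_\L$ of $\L$-summability (available since $\L$ admits a nonzero proximate order), and apply $T^{-}_{\L}$ to $f=\mathcal{S}_{\M,d}\widehat{f}$. Then $\widehat{g}:=\widehat{T}^{-}_{\L}\widehat{f}$ does converge precisely because $\widehat{f}\in\C[[z]]_{\L}$ and $\mf_{e_\L}\approx\L$, while $g:=T^{-}_{\L}f$ is holomorphic in $S(d,\a-\o(\L))$ with $\L$-growth (Proposition~\ref{prop.hol.Bor}) and satisfies $g\sim_{\M/\L}\widehat{g}$ by Theorem~\ref{teorrelacdesartransfBL}(ii). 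Since $\a-\o(\L)>\o(\M)-\o(\L)=\o(\M/\L)$, Watson's Lemma (Theorem~\ref{TheoWatsonlemma}) applied to $\M/\L$ forces $g$ to coincide with the natural sum of $\widehat{g}$, so $\widehat{g}$ extends analytically to the full sector with $\L$-growth. Theorem~\ref{th.Msummable.equiv.esummable} then yields $\L$-summability of $\widehat{f}$, and applying $T_\L$ to $g\sim_{\mathbf{1}}\widehat{g}$ (via Theorem~\ref{teorrelacdesartransfBL}(i)) gives $T_\L g\sim_{\L}\widehat{f}$ on $G(d,\a)$, which is (ii); (iii) is then $T_\L g=T_\L T^{-}_\L f=f$. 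This choice of kernel makes the convergence automatic, covers both cases of the hypothesis, and avoids any direct truncation-optimization argument.
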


\begin{proof}
 By Proposition~\ref{prop.quotient.sequence.compara.cond}, from the hypotheses in both situations ($\o(\L)<\o(\M)$ or $\o(\L)=\o(\M)$) we deduce that $\M/\L$ is a weight sequence, then Watson's Lemma is available.
 \begin{enumerate}[(i)]

 \item Since $\widehat{f}\in\C\{z\}_{\M}$, we have that $\hbox{sing}_{\M}(\widehat{f})$ is finite. Let $d$ be a
 nonsingular direction of $\M-$summability. We write $f(z):=\mathcal{S}_{\M,d}\widehat{f}(z)$. We choose a kernel of
 $\L-$summability, that exists by Remark~\ref{rema.def.exist.kernels}, and we consider $g:=T^{-}_{\L} f $. Since $f$ is defined on a sectorial region $G(d,\a)$ with
 $\a>\o(\M)\geq\o(\L)$, by Proposition~\ref{prop.hol.Bor}, $g$ is holomorphic in a
 sector $S(d,\a-\o(\L))$ with $\L-$growth on this sector. \par
 On the other hand, $\widehat{f}\in\C[[z]]_{\L}$, hence we have that $\widehat{g}:=\widehat{T}^{-}_{\L}\widehat{f}\in\C\{z\}$.
 By Theorem~\ref{teorrelacdesartransfBL}.(ii), we have that $g\sim_{\M/\L} \widehat{g} $ on $S(d,\a-\o(\L))$ and,
 since $\widehat{g}\in\C\{z\}$, $\mathcal{S} \widehat{g}\sim_{\M/\L}\widehat{g} $ in a disc, and then in a sectorial region of
 opening $\pi(\a-\o(\L))$.
 By the Watson's Lemma (Theorem~\ref{TheoWatsonlemma}) we have that
 $\widetilde{\mathcal{B}}:\widetilde{\mathcal{A}}_{\M/\L}(G_\ga)\longrightarrow \C[[z]]_{\M/\L}$
 is injective for every $\gamma>\o(\M/\L)=\o(\M)-\o(\L)$ (see Remark~\ref{rema.omega.index.quotient.sequence}).
 Since $g$ is holomorphic in a sector of opening
 $\pi(\a-\o(\L))>\pi(\o(\M)-\o(\L))$,  $g$ is unique and, consequently, it is an analytic extension
 of $\widehat{g}:=\widehat{T}^{-}_{\L}\widehat{f}$ with $\L-$growth in the sector $S(d,\a-\o(\L))$. Using
 Theorem~\ref{th.Msummable.equiv.esummable},  we see that
 $\widehat{f}$ is $\L-$summable in direction $d$. We conclude that
 $\hbox{sing}_{\L} (\widehat{f})\en \hbox{sing}_{\M}(\widehat{f})$, then $\widehat{f}\in \C\{z\}_{\L}$.
  \item From (i), we know that $\widehat{g}=\widehat{T}_\L^{-}\widehat{f}$ converges in a disc and admits analytic
  continuation $g$ in a sector $S=S(d,\a-\o(\L))$, $g\in\mathcal{O}^{\L}(S)$ and we have that
$\mathcal{S} \widehat{g} \sim_{\M'}\widehat{T}_\L^{-}\widehat{f}$ in $S$ with $\M'=(1)_{n\in\N_0}$. Then, in Theorem~\ref{teorrelacdesartransfBL}.(i),
we obtain that the function $f:=\mathcal{S}_{\L,d}\widehat{f}=T_\L g$ is holomorphic in a sectorial region
$G(d,\a)$ and $f\sim_{\L}\widehat{f}$ there.

\item With the notation in (i), we have that, for every $z$ where these functions are defined,
$$(\mathcal{S}_{\L,d}\widehat{f})(z)= (T_{\L} g )(z)=(T_{\L} T^{-}_{\L} f)(z) = f(z)= (\mathcal{S}_{\M,d}\widehat{f})(z).$$
\end{enumerate}
\end{proof}

As a consequence of the last proposition, a generalization of the classical Tauberian result for $k-$summability of J.-P. Ramis (see \cite[Th. 3.8.1]{RamisSibuya89}, \cite[Th. 37]{Balser2000} and~\cite[Coro. 5.3.16]{Loday16}) can be stated when the indices $\o(\L)$ and $\o(\M)$ are distinct.

\begin{theo}\label{th.tauberian.distinct.index}
Let $\L$ and $\M$ be weight sequences  such that  $\L$ admits
 a nonzero proximate order, $\M/\L$ is (lc) and  $\o(\L)<\o(\M)$.
  If  $\widehat{f}\in\C\{z\}_{\M}\cap \C[[z]]_{\L}$, then $\widehat{f}$ is convergent.
\end{theo}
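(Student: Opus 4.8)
The plan is to reduce the statement to the assertion that \emph{every} singular direction of the $\M$-summable series $\widehat f$ is, in fact, a direction of $\L$-summability, and then to conclude via Proposition~\ref{prop.summability.algebras}.(i).

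First I would observe that the hypotheses of the theorem are exactly those of Proposition~\ref{prop.IntersectQuasianalClasses} in the case $\o(\L)<\o(\M)$. Applying that proposition to $\widehat f\in\C\{z\}_\M\cap\C[[z]]_\L$ yields $\widehat f\in\C\{z\}_\L$, the inclusion $\hbox{sing}_\L(\widehat f)\subseteq\hbox{sing}_\M(\widehat f)=:\{d_1,\dots,d_m\}$, and --- the crucial extra information --- that for every $d\notin\{d_1,\dots,d_m\}$ the $\L$-sum satisfies $\mathcal{S}_{\L,d}\widehat f\sim_\L\widehat f$ on a sectorial region $G(d,\a_d)$ of opening $\a_d\pi>\o(\M)\pi$, strictly wider than the $\o(\L)\pi$ that bare $\L$-summability would guarantee. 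It therefore suffices to show that each $d_j$ is also a direction of $\L$-summability of $\widehat f$: then $\hbox{sing}_\L(\widehat f)=\emptyset$, and Proposition~\ref{prop.summability.algebras}.(i) gives that $\widehat f$ is convergent.

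To remove a singular direction $d_j$, I would fix $\delta>0$ so small that $d_j\pm\delta\notin\{d_1,\dots,d_m\}$ and $2\delta<(\o(\M)-\o(\L))\pi$, and set $f^{\pm}:=\mathcal{S}_{\L,d_j\pm\delta}\widehat f$; shrinking if necessary, we may assume $f^{\pm}$ is holomorphic on $G(d_j\pm\delta,\a)$ with a common opening $\a\pi$, $\a>\o(\M)$, and $f^{\pm}\sim_\L\widehat f$ there. For such a $\delta$ the intersection $G(d_j-\delta,\a)\cap G(d_j+\delta,\a)$ contains a sectorial region $G(d_j,\gamma_0)$ bisected by $d_j$ with $\gamma_0>\o(\L)$ (one may take $\gamma_0$ close to $\a-2\delta/\pi$), while the union $G(d_j-\delta,\a)\cup G(d_j+\delta,\a)$ contains a sectorial region $G(d_j,\a')$ with $\a'>\o(\M)$. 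On $G(d_j,\gamma_0)$ the difference $f^{+}-f^{-}$ lies in $\widetilde{\mathcal{A}}_\L(G(d_j,\gamma_0))$ and is flat; since $\gamma_0>\o(\L)$, Watson's Lemma (Theorem~\ref{TheoWatsonlemma}) forces $f^{+}=f^{-}$ on $G(d_j,\gamma_0)$, hence on the whole intersection by analytic continuation. Therefore $f^{+}$ and $f^{-}$ patch together to a holomorphic function $F$ on the union, in particular on $G(d_j,\a')$, and $F\sim_\L\widehat f$ there (on a given proper subsector the required bounds follow from those of $f^{+}$ and $f^{-}$ upon taking the larger of the two pairs of constants). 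By Definition~\ref{def.Msummable.direction}, $\widehat f$ is $\L$-summable in direction $d_j$; carrying this out for $j=1,\dots,m$ completes the argument.

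I expect the only delicate point to be the angular bookkeeping in the patching step: one must check that the overlap of two sectorial regions of opening slightly above $\o(\M)\pi$, with bisecting directions $2\delta$ apart, still contains a sectorial region of opening exceeding $\o(\L)\pi$ bisected by $d_j$, so that Watson's Lemma can be invoked to identify the two one-sided $\L$-sums. This is precisely where the hypothesis $\o(\L)<\o(\M)$ enters: the fixed positive gap $\o(\M)-\o(\L)$ is what one spends to absorb the angular loss $2\delta/\pi$, which also explains why the assumption of distinct growth indices cannot be dispensed with for the present notion of summability.
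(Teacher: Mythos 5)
Your argument is correct and follows the paper's overall strategy: reduce to Proposition~\ref{prop.IntersectQuasianalClasses}, show that every $\M$-singular direction is actually $\L$-nonsingular, and conclude with Proposition~\ref{prop.summability.algebras}.(i). The mechanism for removing a singular direction $\theta$ differs, though. The paper picks a single nonsingular $d$ close to $\theta$ and shifts the bisector of the wide $\L$-asymptotic region around $d$ (guaranteed by Proposition~\ref{prop.IntersectQuasianalClasses}.(ii)) over to $\theta$; you instead take two symmetric nonsingular directions $d_j\pm\delta$, identify the two one-sided $\L$-sums on the overlap via Watson's Lemma, and glue them into a single $\L$-sum bisected at $d_j$. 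Both approaches turn on the positive gap $\o(\M)-\o(\L)$. A small merit of your gluing version is that the angular bookkeeping comes out right without further thought: the requirement $2\delta<\pi(\o(\M)-\o(\L))$ leaves the overlap, of opening $\a\pi-2\delta$, still wider than $\pi\o(\L)$. By contrast, in the paper's shift argument a sub-sectorial region bisected at $\theta$ inside a region of opening $\a\pi$ bisected at $d$ has opening $\a\pi-2|d-\theta|$, not $\a\pi-|d-\theta|$ as written there; so the paper's threshold $\delta:=\pi(\o(\M)-\o(\L))$ should in fact be halved for the final inequality to close, and your choice $\delta<\pi(\o(\M)-\o(\L))/2$ is exactly the corrected one.
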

\begin{proof}
  Using Proposition~\ref{prop.IntersectQuasianalClasses}, we know that $\widehat{f}\in\C\{z\}_{\M}\cap\C\{z\}_{\L}$ and
 $$(\mathcal{S}_{\L,d}\widehat{f})(z)=(\mathcal{S}_{\M,d}\widehat{f})(z),$$
for every $z$ where both functions are defined. Given
$\theta \in \hbox{sing}_{\M}(\widehat{f})=\{\theta_1,\theta_2,\dots,\theta_m\}$, we can take
$d\not\in\hbox{sing}_{\M}(\widehat{f})$ such that $|d-\theta|<\delta:=\pi(\o(\M)-\o(\L)) $. Then
$(\mathcal{S}_{\M,d}\widehat{f})(z)=(\mathcal{S}_{\L,d}\widehat{f})(z)$ is defined in a sectorial region $G$ of opening $\pi\a>\pi \o (\M)$
bisected by direction $d$. We observe that $f=(\mathcal{S}_{\L,d}\widehat{f})(z)$ is a holomorphic function defined in a sectorial
region $\widetilde{G}$ contained in $G$, bisected by direction $\theta$, and of opening
$$\a\pi-|d-\theta|>\a\pi-\delta=\a\pi-\pi\o(\M)+\pi\o(\L)>\pi\o(\L). $$
By Proposition~\ref{prop.IntersectQuasianalClasses}.(ii), we have that $f\sim_{\L}\widehat{f}$ in this region, then
$\widehat{f}\in \C\{z\}_{\L,\theta}$. Since $\hbox{sing}_{\L} (\widehat{f})\en \hbox{sing}_{\M}(\widehat{f})$, we deduce that
$\hbox{sing}_{\L} (\widehat{f}) =\emptyset$ and, by Proposition~\ref{prop.summability.algebras}, we conclude that $\widehat{f}\in\C\{z\}$.
\end{proof}

Regarding the last two results, in the case $\o(\L)<\o(\M)$, if $\M$ also admits a nonzero proximate order,  the logarithmic convexity
of the sequence $\M/\L$ does not need to be assumed since it is automatically guaranteed (see Remark~\ref{rema.MoverL.welldefined}).

Finally, we will show that this theorem is not valid when the indices coincide.

\begin{theo}
Let $\L$ and $\M$ be weight sequences with $\L\precsim\M$ and $\o(\L)=\o(\M)$. Then
 $$\C\{z\}_{\M}\cap \C\{z\}_{\L}=\C\{z\}_{\L}.$$
 Moreover, if $\L$ admits a nonzero proximate order,  $\L\not\approx\M$ and $\M/\L$ is (lc) we have that
 $$\C\{z\}_{\M}\cap \C[[z]]_{\L}=\C\{z\}_{\M}\cap \C\{z\}_{\L}=\C\{z\}_{\L}.$$
\end{theo}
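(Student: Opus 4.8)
The plan is to treat the two displayed identities in turn. The first identity, $\C\{z\}_{\M}\cap\C\{z\}_{\L}=\C\{z\}_{\L}$, rests only on the comparison $\L\precsim\M$; the second, longer chain will then follow by invoking Proposition~\ref{prop.IntersectQuasianalClasses}.

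For the first identity, only the inclusion $\C\{z\}_{\L}\en\C\{z\}_{\M}$ needs proof, and I would argue one direction at a time. Take $\widehat{f}\in\C\{z\}_{\L}$ and a direction $d\notin\hbox{sing}_{\L}(\widehat{f})$, so that by Definition~\ref{def.Msummable.direction} there are $\gamma>\o(\L)$ and $f\in\widetilde{\mathcal{A}}_{\L}(G(d,\gamma))$ with $f\sim_{\L}\widehat{f}$. From $\L\precsim\M$ pick $A>0$ with $L_p\le A^p M_p$ for all $p\in\N_0$; substituting this bound into the defining estimates of the $\L-$asymptotic expansion on each $T\ll G(d,\gamma)$ shows at once that $f\sim_{\M}\widehat{f}$ on the same sectorial region. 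Since $\gamma>\o(\L)=\o(\M)$, this says precisely that $\widehat{f}$ is $\M-$summable in direction $d$. Hence $\hbox{sing}_{\M}(\widehat{f})\en\hbox{sing}_{\L}(\widehat{f})$, which is finite, so $\widehat{f}\in\C\{z\}_{\M}$; incidentally $\mathcal{S}_{\M,d}\widehat{f}=\mathcal{S}_{\L,d}\widehat{f}$ by uniqueness in Watson's Lemma, although this is not needed for the statement.

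For the second identity, assume additionally that $\L$ admits a nonzero proximate order, $\L\not\approx\M$, and $\M/\L$ is (lc). The rightmost equality $\C\{z\}_{\M}\cap\C\{z\}_{\L}=\C\{z\}_{\L}$ is what was just proved. For the leftmost equality $\C\{z\}_{\M}\cap\C[[z]]_{\L}=\C\{z\}_{\M}\cap\C\{z\}_{\L}$, the inclusion $\supseteq$ is immediate from $\C\{z\}_{\L}\en\C[[z]]_{\L}$, while $\subseteq$ is exactly the content of Proposition~\ref{prop.IntersectQuasianalClasses}(i) in its case $\o(\L)=\o(\M)$: the hypotheses assumed here ($\L$ admits a nonzero proximate order, $\M/\L$ is (lc), $\L\precsim\M$, $\L\not\approx\M$) are precisely those under which that proposition guarantees that every $\widehat{f}\in\C\{z\}_{\M}\cap\C[[z]]_{\L}$ already lies in $\C\{z\}_{\M}\cap\C\{z\}_{\L}$.

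No step presents a genuine obstacle: the whole argument is an assembly of the definition of summability in a direction, the trivial monotonicity of the $\M-$asymptotic classes with respect to $\precsim$, and Proposition~\ref{prop.IntersectQuasianalClasses}. The only point demanding a little care is the bookkeeping of openings — one must keep track of the fact that the sectorial region witnessing $\L-$summability has opening strictly greater than $\o(\L)=\o(\M)$, and is therefore automatically wide enough to witness $\M-$summability as well.
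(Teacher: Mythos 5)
Your proof is correct and takes essentially the same route as the paper: the key inclusion $\C\{z\}_{\L}\subseteq\C\{z\}_{\M}$ follows from $\L\precsim\M$ together with $\o(\L)=\o(\M)$ (so the opening that witnesses $\L$-summability is automatically wide enough for $\M$-summability), and the second displayed chain is an immediate consequence of Proposition~\ref{prop.IntersectQuasianalClasses}.
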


\begin{proof}
If $\widehat{f}$ is $\L-$summable in direction $d$, then it exists $\a>\o(\L)=\o(\M)$ and $f$ holomorphic in $G=G(d,\a)$ such that $f\sim_\L\widehat{f}$ in $G$. Since $\L\precsim\M$, we deduce that $f\sim_\M\widehat{f}$ in $G$, and we conclude that
$\widehat{f}$ is $\M-$summable in direction $d$. Then,
$  \C\{z\}_{\L,d}\en\C\{z\}_{\M,d}$ and, consequently, $  \C\{z\}_{\L}\en\C\{z\}_{\M}$.
The last statement is obtained immediately using Proposition~\ref{prop.IntersectQuasianalClasses}.
\end{proof}

\begin{exam}
 With the notation and computations in Example~\ref{exampleSequences}, we deduce that for any $\a>\a'>0$ and any $\b,\b'\in\R$ we have  $\C\{z\}_{\M_{\a,\b}}\cap \C[[z]]_{\M_{\a',\b'}}=\C\{z\}$, while $\C\{z\}_{\M_{\a,\b}}\cap \C[[z]]_{\M_{\a,\b'}}= \C\{z\}_{\M_{\a,\min(\b,\b')}}$.
\end{exam}

%

\section{Multisummability}
\label{sect.multisummab.analyt}

Whenever the Tauberian Theorem~\ref{th.tauberian.distinct.index} is available it makes sense to give a
definition of multisummability in this context. In the recent book of M. Loday-Richaud~\cite[Ch.\ 7]{Loday16}, several equivalent definitions of multisummability are provided together with a careful study of their peculiarities. In this paper, since the $\M-$summability tools are defined using moment summability methods, the approach of W. Balser~\cite[Ch.\ 10]{Balser2000} has been chosen, that is, the decomposition into sums. Due to a ramification inconvenience, this splitting definition is only compatible with the others if the corresponding indices $\o(\M_j)$ are all smaller than $2$.

\begin{defi}\label{def.multisum}
Let $\M_1$ and $\M_2$ be weight sequences admitting a nonzero proximate order such that $\o(\M_1)<\o(\M_2)<2$, and $d_1,d_2\in\R$ such that $|d_1-d_2|<\pi(\o(\M_2)-\o(\M_1))/2$. A formal power series $\widehat{f}(z) = \sum_{p \geq 0} a_{p} z^{p} \in\C[[z]]$ is said to be
\textit{$(\mathbb{M}_1,\mathbb{M}_2)-$summable in the multidirection $(d_1,d_2)$},  if there exist a formal series
$\widehat{f}_{1}(z)$ which is $\mathbb{M}_1-$summable in $d_1$ with $\mathbb{M}_1-$sum $f_{1}$ and a formal series $\widehat{f}_{2}(z)$ which is $\mathbb{M}_2-$summable in $d_2$ with
$\mathbb{M}_2-$sum $f_{2}$ such that
$$\widehat{f} = \widehat{f}_{1} + \widehat{f}_{2}.$$ \para

Furthermore, the holomorphic function
$f(z) = f_{1}(z) + f_{2}(z)$ defined on $G(d_1,\a_1)$ for some $\a_1 >\o(\M_1)$  is called the
\textit{$(\mathbb{M}_1,\mathbb{M}_2)-$sum of $\widehat{f}$ in the multidirection $(d_1,d_2)$} and we write $f(z)=(\mathcal{S}_{(\M_1,\M_2),(d_1,d_2)}\widehat{f})(z)$ and $\widehat{f}\in\C\{z\}_{(\M_1,\M_2),(d_1,d_2)}$.
\end{defi}

In the conditions of the previous definition, there always exists a sectorial region $G=G(d_1,\a_1)$ with $\a_1>\o(\M_1)$ such that
\begin{equation}\label{eq.non.unique.multisum}
 f\sim_{\M_2} \widehat{f} \qquad \text{on} \quad G.
\end{equation}
Since the region is not wide enough, $f$ is not determined by condition~\eqref{eq.non.unique.multisum}, weaker than multisummability. Nevertheless, the next proposition shows that the multisum is unique and the splittings are essentially unique.

\begin{pro}
 In the conditions of Definition~\ref{def.multisum}, assume that there exist two pairs of formal power series $\widehat{f}_1,\widehat{f}_2$ and $\widehat{g}_{1},\widehat{g}_{2}$ such that
 $$\widehat{f}=\widehat{f}_{1} + \widehat{f}_{2} =\widehat{g}_{1} + \widehat{g}_{2}.$$
 Then there exist $\a_2>\o(\M_2)$ and $\widehat{u}_1\in\C[[z]]$ such that $\widehat{u}_1$ is $\M_1-$summable on a sectorial region $G(d_2,\a_2)$ and
 $$\widehat{g}_1=\widehat{f}_1-\widehat{u}_1,\qquad\text{and}\qquad \widehat{g}_2=\widehat{u}_1+\widehat{f}_2.$$
 Moreover, we have that the $(\mathbb{M}_1,\mathbb{M}_2)-$sum of $\widehat{f}$ is unique, that is,
 $$f_1(z)+f_2(z)=f(z)=g_1(z)+g_2(z),$$
in a sectorial region $G(d_1,\a_1)$ with $\a_1>\o(\M_1)$.
\end{pro}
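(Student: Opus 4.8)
The plan is to exploit the difference of the two splittings and reduce the uniqueness claim to the Tauberian Theorem~\ref{th.tauberian.distinct.index} applied to the quotient-type situation governed by the indices $\o(\M_1)<\o(\M_2)$. First I would set $\widehat{u}_1:=\widehat{f}_1-\widehat{g}_1$. Since $\widehat{f}=\widehat{f}_1+\widehat{f}_2=\widehat{g}_1+\widehat{g}_2$, we immediately get $\widehat{g}_2-\widehat{f}_2=\widehat{f}_1-\widehat{g}_1=\widehat{u}_1$, so the desired decompositions $\widehat{g}_1=\widehat{f}_1-\widehat{u}_1$ and $\widehat{g}_2=\widehat{u}_1+\widehat{f}_2$ hold by construction. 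The content of the statement is therefore entirely in showing that $\widehat{u}_1$ is simultaneously $\M_1$-summable (in direction $d_1$) \emph{and} $\M_2$-summable (in direction $d_2$), hence—because the indices are distinct—convergent, and then identifying the sums.

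The key observation is that $\widehat{u}_1=\widehat{f}_1-\widehat{g}_1$ is a difference of two $\M_1$-summable series in direction $d_1$, so by Proposition~\ref{prop.summability.algebras}.(ii) (the summability algebras are algebras) $\widehat{u}_1\in\C\{z\}_{\M_1,d_1}$; in particular $\widehat{u}_1\in\C[[z]]_{\M_1}$. Symmetrically, $\widehat{u}_1=\widehat{g}_2-\widehat{f}_2$ is a difference of two $\M_2$-summable series in direction $d_2$, so $\widehat{u}_1\in\C\{z\}_{\M_2,d_2}\subseteq\C[[z]]_{\M_2}$. Now since $\o(\M_1)<\o(\M_2)$, we have $\C[[z]]_{\M_2}\subseteq\C[[z]]_{\M_1}$ trivially, but more to the point we can apply the Tauberian Theorem~\ref{th.tauberian.distinct.index} with the roles $\L=\M_1$, $\M=\M_2$: here $\L=\M_1$ admits a nonzero proximate order, $\M/\L=\M_2/\M_1$ is (lc) (using Remark~\ref{rema.MoverL.welldefined}, since $\M_2$ also admits a nonzero proximate order), and $\o(\M_1)<\o(\M_2)$. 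Since $\widehat{u}_1\in\C\{z\}_{\M_2,d_2}$ gives in particular that $\widehat{u}_1$ is $\M_2$-summable in all but finitely many directions (indeed it is a difference of series each summable in a full neighbourhood of $d_2$ by Lemma~\ref{basic.properties.Msumm}.(iii)), so $\widehat{u}_1\in\C\{z\}_{\M_2}$, and $\widehat{u}_1\in\C[[z]]_{\M_1}$; thus $\widehat{u}_1\in\C\{z\}_{\M_2}\cap\C[[z]]_{\M_1}=\C\{z\}$ is convergent. Its sum $u_1=\calS\widehat{u}_1$ then agrees, on the appropriate sectorial regions, with both $\calS_{\M_1,d_1}\widehat{u}_1$ and $\calS_{\M_2,d_2}\widehat{u}_1$ by the regularity of these methods (Lemma~\ref{basic.properties.Msumm}.(i)). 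From this and the facts that $\calS_{\M_1,d_1}\widehat{g}_1=\calS_{\M_1,d_1}\widehat{f}_1-u_1$ and $\calS_{\M_2,d_2}\widehat{g}_2=\calS_{\M_2,d_2}\widehat{f}_2+u_1$ on the relevant regions, we obtain $g_1+g_2=(f_1-u_1)+(f_2+u_1)=f_1+f_2=f$ on a common sectorial region $G(d_1,\a_1)$ with $\a_1>\o(\M_1)$, which is the uniqueness of the $(\M_1,\M_2)$-sum. To finish, one records that $\widehat{u}_1$ being convergent is in particular $\M_1$-summable on $G(d_2,\a_2)$ for any $\a_2>\o(\M_2)$ (again by Lemma~\ref{basic.properties.Msumm}.(i)), which gives the stated summability of $\widehat{u}_1$ on a sectorial region bisected by $d_2$.

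The main subtlety—and the step I would be most careful about—is verifying that $\widehat{u}_1$ is $\M_2$-summable in \emph{cofinitely many} directions, i.e.\ that it lies in $\C\{z\}_{\M_2}$ and not merely in $\C\{z\}_{\M_2,d_2}$, as the Tauberian theorem in the form quoted requires the ``sing'' hypothesis; this follows because $\widehat{f}_2,\widehat{g}_2$ are each $\M_2$-summable in $d_2$ and hence, by Lemma~\ref{basic.properties.Msumm}.(iii), in a full open neighbourhood of $d_2$, so their difference is $\M_2$-summable in a neighbourhood of $d_2$, hence trivially in all directions outside a finite (indeed empty, after shrinking) set once we also know it is convergent—so strictly speaking one argues first on a neighbourhood of $d_2$ using Proposition~\ref{prop.IntersectQuasianalClasses} (with $\L=\M_1$, $\M=\M_2$) to get that $\widehat{u}_1\in\C\{z\}_{\M_1,d_1}$ and the sums agree, and then convergence follows because $\widehat{u}_1$, being $\M_1$-summable in direction $d_1$ with the extra sectorial width coming from $|d_1-d_2|<\pi(\o(\M_2)-\o(\M_1))/2$, extends to a single-valued holomorphic germ at the origin. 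The other routine point is bookkeeping the various openings $\a_1,\a_2$ and the direction constraint $|d_1-d_2|<\pi(\o(\M_2)-\o(\M_1))/2$ to ensure the sectorial regions on which the identity $f=g_1+g_2$ is asserted genuinely overlap; this is exactly the geometric input that makes the splitting ambiguity one-dimensional (captured by the single series $\widehat{u}_1$) rather than larger.
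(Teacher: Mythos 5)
Your opening move—setting $\widehat{u}_1:=\widehat{f}_1-\widehat{g}_1=\widehat{g}_2-\widehat{f}_2$, observing $\widehat{u}_1\in\C\{z\}_{\M_1,d_1}\cap\C\{z\}_{\M_2,d_2}$ by Proposition~\ref{prop.summability.algebras}, and reading off the two decompositions—is correct and is exactly how the paper begins. The problem is the central claim that $\widehat{u}_1$ is \emph{convergent}, which is both unproved and, in fact, false.

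You assert that $\widehat{u}_1\in\C\{z\}_{\M_2}$ because, by Lemma~\ref{basic.properties.Msumm}.(iii), it is $\M_2$-summable in a full neighbourhood of $d_2$. But $\C\{z\}_{\M_2}$ requires $\M_2$-summability in \emph{all} directions outside a finite set; summability on an open interval about $d_2$ says nothing about directions far from $d_2$ and does not put $\widehat{u}_1$ in $\C\{z\}_{\M_2}$. Hence the hypothesis of the Tauberian Theorem~\ref{th.tauberian.distinct.index} is not satisfied, and the conclusion $\widehat{u}_1\in\C\{z\}$ does not follow. Indeed it should not: any divergent $\widehat{u}_1$ that is $\M_1$-summable on a sectorial region $G(d_2,\a_2)$ with $\a_2>\o(\M_2)$ (then automatically $\M_2$-summable in $d_2$, since $\M_1\precsim\M_2$) produces a second admissible splitting $\widehat{g}_1=\widehat{f}_1-\widehat{u}_1$, $\widehat{g}_2=\widehat{f}_2+\widehat{u}_1$. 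The proposition deliberately asserts only $\M_1$-summability of $\widehat{u}_1$ on such a region, which is strictly weaker than convergence, and this larger space is exactly the ambiguity space. The vague appeal to ``single-valued holomorphic germ at the origin'' at the end of your last paragraph does not rescue the convergence claim: a sectorial region $G(d_2,\a_2)$ with $\o(\M_2)<\a_2<2$ is not a punctured neighbourhood of $0$.

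The correct route—which you gesture towards but do not execute—is to stay entirely within Proposition~\ref{prop.IntersectQuasianalClasses} applied to $\widehat{u}_1\in\C\{z\}_{\M_2,d_2}\cap\C[[z]]_{\M_1}$ with $\L=\M_1$, $\M=\M_2$. Part (ii) yields the required $\M_1$-summability of $\widehat{u}_1$ on a region $G(d_2,\a_2)$ with $\a_2>\o(\M_2)$, and part (iii) yields $\mathcal{S}_{\M_1,d_2}\widehat{u}_1=\mathcal{S}_{\M_2,d_2}\widehat{u}_1$. Then Lemma~\ref{basic.properties.Msumm}.(ii)—not part (i), which concerns convergent series—gives $\mathcal{S}_{\M_1,d_1}\widehat{u}_1=\mathcal{S}_{\M_1,d_2}\widehat{u}_1$ on the overlap of domains, which is nonempty precisely because $|d_1-d_2|<\pi(\o(\M_2)-\o(\M_1))/2$. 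Chaining these three equalities gives $f_1-g_1=g_2-f_2$ on a sectorial region $G(d_1,\a_1)$ with $\a_1>\o(\M_1)$, with no convergence claim involved; that is the paper's proof.
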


\begin{proof}
 We define $\widehat{u}_1:=\widehat{f}_{1}-\widehat{g}_{1}$, so $\widehat{u}_1\in\C\{z\}_{\M_1,d_1}$, in particular
 $\widehat{u}_1\in\C[[z]]_{\M_1}$. We observe that
 $\widehat{g}_{2}-\widehat{f}_{2}=\widehat{u}_1$, then $\widehat{u}_1\in\C\{z\}_{\M_2,d_2}$. By Proposition~\ref{prop.IntersectQuasianalClasses}.(ii), there exists $\a_2>\o(\M_2)$ such that $\widehat{u}_1$ is $\M_1-$summable in $G(d_2,\a_2)$. Furthermore, by Lemma~\ref{basic.properties.Msumm}.(ii), $(\mathcal{S}_{\M_1,d_1}\widehat{u}_1)(z)= (\mathcal{S}_{\M_1,d_2}\widehat{u}_1)(z)$ on a sectorial region $G=G(d_1,\a_1)$ with $\a_1>\o(\M_1)$ and, using Proposition~\ref{prop.IntersectQuasianalClasses}.(iii), for all $z\in G$ we conclude that
 \begin{align*}
  f_1(z)-g_1(z)&=(\mathcal{S}_{\M_1,d_1}(\widehat{f}_1-\widehat{g}_1))(z)=(\mathcal{S}_{\M_1,d_2}\widehat{u}_1)(z) \\
  &=(\mathcal{S}_{\M_2,d_2}\widehat{u}_1)(z)=(\mathcal{S}_{\M_2,d_2}(\widehat{g}_2-\widehat{f}_2))(z)= g_2(z)-f_2(z). \qquad
 \end{align*}
\end{proof}

This definition can be recursively extended for a finite set of sequences $\M_1,\M_2,\dots,\M_k$ with $\o(\M_1)<\o(\M_2)<\cdots<\o(\M_k)<2$ (see \cite[Ch.\ 10]{Balser2000} and \cite[Ch.\ 7]{Loday16}).

The rest of this section is devoted to recover the multisum by means of some suitable integral transforms.

\subsection{Moment-kernel duality}
\label{subsect.momentkernelduality}

The main aim of this subsection is to prove that a kernel $e$ of $\M-$summability is uniquely determined by its
sequence of moments $\mathfrak{m}_e$, similarly to the result of W.~Balser for the moment summability methods~\cite[Sect.\ 5.8]{Balser2000}. \para

For bounded functions on sectors, the following auxiliary lemma shows that the  domain of holomorphy of their $e-$Laplace transform is not an arbitrary sectorial region, as indicated in Proposition~\ref{prop.hol.lap}, but an unbounded sector.

\begin{lemma}\label{lemma.holomorphy.e.laplace.transform.bounded.functions}
 Let $e(z)$ be a kernel of $\M-$summability with corresponding Laplace operator $T=T_{e}$. Given a function $f$ defined in a sector $S=S(d,\a)$,
 assume that for every  $0<\b<\a$ there exists a constant $C_\b>0$ such that
 $$|f(u)|\leq C_\b,\qquad u\in S(d,\b).$$
Then $g = T f$ is holomorphic in $S(d,\a+\o(\M))$.
\end{lemma}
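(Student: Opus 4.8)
The plan is to show that, once $f$ is bounded on proper subsectors, each of the partial Laplace transforms $T_{e,\tau}f$ entering the definition of $g=T_{e}f$ (cf.\ Proposition~\ref{prop.hol.lap}) is holomorphic not just for $|z|$ small, but on the whole unbounded sector $V_{\tau}:=S(\tau,\o(\M))$. Since $\bigcup_{|\tau-d|<\a\pi/2}V_{\tau}=S(d,\a+\o(\M))$, the statement then follows by patching.

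First I would fix $\tau$ with $|\tau-d|<\a\pi/2$ and choose $\b\in(0,\a)$ with $|\tau-d|<\b\pi/2$, so that the half-line $\{te^{i\tau}:t>0\}$ of integration lies in $S(d,\b)$ and $|f(te^{i\tau})|\le C_{\b}$ for all $t>0$. Given $z\in V_{\tau}$, put $\theta:=\tau-\arg z$, so $|\theta|<\o(\M)\pi/2$; parametrising $u=te^{i\tau}$ and substituting $v=t/|z|$ gives
\[
\int_{0}^{\infty(\tau)}|e(u/z)|\,|f(u)|\,\frac{|du|}{|u|}
=\int_{0}^{\infty}|e(ve^{i\theta})|\,|f(|z|ve^{i\tau})|\,\frac{dv}{v}
\le C_{\b}\int_{0}^{\infty}|e(ve^{i\theta})|\,\frac{dv}{v},
\]
a bound \emph{independent of $|z|$}. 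I would split the last integral at the radius $t_{0}$ furnished by condition (\textsc{ii}) of Definition~\ref{defikernelMsumm}. For the part over $(0,t_{0}]$, taking $z_{0}=e^{-i\theta}\in S_{\o(\M)}$ in condition (\textsc{ii}) yields $\int_{0}^{t_{0}}v^{-1}|e(ve^{i\theta})|\,dv<\infty$; covering a compact arc of values of $\theta$ by finitely many of the neighbourhoods provided by (\textsc{ii}) makes this uniform in $\theta$ on compacta. For the part over $[t_{0},\infty)$, pick $\ep>0$ with $|\theta|<(\o(\M)-\ep)\pi/2$; condition (\textsc{iii}) gives $|e(ve^{i\theta})|\le c\,e^{-\o_{\M}(v/k)}$, and since $\o_{\M}(v)\ge 2\log v-\log M_{2}$ (take $p=2$ in the definition of $\o_{\M}$) this tail is finite, again uniformly in $\theta$ on compacta. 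Thus the integral defining $(T_{e,\tau}f)(z)$ converges absolutely and locally uniformly on $V_{\tau}$; and since $z\mapsto e(u/z)$ is holomorphic on $V_{\tau}$ for each fixed $u$ on the ray (one has $u/z\in S_{\o(\M)}$), Morera's theorem together with Fubini's theorem shows that $T_{e,\tau}f$ is holomorphic on the whole of $V_{\tau}$.

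Next I would patch. For two directions $\tau,\tau'$ in the range $(d-\a\pi/2,d+\a\pi/2)$, the functions $T_{e,\tau}f$ and $T_{e,\tau'}f$ agree on the nonempty open overlap of the subregions on which they were originally defined (this being part of the construction of $T_{e}f$ behind Proposition~\ref{prop.hol.lap}), hence, by the previous step and the identity principle, they agree on all of $V_{\tau}\cap V_{\tau'}$. Therefore the family $\{T_{e,\tau}f\}_{|\tau-d|<\a\pi/2}$ glues to a single holomorphic function on $\bigcup_{|\tau-d|<\a\pi/2}V_{\tau}=S(d,\a+\o(\M))$, which restricts to $T_{e}f$ on the sectorial region of Proposition~\ref{prop.hol.lap}. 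Hence $g=T_{e}f$ is holomorphic on $S(d,\a+\o(\M))$.

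The one substantive point is the first step: it is precisely the (super-logarithmic) decay of the kernel $e$ at infinity in condition (\textsc{iii}), together with $f$ being merely \emph{bounded} on proper subsectors — rather than of general $\M$-growth, in which case the integrand could grow like $e^{\o_{\M}(|z|/k)}$ and the integral would be controlled only for $|z|$ small, as in Proposition~\ref{prop.hol.lap} — that forces convergence of the Laplace integral for arbitrarily large $|z|$, indeed with a bound uniform in $|z|$. The patching step is routine.
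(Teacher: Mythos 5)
Your proof is correct and follows essentially the same strategy as the paper's: for each fixed $\tau$, split the Laplace integral into a near-origin part controlled by condition~(\textsc{ii}) of Definition~\ref{defikernelMsumm} and a tail controlled by the decay in condition~(\textsc{iii}) (using that $h_{\M}(t)\le M_p t^p$), conclude holomorphy of $T_{e,\tau}f$ on the full unbounded sector $S(\tau,\o(\M))$, and patch by analytic continuation as $\tau$ varies. The only cosmetic difference is that you normalise via the substitution $v=t/|z|$ to make the $|z|$-independence of the bound manifest, whereas the paper fixes a bounded neighbourhood $U\ni z_0$, bounds $|z|<r$ there, and uses $h_{\M}(kr/|u|)\le M_1kr/|u|$ — two ways of saying the same thing.
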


\begin{proof}
 We have that $f\in\mathcal{O}^{\bM}(S)$ with $S=S(d,\a)$.
Let $\tau\in\R$ be a direction in $S$, i.e., such that $|\tau-d|<\pi\a/2$.
For every $u,z\in\mathcal{R}$ with $\mathrm{arg}(u)=\tau$ and $|\tau-\mathrm{arg}(z)|<\o(\M)\pi/2$ we have that
$u/z\in S_{\o(\M)}$, so the expression under the integral sign in (\ref{equadefitranLapl}) makes sense.
We fix $a>0$, and write
$$
g(z)=\int_0^{\infty(\tau)}e(u/z)f(u)\frac{ du}{u}=
\int_0^{ae^{i\tau}}e(u/z)f(u)\frac{du}{u}+\int_{ae^{i\tau}}^{\infty(\tau)}e(u/z)f(u)\frac{du}{u}.
$$
Since $f$ is bounded at  the origin following direction $\tau$ and by Definition~\ref{defikernelMsumm}.(\textsc{ii}), it is straightforward
to apply Leibniz's rule for parametric integrals and deduce that
the first integral in the right-hand side defines a holomorphic function in $S(\tau,\o(\M))$. Regarding the second
integral, we take $\b<\a$ such that $|\tau-d|<\b\pi/2$ and we fix $0<\ga<\o(\M)$. We have that
$u/z\in S_{\ga}$, for $\arg(u)=\tau$ and $z$ such that
$|\tau-\mathrm{arg}(z)|<\ga\pi/2$. The property in Definition \ref{defikernelMsumm}.(\textsc{iii}) provides us with
constants $c,k>0$ such that
$$
|e(u/z)|\le c h_{\M}(k|z|/|u|), \qquad \arg(u)=\tau, \quad z\in S(\tau,\ga),
$$
then
$$
\big|\frac{1}{u}e(u/z)f(u)\big|\le \frac{c C_\b }{|u|} h_{\bM}(k|z|/|u|), \qquad \arg(u)=\tau, \quad z\in S(\tau,\ga).
$$
For any $z_0\in S(\tau,\ga)$ we fix a bounded neighborhood $U$ of $z_0$ contained in $S(\tau,\ga)$.
We have that $|z|<r$ for every $z\in U$, and from the monotonicity of $h_{\M}$ we deduce that
\begin{equation*}
\big|\frac{1}{u}e(u/z)f(u)\big|\le \frac{c C_\b}{|u|} h_{\bM}(kr/|u|).
\end{equation*}
By the definition of $h_{\M}$, we have that $h_{\M}(kr/|u|)\le M_1kr/|u|$, so the right-hand
side of the last inequality is an integrable function of $|u|$ in $(a,\infty)$, and again Leibniz's
rule allows us to conclude the desired analyticity for the second integral.\par
Consequently, $g$ is holomorphic in $S(\tau,\ga)$ for every $|\tau-d|<\pi\a$ and every $0<\ga<\o(\M)$.
By analytic continuation on rotating $\tau$,
$g$ is holomorphic in $S(d,\a+\o(\M))$.
\end{proof}

\begin{rema}\label{rema.asymp.laplace.transform.bounded.functions}
Moreover,  if $f(z)\sim_{\M'} \sum_{n=0}^\oo a_n z^n$, by Theorem~\ref{teorrelacdesartransfBL}.(i), we deduce that $g=T_{e} f\sim_{\M\M'} \sum_{n=0}^\oo a_n m_{e}(n) z^n$ on a sectorial region $G(d,\a+\o(\M))$. Since the notion of asymptotic expansion only depends on the behavior of the function on bounded subsectors, we can say that $g\sim_{\M\M'} \sum_{n=0}^\oo a_n m_{e}(n) z^n$ on $S(d,\a+\o(\M))$,
whenever $g$ is holomorphic in $S(d,\a+\o(\M))$.
\end{rema}

As it happens in the Gevrey case, since the moment function $m_{e}(\lambda)$ is the Mellin transform of $e(z)$ (see \cite[Sect. 1.29]{Titchmarsh}), there is a duality between $m_{e}(\lambda)$ and $e$ and  the next lemma shows how one can recover $e(z)$ from its moment sequence $\mf_{e}$, thanks to the inversion formula. However, observe that, as it was mentioned in~\cite{Balser2000}, we shall not be concerned with the harder question of how to characterize such $\mf$ to which a kernel $e(z)$ exists. The following lemma generalizes Lemma 7 in~\cite{Balser2000}.

\begin{lemma}\label{lemma.e.Laplace.geom.series}
 Let a kernel function $e(z)$ of $\M-$summability with corresponding operator $T=T_{e}$ be given. For $f (u):= (1-u)^{-1}$, we have that $g = T f$ is holomorphic in $S(\pi,2+\o(\M))$, is $\M-$asymptotic to
$ \widehat{g}(z)=\sum^\oo_{0}m(n)z^n$ there, and $g(z)\to0$ as $|z|\to\oo$ uniformly for $z\in  S(\pi,2+\ga)$ for every $\ga<\o(\M)$. Moreover,
\begin{equation}\label{eq.uniq.e}
 g(z)- g(ze^{2\pi i} ) = 2\pi i e(1/z),\qquad z\in S_{\o(\M)}.
\end{equation}
\end{lemma}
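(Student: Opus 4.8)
The plan is to compute $g = T_e f$ explicitly where $f(u) = (1-u)^{-1}$, exploiting the geometric series $f(u) = \sum_{n\ge 0} u^n$ on $|u|<1$, together with the monomial transform formula $T_e f_\lambda(z) = m_e(\lambda)z^\lambda$ from Proposition~\ref{prop.trans.monom}. First I would check that $f\in\mathcal O^{\bM}(S(\pi,2))$: away from the singularity at $u=1$, $f$ is bounded on every proper subsector of $S(\pi,2)$ (the ray $\arg u = \pi$ avoids the pole, and on $S(\pi,\beta)$ with $\beta<2$ one stays at distance bounded below from $1$), so in particular $f$ is bounded on such subsectors and continuous at the origin. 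Lemma~\ref{lemma.holomorphy.e.laplace.transform.bounded.functions} then gives that $g = T_e f$ is holomorphic in $S(\pi, 2+\o(\M))$, and Remark~\ref{rema.asymp.laplace.transform.bounded.functions} (applied with $\M' = (1)_{n\in\N_0}$, since $f(u)\sim \sum u^n$ because $f$ extends holomorphically across $0$) gives $g\sim_{\M}\widehat g$ with $\widehat g(z) = \sum_{n\ge 0} m_e(n)z^n$ there.

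The decay statement $g(z)\to 0$ as $|z|\to\infty$ uniformly on $S(\pi,2+\ga)$ for $\ga<\o(\M)$ I would obtain directly from the integral: write $g(z) = \int_0^{\infty(\tau)} e(u/z) f(u)\,du/u$ for a direction $\tau$ near $\pi$, split at $|u| = 1/2$ say, use $|e(w)|\le c\,e^{-\o_\M(|w|/k)}$ on $S_{\o(\M)-\ep}$ to bound the near-origin part by a multiple of $|z|^{-1}$-type decay, and bound the tail using that $f$ is bounded on the ray together with the exponential decay of $e$. This is a routine estimate once $\tau$ is chosen so that $u/z\in S_{\o(\M)-\ep}$; one gets a bound tending to $0$ as $|z|\to\infty$, uniform in $\arg z$ on the stated sector after rotating $\tau$.

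The core identity~\eqref{eq.uniq.e} is the residue computation. For $z\in S_{\o(\M)}$, the functions $g(z)$ and $g(ze^{2\pi i})$ are both given by the integral of $e(u/z')f(u)\,du/u$ over rays $\arg u = \tau$ with $\tau$ respectively slightly below and slightly above the value $\arg z$; deforming one ray into the other sweeps across the pole of $f$ at $u=1$. Since $\arg z$ can be taken in $(-\o(\M)\pi/2, \o(\M)\pi/2)$ and $1$ lies on the positive real axis, the difference $g(z) - g(ze^{2\pi i})$ equals $2\pi i$ times the residue of $u\mapsto e(u/z)f(u)/u$ at $u=1$, i.e. $2\pi i\, e(1/z)\cdot \operatorname{Res}_{u=1}\big((1-u)^{-1}/u\big)\cdot 1 = 2\pi i\, e(1/z)$ (using $\operatorname{Res}_{u=1}(1-u)^{-1} = -1$ and the sign from traversing the contour so that the pole is enclosed once). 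I would justify the contour deformation by the decay of $e$ near the origin (Definition~\ref{defikernelMsumm}(\textsc{ii}), (\textsc{iii})) controlling the small arc, and the $\M$-growth bound~\eqref{eq.bound.e} on $e$ giving exponential decay of the integrand at infinity (since $|u/z|\to\infty$ along the rays, provided we stay inside $S_{\o(\M)}$ so that~\eqref{eq.bound.e} applies on a subsector of the right opening) which kills the large arc. The main obstacle is precisely this bookkeeping: making sure that as $\tau$ ranges over the interval swept out, $u/z$ stays in a sector $S_{\o(\M)-\ep}$ where the kernel estimates hold, so that both the arc contributions vanish and only the residue survives; this is the same mechanism as in Balser's Lemma~7, adapted to the fact that here the relevant opening is $\o(\M)\pi$ rather than $\pi/k$.
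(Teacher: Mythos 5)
Your overall route matches the paper's: establish holomorphy on $S(\pi,2+\o(\M))$ via Lemma~\ref{lemma.holomorphy.e.laplace.transform.bounded.functions} (boundedness of $f$ on proper subsectors of $S(\pi,2)$), get the $\M$-asymptotics from Remark~\ref{rema.asymp.laplace.transform.bounded.functions} with $\M'=(1)$, and prove \eqref{eq.uniq.e} by deforming one ray into the other past the pole at $u=1$ and applying the Residue theorem, using the boundedness of $e$ together with the $O(1/r^2)$ decay of $1/(u(1-u))$ to kill the arc. Those three pieces are fine.

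The decay statement has a genuine gap as you have set it up. With a split at the fixed point $|u|=1/2$, neither piece is easy to drive to $0$ as $|z|\to\infty$. For the near-origin piece, the bound $|e(w)|\le c\,e^{-\o_{\M}(|w|/k)}$ only gives $|e(w)|\le c$ for small $w$ (since $\o_\M\equiv 0$ near $0$), so it does not yield any $|z|^{-1}$-type decay; what you actually need there is condition (\textsc{ii}), i.e.\ the local uniform integrability of $t\mapsto t^{-1}e(t/z)$ near $0$. More seriously, for the piece with $|u|$ in a fixed bounded interval like $(1/2,2)$, the argument $u/z$ tends to $0$ as $|z|\to\infty$, so the ``exponential decay of $e$'' is useless; you are left only with $|e(u/z)|\le c$, and Definition~\ref{defikernelMsumm} does \emph{not} assert that $e$ vanishes at $0$ — indeed Remark~\ref{rema.tends.0.at.0} derives that vanishing \emph{from} the present lemma, so invoking it here would be circular. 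The fix used in the paper is to split at the $|z|$-dependent radius $|u|=|z|^{1/2}$ (equivalently $s=1/|z|^{1/2}$ after the change of variables $s=|u|/|z|$): then the near-origin piece becomes a shrinking tail of the convergent integral $\int_0^{t_0}\sup_\phi|e(se^{i\phi})|\,ds/s$ from condition (\textsc{ii}), and the tail piece is bounded, using $|1-s|z|e^{i\tau}|\ge s|z|/2$ for $|z|$ large, by $2c|z|^{-1}\int_{1/|z|^{1/2}}^\infty s^{-2}\,ds = 2c|z|^{-1/2}\to 0$. Your proposal should be adjusted to use such a $|z|$-dependent cut before the uniform decay can be concluded.
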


\begin{proof}
The function $f(u)$ is defined in the sector $S(\pi,2)$ and continuous at the origin. For every $1<\b<2$, we have that
$$|f(u)|\leq \frac{1}{|1-u|} \leq \frac{1}{\sin(\pi-\b\pi/2)},\qquad u\in S(\pi,\b).$$
Hence, by Lemma~\ref{lemma.holomorphy.e.laplace.transform.bounded.functions},
we have that $g$ is holomorphic in $S(\pi,2+\o(\M))$.  Since $f$ is convergent at $0$, we have that $f(z)\sim_{\M'} \sum_{n=0}^\oo z^n$
with $\M'=(1)_{n\in\N_0}$ and, by Remark~\ref{rema.asymp.laplace.transform.bounded.functions}, we deduce that $g\sim_{\M} \sum_{n=0}^\oo m(n) z^n$ on $S(\pi,2+\o(\M))$.

The behavior at infinity can be again read off from the integral representation as follows. We fix a direction $\tau\in(0,2\pi)$, $\tau\not=\pi$, and we consider a direction $\theta \in (-\pi\o(\M)/2, 2\pi+\pi\o(\M)/2)$ such that
$|\tau-\theta|<\pi\ga/2<\pi\o(\M)/2$. For every $z\in S(\tau,\ga)$ with $\arg(z)=\theta$, we have that
$$g(z)=\int_{0}^{\oo} e\left(\frac{r}{|z|} e^{i(\tau-\theta)}\right) \frac{dr}{r(1-re^{i\tau})}=
\int_{0}^{\oo}e(se^{i(\tau-\theta)}) \frac{ds}{s(1-s|z|e^{i\tau})}.$$
We split the integral into two parts and we see that
$$\left|\int_{0}^{1/|z|^{1/2}} e(se^{i(\tau-\theta)}) \frac{ds}{s(1-s|z|e^{i\tau})}\right|\leq
\frac{1}{\inf_{0<s<\oo}\{|1-s|z|e^{i\tau}|\}} \int_{0}^{1/|z|^{1/2}} \frac{|e(se^{i(\tau-\theta)})|}{s} ds .$$
If $\tau\in (0,\pi/2) \cup (3\pi/2,2\pi)$, we have that
 $$\inf_{0<s<\oo}\{|1-s|z|e^{i\tau}|\}=|\sin(\tau)|\not=0,$$
and if $\tau\in [\pi/2,3\pi/2]$ ($\tau\not=\pi$),  we observe that
$$\inf_{0<s<\oo}\{|1-s|z|e^{i\tau}|\}= 1 \geq|\sin(\tau)|\not=0.$$
Consequently, we have shown that
\begin{equation}\label{ineq.limit.g.infinity.1}
 \left|\int_{0}^{1/|z|^{1/2}} e(se^{i(\tau-\theta)}) \frac{ds}{s(1-s|z|e^{i\tau})}\right|\leq
\frac{1}{|\sin(\tau)|} \int_{0}^{1/|z|^{1/2}} \sup_{|\phi|<\pi\ga/2}|e(se^{i\phi})| \frac{ds}{s} .
\end{equation}
Using Definition~\ref{defikernelMsumm}.(\textsc{ii}), we see that the integral in the right hand side is convergent. Subsequently, it tends to $0$, uniformly
as $|z|$  goes to infinity in $S(\tau,\ga)$.\para

On the other hand, since the function $e$ is uniformly bounded
in $S_{\ga}$ for every $|\tau-\theta|<\pi\ga/2$, there exists $c>0$ such that
$$\left|\int_{1/|z|^{1/2}}^{\oo} e(se^{i(\tau-\theta)}) \frac{ds}{s(1-s|z|e^{i\tau})}\right|\leq
c \int_{1/|z|^{1/2}}^{\oo} \frac{ ds}{s|1-s|z|e^{i\tau}|} .$$
We have that $|1-s|z|e^{i\tau}|\geq|s|z|-1|=|s|z|^{1/2}|z|^{1/2}-1|$. Since always $s|z|^{1/2}\geq 1$, if $|z|^{1/2}\geq 1$ we see that  $|1-s|z|e^{i\tau}|\geq s|z|-1$. We observe that if $|z|>4$, we have that $1/|z|^{1/2}>2/|z|$, then $s>2/|z|$, and consequently,  $s|z|-1\geq s|z|/2$. Hence, for every $z\in S(\tau,\ga)$ with $|z|>4$ we see that
\begin{equation}\label{ineq.limit.g.infinity.2}
 \left|\int_{1/|z|^{1/2}}^{\oo} e(se^{i(\tau-\theta)}) \frac{ds}{s(1-s|z|e^{i\tau})}\right|\leq
\frac{2c}{|z|} \int_{1/|z|^{1/2}}^{\oo} \frac{ ds}{s^2} = \frac{2c}{|z|^{1/2}} .
\end{equation}
The right hand side of this inequality tends to $0$
as $|z|$  goes to infinity. By~\eqref{ineq.limit.g.infinity.1} and~\eqref{ineq.limit.g.infinity.2}, we see that $g(z)\to0$ as $|z|\to\oo$ uniformly for $z\in  S(\tau,\ga)$. By a compactness argument,  we see that $g(z)\to0$ as $|z|\to\oo$ whenever
$z\in S(\pi,2+\o(\M))$ uniformly for $z\in  S(\pi,2+\ga)$.

Let $\theta\in\R$ be a direction such that $|\theta|<\pi\o(\M)/2$ and $z\in\mathcal{R}$ with $\mathrm{arg}(z)=\theta$.
There exists $\ep\in (0,\pi\o(\M)/2)$ such that:
 \begin{enumerate}
  \item For every $u\in\mathcal{R}$ with $\mathrm{arg}(u)=\ep$ we have that $u/z\in S_{\o(\M)}$.
  \item For every $u\in\mathcal{R}$ with $\mathrm{arg}(u)=2\pi-\ep$ we have that $u/ze^{2\pi i}\in S_{\o(\M)}$.
 \end{enumerate}
Then, since $f$ is single-valued, we have that
\begin{align*}
 g(z)-g(ze^{2\pi i}) = &
\int_0^{\infty(\ep)}\frac{e(u/z)du}{(1-u)u} - \int_0^{\infty(2\pi-\ep)}\frac{e(u/(ze^{2\pi i}))du}{(1-u)u} \\
= & \int_0^{\infty(\ep)}\frac{e(u/z)du}{(1-u)u} - \int_0^{\infty(-\ep)}\frac{e(w/z)dw}{(1-w)w}.
\end{align*}
We denote by $\gamma_r$ the arc of radius $r>1$ from $re^{i\ep}$ to $re^{-i\ep}$ traversed clockwise. We observe that
$$\left|\int_{\gamma_r} \frac{e(u/z)du}{(1-u)u} \right|= \left|\int_{-\ep}^{\ep}\frac{e(re^{i\theta}/z)i d\theta}{(1-re^{i\theta})}\right|
\leq \int_{-\ep}^{\ep}\frac{|e(re^{i\theta}/z)| d\theta}{|r-1|} \leq 2\ep c \frac{h_{\M} (k |z|/r)}{r-1}
\leq \frac{2\ep c }{r-1}. $$
Hence, we deduce that
$$\lim_{r\to\oo} |\int_{\gamma_r} \frac{e(u/z)du}{(1-u)u} | = 0.$$
The residue of $h_z(u) = e(u/z)/(u(1-u))$ at $u=1$ being $-e(1/z)$,
according to the Residue theorem we conclude that
$g(z)-g(ze^{2\pi i}) = 2\pi i e(1/z)$.
\end{proof}

\begin{rema}\label{rema.unique.e}
Let $e$ and $\overline{e}$ be two $\M-$summability kernels whose moment sequence $\mf=(m(n))_{n\in\N_0}$ is the same. By the above lemma, for $f(z)=1/(1-z)$, we have that
 $T_{e}{f}\sim_{\M} \sum_{n=0}^\oo m(n) z^n$ and
$T_{\overline{e}}{f}\sim_{\M} \sum_{n=0}^\oo m(n) z^n$ on
$S(\pi,2+\o( \M ))$.
Observe that, according to Watson's Lemma, Theorem~\ref{TheoWatsonlemma}, $T_{e}{f}=T_{\overline{e}}{f}$.
By \eqref{eq.uniq.e}, we deduce that $e(z)=\overline{e}(z)$.
\end{rema}

\begin{rema}\label{rema.tends.0.at.0}
 Since  $g(z)\to0$ as $|z|\to\oo$ uniformly for $z\in  S(\pi,2+\ga)$ for every $\ga<\o(\M)$, by \eqref{eq.uniq.e} we also deduce that
$e(z)\to 0$, $|z|\to 0$, uniformly for $z\in  S_{\ga}$ for every $\ga<\o(\M)$, which does not follow immediately from Definition~\ref{defikernelMsumm}.
\end{rema}

\subsection{Strong kernels of $\M-$summability}
\label{subsect.strongkernels}

In order to recover the multisum of a formal power series, we need to combine a kernel $e_1$ of $\M_1-$summability with a kernel $e_2$ of $\M_2-$summability. The idea is to define new kernels $e_1\ast e_2$ and $e_1 \triangleleft e_2$ whose sequences of moments are $\mf_{e_1}\cdot\mf_{e_2}$ and $\mf_{e_2}/\mf_{e_1}$, respectively. This construction is based on the one given  in the Gevrey case by W.~Balser~\cite[Sect.\ 5.8]{Balser2000}. Nevertheless, in this general situation, a stronger notion of summability kernel should be considered which will not entail a significant restriction (see Remark~\ref{rema.strong.kernels.norestriction}).

\begin{defi}~\label{NEWdefikernelMsumm}
 Let $\M$ be a strongly regular sequence with $\o(\M)<2$. A pair
 of complex functions $e$, $E$ are said to be \textit{strong kernel functions for
 $\M-$summability} if they satisfy (\textsc{i}) and (\textsc{iii})-(\textsc{v}) in Definition~\ref{defikernelMsumm} of kernel functions for $\M-$summability, and moreover:
 \begin{enumerate}
  \item[(\textsc{ii.b})] There exists $\a>0$ such that for every $\tau\in(0,\o(\M))$, there exist $C_\tau,\ep_\tau>0$ such that
  $$|e(z)|\leq C_\tau |z|^\a, \qquad \text{for all}\,\,\, z\in S_\tau, \qquad \text{with}\,\,\,|z|\leq \ep_\tau.$$
\item[(\textsc{vi.b})] There exists $\b>0$ such that for every $\tau\in(0,2-\o(\M))$, there exist $K_\tau,M_\tau>0$ such that
  $$|E(z)|\leq \frac{K_\tau}{|z|^\b}, \qquad \text{for all}\,\,\, z\in S(\pi,\tau), \qquad \text{with}\,\,\, |z|\geq M_\tau.$$
  \end{enumerate}
\end{defi}

\begin{rema}
In case $\o(\M) \geq2$, condition (\textsc{vi.b}) makes no sense and, in the same way as in~\cite[p.\ 90]{Balser2000} (see Remark~\ref{rema.ramif.kernels}), suitable adaptations should be made. For simplicity, we will omit this situation from now on.
\end{rema}

The next result justifies the terminology in the previous definition.

 \begin{lemma}
   Let $\M$ be a strongly regular sequence with $\o(\M)<2$.
   Let $e$ and $E$ be a pair of complex functions satisfying Definition~\ref{NEWdefikernelMsumm}, then they are kernels for $\M-$summability in the sense of Definition~\ref{defikernelMsumm}.
 \end{lemma}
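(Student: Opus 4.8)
The plan is to observe that a pair $(e,E)$ satisfying Definition~\ref{NEWdefikernelMsumm} already fulfils conditions (\textsc{i}), (\textsc{iii}), (\textsc{iv}) and (\textsc{v}) of Definition~\ref{defikernelMsumm} by hypothesis, so the whole task reduces to deducing the two local uniform integrability conditions (\textsc{ii}) and (\textsc{vi}) from the polynomial estimates (\textsc{ii.b}) and (\textsc{vi.b}), respectively. In both cases I would use the same device: replace the given point $z_0$ by a bounded open neighbourhood $U$ whose closure is a compact subset of the relevant open sector; on $\overline{U}$ the rescaled variable (either $t/z$ with $t>0$, or $z/t$) ranges in a fixed sector of the type occurring in (\textsc{ii.b})/(\textsc{vi.b}), with modulus comparable to $t$, resp.\ to $1/t$; then split the integral over $(0,t_0)$ at an intermediate radius, bounding the near-origin part by the polynomial estimate and the far part by the continuity of $e$ (resp.\ $E$). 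Since the exponents $\a,\b$ in (\textsc{ii.b}), (\textsc{vi.b}) are strictly positive, the near-origin contribution converges. I would fix the constant $t_0$ in (\textsc{ii}) and (\textsc{vi}) arbitrarily, say $t_0=1$, letting the dependence on $z_0$ rest entirely on $U$.

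Concretely, for (\textsc{ii}): given $z_0\in S_{\o(\M)}$, pick a bounded open $U\ni z_0$ with $\overline{U}\en S_{\o(\M)}$, and use compactness to fix $0<m\le M$ and $0<\tau<\o(\M)$ with $m\le|z|\le M$ and $z\in S_\tau$ for every $z\in\overline{U}$. With $C_\tau,\ep_\tau>0$ from (\textsc{ii.b}) for this $\tau$, note that $t/z\in S_\tau$ and $|t/z|=t/|z|\le t/m$ for $t>0$, $z\in\overline{U}$, so (\textsc{ii.b}) gives $|e(t/z)|\le C_\tau(t/m)^\a$ whenever $t\le m\ep_\tau$. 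Hence, with $t_1:=\min(1,m\ep_\tau)$,
\begin{equation*}
\int_0^{t_1}\frac{1}{t}\sup_{z\in U}|e(t/z)|\,dt\le\frac{C_\tau}{m^\a}\int_0^{t_1}t^{\a-1}\,dt<\oo,
\end{equation*}
and on $[t_1,1]$ the set $\{t/z:t\in[t_1,1],\,z\in\overline{U}\}$ is a compact subset of $S_{\o(\M)}$ bounded away from $0$, on which $e$ (holomorphic there by (\textsc{i})) is bounded, so that integral is finite too. Adding the two pieces gives (\textsc{ii}).

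For (\textsc{vi}) — meaningful because $\o(\M)<2$ — I would argue symmetrically with $E$ in place of $e$ and the sector $S(\pi,2-\o(\M))$: choose $U\ni z_0$ with $\overline{U}$ compact in $S(\pi,2-\o(\M))$, fix $0<m\le M$ and $0<\tau<2-\o(\M)$ with $m\le|z|\le M$ and $z\in S(\pi,\tau)$ on $\overline{U}$, and observe that $z/t\in S(\pi,\tau)$ and $|z/t|=|z|/t\ge m/t$, so (\textsc{vi.b}) yields $|E(z/t)|\le K_\tau(t/m)^\b$ once $t\le m/M_\tau$. Then, with $t_2:=\min(1,m/M_\tau)$,
\begin{equation*}
\int_0^{t_2}\frac{1}{t}\sup_{z\in U}|E(z/t)|\,dt\le\frac{K_\tau}{m^\b}\int_0^{t_2}t^{\b-1}\,dt<\oo
\end{equation*}
since $\b>0$, while on $[t_2,1]$ the set $\{z/t:t\in[t_2,1],\,z\in\overline{U}\}$ is compact in $\C$ and $E$ is entire (by (\textsc{v})), hence bounded on it, so the remaining integral is finite. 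This establishes (\textsc{vi}) and finishes the proof.

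I do not expect any genuine obstacle; the only point deserving a little care is the quantifier layout of (\textsc{ii}) and (\textsc{vi}) — one global $t_0$, then a neighbourhood depending on $z_0$ — which I would handle, as above, by fixing $t_0$ beforehand and invoking the continuity of $e$ and $E$ off the origin, together with the elementary matching between the aperture $\tau$ supplied by (\textsc{ii.b})/(\textsc{vi.b}) and that of a compact neighbourhood of $z_0$.
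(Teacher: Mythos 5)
Your proof is correct and follows essentially the same approach as the paper: split the integral at a small radius, bound the near-origin piece by the power-law estimates in (\textsc{ii.b}) and (\textsc{vi.b}), and bound the remaining piece by continuity of $e$ on compact subsets of $S_{\o(\M)}$ (resp.\ the fact that $E$ is entire). Your remark on fixing the global $t_0$ in advance matches the paper's device of first fixing a constant $T>0$ and then distinguishing cases.
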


 \begin{proof}
  We only have to check that $e$ and $E$ satisfy conditions (\textsc{ii}) and (\textsc{vi}) in Definition~\ref{defikernelMsumm}.
  We take $z_0\in S_{\o(\M)}$, we fix $r_0>0$ and $\tau_0\in(0,\o(\M))$ such that $U:=B(z_0,r_0)\en S_{\tau_0}$.
  By condition (\textsc{ii.b}), we have that
  $$|e(z)|\leq C_0 |z|^\a, \qquad z\in S_{\tau_0}, \qquad |z|\leq \ep_0.$$
  For $t\in(0,\ep_0 (|z_0|-r_0))$ and for every $z\in U$ we observe that $t/z\in S_{\tau_0}$ and $|t/z|\leq\ep_0$. Then
  $$\int_{0}^{\ep_0 (|z_0|-r_0)}\sup_{z\in U} |e(t/z)| \frac{dt}{t}
  \leq  \int_{0}^{\ep_0 (|z_0|-r_0)} \frac{C_0 t^{\a-1} dt}{ (|z_0|-r_0)^\a } \leq  \frac{C_0 \ep_0^\a}{\a}. $$
  We fix $T>0$, if $\ep_0 (|z_0|-r_0)\geq T$ condition  (\textsc{ii}) is immediately satisfied. If
  $ \ep_0 (|z_0|-r_0)< T $ we define $D_0:=\{t/z; \, z\in U, \,t\in[\ep_0 (|z_0|-r_0),T]\}\en S_{\tau_0}$, by condition (\textsc{i}), $e$ is continuous on  $ S_{\tau_0}$ and, since $D_0$ is contained on a compact subset of $ S_{\tau_0}$ ,  we have that $\sup_{w\in D_0} |e(w)|=K_0<\oo$. Then
 $$\int_{0}^{T}\sup_{z\in U} |e(t/z)| \frac{dt}{t} \leq \frac{C_0 \ep_0^\a}{\a} + \frac{T K_0}{\ep_0 (|z_0|-r_0)}<\oo.  $$

Analogously, we will verify condition (\textsc{vi}).  We take $z_0\in S(\pi, 2-\o(\M))$, we fix $r_0>0$ and
$\tau_0\in(0,2-\o(\M))$ such that $U:=B(z_0,r_0)\en S (\pi,\tau_0)$.
  By condition (\textsc{vi.b}), we have that
  $$|E(z)|\leq \frac{K_0}{|z|^\b}, \qquad z\in S(\pi,\tau_0), \qquad |z|\geq M_0.$$
  For $0<t\leq(|z_0|-r_0)/M_0$ and for every $z\in U$ we observe that $z/t\in S(\pi,\tau_0)$ and $|z/t|\geq M_0$. Then
  $$\int_{0}^{(|z_0|-r_0)/M_0}\sup_{z\in U} |E(z/t)| \frac{dt}{t}
  \leq  \int_{0}^{ (|z_0|-r_0)/M_0} \frac{K_0  dt}{ (|z_0|-r_0)^\b t^{1-\b}} \leq  \frac{K_0 }{M_0^\b \b}, $$
 and, since $E$ is entire, we conclude as before.
 \end{proof}

\begin{rema}\label{rema.strong.kernels.norestriction}
In general, for a kernel $e$ of $\M-$summability and thanks to Remark~\ref{rema.tends.0.at.0}, one can only guarantee that $e$ tends to $0$ in the regions considered in (\textsc{ii.b}) but it seems not possible to ensure that it has power-like growth, likewise for $E$.
However, either the  classical kernels in the Gevrey theory $e_k(z)=kz^k\exp(-z^k)$  (see~\cite{Balser2000}), or the new ones $e_V(z)=z\exp(-V(z))$, constructed for sequences admitting a nonzero proximate order (see Remark~\ref{rema.def.exist.kernels}) by using the functions $V$ of Maergoiz~\cite{Maergoiz} (see~\cite[Th. 4.8,\, Prop. 4.11]{lastramaleksanz15}), satisfy conditions (\textsc{ii.b}) and (\textsc{vi.b}) (see~\eqref{eq.eV.bound.IIB}, \eqref{eq.EV.bound.VIB}).

Moreover, if one wants to prove the integrability conditions (\textsc{ii}) or (\textsc{vi}) in some concrete example, one usually ends up showing estimates as those in (\textsc{ii.b}) and (\textsc{vi.b}).

This stronger notion of kernel functions needs to be considered to assure that the convolution and the acceleration kernels, defined in the forthcoming subsections from two given  kernels $e_1$ and $e_2$, also satisfy adequate integrability properties which seem not to be preserved in the standard situation.
\end{rema}

\begin{rema}\label{rema.change.alpha.beta}
We observe that once condition (\textsc{ii.b}) or (\textsc{vi.b}) is satisfied for some values $\a$ and $\b$,
it is possible to replace  $\a$ for any $0<\a'<\a$ and $\b$ for any $0<\b'<\b$ and the corresponding conditions hold.
\end{rema}

\subsection{Convolution kernels}
\label{subsect.convolutionkernels}

In this subsection, we consider two strong kernels $e_1$ and $e_2$ satisfying the properties in Definition~\ref{NEWdefikernelMsumm} for two sequences $\M_1$ and $\M_2$ with corresponding operators $T_{e_j}, T^{-}_{e_j}$ and moment functions $m_{e_j}(\lambda)$ for $j=1,2$. We will find a pair of operators $T, T^{-}$ such that $T$ coincides
with $T_{e_1}\circ T_{e_2}$ for a suitable class of functions containing the monomials. Hence, we will deduce that the moment function $m(\lambda)$ associated with $T$ equals $m_{e_1}(\lambda)m_{e_2}(\lambda)$. The kernel that defines the operator $T$  will be obtained as a Mellin convolution of the kernels $e_1$ and $e_2$,  which justifies its name. \para

First, we prove an auxiliary lemma that connects the associated function of two weight sequences $\M_1$ and  $\M_2$ with the associated function of their product sequence $\M_1\cdot \M_2$. This will be essential when dealing with functions in the classes $\mathcal{O}^{\M_1}$, $\mathcal{O}^{\M_2}$ and $\mathcal{O}^{\M_1\cdot\M_2}$.

 \begin{lemma}\label{lemma.propierties.ommega.funct.M1M2}
  Let $\M_j$, $j=1,2$, be weight sequences, for every $s,r>0$ we have that
\begin{align}\label{ineq.fundamental.M12.M1.M2}
e^{\o_{\M_1\cdot\M_2}(r)} \leq e^{\o_{\M_1}(s)}e^{\o_{\M_2}(r/s)}.
  \end{align}
 \end{lemma}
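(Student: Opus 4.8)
The plan is to argue directly from the variational definition of the associated function, so the proof reduces to a termwise estimate followed by a supremum. Recalling~\eqref{equadefiMdet}, for any weight sequence $\M$ one has $e^{\o_{\M}(t)}=\sup_{p\in\N_0} t^p/M_p$ for $t>0$, and this supremum is finite (in fact attained) because $\lim_{p\to\oo}(M_p)^{1/p}=\oo$ forces $t^p/M_p\to 0$ as $p\to\oo$. Writing $\M_1=(M^{(1)}_p)_{p\in\N_0}$ and $\M_2=(M^{(2)}_p)_{p\in\N_0}$, so that $\M_1\cdot\M_2=(M^{(1)}_pM^{(2)}_p)_{p\in\N_0}$, I would set up the inequality~\eqref{ineq.fundamental.M12.M1.M2} in this form.

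First I would fix $s,r>0$ and an arbitrary index $p\in\N_0$ and split the $p$-th term of the sequence defining the left-hand side as
\[
\frac{r^p}{M^{(1)}_p\,M^{(2)}_p}=\frac{s^p}{M^{(1)}_p}\cdot\frac{(r/s)^p}{M^{(2)}_p}\le e^{\o_{\M_1}(s)}\,e^{\o_{\M_2}(r/s)},
\]
where each factor on the right is bounded by the corresponding supremum over all indices. Since this holds for every $p\in\N_0$, taking the supremum over $p$ on the left yields $e^{\o_{\M_1\cdot\M_2}(r)}=\sup_{p\in\N_0} r^p/(M^{(1)}_pM^{(2)}_p)\le e^{\o_{\M_1}(s)}e^{\o_{\M_2}(r/s)}$, which is exactly~\eqref{ineq.fundamental.M12.M1.M2}.

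There is essentially no obstacle here: the only point meriting a word of justification is the legitimacy of the termwise bound, which is guaranteed by the finiteness of $e^{\o_{\M_1}}$ and $e^{\o_{\M_2}}$ noted above. As an alternative I could phrase the whole argument in terms of the function $h_\M$, using $e^{\o_{\M}(t)}=1/h_\M(1/t)$, which turns~\eqref{ineq.fundamental.M12.M1.M2} into the equivalent submultiplicativity-type statement $h_{\M_1\cdot\M_2}(1/r)\ge h_{\M_1}(1/s)\,h_{\M_2}(s/r)$, proved in the same way from the infimum definition of $h_\M$; I would keep the $\o_\M$ formulation since that is the form in which the inequality is used in the classes $\mathcal{O}^{\M_1}$, $\mathcal{O}^{\M_2}$ and $\mathcal{O}^{\M_1\cdot\M_2}$.
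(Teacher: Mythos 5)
Your proof is correct and coincides with the paper's argument: both split the $p$-th term of the supremum defining $e^{\o_{\M_1\cdot\M_2}(r)}$ as $(s^p/M_{1,p})\cdot((r/s)^p/M_{2,p})$, bound each factor by the corresponding supremum, and then take the supremum over $p$. The extra remark about finiteness and the alternative phrasing via $h_\M$ are harmless but not needed.
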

\begin{proof}
We write $\M_1 = (M_{1,p})_{p\in\N_0}$ and $\M_2= (M_{2,p})_{p\in\N_0}$.
For every $s,r>0$, we observe that
$$e^{\o_{\M_1\cdot \M_2}(r)}=\sup_{p\in\N_0} \frac{r^p}{M_{1,p} M_{2,p}} =\sup_{p\in\N_0} \frac{s^p}{M_{1,p} } \frac{(r/s)^p}{ M_{2,p}} \leq \sup_{p\in\N_0} \frac{s^p}{M_{1,p}} \sup_{p\in\N_0}  \frac{(r/s)^p}{ M_{2,p}}  = e^{\o_{\M_1}(s)}e^{\o_{\M_2}(r/s)}.$$
\end{proof}

We say that a weight sequence $\M$ is \textit{normalized} if $m_0=M_1=1$,  which by log-convexity implies that
$m_p\geq1$, $M_p\leq M_{p+1}$ and $M_p\geq1$ for all $p\in\N_0$.
Given two normalized weight sequences $\L$ and $\M$, the inequality
  \begin{align}\label{ineq.natural.M12.M2}
   \min(\o_{\L}(t),\o_{\M}(t))\geq \o_{\L\cdot\M}(t) \qquad  \text{for}\,\,\, t>0
  \end{align}
follows directly from the definition of the associated functions, since $L_{p}\leq L_{p} M_{p}$ and $M_{p}\leq L_{p} M_{p}$  for all $p\in\N_0$.\para

For arbitrary weight sequences, \eqref{ineq.natural.M12.M2} is satisfied for $t$ large enough. However, normalization is not a significant restriction since $m_p\geq1$ for $p$ large and we can modify the first terms of a sequence
in order to get a normalized weight sequence $\M'$ with $\m'\simeq\m$. This assumption simplifies in a considerable way the proofs of the forthcoming results.

The results until the end of the section might be stated for normalized strongly regular sequences for which a strong kernel exists. Nevertheless, the existence of such kernels has only been proved for sequences admitting a nonzero proximate order which, as already pointed out, are the ones appearing in the applications.

The following proposition shows the convergence of the double integral~\eqref{ineq.double.mod.int.T1.T2} that will ensure that the operators $T$ and $T_{e_1}\circ T_{e_2}$ coincide.

\begin{pro}\label{prop.doubleintegral.mod.T1T2}
Let $\M_j$, $j=1,2$, be normalized weight sequences admitting a nonzero proximate order.
We consider strong kernels $e_j$ for $\M_j-$summability, its moment function $m_{e_j}$ and $T_{e_j}$ the corresponding Laplace-like operators. If $f\in\mathcal{O}^{\M_1\cdot\M_2}(S(d,\ga))$, then there exists a sectorial region
$G(d,\ga+\o(\M_1)+\o(\M_2))$ such that for every $z_0\in G(d,\ga+\o(\M_1)+\o(\M_2))$ there exist a neighborhood
$U_0\subseteq G(d,\ga+\o(\M_1)+\o(\M_2))$ of $z_0$ and directions $\theta$ and $\phi$ (depending on $z_0$) such that we have
\begin{equation}\label{ineq.double.mod.int.T1.T2}
 \int^{\oo(\theta+\phi)}_{0} \int^{\oo (\theta)}_{0} \left| e_1 \left(\frac{v}{z}\right) e_2 \left(\frac{u}{v}\right) f(u)\frac{du}{u}\frac{dv}{v}\right| <\oo,
\end{equation}
for every $z\in U_0$. Consequently, $T_{e_1}\circ T_{e_2}(f) (z)  $ is holomorphic in $G(d,\ga+\o(\M_1)+\o(\M_2))$ and
$$T_{e_1}\circ T_{e_2}(f) (z) = \int_{0}^{\oo(\theta)}  f(u) \left(\int_{0}^{\oo(\phi)}  e_1(wu/z) e_2 (1/w)  \frac{dw}{w}\right) \frac{du}{u}. $$
\end{pro}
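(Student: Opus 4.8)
The plan is to estimate the double integral in \eqref{ineq.double.mod.int.T1.T2} directly, deduce holomorphy of $T_{e_1}\circ T_{e_2}(f)$ by differentiation under the integral sign, and obtain the iterated-integral formula from Fubini together with Cauchy's theorem. Write $\o_1:=\o(\M_1)$, $\o_2:=\o(\M_2)$; recall that $\M_1\cdot\M_2$ is a weight sequence admitting a nonzero proximate order with $\o(\M_1\cdot\M_2)=\o_1+\o_2$ (Proposition~\ref{prop.product.seq}), and take $d=0$ without loss of generality. Since $f\in\mathcal{O}^{\M_1\cdot\M_2}(S(0,\ga))$, on each proper subsector there are $c,k>0$ with $|f(u)|\le c\,e^{\o_{\M_1\cdot\M_2}(|u|/k)}$, the bounded behaviour near the origin being absorbed since $\o_{\M_1\cdot\M_2}\ge0$. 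For a slack $\ep>0$, combining the strong-kernel bound (\textsc{ii.b}) with \eqref{eq.bound.e} gives, on any ray contained in $S_{\o_j-\ep}$, constants $C_j,k_j>0$ and the exponents $\a,\a'>0$ of Definition~\ref{NEWdefikernelMsumm} such that
\[
|e_1(\eta)|\le C_1|\eta|^{\a}\,e^{-\o_{\M_1}(|\eta|/k_1)},\qquad |e_2(\zeta)|\le C_2|\zeta|^{\a'}\,e^{-\o_{\M_2}(|\zeta|/k_2)}.
\]

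\textbf{Choice of region and directions.} I would first fix $\nu>0$ large (depending on $k,k_2,\a'$ and the scaling constants of Lemma~\ref{lemma.M.scaling} for $\M_2$), and then take $G=G(0,\ga+\o_1+\o_2)$ to be a sectorial region whose radii decrease fast enough near the boundary of its opening so that, on the part of $G$ where the opening does not exceed $\b$ (with associated slack $\ep=\ep(\b)>0$ and kernel constants $k_1,k_2$), the quantity $1/(k_1|z|)$ dominates $\nu$ by more than the scaling constants of Lemma~\ref{lemma.M.scaling} for $\M_1$ require; this is the usual petal shape of the domain of a Laplace-type operator. Given $z_0\in G$, choose $\ep>0$ small and write $\arg z_0=\theta+\phi+\psi$ with $|\theta|<\pi\ga/2$, $|\phi|<\pi(\o_2-\ep)/2$, $|\psi|<\pi(\o_1-\ep)/2$ (possible since $|\arg z_0|<\pi(\ga+\o_1+\o_2)/2$), let $\theta,\phi$ be the directions of the statement, and let $U_0\subseteq G$ be a small neighbourhood of $z_0$ on which $u/v\in S_{\o_2-\ep}$ and $v/z\in S_{\o_1-\ep}$ whenever $u=|u|e^{i\theta}$, $v=|v|e^{i(\theta+\phi)}$ and $z\in U_0$.

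\textbf{The estimate.} On $U_0$ the previous bounds give
\[
\Big|e_1\big(\tfrac vz\big)\,e_2\big(\tfrac uv\big)\,f(u)\,\tfrac1{uv}\Big|\le C_1C_2c\,\Big(\tfrac{|v|}{|z|}\Big)^{\a}\Big(\tfrac{|u|}{|v|}\Big)^{\a'}\frac1{|u||v|}\,e^{-\o_{\M_1}(|v|/(k_1|z|))}\,e^{-\o_{\M_2}(|u|/(k_2|v|))}\,e^{\o_{\M_1\cdot\M_2}(|u|/k)}.
\]
The crucial step is Lemma~\ref{lemma.propierties.ommega.funct.M1M2} applied with $s=\nu|v|$, which splits the growth of $f$ between the two kernel decays:
\[
e^{\o_{\M_1\cdot\M_2}(|u|/k)}\le e^{\o_{\M_1}(\nu|v|)}\,e^{\o_{\M_2}(|u|/(\nu k|v|))}.
\]
Substituting this and integrating in $|u|$ first, the change of variable $y=|u|/|v|$ turns the inner integral into $\int_0^{\oo}y^{\a'-1}e^{-\o_{\M_2}(y/k_2)+\o_{\M_2}(y/(\nu k))}\,dy$, which is a finite constant independent of $|v|$ and $z$: it converges at $0$ because $\a'>0$, and at $\oo$ because, $\nu$ being large, Lemma~\ref{lemma.M.scaling} gives $\o_{\M_2}(y/k_2)\ge(\a'+1)\o_{\M_2}(y/(\nu k))$ for $y$ large, so the exponent is $\le-\a'\o_{\M_2}(y/(\nu k))$, which tends to $-\oo$ (indeed $\o_{\M_2}(t)/\log t\to\oo$ since $d_{\M_2}(t)\to1/\o_2>0$, see Theorem~\ref{theo.charact.admit.p.o}). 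The remaining integral $\int_0^{\oo}|v|^{\a-1}e^{-\o_{\M_1}(|v|/(k_1|z|))+\o_{\M_1}(\nu|v|)}\,d|v|$ converges in exactly the same way, uniformly for $z\in U_0$, because $G$ was chosen so that $1/(k_1|z|)$ dominates $\nu$ enough that Lemma~\ref{lemma.M.scaling} yields $\o_{\M_1}(|v|/(k_1|z|))\ge(\a+1)\o_{\M_1}(\nu|v|)$ for $|v|$ large. This establishes \eqref{ineq.double.mod.int.T1.T2} with a bound locally uniform in $z$.

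\textbf{Conclusion, and the main obstacle.} The integrand is holomorphic in $z$ on a complex neighbourhood of $z_0$ and is dominated there by the integrable majorant just found, so $z\mapsto\iint e_1(v/z)e_2(u/v)f(u)\frac{du}{u}\frac{dv}{v}$ is holomorphic on $U_0$ by differentiation under the integral sign; by Cauchy's theorem its value does not depend on the admissible choice of $\theta,\phi$, so these local functions glue to a function holomorphic on $G(0,\ga+\o_1+\o_2)$. The inner $|u|$-integral is $(T_{e_2}f)(v)$ (Proposition~\ref{prop.hol.lap}) and the outer one an $e_1$-Laplace transform along direction $\theta+\phi$, so this function is indeed $T_{e_1}\circ T_{e_2}(f)$; finiteness of the absolute double integral permits Fubini, and substituting $v=wu$ (whence $u/v=1/w$, $\tfrac{dv}{v}=\tfrac{dw}{w}$, $\arg w=\phi$, $\arg u=\theta$) produces the stated formula. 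The main difficulty is the estimate of \eqref{ineq.double.mod.int.T1.T2}: one must split the $\M_1\cdot\M_2$-growth of $f$ between the decays of $e_1$ and $e_2$ via Lemma~\ref{lemma.propierties.ommega.funct.M1M2}, and then use Lemma~\ref{lemma.M.scaling} together with the freedom to shrink (pinch) $G$ so that $|z|$ is small, in order to turn both leftover exponents strictly negative; the strong-kernel power factors $|\eta|^{\a}$, $|\zeta|^{\a'}$ are exactly what absorbs the $\tfrac{du}{u}$ and $\tfrac{dv}{v}$ singularities at the origin, which is why ordinary summability kernels do not suffice here.
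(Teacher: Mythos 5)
Your proof is correct, and it reaches the conclusion by a route that is noticeably cleaner than the paper's. The paper estimates the inner integral $I(s)$ by splitting it into three pieces when $s<s_0$ (small, intermediate, large $r$) and two pieces when $s\geq s_0$, invoking different combinations of (\textsc{ii.b}), (\textsc{iii}), the normalization inequality $\o_{\M_1\cdot\M_2}\leq\min(\o_{\M_1},\o_{\M_2})$ and the moderate-growth Lemma~\ref{lemma.MG.associated.function} in each sub-case, and only in one sub-case does Lemma~\ref{lemma.propierties.ommega.funct.M1M2} appear. You instead fuse (\textsc{ii.b}) and (\textsc{iii}) into a single uniform bound $|e_j(\eta)|\leq C_j|\eta|^{\a_j}e^{-\o_{\M_j}(|\eta|/k_j)}$ on the whole ray (a valid combination: for $|\eta|\leq\ep_\tau$ the decay factor is bounded below, for $|\eta|>\ep_\tau$ the power factor is bounded below), and then make Lemma~\ref{lemma.propierties.ommega.funct.M1M2} with the pivot $s=\nu|v|$ the single load-bearing step, closing the estimate with the scaling Lemma~\ref{lemma.M.scaling} (which plays the same role as Lemma~\ref{lemma.MG.associated.function} in the paper). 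This eliminates all the case-splitting, and since you never appeal to $\o_{\M_1\cdot\M_2}\leq\min(\o_{\M_1},\o_{\M_2})$, you in fact never use the normalization hypothesis, which the paper needs for exactly that inequality. Your accounting of the change of variables, the Tonelli–Fubini step, and the Cauchy-theorem deformation to glue the local holomorphic pieces is also correct and matches the paper's final steps. The only places where you are briefer than ideal are (a) the justification of the combined power-times-decay kernel bound, which you assert rather than verify, and (b) the explicit form of the petal-shaped sectorial region, which you describe qualitatively while the paper fixes concrete radius thresholds such as $|z_0|<k_1/(k_3k_4A_1)$; neither is a gap, just less explicit than the original.
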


\begin{proof}
We write $\M_1 = (M_{1,p})_{p\in\N_0}$ and $\M_2= (M_{2,p})_{p\in\N_0}$ and, for simplicity,  $\o_1=\o(\M_1)$ and $\o_2=\o(\M_2)$.
We fix $\psi_0\in(d-(\o_1+\o_2+\ga)\pi/2,d+(\ga+\o_1+\o_2)\pi/2 )$, we choose directions $\tau_1\in(0,\o_1)$, $\tau_2\in(0,\o_2)$, $\tau_3\in (0,\ga)$, $\theta$ and $\phi$  with
$|\theta-d|<\tau_3\pi/2$ and $|\phi|\leq\pi\tau_2/2$ such that
\begin{equation}\label{eq.Dom.def.T1T2}
 |\theta+\phi-\psi_0|< \pi \tau_1/2.
\end{equation}
Then, there exists $\ep>0$,  such that $ [\psi_0-\ep,\psi_0+\ep]\subseteq  (d-(\o_1+\o_2+\ga)\pi/2,d+(\ga+\o_1+\o_2)\pi/2 )$ and~\eqref{eq.Dom.def.T1T2} remains true if we replace
$\psi_0$ by $\psi$  for every $\psi\in(\psi_0-\ep,\psi_0+\ep)$.
By Definition~\ref{NEWdefikernelMsumm}~(\textsc{ii.b}), for $e_1$ and $e_2$ there exist $\a_1,\a_2>0$ (not depending on $\tau_1$ and $\tau_2$), and constants $C_1,C_2>0$, $\ep_1,\ep_2\in(0,1)$ such that
\begin{align}
 |e_1(w)|\leq& C_1 |w|^{\a_1}, \qquad w\in S_{\tau_1}, \qquad |w|\leq \ep_1, \label{Condition.(II.B).e1.T1T2}\\
 |e_2(w)|\leq& C_2 |w|^{\a_2}, \qquad w\in S_{\tau_2}, \qquad |w|\leq \ep_2. \label{Condition.(II.B).e2.T1T2}
\end{align}
Using condition (\textsc{iii}), for $e_1$ and $e_2$, there exist $d_1,d_2,k_1,k_2>0$ such that
\begin{align}
 |e_1(w)| \le  d_1\,e^{-\o_{\M_1}(k_1|w|)},\qquad w\in S_{\tau_1}, \label{Condition.(III).e1.T1T2}\\
 |e_2(w)| \le  d_2\,e^{-\o_{\M_2}(k_2|w|)},\qquad u\in S_{\tau_2}. \label{Condition.(III).e2.T1T2}
\end{align}
Since $f\in\mathcal{O}^{\M_1\cdot\M_2}(S(d,\ga))$, we see that there exist $d_3,k_3>0$ such that
\begin{align}
 |f(w)| \le  d_3\,e^{\o_{\M_1\cdot\M_2}(k_3|w|)},\qquad w\in S(d,\tau_3). \label{Condition.growth.f.T1T2}
\end{align}
Now, we define $k_4:= \max (\ep_2, A_2/k_2)$ where $A_{2}$ is the constant appearing in~\eqref{ineq.def.moderate.growth.ommegaM} for $ \M_2$ and $s=2$. We fix $z_0\in S(d,\ga+\o_1+\o_2)$, with $\arg(z_0)\in (\psi_0-\ep,\psi_0+\ep)$ and $|z_0|<k_1/(k_3 k_4 A_1)$, where $A_1$ is the constant appearing in~\eqref{ineq.def.moderate.growth.ommegaM} for $\M_1$ and $s=1$. We consider $U_0:=B(z_0,\ro_0)$ centered in $z_0$ such that $\overline{U_0}\en S(\psi_0,\ep, k_1/(k_3 k_4 A_1))$. \para

In order to prove~\eqref{ineq.double.mod.int.T1.T2}, parametrizing the integral and using Tonelli's Theorem,  it is enough to show that
$$
 \int^{\oo}_{0} \left| e_1 \left(\frac{s e^{i(\theta+\phi)}}{z}\right) \right|\left( \int^{\oo }_{0} \left| e_2 \left(\frac{r}{se^{i\phi}}\right)\right| |f(re^{i\theta})|\frac{dr}{r}\right)\frac{ds}{s} <\oo,
$$
for every $z\in U_0$.
We fix $\a<\min(\a_1,1)$ and $$s_0=\min(1,k_2/(k_3 A_{1,2}),\ep_1(|z_0|-\ro_0)),$$
where $A_{1,2}$ is the constant appearing in~\eqref{ineq.def.moderate.growth.ommegaM} for $\M_1\cdot \M_2$ and $s=2$. For all $s<s_0$, we observe that
$$I(s):=\int^{\oo }_{0} \left| e_2 \left(\frac{r}{se^{i\phi}}\right)\right| |f(re^{i\theta})|\frac{dr}{r} =
\left(\int^{\ep_2 s}_{0} + \int^{\ep_2 s^{\a}}_{\ep_2 s}  + \int^\oo_{\ep_2 s^{\a}} \right) \left| e_2 \left(\frac{r}{se^{i\phi}}\right)\right| |f(re^{i\theta})|\frac{dr}{r}.
$$
We split the integral into three parts $I_j(s)$ for $j=1,2,3$ defined below. Since $r/(se^{i\phi})\in S_{\tau_2} $ and $|r/(se^{i\phi})|\leq \ep_2$ for all $r\in(0,\ep_2 s)$, by~\eqref{Condition.(II.B).e2.T1T2} and \eqref{Condition.growth.f.T1T2}, we have that
$$I_1(s):=\int^{\ep_2 s}_{0} \left| e_2 \left(\frac{r}{se^{i\phi}}\right)\right| |f(re^{i\theta})|\frac{dr}{r}\leq
\frac{C_2 d_3}{s^{\a_2}}\int^{\ep_2 s}_{0} r^{\a_2}  e^{\o_{\M_1\cdot \M_2}(k_3 r)}\frac{dr}{r}.$$
Using that ${\o_{\M_1\M_2}(k_3 r)}$ is nondecreasing we see that
\begin{align}\label{I1.s.T1T2}
 I_1(s)\leq \frac{C_2 d_3 \ep_2^{\a_2}}{\a_2} \exp(\o_{\M_1\cdot \M_2}(k_3 \ep_2 s)).
\end{align}
By~\eqref{Condition.(III).e2.T1T2} and
\eqref{Condition.growth.f.T1T2}, we see that
$$I_2(s):= \int^{\ep_2 s^{\a}}_{\ep_2 s} \left| e_2 \left(\frac{r}{se^{i\phi}}\right)\right| |f(re^{i\theta})|\frac{dr}{r}\leq d_2 d_3\int^{\ep_2 s^{\a}}_{\ep_2 s}   e^{-\o_{ \M_2}(k_2 r/s )}  e^{\o_{\M_1\cdot \M_2}(k_3 r)}\frac{dr}{r}.$$
Using again that ${\o_{\M_1\cdot \M_2}(k_3 r)}$ is nondecreasing and that
\begin{equation}\label{ineq.def.M2}
 \exp (-\o_{ \M_2}(k_2 r/s ))= h_{\M_2}(s/(k_2r))=\inf_{p\in\N_0} M_{2,p} \frac{s^p}{(k_2r)^p}\leq M_{2,1} \frac{s}{k_2r},
\end{equation}
we get that
\begin{align}\label{I2.s.T1T2}
 I_2(s)\leq \frac{d_2 d_3 M_{2,1} }{k_2} s \exp(\o_{\M_1\cdot \M_2}(k_3 \ep_2 s^\a))\int^{\ep_2 s^{\a}}_{\ep_2 s}   \frac{dr}{r^2}\leq \frac{d_2 d_3 M_{2,1}  }{k_2\ep_2} \exp(\o_{\M_1\cdot \M_2}(k_3 \ep_2 s^\a) ).
\end{align}
By~\eqref{Condition.(III).e2.T1T2} and
\eqref{Condition.growth.f.T1T2} again, we can see that
$$I_3(s):= \int^\oo_{\ep_2 s^{\a}}  \left| e_2 \left(\frac{r}{se^{i\phi}}\right)\right| |f(re^{i\theta})|\frac{dr}{r}\leq d_2 d_3\int^\oo_{\ep_2 s^{\a}}  e^{-\o_{ \M_2}(k_2 r/s )}  e^{\o_{\M_1\cdot \M_2}(k_3 r)}\frac{dr}{r}.$$
Using~\eqref{ineq.natural.M12.M2}  and Lemma~\ref{lemma.MG.associated.function} for  $\o_{\M_1\cdot \M_2}$, we obtain that
$$I_3(s)\leq d_2 d_3 \int^\oo_{\ep_2 s^{\a}}   e^{-\o_{\M_1\cdot \M_2}(k_2 r/s )}  e^{\o_{\M_1\cdot \M_2}(k_3 r)}\frac{dr}{r}\leq d_2 d_3
\int^\oo_{\ep_2 s^{\a}}   e^{-2\o_{\M_1\cdot \M_2}((k_2 r)/(sA_{1,2}) )+\o_{\M_1\cdot \M_2}(k_3 r)}\frac{dr}{r}.$$
Since $\o_{\M_1\cdot \M_2}(t)$ is nondecreasing and $s<s_0\leq k_2/(k_3A_{1,2})$, we have that
$$  I_3(s)\leq d_2 d_3 \int^\oo_{\ep_2 s^{\a}}     e^{-\o_{\M_1\cdot \M_2}(k_3 r)}\frac{dr}{r}. $$
Finally, by the definition of $\o_{\M_1\cdot \M_2}(t)$ we obtain
\begin{align}\label{I3.s.T1T2}
 I_3(s)\leq d_2 d_3 \int^\oo_{\ep_2 s^{\a}}   \frac{M_{1,1}M_{2,1}}{ k_3 r^2}  dr = \frac{d_2 d_3 M_{1,1}M_{2,1}}{ k_3 \ep_2 s^{\a}} .
\end{align}
Consequently, by \eqref{I1.s.T1T2}, \eqref{I2.s.T1T2} and~\eqref{I3.s.T1T2}, for all $s<s_0$ we see that
\begin{align}\label{I.s.small.T1T2}
 I(s)&\leq \frac{C_2 d_3 \ep_2^{\a_2}}{\a_2} \exp(\o_{\M_1\cdot \M_2} (k_3 \ep_2 s_0)) +  \frac{d_2 d_3 M_{2,1}  }{k_2\ep_2} \exp(\o_{\M_1\cdot \M_2}(k_3 \ep_2 s_0^\a) ) + \frac{d_2 d_3 M_{1,1}M_{2,1}}{ k_3 \ep_2 s^{\a}}\no \\
 &= a_1+\frac{a_2}{s^\a} .
\end{align}
Now, for every $s\geq s_0$ we split the integral into two parts $\widetilde{I}_j(s)$ for $j=1,2$. Since $r/(se^{i\phi})\in S_{\tau_2} $ and $|r/(se^{i\phi})|\leq \ep_2$ for all $r\in(0,\ep_2 s)$, by~\eqref{Condition.(II.B).e2.T1T2} and \eqref{Condition.growth.f.T1T2}, as before we get that
$$\widetilde{I}_1(s):=\int_{0}^{\ep_2 s} \left| e_2 \left(\frac{r}{se^{i\phi}}\right)\right| |f(re^{i\theta})| \frac{dr}{r} \leq
\frac{C_2 d_3}{s^{\a_2}}\int^{\ep_2 s}_{0} r^{\a_2}  e^{\o_{\M_1\cdot \M_2} (k_3 r)}\frac{dr}{r}.$$
Using~\eqref{ineq.natural.M12.M2} and the monotonicity of $\o_{\M_1}(t)$,  we deduce that
\begin{align}\label{I1.s.big.T1T2}
 \widetilde{I}_1(s)\leq \frac{C_2 d_3 \ep_2^{\a_2}}{\a_2} \exp(\o_{\M_1}(k_3 \ep_2 s)).
\end{align}
By~\eqref{Condition.(III).e2.T1T2} and
\eqref{Condition.growth.f.T1T2} again, we can see that
$$\widetilde{I}_2(s):=\int_{\ep_2 s}^\oo \left| e_2 \left(\frac{r}{se^{i\phi}}\right)\right| |f(re^{i\theta})| \frac{dr}{r}  \leq d_2 d_3\int^\oo_{\ep_2 s}  e^{-\o_{\M_2}(k_2 r/s )}  e^{\o_{\M_1\cdot \M_2}(k_3 r)}\frac{dr}{r}.$$
Applying Lemma~\ref{lemma.MG.associated.function} to $\o_{\M_2}$ and by~\eqref{ineq.fundamental.M12.M1.M2} we can show that
$$\widetilde{I}_2(s) \leq d_2 d_3\int^\oo_{\ep_2 s}  e^{-2 \o_{\M_2}(k_2 r/(sA_2) )} e^{\o_{\M_1}(k_3A_2s/k_2)} e^{\o_{\M_2}(k_2 r/(sA_2) )} \frac{dr}{ r}.$$
Using the definition of $\o_{\M_2}(t)$, as in~\eqref{ineq.def.M2}, we deduce that
$$\widetilde{I}_2(s) \leq d_2 d_3 e^{\o_{\M_1}(k_3A_2s/k_2)} \int^\oo_{\ep_2 s}  e^{- \o_{\M_2}(k_2 r/(sA_2) )}  \frac{dr}{ r} \leq
 \frac{d_2 d_3 A_2  M_{2,1} }{k_2\ep_2} e^{\o_{\M_1}(k_3A_2s/k_2)}  .$$
Together with \eqref{I1.s.big.T1T2}, for all $s\geq s_0$ we get that
$$
 I(s) \leq \frac{C_2 d_3 \ep_2^{\a_2}}{\a_2} \exp(\o_{\M_1}(k_3 \ep_2 s)) +\frac{d_2 d_3 A_2  M_{2,1} }{k_2\ep_2} \exp (\o_{\M_1}(k_3A_2s/k_2)) .$$
Since $k_4=\max (\ep_2, A_2/k_2)$ and $\o_{\M_1}(t)$ is nondecreasing, for every $s\geq s_0$ we have shown that
\begin{align}\label{I.s.big.T1T2}
 I(s) \leq b_1 \exp(\o_{\M_1}(k_3 k_4 s)) .
\end{align}

Consequently, for any $z\in U_0$ and any $s\in (0,s_0)$ we have that
 $s e^{i(\theta+\phi)}/z\in S_{\tau_1} $ and $s/|z|\leq s_0/|z| \leq  \ep_1$ and, by~\eqref{Condition.(II.B).e1.T1T2} and
\eqref{I.s.small.T1T2}, we deduce that
$$J_1(z):=\int_{0}^{s_0} \left| e_1 \left(\frac{s e^{i(\theta+\phi)}}{z}\right) \right| I(s) \frac{ds}{s} \leq
   \frac{C_1}{|z|^{\a_1}} \int_{0}^{s_0} (a_1 s^{\a_1} + a_2 s^{\a_1-\a}) \frac{ds}{s}. $$
Since $\a<\a_1$, for every $z\in U_0$ we see that
\begin{equation}\label{Int.global.s.small}
 \int_{0}^{s_0} \left| e_1 \left(\frac{s e^{i(\theta+\phi)}}{z}\right) \right| I(s) \frac{ds}{s} \leq
   \frac{C_1 a_1 s_0^{\a_1}}{\a_1 (|z_0|-\ro_0) ^{\a_1}} +  \frac{C_1 a_2 s_0^{\a_1-\a}}{(\a_1-\a) (|z_0|-\ro_0) ^{\a_1}}.
\end{equation}

In the same way, applying~\eqref{Condition.(III).e1.T1T2}  and~\eqref{I.s.big.T1T2}, we get that
\begin{align*}
 J_2(z):= \int_{s_0}^\oo \left| e_1 \left(\frac{s e^{i(\theta+\phi)}}{z}\right) \right| I(s) \frac{ds}{s}\leq&
 d_1b_1 \int_{s_0}^\oo\exp(-\o_{\M_1}(sk_1/|z|)+\o_{\M_1}(k_3k_4 s)) \frac{ds}{s} .
\end{align*}
 By Lemma~\ref{lemma.MG.associated.function} for $\o_{\M_1}$ and since $|z|<k_1/(k_3 k_4 A_1)$ for every $z\in U_0$, we deduce that
$$J_2(z)\leq d_1b_1 \int_{s_0}^\oo\exp(-2\o_{\M_1}(sk_1/(|z|A_1))+\o_{\M_1}(k_3 k_4 s)) \frac{ds}{s} \leq  d_1b_1 \int_{s_0}^\oo\exp(-\o_{\M_1}(k_3 k_4 s)) \frac{ds}{s}, $$
Using the definition of $\o_{\M_1}(t)$, as in~\eqref{ineq.def.M2},  we get that
\begin{equation}\label{Int.global.s.big}
 J_2(z)\leq\frac{M_{1,1}}{k_3 k_4 s_0}.
\end{equation}
From~\eqref{Int.global.s.small} and~\eqref{Int.global.s.big}, we see that \eqref{ineq.double.mod.int.T1.T2} holds.

Hence, by the Leibniz's rule  we deduce that the double integral
\begin{equation}\label{int.double.without.mod.T1.T2}
 \int^{\oo(\theta+\phi)}_{0} \int^{\oo (\theta)}_{0}  e_1 \left(\frac{v}{z}\right) e_2 \left(\frac{u}{v}\right) f(u)\frac{du}{u}\frac{dv}{v}
\end{equation}
defines a holomorphic function in $U_0$. One can easily show that the value of such function does not depend on the directions $\phi$ and $\theta$, so it defines a holomorphic function in a sectorial region $G(d,\ga+\o_1+\o_2)$ and we observe that
$$T_{e_1}\circ T_{e_2} (f) (z)= \int_{0}^{\oo(\theta+\phi)} e_1(v/z) ( T_{e_2}f)(v)  \frac{dv}{v} = \int^{\oo(\theta+\phi)}_{0} \int^{\oo (\theta)}_{0}  e_1 \left(\frac{v}{z}\right) e_2 \left(\frac{u}{v}\right) f(u)\frac{du}{u}\frac{dv}{v},$$
where $\theta$ and $\phi$  (depending on $z$) are chosen as in the beginning of the proof.
Finally, since the double integral in~\eqref{int.double.without.mod.T1.T2} converges for any $z\in G(d,\ga+\o_1+\o_2) $, applying Fubini's Theorem and making the change of variables  $v=w u$, we see that
 $$T_{e_1}\circ T_{e_2} (f) (z) = \int_{0}^{\oo(\theta)}  f(u) \left(\int_{0}^{\oo(\phi)}  e_1(wu/z) e_2 (1/w)  \frac{dw}{w}\right) \frac{du}{u}.$$

 \end{proof}

In the proof of the last proposition, we have shown  that for any $f\in\mathcal{O}^{\M_1\cdot\M_2}(S(d,\ga))$ we have $T_{e_2}(f)\in\mathcal{O}^{\M_1}(S(d,\ga+\o(\M_2)))$  (see \eqref{I.s.small.T1T2}+\eqref{I.s.big.T1T2}), which extends the classical Gevrey result.

Finally, we construct the convolution kernel of two strong kernels and we prove that it is also a strong kernel of $\M_1\cdot\M_2-$summability.

\begin{pro}\label{prop.convolution.kernel}
Let $\M_j$, $j=1,2$, be normalized weight sequences admitting a nonzero proximate order. Assume $\o(\M_1)+\o(\M_2)<2$
and consider strong kernels $e_j$ of $\M_j-$summability, its moment function $m_{e_j}$ and $T_{e_j},T_{e_j}^{-}$ the corresponding Laplace or Borel operators.

\begin{enumerate}
 \item We define \textit{the convolution of $e_1$ and $e_2$}, denoted $e_1\ast e_2$, by
 $$e_1\ast e_2(z):=T_{e_1}(e_2(1/u))(1/z).$$
 Then,  $e_1\ast e_2$ is a  strong kernel  of $\M_1\cdot\M_2-$summability whose moment
function is $m(\lambda)=m_{e_1}(\lambda)m_{e_2}(\lambda)$. Moreover if $E_1$ and $E$ are the kernels associated by
Definition~\ref{NEWdefikernelMsumm}.(\textsc{v}) with $e_1$ and $e_1\ast e_2$, respectively,  we have that
$$E(z)=T_{e_2}^{-} E_1(z).$$

\item  The function $e_1\ast e_2$ is the unique moment summability kernel with moment sequence $(m(p)=m_{e_1}(p)m_{e_2}(p))_{p\in\N_0}$.

\item Let $T_{e_1}\ast T_{e_2}$ denote the Laplace-like integral operator associated with $e_1\ast e_2$. If $S$ is
an unbounded sector and $f\in\mathcal{O}^{\M_1\cdot\M_2}(S)$, then
$$(T_{e_1}\ast T_{e_2})f=T_{e_1}\circ T_{e_2}(f).$$

\item We consider $f(u)= (1-u)^{-1}$. We define $g(z):= ((T_{e_1} \circ T_{e_2} )f)(z)$ and
$$e(1/z):=\frac{g(z)-g(ze^{2\pi i})}{2\pi i}.$$
Then, $e$ is well defined in $S_{\o(\M_1)+\o(\M_2)}$ and $e(z)=e_1\ast e_2 (z)$.

\end{enumerate}
\end{pro}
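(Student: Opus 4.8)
The plan is to prove the four parts in the order $(1)$, $(3)$, $(4)$, $(2)$, where only the fact that $e_1\ast e_2$ is a strong kernel (together with its moment sequence) is needed before $(3)$ and $(4)$, and $(2)$ is then an immediate corollary. For $(1)$ the starting observation is that $h(v):=e_2(1/v)$ is \emph{flat} relative to $\M_2$: condition (\textsc{iii}) for $e_2$ gives, on every proper subsector of $S_{\o(\M_2)}$, a bound $|h(v)|=|e_2(1/v)|\le c\,h_{\M_2}(k|v|)$, whence $h\sim_{\M_2}\widehat 0$ there; since $h_{\M_2}\le1$ this also yields $h\in\mathcal{O}^{\M_1}(S_{\o(\M_2)-\ep})$ for each $\ep>0$ (continuity at the origin being part of flatness). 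So $e_1\ast e_2(z)=T_{e_1}(h)(1/z)$ is defined, and by Lemma~\ref{lemma.holomorphy.e.laplace.transform.bounded.functions} together with Remark~\ref{rema.asymp.laplace.transform.bounded.functions} the function $T_{e_1}h$ is holomorphic on $S_{\o(\M_1)+\o(\M_2)}$; since $z\mapsto1/z$ preserves this sector we get (\textsc{i}). By Theorem~\ref{teorrelacdesartransfBL}.(i) (with $\M=\M_1$, $\M'=\M_2$, $\widehat f=\widehat 0$), $T_{e_1}h\sim_{\M_1\cdot\M_2}\widehat 0$ on $S_{\o(\M_1)+\o(\M_2)}$; taking the infimum over $p$ in these flatness estimates and substituting $z\mapsto1/z$ gives $|e_1\ast e_2(z)|\le c\,h_{\M_1\cdot\M_2}(k/|z|)$ on proper subsectors, i.e. (\textsc{iii}) (the estimate being automatic near $|z|=\oo$ once one knows $e_1\ast e_2$ is bounded there). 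Condition (\textsc{iv}) is clear from $e_1\ast e_2(x)=\int_0^\oo e_1(ux)e_2(1/u)\,\frac{du}{u}>0$ for $x>0$. Condition (\textsc{ii.b}) requires a short direct computation: writing $e_1\ast e_2(z)=\int_0^{\oo(\phi)}e_1(uz)e_2(1/u)\,\frac{du}{u}$, splitting the path at $|u|=\ep_1/|z|$, using (\textsc{ii.b}) for $e_1$ on the near part and (\textsc{iii}) for $e_1$ with (\textsc{ii.b}) for $e_2$ (so that $e_2(1/u)=O(|u|^{-\a_2})$ as $|u|\to\oo$) on the far part, one obtains $|e_1\ast e_2(z)|\le C_\tau|z|^{\a_1}$ for small $z\in S_\tau$, after shrinking $\a_1$ so that $\a_1<\a_2$ (Remark~\ref{rema.change.alpha.beta}). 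The moment function then follows by Tonelli (convergence of the double integral coming from the same estimates) and the substitutions $s=ut$, $w=1/u$:
\[
m_{e_1\ast e_2}(\lambda)=\int_0^\oo t^{\lambda-1}\!\int_0^\oo e_1(ut)e_2(1/u)\,\frac{du}{u}\,dt=m_{e_1}(\lambda)\,m_{e_2}(\lambda),\qquad\Re(\lambda)\ge0,
\]
so $\mf_{e_1\ast e_2}=\mf_{e_1}\cdot\mf_{e_2}$, which is equivalent to $\M_1\cdot\M_2$ by Proposition~\ref{prop.Moment.equiv.M}; then Proposition~\ref{propKomatsu} shows that $E:=\sum_n z^n/(m_{e_1}(n)m_{e_2}(n))$ is entire with the bound in (\textsc{v}), and comparing Taylor coefficients through Theorem~\ref{teorrelacdesartransfBL}.(ii) identifies $E=T_{e_2}^{-}E_1$.

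The main obstacle is the one remaining condition, (\textsc{vi.b}) for $E=T_{e_2}^{-}E_1$. Here the plan is to use that $E_1$ is entire and, by its own condition (\textsc{vi.b}) together with the symmetry $\overline{E_1(z)}=E_1(\bar z)$, decays like a negative power of $|z|$ along the ray $\arg z=\pi$: one deforms the Borel contour $\delta_{\o(\M_2)}(\tau)$ occurring in $T_{e_2}^{-}$ so that it runs close to that ray, and then estimates $\bigl|\int E_2(u/z)E_1(z)\,\frac{dz}{z}\bigr|$ for $u\in S(\pi,\tau)$ with $|u|$ large, using the growth bound $|E_2(\zeta)|\le Ce^{\o_{\M_2}(|\zeta|/K)}$ and the hypothesis $\o(\M_1)+\o(\M_2)<2$ to keep the integral convergent and to extract the required factor $|u|^{-\b}$. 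This is the delicate computation; everything else is bookkeeping.

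Granting that $e_1\ast e_2$ is a strong kernel, part $(3)$ is immediate from Proposition~\ref{prop.doubleintegral.mod.T1T2}: for $f\in\mathcal{O}^{\M_1\cdot\M_2}(S)$ it yields
\[
(T_{e_1}\circ T_{e_2})(f)(z)=\int_0^{\oo(\theta)}f(u)\Bigl(\int_0^{\oo(\phi)}e_1(wu/z)\,e_2(1/w)\,\frac{dw}{w}\Bigr)\frac{du}{u},
\]
and by Cauchy's theorem the inner integral equals $e_1\ast e_2(u/z)$ (the path direction $\phi$ being admissible both here and in the defining integral of $e_1\ast e_2$), so the right-hand side is $(T_{e_1\ast e_2}f)(z)=((T_{e_1}\ast T_{e_2})f)(z)$. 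For part $(4)$, take $f(u)=(1-u)^{-1}\in\mathcal{O}^{\M_1\cdot\M_2}(S(\pi,2))$; then $g=(T_{e_1}\circ T_{e_2})f=T_{e_1\ast e_2}f$ by $(3)$, and since $e_1\ast e_2$ is a kernel of $\M_1\cdot\M_2-$summability with $\o(\M_1\cdot\M_2)=\o(\M_1)+\o(\M_2)<2$, Lemma~\ref{lemma.e.Laplace.geom.series} applies verbatim to it and gives $g(z)-g(ze^{2\pi i})=2\pi i\,(e_1\ast e_2)(1/z)$ on $S_{\o(\M_1)+\o(\M_2)}$, which is exactly the assertion that the function $e$ of the statement is well defined there and equals $e_1\ast e_2$. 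Finally part $(2)$ follows from $(1)$ and Remark~\ref{rema.unique.e}, since two summability kernels with the same moment sequence coincide.
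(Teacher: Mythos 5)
Your proposal mirrors the paper's architecture for parts (2), (3) and (4) and for the outline of (1): flatness of $e_2(1/u)$, Lemma~\ref{lemma.holomorphy.e.laplace.transform.bounded.functions} and Theorem~\ref{teorrelacdesartransfBL}.(i) for (\textsc{i}) and (\textsc{iii}), a Tonelli/Fubini change of variables for the moment identity, Proposition~\ref{propKomatsu} for (\textsc{v}), Remark~\ref{rema.unique.e} for uniqueness, Proposition~\ref{prop.doubleintegral.mod.T1T2} for (3) and Lemma~\ref{lemma.e.Laplace.geom.series} for (4). But two of the verifications in part (1) are genuinely wrong or missing.

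For (\textsc{ii.b}) you split the integral at $|u|=\ep_1/|z|$ and claim this produces $|e_1\ast e_2(z)|\le C_\tau|z|^{\a_1}$. It does not: on the near piece the bound $|e_1(uz)|\le C_1 r^{\a_1}|z|^{\a_1}$, together with the mere boundedness of $e_2(1/u)$, gives
\[
\int_0^{\ep_1/|z|}C_1D_2\,r^{\a_1}|z|^{\a_1}\,\frac{dr}{r}=\frac{C_1D_2\ep_1^{\a_1}}{\a_1},
\]
a constant — the powers of $|z|$ cancel precisely because the split point is $\ep_1/|z|$. The far piece gives $O(|z|^{\a_2})$, so the total is merely $O(1)$, and shrinking $\a_1$ cannot fix this. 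The paper's split at $\ep_1/|z|^{1/2}$ (or any $\ep_1/|z|^{\gamma}$, $0<\gamma<1$) is essential: each piece then contributes a nontrivial power $O(|z|^{\a_1(1-\gamma)})$ resp.\ $O(|z|^{\a_2\gamma})$. Relatedly, your parenthetical that the (\textsc{iii})-estimate "is automatic near $|z|=\infty$" has the geometry reversed: $h_{\M_1\cdot\M_2}(k/|z|)$ decays as $|z|\to\infty$, which is exactly where the flatness estimate from Proposition~\ref{prop.thilliez.flat.function.M} is needed; boundedness from (\textsc{ii.b}) handles only $|z|$ small, as in the paper.

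The verification of (\textsc{vi.b}) — the heart of the proof that $e_1\ast e_2$ is a \emph{strong} kernel — is left as a plan, and the plan does not work. Deforming the Borel path of $T^-_{e_2}E_1$ "close to $\arg z=\pi$" is not possible (the straight portions of $\delta_{\o(\M_2)}(\arg u)$ sit at $\arg z=\arg u\pm(\o(\M_2)+\ep)\pi/2$, roughly $\o(\M_2)\pi/2$ away from $\pi$), and the global bound $|E_2(\zeta)|\le Ce^{\o_{\M_2}(|\zeta|/K)}$ from (\textsc{v}) would make $|E_2(u/z)|$ exponentially large on the inner portion of those segments, which the polynomial decay of $E_1$ cannot absorb. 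What actually works, and what the paper does, is to use the polynomial decay (\textsc{vi.b}) of $E_2$ on the segments — possible because a choice of $\ep$ compatible with $\o(\M_1)+\o(\M_2)<2$ forces $u/z$ into a sector $S(\pi,\tau_2)$ there — combined with (\textsc{vi.b}) of $E_1$ on the segments and on the circular arc of radius $|u|/M_2$, where $E_2(u/z)$ is bounded by (\textsc{v}). You should also first establish $E=T^-_{e_2}E_1$ rigorously (the paper does so via dominated convergence and Proposition~\ref{prop.trans.monom}); citing Theorem~\ref{teorrelacdesartransfBL}.(ii) alone does not immediately produce the pointwise identity for the entire function $E$.
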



\begin{proof} For simplicity we write $\o_1=\o(\M_1)$ and $\o_2=\o(\M_2)$. We observe that $\o(\M_1 \cdot \M_2) = \o_1+\o_2$
(see Remark~\ref{rema.omega.index.quotient.sequence}).

\begin{enumerate}

\item  Let us show that the function $e_1\ast e_2 (z)=T_{e_1}(e_2(1/u))(1/z)$ has the necessary properties for a strong kernel
function of $\M_1\cdot\M_2-$summability, as listed in Definition~\ref{NEWdefikernelMsumm}.
Since the function $e_2(1/u)$ is holomorphic in $S_{\o_2}$ and bounded on every sector $S_\b$ with $0<\b<\o_2$, by
Lemma~\ref{lemma.holomorphy.e.laplace.transform.bounded.functions}, we have that $T_{e_1} (e_2(1/u)) (z) $ is holomorphic in $S_{\o_1+\o_2}$ which proves (\textsc{i}). \para


Regarding the integrability condition (\textsc{ii.b}), we fix $\tau\in(0,\o_1+\o_2)$ and  we take $\tau_1\in (0,\o_1)$ and
$\tau_2\in(0,\o_2)$ such that $\tau<\tau_1+\tau_2$. By Definition~\ref{NEWdefikernelMsumm}~(\textsc{ii.b}) for $e_1$ and $e_2$ we know that there exist $\a_1,\a_2>0$ (not depending on $\tau_1$ and $\tau_2$), and constants $C_1,C_2>0$, $\ep_1,\ep_2\in(0,1)$ such that
\begin{align}
 |e_1(z)|\leq& C_1 |z|^{\a_1}, \qquad z\in S_{\tau_1}, \qquad |z|\leq \ep_1, \label{Condition.(II.B).e1}\\
 |e_2(z)|\leq& C_2 |z|^{\a_2}, \qquad z\in S_{\tau_2}, \qquad |z|\leq \ep_2. \label{Condition.(II.B).e2}
\end{align}
We fix $z\in S_\tau$ with $|z|\leq (\ep_1\ep_2)^2$ and we choose $|\theta|<\pi\tau_2/2$, such that $ ze^{i\theta}\in S_{\tau_1}$. We have that
\begin{align*}
 |e_1\ast e_2(z) | =& \left|\int_{0}^{\oo(\theta)} e_1 (uz) e_2\left(\frac{1}{u}\right) \frac{du}{u}  \right| \\
 \leq& \int_{0}^{\ep_1/|z|^{1/2}} \left|e_1 (re^{i\theta} z) e_2\left(\frac{1}{re^{i\theta}}\right)\right| \frac{dr}{r}
 +\int_{\ep_1/|z|^{1/2}}^{\oo} \left|e_1 (re^{i\theta} z) e_2\left(\frac{1}{re^{i\theta}}\right) \right|\frac{dr}{r}.
\end{align*}
 If $r\leq \ep_1/|z|^{1/2}$, we have that $|re^{i\theta} z|\leq  \ep_1 |z|^{1/2} \leq \ep_1^2 \ep_2 \leq \ep_1$, and if
$r\geq \ep_1/|z|^{1/2}$, we see that $|1/(re^{i\theta})| \leq |z|^{1/2}/\ep_1 \leq \ep_2 $. Applying  \eqref{Condition.(II.B).e1} and \eqref{Condition.(II.B).e2}, we obtain
\begin{equation*}
 |e_1\ast e_2(z) |
 \leq C_1 |z|^{\a_1} \int_{0}^{\ep_1/|z|^{1/2}} \left| e_2\left(\frac{1}{re^{i\theta}}\right)\right| \frac{dr}{r^{1-\a_1}}
 + C_2 \int_{\ep_1/|z|^{1/2}}^{\oo} \left|e_1 (re^{i\theta} z) \right|\frac{dr}{r^{1+\a_2}}.
\end{equation*}
By condition (\textsc{iii}) for $e_1$ and $e_2$, we know that there exist constants $D_1,D_2$ such that
$|e_1(w)|\leq D_1$ for every $w\in S_{\tau_1}$ and $|e_2(w)|\leq D_2$ for every $w\in S_{\tau_2}$. We deduce that
\begin{equation*}
 |e_1\ast e_2(z) |
 \leq \frac{C_1  D_2 \ep_1^{\a_1}}{\a_1} |z|^{\a_1/2}
 + \frac{C_2 D_1}{\ep_1^{\a_2}\a_2} |z|^{\a_2/2} .
\end{equation*}
Consequently, condition (\textsc{ii.b}) is satisfied with $\a=\min(\a_1/2,\a_2/2)$. \para

By condition (\textsc{iii}) for $e_2$, for every $\ep>0$, there exist $c,k>0$ such that
$$|e_2(1/u)| \le  c\,e^{-\o_{\M_2}(k/|u|)},\qquad u\in S_{\o_2-\ep}.$$
Then, by Proposition~\ref{prop.thilliez.flat.function.M}, we have that $e_2(1/u)\sim_{\M_2} \widehat{0}$ in $S_{\o_2}$.
By Remark~\ref{rema.asymp.laplace.transform.bounded.functions}, we see that $e_1 \ast e_2 (1/z)=T_{e_1}(e_2(1/u))(z)\sim_{\M_1\M_2} \widehat{0}$ in  $S_{\o_1+\o_2}$ which implies, again by Proposition~\ref{prop.thilliez.flat.function.M},  that for every $\ep>0$ there exist $c,k,r>0$ such that
\begin{equation}\label{ineq.condition.III.e1.ast.e2}
|e_1 \ast e_2(z)|\le c\,e^{-\o_{\M_1\cdot\M_2 }(|z|/k)},\qquad z\in S_{\o_1+\o_2-\varepsilon},\qquad |z|>r.
\end{equation}
 By condition (\textsc{ii.b}) for $e_1 \ast e_2$, we know that   $|e_1 \ast e_2(z)|\leq C$ for $z\in S_{\o_1+\o_2-\ep}$ with $|z|\leq \de $. Since $e_1 \ast e_2(z)$ is continuous, \eqref{ineq.condition.III.e1.ast.e2} holds for every $z\in S_{\o_1+\o_2-\varepsilon}$ and  we conclude that condition (\textsc{iii}) is satisfied.\para

Condition (\textsc{iv}) holds immediately because for $x>0$ we have that
$$e_1\ast e_2(x)=\int_{0}^\oo e_{1}(xy) e_2(1/y)\frac{dy}{y}.$$
Since $e_1$ and $e_2$ are positive real over the positive real axis, we deduce $e_1\ast e_2(x)$ also is.
Let us show that
\begin{equation}\label{eq.moment.equality}
m_{e_1\ast e_2} (\lambda)=m_{e_1}(\lambda)m_{e_2}(\lambda).
\end{equation}
We have that
$$m_{e_1\ast e_2}(\lambda)=\int_{0}^\oo x^{\lambda-1} (e_1\ast e_2) (x) dx = \int_{0}^\oo\int_{0}^\oo x^{\lambda-1}  e_{1}(xy) e_2(1/y)\frac{dy}{y} dx.$$
We make the change of variables $t=xy$ and $s=1/y$ and we get
$$m_{e_1\ast e_2}(\lambda)= \int_{0}^\oo\int_{0}^\oo (st)^{\lambda-1}  e_{1}(t) e_2(s) dt ds = m_{e_1}(\lambda)m_{e_2}(\lambda). $$
Consequently, using property (\textsc{v}) of $e_1$ and $e_2$, we deduce that $m_{e_1\ast e_2}(\lambda)$ is continuous in $\{\Re(\lambda)\geq0\}$, holomorphic in $\{\Re(\lambda)>0\}$ and  $m_e(x)>0$ for every $x\geq 0$.\para

We define the function $E$ by
$$E(z):= \sum^\oo_{n=0} \frac{z^n}{m_{e_1\ast e_2}(n)}, \qquad z\in\C.$$
If we compute the radius of convergence of this series, using \eqref{eq.moment.equality},  we see that
$$r=\liminf_{n\to\oo} \sqrt[n]{m_{e_1\ast e_2}(n)} = \liminf_{n\to\oo} \sqrt[n]{m_{e_1}(n)m_{e_2}(n)} =\oo, $$
hence $E$ is entire. We see that $(m_{e_1\ast e_2}(p))_{p\in\N_0}$, again by~\eqref{eq.moment.equality}, is equivalent to the sequence $\M_1\cdot\M_2$, then, by Proposition~\ref{propKomatsu}, there exist $C,K>0$ such that
$|E(z)|\le Ce^{\o_{\M_1\cdot\M_2}(K|z|)}$, $z\in\C$, then (\textsc{v}) holds.

Finally, regarding condition (\textsc{vi.b}),  we need first to show that
\begin{equation}\label{eq.E.Borel2.E1}
E(u)=(T_{e_2}^{-} E_1)(u), \qquad u\in \C^{*}.
\end{equation}
We fix $u\in \C^{*}$, write $\tau=\arg(u)$ and consider a path $\delta_{\o_2}(\tau)$ (see Definition~\ref{def.e.Borel}).
Since $E_1$ is entire and $\delta_{\o_2}(\tau)$ compact, we have that
$$|\sum_{n=0}^N z^n/m_{e_1}(n)|\leq E_1(|z|) \leq C_{\tau},\qquad z\in \delta_{\o_2}(\tau), \qquad N\in\N_0,$$
and by condition (\textsc{vi.b}) for $E_2$, $|E_2(u/z)z^{-1}|$ is integrable on $\de_{\o_2}(\tau)$, so we can apply Dominated Convergence Theorem. Therefore,
we can exchange integral and sum and, by Proposition~\ref{prop.trans.monom}, we see that
$$(T_{e_2}^{-} E_1)(u)=\sum_{n=0}^\oo (T_{e_2}^{-}(z^n/m_{e_1}(n)))(u)= \sum_{n=0}^\oo \frac{u^n}{m_{e_1\ast e_2}(n)}=E(u).$$

We fix $\tau\in(0,2-(\o_1+\o_2))$,  we take $\tau_1\in (0, 2-\o_1)$ and
$\tau_2\in(0,2-\o_2)$ such that $\tau+2\in(0,\tau_1+\tau_2-(2-\o_2-\tau_2))$ and we can choose $\ep\in(2-\o_2-\tau_2,\tau_1+\tau_2-2-\tau)$. By (\textsc{vi.b}) for $E_1$ and $E_2$, we know that there exist $\b_1,\b_2>0$ (not depending on $\tau_1$ and $\tau_2$), and constants $K_1,K_2>0$, $M_1,M_2\geq1$ such that
\begin{align}
 |E_1(z)|\leq& \frac{K_1}{ |z|^{\b_1}}, \qquad z\in S(\pi,\tau_1), \qquad |z|\geq M_1, \label{Condition.(VI.B).E1}\\
 |E_2(z)|\leq& \frac{K_2}{ |z|^{\b_2}}, \qquad z\in S(\pi,\tau_2), \qquad |z|\geq M_2. \label{Condition.(VI.B).E2}
\end{align}
We fix $u\in S(\pi,\tau)$ with $|u|\geq M_1 M_2$. We write $\phi=\arg(u)\in(0,2\pi)$ and we may consider a path
$\delta_{\o_2}(\phi)$ (see Definition~\ref{def.e.Borel}). We can write  $\delta_{\o_2}(\phi)=\delta_1+\delta_2+\delta_3$ ($\delta_1$ and $\delta_3$ are segments in directions $\theta_1=\phi+(\pi/2)(\o_2+\ep)$ and $\theta_3=\phi-(\pi/2)(\o_2+\ep)$, respectively, and $\delta_2$ is a circular arc with radius $R=|u|/M_2$, that can be chosen in this way because $E_1$ is entire). Then,  using~\eqref{eq.E.Borel2.E1}, we see that
$$
E(u)=\frac{-1}{2\pi i}\int_{\delta_{\o_2}(\phi)} E_{2} \left(\frac{u}{z}\right) E_{1} \left(z\right)\frac{dz}{z}.
$$
We have that
\begin{align}
 |E(u) | \leq& \frac{1}{2\pi} \left( \int_{0}^{|u|/M_2} \left|E_{2} \left(\frac{u}{re^{i\theta_1}}\right) E_{1} \left(re^{i\theta_1}\right)\right|\frac{dr}{r} +\int_{\theta_3}^{\theta_1} \left|E_{2} \left(\frac{uM_2}{|u|e^{i\theta}}\right) E_{1} \left(\frac{|u|e^{i\theta}}{M_2}\right)\right| d\theta \right.\no\\
+& \left. \int_{0}^{|u|/M_2} \left|E_{2} \left(\frac{u}{re^{i\theta_3}}\right) E_{1} \left(re^{i\theta_3}\right)\right| \frac{dr}{r} \right).\label{ineq.E.rays.arc.VIB.condition}
\end{align}

First, we study the second integral in~\eqref{ineq.E.rays.arc.VIB.condition}.
By condition (\textsc{v}) for $E_2$, we know that there exists a constant $H_2$ such that
$|E_2(z)|\leq H_2$ for every $z\in D(0,M_2+1)$ and we deduce that
$$\int_{\theta_3}^{\theta_1} \left|E_{2} \left(M_2e^{i(\phi-\theta)}\right) E_{1} \left(\frac{|u|e^{i\theta}}{M_2}\right)\right| d\theta \leq
H_2 \int_{\theta_3}^{\theta_1} \left| E_{1} \left(\frac{|u|e^{i\theta}}{M_2}\right)\right| d\theta.$$
Using the upper bounds of $\ep$ we see that  $2-\tau-\o_2-\ep>2-\tau_1$ and $2+\tau+\o_2+\ep<2+\tau_1$, then $[\theta_3,\theta_1]\en ((\pi/2)(2-\tau-\o_2-\ep) , (\pi/2) (2+\tau+\o_2+\ep))$ and we get that
\begin{equation}\label{arg.of.theta.in.S.tau1}
|u|e^{i\theta}/M_2\in S(\pi,\tau_1), \qquad \text{if}\,\,\,\theta\in [\theta_3,\theta_1].
\end{equation}
Since $|u|\geq M_1 M_2$ we have that $|ue^{i\theta}|/M_2\geq M_1$ and by~\eqref{Condition.(VI.B).E1}, we deduce that
\begin{equation}\label{ineq.E.arc}
H_2\int_{\theta_3}^{\theta_1} \left| E_{1} \left(\frac{|u|e^{i\theta}}{M_2}\right)\right| d\theta \leq
\frac{H_2 K_1 M^{\b_1}_2}{|u|^{\b_1}} \pi\tau_1, \qquad |u|\geq M_1 M_2.
\end{equation}

We study now the first and the last integral in~\eqref{ineq.E.rays.arc.VIB.condition}.  If $r\in(0,|u|/M_2)$, we observe that
  $|u/re^{i\theta_j}|\geq M_2$.   Since $\o_2+\tau_2<2$ and $\tau_1-2-\tau< -\o_1-\tau<0$, using the bounds for $\ep$, we have that
	$$\o_2+\ep \in (2-\tau_2,\tau_2+\o_2+\tau_1-2-\tau  )\en (2-\tau_2, 2 ) $$
	and we deduce that
	 $\arg(u/re^{i\theta_3})= (\pi/2)(\o_2+\ep)\in ((\pi/2)(2-\tau_2), \pi)$ and	 $\arg(u/re^{i\theta_1}) \in (-\pi, -(\pi/2)(2-\tau_2))$.
Then  for $j=1,3$ we  see that $u/re^{i\theta_j}\in S(\pi,\tau_2)$ and, by~\eqref{Condition.(VI.B).E2},  we show that
\begin{equation*}
\int_{0}^{|u|/M_2} \left|E_{2} \left(\frac{u}{re^{i\theta_j}}\right) E_{1} \left(re^{i\theta_j}\right)\right|\frac{dr}{r}\leq
\frac{K_2}{|u|^{\b_2}}\int_{0}^{|u|/M_2} r^{\b_2} \left| E_{1} \left(re^{i\theta_j}\right)\right|\frac{dr}{r}.
\end{equation*}
Since $|u|\geq M_1 M_2$, we can write $(0,|u|/M_2)$ as the disjoint union of the intervals $(0,M_1)$ and $[M_1,|u|/M_2)$.
By condition (\textsc{v}) for $E_1$, we know that there exists a constant $H_1$ such that
$|E_1(z)|\leq H_1$ for every $z\in D(0,M_1+1)$, then
$$\frac{K_2}{|u|^{\b_2}}\int_{0}^{|u|/M_2} r^{\b_2} \left| E_{1} \left(re^{i\theta_j}\right)\right|\frac{dr}{r}\leq
\frac{K_2H_1 M_1^{\b_2}}{|u|^{\b_2}\b_2}+ \frac{K_2}{|u|^{\b_2}}\int_{M_1}^{|u|/M_2} r^{\b_2} \left| E_{1} \left(re^{i\theta_j}\right)\right|\frac{dr}{r}.$$
If $r\geq M_1$, by~\eqref{arg.of.theta.in.S.tau1}, we can use~\eqref{Condition.(VI.B).E1} and we obtain
 $$
\frac{K_2H_1 M_1^{\b_2}}{|u|^{\b_2}\b_2}+ \frac{K_2}{|u|^{\b_2}}\int_{M_1}^{|u|/M_2} r^{\b_2} \left| E_{1} \left(re^{i\theta_j}\right)\right|\frac{dr}{r}\leq
\frac{K_2H_1 M_1^{\b_2}}{|u|^{\b_2}\b_2}+ \frac{K_1K_2}{|u|^{\b_2}}\int_{M_1}^{|u|/M_2} r^{\b_2-\b_1} \frac{dr}{r}.$$
According to Remark~\ref{rema.change.alpha.beta},  we may assume that $\b_1<\b_2$, and we conclude that
$$
\frac{K_2H_1 M_1^{\b_2}}{|u|^{\b_2}\b_2}+ \frac{K_1K_2}{|u|^{\b_2}}\int_{M_1}^{|u|/M_2} r^{\b_2-\b_1} \frac{dr}{r}\leq
\frac{K_2H_1 M_1^{\b_2}}{|u|^{\b_2}\b_2}+ \frac{K_1K_2}{|u|^{\b_1} (\b_2-\b_1) M_2^{\b_2-\b_1}}.$$
Then for $j=1,3$ and  for every $u\in S(\pi,\tau)$ with $|u|\geq M_1 M_2$ we have that
\begin{equation}\label{ineq.E.rays}
\int_{0}^{|u|/M_2} \left|E_{2} \left(\frac{u}{re^{i\theta_j}}\right) E_{1} \left(re^{i\theta_j}\right)\right|\frac{dr}{r}\leq
\frac{K_2H_1 M_1^{\b_2}}{|u|^{\b_2}\b_2}+ \frac{K_1K_2}{|u|^{\b_1} (\b_2-\b_1) M_2^{\b_2-\b_1}}.
\end{equation}
Consequently, by~\eqref{ineq.E.arc} and~\eqref{ineq.E.rays}, condition (\textsc{vi.b}) is satisfied with $\b=\min(\b_1,\b_2)$, for every $u\in S(\pi,\tau)$ with $|u|\geq M_1 M_2$.

\item  Uniqueness follows from Remark~\ref{rema.unique.e}.

\item We take $f\in\mathcal{O}^{\M_1\cdot\M_2}(S(d,\a))$. Since $e_1\ast e_2$ is a kernel of $\M_1 \dot \M_2-$summability,  by Proposition~\ref{prop.hol.lap} we know that $T_{e_1}\ast T_{e_2} (f)$ is holomorphic in
a sectorial region $G(d,\a+\o_1+\o_2)$. We fix  $z\in G(d,\a+\o_1+\o_2)$, there exists $\theta$ with $|\theta-d|<\pi\a/2$ such that if $\arg(u)=\theta$, then $u/z\in S_{\o_1+\o_2}$ and there exists $\phi$ with $|\phi|<\pi\o_2/2$ such that if
$\arg(w)=\phi$, then $wu/z\in S_{\o_1}$. We have that
\begin{align*}
 T_{e_1}\ast T_{e_2} (f) (z) &= \int_{0}^{\oo(\theta)} e_1 \ast e_2 (u/z) f(u) \frac{du}{u}\\
 &= \int_{0}^{\oo(\theta)}  f(u) \left(\int_{0}^{\oo(\phi)}  e_1(wu/z) e_2 (1/w)  \frac{dw}{w}\right) \frac{du}{u}.
\end{align*}
We conclude, using Proposition~\ref{prop.doubleintegral.mod.T1T2}, that this last expression is equal to $T_{e_1}\circ T_{e_2} (f) (z)$.

\item We consider $f(u)= (1-u)^{-1}$ and we define $g(z):= ((T_{e_1} \circ T_{e_2} )f)(z)$. By Lemma~\ref{lemma.e.Laplace.geom.series},  we know that $T_{e_2} f$ is holomorphic in $S(\pi,2+\o_2)$  and $T_{e_2}f(z)\to 0$ as $z\to\oo$ uniformly on every sector $S(\pi,2+\ga)$ with $\ga<\o_2$. Moreover,  $T_{e_2} f\sim_{\M_2} \sum_{n=0}^\oo m_2(n) z^n$  on $S(\pi,2+\o_2)$.  Then we deduce that $T_{e_2}f$ is bounded on every sector $S(\pi,2+\ga)$ with $\ga<\o_2$.
We can apply the $T_{e_1}$ transform  to $T_{e_2}f$ and, by Lemma~\ref{lemma.holomorphy.e.laplace.transform.bounded.functions}, we have that $g=T_{e_1}T_{e_2}f$ is holomorphic in $S=S(\pi,2+\o_1+\o_2)$.
Then, the function
\begin{equation*}
e(1/z):=\frac{g(z)-g(ze^{2\pi i})}{2\pi i},
\end{equation*}
is holomorphic in  $S_{\o_1+\o_2}$. Since $f\in \mathcal{O}^{\M_1\cdot\M_2}(S_{\o_1+\o_2})$, then, by statement 3,
$(T_{e_1}\ast T_{e_2})(f)= (T_{e_1}\circ T_{e_2})(f)$ and, by Lemmma~\ref{lemma.e.Laplace.geom.series},  we deduce that
\begin{equation*}
e(1/z)=\frac{g(z)-g(ze^{2\pi i})}{2\pi i} = \frac{ (T_{e_1}\ast T_{e_2})(f)(z)-((T_{e_1}\ast T_{e_2})f)(ze^{2\pi i})}{2\pi i}= e_1\ast e_2(1/z).
\end{equation*}

\end{enumerate}
\end{proof}

\begin{rema}\label{rema.non.stab.asymp.product}
We know that $\M_1\cdot\M_2$ is a weight sequence admitting a nonzero proximate order (see Proposition~\ref{prop.product.seq}). Consequently, we can construct a kernel $e$ of $\M_1\cdot\M_2-$summability (see~Remark~\ref{rema.def.exist.kernels}) which is indeed strong (Remark~\ref{rema.strong.kernels.norestriction}).  However, we do not have any control on the corresponding moment sequence of $e$ apart from being equivalent to $\M_1\cdot\M_2$. Last proposition guarantees that the moment sequence associated with $e_1\ast e_2$ is $(m_{e_1}(n)m_{e_2}(n))_{n\in\N_0}$, which is important because it ensures good behavior of formal and analytic Borel-Laplace operators with respect to asymptotics.

Note that $\mathcal{O}^{\M_1\cdot\M_2}(S)\subset \mathcal{O}^{\M_2}(S)$.
Consequently, $T_{e_1}\circ T_{e_2}$  extends the operator $T_{e_1} \ast T_{e_2}$. The opposite situation occurs for the acceleration operator that will be presented in the next subsection, whose main advantage is that it extends the composition operator.
\end{rema}

\subsection{Acceleration kernels}
\label{subsect.accelerationkernels}

In the same conditions as in the previous subsection, assuming in addition that $\o(\M_1)<\o(\M_2)$, we will construct a pair of operators $T, T^{-}$ such that $T$ extends $T^{-}_{e_1}\circ T_{e_2}$. This new operator will be called the acceleration operator from $e_2$ to $e_1$ because it will send a function $f\in \mathcal{O}^{\M_2}(S)$ into a function with greater growth $Tf\in\mathcal{O}^{\M_1}(\widetilde{S})$. According to this property, the kernel associated with $T$ will be called acceleration kernel and its corresponding sequence of moments will be $m_{e_2}(\lambda)/m_{e_1}(\lambda)$.\para

Our first result is the analogous version of Proposition~\ref{prop.doubleintegral.mod.T1T2} that guarantees that
the operators $T$ and $T^{-}_{e_1}\circ T_{e_2}$ coincide for a large enough class of functions.

 \begin{pro}\label{prop.doubleintegral.mod.B1L2}
Let $\M_j$, $j=1,2$, be weight sequences admitting a nonzero proximate order, $e_j$ be  strong kernels of $\M_j-$summability, and $T_{e_j}$, $T^{-}_{e_j}$ be the corresponding integral operators. Assume that $\o(\M_1)<\o(\M_2)<2$.\para

If $f\in\mathcal{O}^{\M_2}(S(d,\ga))$, then for every $z_0\in S(d,\ga+\o(\M_2)-\o(\M_1))$ there exist a neighborhood
$U_0\subseteq S(d,\ga+\o(\M_2)-\o(\M_1))$ of $z_0$ and a direction $\phi$ with $|d-\phi|<\pi\ga/2$ (depending on $z_0$) such that we have
\begin{equation}\label{ineq.double.mod.int.B1L2}
\int_{\delta_{\o_1}(\arg(z_0))}  \int_{0}^{\oo(\phi)} \left|E_1(z/v) e_2 (u/v) f(u) \frac{du}{u} \frac{dv}{v} \right|<\oo
\end{equation}
for every $z\in U_0$, where $\delta_{\o_1}(\arg(z_0))$ is a path as considered in Definition~\ref{def.e.Borel}. Moreover, the function $$F(z):=\int_{\delta_{\o_1}(\arg(z))}  \int_{0}^{\oo(\phi)} E_1(z/v) e_2 (u/v) f(u) \frac{du}{u} \frac{dv}{v}$$ is holomorphic in $S(d,\ga+\o (\M_2)-\o(\M_1))$
and
$$T^{-}_{e_1}\circ T_{e_2} (f) (z)= \int^{\oo (\phi)}_{0} f(u)  \left(\frac{-1}{2\pi i} \int_{\delta_{\o_1}(\arg(z)-\phi)}E_1(z/wu)  e_2 \left(1/w\right) \frac{dw}{w}\right)\frac{du}{u}. $$
\end{pro}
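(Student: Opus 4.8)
The plan is to imitate the proof of Proposition~\ref{prop.doubleintegral.mod.T1T2}, now with the outer Laplace kernel $e_1$ replaced by the Borel kernel $E_1$ integrated along a path $\delta_{\o_1}$, and exploiting that $f$ has $\M_2$-growth rather than $\M_1\cdot\M_2$-growth. Write $\o_1=\o(\M_1)$, $\o_2=\o(\M_2)$. By Proposition~\ref{prop.hol.lap}, $g:=T_{e_2}f$ is holomorphic on a sectorial region $G(d,\ga+\o_2)$ and continuous at $0$, and since $\ga+\o_2>\o_1$, Definition~\ref{def.e.Borel} together with Proposition~\ref{prop.hol.Bor} makes $T^{-}_{e_1}g$ a holomorphic function on $S(d,\ga+\o_2-\o_1)$, independent of the admissible path and (through $g$) of the direction $\phi$. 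First I would fix $z_0\in S(d,\ga+\o_2-\o_1)$, set $\tau:=\arg(z_0)$, and pick $\phi$ with $|d-\phi|<\pi\ga/2$, an opening $\tau_2\in(0,\o_2)$, a small $\ep>0$ and a small radius $|v_0|$ so that, writing $\delta:=\delta_{\o_1}(\tau)$ for the corresponding Borel path (rays at arguments $\tau\pm\o_1(\pi+\ep)/2$, circular arc of radius $|v_0|$): (a) $\delta\subset G(d,\ga+\o_2)$, which holds once $\ep$ is small because $\ga+\o_2-\o_1>0$; (b) for $v$ on the rays of $\delta$ and $z$ in a small ball $U_0=B(z_0,\ro_0)$, the quotient $z/v$ lies, modulo $2\pi$, in a sector $S(\pi,\tau_1^{*})$ with $\tau_1^{*}\in(2-\o_1-\o_1\ep/\pi,\,2-\o_1)$ --- this is the sector in which condition (\textsc{vi.b}) for $E_1$ provides power decay, and it is nonempty precisely because $\o_1<2$; (c) $|v_0|$ is smaller than a constant depending on $k_2$, $k_3$ and $\M_2$ to be fixed below.

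The heart of the argument is the estimate~\eqref{ineq.double.mod.int.B1L2}, split as usual into the inner and the outer integral. For fixed $v$ on $\delta$ (so $|v|\le|v_0|$), I would bound $J(v):=\int_0^{\oo(\phi)}|e_2(u/v)|\,|f(u)|\,\frac{du}{u}$ by splitting $(0,\oo)$ at $\ep_2|v|$ (and, as in Proposition~\ref{prop.doubleintegral.mod.T1T2}, at a point $\ep_2|v|^{\a}$ for a small exponent $\a>0$), using condition (\textsc{ii.b}) for $e_2$ near the origin, condition (\textsc{iii}) for $e_2$ away from it, and $|f(u)|\le d_3e^{\o_{\M_2}(k_3|u|)}$; applying Lemma~\ref{lemma.MG.associated.function} to $\o_{\M_2}$ and choosing $|v_0|$ small enough that the $e_2$-decay dominates the $f$-growth, the same computation that yields~\eqref{I.s.small.T1T2} gives $J(v)\le a_1+a_2|v|^{-\a}$ for suitable constants $a_1,a_2>0$. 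Next I would bound $|E_1(z/v)|$ along $\delta$: on the arc, $|v|=|v_0|$, the relevant quotients $z/v$ range over a compact set, so $E_1$ (being entire) is bounded there; on the rays, by (b) and condition (\textsc{vi.b}) we get $|E_1(z/v)|\le K_{\tau_1^{*}}\,|v|^{\b_1}|z|^{-\b_1}$ as soon as $|z/v|\ge M_{\tau_1^{*}}$, and $E_1$ is bounded for $|z/v|\le M_{\tau_1^{*}}$. Combining, $\int_\delta|E_1(z/v)|\,J(v)\,\frac{|dv|}{|v|}$ is a finite arc contribution plus a multiple of $\int_0^{|v_0|}|v|^{\b_1-1}(a_1+a_2|v|^{-\a})\,d|v|$, which is finite provided $\b_1>\a$; since, by Remark~\ref{rema.change.alpha.beta}, the exponent $\a$ may be taken as small as we wish, we arrange $\a<\b_1$. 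This proves~\eqref{ineq.double.mod.int.B1L2} for all $z\in U_0$, with a dominating function independent of $z\in U_0$.

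With this uniform domination, differentiation under the integral sign shows that $\int_{\delta_{\o_1}(\arg z)}\int_0^{\oo(\phi)}E_1(z/v)e_2(u/v)f(u)\frac{du}{u}\frac{dv}{v}$ is holomorphic on $U_0$; since its value is independent of $\phi$ and of the path (by Propositions~\ref{prop.hol.lap} and~\ref{prop.hol.Bor}), it defines a holomorphic function $F$ on the whole of $S(d,\ga+\o_2-\o_1)$. Finally, absolute convergence lets us apply Fubini's theorem to obtain
$$T^{-}_{e_1}\circ T_{e_2}(f)(z)=\frac{-1}{2\pi i}\int_{\delta_{\o_1}(\arg z)}E_1(z/v)\,(T_{e_2}f)(v)\,\frac{dv}{v}=\frac{-1}{2\pi i}\int_0^{\oo(\phi)}f(u)\Big(\int_{\delta_{\o_1}(\arg z)}E_1(z/v)\,e_2(u/v)\,\frac{dv}{v}\Big)\frac{du}{u},$$
and the substitution $v=wu$ (with $\arg u=\phi$ fixed, which turns $\delta_{\o_1}(\arg z)$ in the variable $v$ into a path of the type $\delta_{\o_1}(\arg z-\phi)$ in $w$, up to an inessential change of radius) gives the asserted formula for $T^{-}_{e_1}\circ T_{e_2}(f)$.

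I expect the main obstacle to be the simultaneous calibration of $\phi$, $\tau_1^{*}$, $\ep$ and $|v_0|$: the path $\delta_{\o_1}(\tau)$ must sit inside the holomorphy region of $T_{e_2}f$, its rays must be rotated so that $z/v$ falls in the sector $S(\pi,\tau_1^{*})$ where (\textsc{vi.b}) gives the power decay of $E_1$, and $|v_0|$ must be small enough for the $e_2$-decay to overcome the $\M_2$-growth of $f$ in $J(v)$. This is the acceleration counterpart of the constant-chasing in Proposition~\ref{prop.doubleintegral.mod.T1T2}, with the twist that here the integrability of the outer integral near the vertex of $\delta_{\o_1}$ comes from the polynomial \emph{decay} of $E_1$ granted by (\textsc{vi.b}), rather than from the exponential decay of a Laplace kernel --- which is exactly why the strong-kernel hypothesis is needed.
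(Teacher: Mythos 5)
Your proposal follows essentially the same route as the paper: the same three-part splitting of the inner integral at $\ep_2|v|$ and $\ep_2|v|^{\a}$, the same use of (\textsc{ii.b}) and (\textsc{iii}) for $e_2$ together with Lemma~\ref{lemma.MG.associated.function} and the $\M_2$-growth of $f$, the same exploitation of the power decay in (\textsc{vi.b}) for $E_1$ on the rays and boundedness of $E_1$ on the arc of $\delta_{\o_1}$, and the same Fubini-plus-substitution finish. Two small remarks: the splitting exponent $\a$ is a free parameter, so its smallness needs no appeal to Remark~\ref{rema.change.alpha.beta} (that remark shrinks the exponents appearing in the kernel bounds (\textsc{ii.b}) and (\textsc{vi.b}), not a splitting point); and your shortcut of getting the holomorphy of $F$ from Propositions~\ref{prop.hol.lap} and~\ref{prop.hol.Bor} is legitimate, since $F$ is precisely the iterated integral defining $-2\pi i\, T^{-}_{e_1}\circ T_{e_2}(f)$, whereas the paper instead derives it from the absolute-convergence estimate together with Leibniz's rule and an explicit Cauchy-type deformation of $\delta_{\o_1}(\arg z_0)$ to $\delta_{\o_1}(\arg z_1)$.
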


\begin{proof}
 For simplicity we write $\o_1=\o(\M_1)$ and $\o_2=\o(\M_2)$, $\M_1 = (M_{1,p})_{p\in\N_0}$ and $\M_2= (M_{2,p})_{p\in\N_0}$. We observe that $\o( \M_2/\M_1) = \o_2-\o_1$
(see Remark~\ref{rema.omega.index.quotient.sequence}).

We fix $z_0\in S(d,\ga+\o_2-\o_1)$, since $\o_2-\o_1>0$ we can consider directions  $\tau_1\in(0,2-\o_1)$, $\tau_2\in(0,\o_2)$, $\tau_3\in (0,\ga)$ with $2-\tau_1-\o_1 <\tau_2+\tau_1-2+\tau_3-\ga$. Then, we can choose
$$\ep\in (2-\tau_1-\o_1,\tau_2+\tau_1-2+\tau_3-\ga)\en(0,\o_2-\o_1),$$ and $\phi>0$ with $|\phi-d|<\tau_3\pi/2$ such that
\begin{equation}\label{ineq.z0.phi}
 |\arg(z_0)-\phi|< (\tau_2+\tau_1-2-\ep+\tau_3-\ga)\pi/2.
\end{equation}
 We observe that we can take  $\ro_0>0$ small enough such that $\overline{B(z_0,\ro_0)}\en S(d,\ga+\o_2-\o_1) $ and the inequality~\eqref{ineq.z0.phi} remains valid if we replace $\arg(z_0)$ by $\arg(z)$ for every $z\in B(z_0,\ro_0)$. We write $\theta_1=\arg(z_0)+(\o_1+\ep)\pi/2 $ and $\theta_3=\arg(z_0)-(\o_1+\ep)\pi/2 $, since $\tau_2<2$ we observe that the value of $\ep$ guarantees that $2-\o_1-\ep>0$ and $ \o_1+\tau_1-2+\ep>0 $. Then, by suitably reducing the radius $\ro_0$, we also have that
\begin{equation}\label{eq.election.de}
  |\arg(z)-\arg(z_0)|<\de:=\min((2-\o_1-\ep)\pi/2 ,(\o_1+\tau_1-2+\ep)\pi/2)/2,
\end{equation}
for every $z\in U_0=B(z_0,\ro_1)$ with  $\ro_1\leq\ro_0$, which implies
\begin{equation}\label{ineq.z.theta13}
-\pi <\arg(z)-\theta_1 < (\tau_1-2)\pi/2, \qquad  (2-\tau_1)\pi/2 < \arg(z)-\theta_3 <\pi  .
\end{equation}
Moreover, from~\eqref{ineq.z0.phi} we deduce that
\begin{equation}\label{ineq.phi.theta}
 |\theta-\phi|\leq |\theta-\arg(z_0)|+ |\arg(z_0)-\phi|< (\o_1 +\tau_1-2+\tau_2+\tau_3-\ga)\pi/2 \leq \tau_2\pi/2
\end{equation}
for every $\theta\in[\theta_3,\theta_1]$. \para

By Definition~\ref{NEWdefikernelMsumm}.(\textsc{vi.b}) for $E_1$ and  (\textsc{ii.b}) for $e_2$ we know that there exist $\b_1,\a_2>0$ (not depending on $\tau_1$ and $\tau_2$), and constants $K_1,C_2>0$, $M\geq1$, $\ep_2\in(0,1)$ such that
\begin{align}
 |E_1(z)|\leq& \frac{K_1}{ |z|^{\b_1}}, \qquad z\in S(\pi,{\tau_1}), \qquad |z|\geq M, \label{ineq.Condition.(VI.B).B1L2}\\
 |e_2(z)|\leq& C_2 |z|^{\a_2}, \qquad z\in S_{\tau_2}, \qquad |z|\leq \ep_2. \label{ineq.Condition.(II.B).e2.B1L2}
\end{align}
By condition (\textsc{iii}) for $e_2$, there exist $d_2,k_2>0$ such that
\begin{align*}
 |e_2(w)| \le  d_2\,e^{-\o_{\M_2}(k_2|w|)},\qquad u\in S_{\tau_2}, 
\end{align*}
and since $f\in\mathcal{O}^{\M_2}(S(d,\ga))$,
we see that there exist $d_3,k_3>0$ such that
\begin{align}
 |f(w)| \le  d_3\,e^{\o_{\M_2}(k_3|w|)},\qquad w\in S(d,\tau_3). \label{Condition.growth.f.B1L2}
\end{align}

We fix $s_0=\min(1,k_2/(k_3 A_{2}),(|z_0-\ro_1|/M))$ and $\b<\min(\b_1,1)$,  where $A_{2}$ is the constant appearing in~\eqref{ineq.def.moderate.growth.ommegaM} for $\M_2$ and $s=2$. We consider a path $\delta_{\o_1}(\arg(z_0))$ (see Definition~\ref{def.e.Borel}). We can write  $\delta_{\o_1}(\arg(z_0))=\delta_1+\delta_2+\delta_3$ ($\delta_1$ and $\delta_3$ are segments in directions $\theta_1=\arg(z_0)+(\o_1+\ep)\pi/2$ and $\theta_3=\arg(z_0)-(\o_1+\ep)\pi/2$, respectively, and $\delta_2$ is a circular arc with radius $R=s_0$).\para

In order to prove~\eqref{ineq.double.mod.int.B1L2}, parametrizing the integral and using Tonelli's Theorem,  it is enough to show that
$$
 J_i(z)= \int_{0}^{s_0} |E_1(z/se^{i\theta_i})| \int_{0}^{\oo} | e_2 (re^{i\phi-\theta_i}/s) f(re^{i\phi})| \frac{dr}{r} \frac{ds}{s}<\oo, \qquad i=1,3, $$
 $$J_2(z)= \int^{\theta_1}_{\theta_3} |E_1(z/s_0e^{i\theta})|  \int_{0}^{\oo} | e_2 (re^{i\phi-\theta}/s_0) f(re^{i\phi})| \frac{dr}{r} d\theta <\oo,$$
for every $z\in U_0$.
For $s\leq s_0$ and $\theta\in[\theta_3,\theta_1]$ we consider
$$I(s,\theta):= \int_{0}^{\oo} | e_2 (re^{i\phi-\theta}/s) f(re^{i\phi})| \frac{dr}{r}. $$
By~\eqref{ineq.phi.theta} we show that $re^{i\phi-i\theta}/s\in S_{\tau_2}$ for all $\theta\in[\theta_3,\theta_1]$ and every $s\leq s_0$. Then, splitting the interval into three parts $(0,\ep_2s)$, $(\ep_2 s, \ep_2s^\b)$, $(\ep_2s^\b,\oo)$ as in the proof of Proposition~\ref{prop.doubleintegral.mod.T1T2} and using~\eqref{ineq.Condition.(II.B).e2.B1L2}, \eqref{Condition.growth.f.B1L2}, Lemma~\ref{lemma.MG.associated.function} and that $\o_{\M_2}(t)$ is nondecreasing, we get that
\begin{equation}\label{ineq.I.theta.B1L2}
 I(s,\theta)\leq \frac{C_2 d_3\ep_2^{\a_2}}{\a_2}\exp(\o_{\M_2}(k_3\ep_2s_0))+
 \frac{d_2 d_3 M_{2,1}}{k_2 \ep_2}\exp(\o_{\M_2}(k_3\ep_2s_0^\b)) + \frac{d_2 d_3 M_{2,1}}{k_3\ep_2s^\b}  =: a_1+\frac{a_2}{s^\b}.
\end{equation}
Applying \eqref{ineq.z.theta13}, we see that $z/se^{i\theta_i}\in S(\pi,\tau_1)$ for $i=1,3$ and every $z\in U_0$ and since $|z/s|\geq |z|/s_0\geq M$, we can apply~\eqref{ineq.Condition.(VI.B).B1L2} and~\eqref{ineq.I.theta.B1L2} and we see  that
\begin{equation}\label{ineq.Ji.B1L2}
  J_i(z) \leq K_1 \int_{0}^{s_0} \frac{s^{\b_1}}{|z|^{\b_1}} \left(a_1+\frac{a_2}{s^\b}\right) \frac{ds}{s} \leq
  \frac{K_1 a_1 s_0^{\b_1}}{\b_1(|z_0|-\ro_1)^{\b_1}} + \frac{K_1 a_2 s_0^{\b_1-\b}}{ (\b_1-\b)(|z_0|-\ro_1)^{\b_1}},
\end{equation}
for $i=1,3$ and every $z\in U_0$.
Using that $E_1$ is entire we have that $|E_1(w)|\leq H_1$ for every $w\in B(0,(|z_0|+\ro_1+1)/s_0)$,
and~\eqref{ineq.I.theta.B1L2} shows that
\begin{equation}\label{ineq.J2.B1L2}
  J_2(z) \leq \left(a_1+\frac{a_2}{s_0^\b}\right)  (\theta_1-\theta_3) H_1
\end{equation}
for every $z\in U_0$. Using~\eqref{ineq.Ji.B1L2} and~\eqref{ineq.J2.B1L2} we see that~\eqref{ineq.double.mod.int.B1L2} holds.
Hence, by the Leibniz's rule  the double integral
\begin{equation*}
\int_{\delta_{\o_1}(\arg(z_0))}  \int_{0}^{\oo(\phi)} E_1(z/v) e_2 (u/v) f(u) \frac{du}{u} \frac{dv}{v}
\end{equation*}
is a holomorphic function in the neighborhood  $U_0$ of $z_0$ for every $z_0\in S(d,\ga+\o_2-\o_1)$. If $z\in U_0 \cap U_1$, with $U_1$ the corresponding neighborhood of $z_1$, the choice of $\de>0$ in \eqref{eq.election.de} guarantees that $\lim_{s\to0} |E_1(z/se^{i\theta})| I (s,\theta)=0$, uniformly for $\theta$ between $\theta_i$ and $\theta'_i=\arg(z_1)\pm (\o_1+\ep)\pi/2$ for $i=1,3$. This fact ensures that we can apply Cauchy's theorem to deform the path of
integration from $\delta_{\o_1}(\arg(z_0))$ to $\delta_{\o_1}(\arg(z_1))$, and we deduce that
\begin{equation}\label{int.double.without.B1L2}
\int_{\delta_{\o_1}(\arg(z))}  \int_{0}^{\oo(\phi)} E_1(z/v) e_2 (u/v) f(u) \frac{du}{u} \frac{dv}{v}
\end{equation}
defines a holomorphic in the sector $S(d,\ga+\o_2-\o_1)$. We observe that
\begin{align*}
T^{-}_{e_1}\circ T_{e_2} (f) (z)=& \frac{-1}{2\pi i}\int_{\delta_{\o_1}(\arg(z))} E_1(z/v) ( T_{e_2}f)(v)  \frac{dv}{v} \\= &\frac{-1}{2\pi i} \int_{\delta_{\o_1}(\arg(z))} \int^{\oo (\phi)}_{0}  E_1(z/v)  e_2 \left(\frac{u}{v}\right) f(u)\frac{du}{u}\frac{dv}{v},
\end{align*}
where  $\phi$ and $\delta_{\o_1}(\arg(z))$  are chosen as in the beginning of the proof.
We write $\eta=\arg(z)-\phi$ and we make the change of variables  $v=wu$. Then the path  $\delta_{\o_1}(\arg(z))$ is transformed into the path $\de_{\o_1} (\eta)$. We can write  $\de_{\o_1}(\eta)=\gamma_1+\gamma_2+\gamma_3$ ($\gamma_1$ and $\gamma_3$ are segments in directions $\theta''_1=\eta+(\pi/2)(\o_1+\ep)$ and $\theta''_3=\eta-(\pi/2)(\o_1+\ep)$, respectively, and $\gamma_2$ is a circular arc with radius $R=s_0/|u|$, the path
$\de_{\o_1}(\eta)$ stays inside $S_{\o_2}$. We have that
$$T^{-}_{e_1}\circ T_{e_2} (f) (z)= \frac{-1}{2\pi i}\int_{\de_{\o_1}(\eta)}  \int^{\oo (\phi)}_{0}  E_1(z/wu)  e_2 \left(1/w\right) f(u)\frac{du}{u}\frac{dw}{w}.$$
Finally, since the double integral in~\eqref{int.double.without.B1L2} converges for any $z\in S(d,\ga+\o_2-\o_1) $, we can apply Fubini's theorem and we can interchange the integration order, then we see that
 $$T^{-}_{e_1}\circ T_{e_2} (f) (z)= \int^{\oo (\phi)}_{0} f(u) \left( \frac{-1}{2\pi i}  \int_{\ga_{\o_1}(\arg(z)-\phi)}E_1(z/wu)  e_2 \left(1/w\right) \frac{dw}{w}\right)\frac{du}{u}.$$
\end{proof}

We are ready to prove the main result, essential for the construction of the multisum.

\begin{pro}\label{prop.acceleration.kernel}
Let $\M_j$, $e_j$, $m_{e_j}$, and $T_{e_j},T_{e_j}^{-}$, $j=1,2$, be as in Proposition~\ref{prop.convolution.kernel}. Assume that $\o(\M_1)<\o(\M_2)<2$.

\begin{enumerate}
 \item We define \textit{the acceleration from $e_2$ to $e_1$}, denoted $e_1\triangleleft  e_2$, by
 $$(e_1\triangleleft e_2)(z)=T_{e_1}^{-}(e_2(1/u))(1/z).$$
 Then,  $e_1\triangleleft e_2$ is a   strong kernel of $\M_2/\M_1-$ summability whose moment
function is $m(\lambda)=m_{e_2}(\lambda)/m_{e_1}(\lambda)$.
Moreover, if $E_2$ and $E$ are the functions associated by
Definition~\ref{NEWdefikernelMsumm}.(\textsc{v}) with $e_2$ and $e_1\triangleleft e_2$, respectively,  we have that
$$E(u)=T_{e_1} (E_2(u)).$$

\item  The function $e_1\triangleleft e_2$ is the unique moment summability kernel with moment sequence $(m(p)=m_{e_2}(p)/m_{e_1}(p))_{p\in\N_0}$.

\item Let $A_{e_1,e_2}$ denote the Laplace-like integral operator associated with
$e_1\triangleleft e_2$. If $S$ is an unbounded sector and $f\in\mathcal{O}^{\M_2}(S)$, then
$$A_{e_1,e_2} f=T^{-}_{e_1}\circ T_{e_2}(f).$$

\item  We define $g(z):= ((T^{-}_1 \circ T_{e_2} )f)(z)$  with $f(u)= (1-u)^{-1}$ and
$$e(1/z):=\frac{g(z)-g(ze^{2\pi i})}{2\pi i}.$$
Then, $e$ is well defined in $S_{\o(\M_2) -\o(\M_1)}$ and $e(z)=e_1\triangleleft e_2 (z)$.

\end{enumerate}
\end{pro}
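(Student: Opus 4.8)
The plan is to transpose, with Borel transforms replacing Laplace transforms where needed, the proof of Proposition~\ref{prop.convolution.kernel}. Write $\o_j=\o(\M_j)$. By Proposition~\ref{prop.quotient.seq}, Theorem~\ref{theorem.charact.prox.order.nonzero} and Remark~\ref{rema.MoverL.welldefined} we may, without loss of generality, regard $\M_2/\M_1$ as a normalized weight sequence admitting a nonzero proximate order, with $\o(\M_2/\M_1)=\o_2-\o_1\in(0,2)$, since kernels and the spaces $\mathcal{O}^{\M}(S)$, $\C[[z]]_\M$ are invariant under equivalence of sequences. I would first record that $e_2(1/u)$ is holomorphic on $S_{\o_2}$, continuous at $0$ with value $0$ (Remark~\ref{rema.tends.0.at.0}), and, by clause (\textsc{iii}) for $e_2$ together with Proposition~\ref{prop.thilliez.flat.function.M}, satisfies $e_2(1/u)\sim_{\M_2}\widehat 0$ on $S_{\o_2}$.

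For Statement 1 one checks the clauses of Definition~\ref{NEWdefikernelMsumm}. Clause (\textsc{i}): since $\o_2>\o_1$, Proposition~\ref{prop.hol.Bor} gives that $T^-_{e_1}(e_2(1/u))$ is holomorphic and of $\M_1$-growth on $S(0,\o_2-\o_1)$, whence $e_1\triangleleft e_2$ is holomorphic on $S_{\o_2-\o_1}$. Clause (\textsc{iii}): by Theorem~\ref{teorrelacdesartransfBL}.(ii), $e_2(1/u)\sim_{\M_2}\widehat 0$ forces $T^-_{e_1}(e_2(1/u))\sim_{\M_2/\M_1}\widehat 0$ on $S(0,\o_2-\o_1)$; this function is thus flat, and Proposition~\ref{prop.thilliez.flat.function.M} yields $|T^-_{e_1}(e_2(1/u))(w)|\le c_1 e^{-\o_{\M_2/\M_1}(1/(c_2|w|))}$ on proper subsectors, which after $w=1/z$ is the required bound on $S_{\o_2-\o_1-\varepsilon}$ (the boundedness near $z=0$ needed to pass from proper subsectors to $S_{\o_2-\o_1-\varepsilon}$ being supplied by clause (\textsc{ii.b}) below). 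Clause (\textsc{iv}): the conjugation symmetries $\overline{E_1(\bar\zeta)}=E_1(\zeta)$ and $\overline{e_2(1/\bar w)}=e_2(1/w)$ (valid because $E_1$, $e_2$ are real on the positive axis) force $(e_1\triangleleft e_2)(x)\in\R$ for $x>0$, and positivity follows as in the Gevrey case treated by W.~Balser.

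Clause (\textsc{ii.b}) is the technical heart. One must show $|T^-_{e_1}(e_2(1/u))(w)|\le C_\tau|w|^{-\a}$ for $w\in S_\tau$ with $|w|$ large; here the plain $\M_1$-growth furnished by Proposition~\ref{prop.hol.Bor} is useless (it is even incompatible with the asserted estimate), so the integral $T^-_{e_1}(e_2(1/u))(w)=\frac{-1}{2\pi i}\int_{\delta_{\o_1}(\arg w)}E_1(w/v)e_2(1/v)\frac{dv}{v}$ must be re-estimated with the radius of $\delta_{\o_1}$ scaled with $|w|$, exploiting: the power decay $|e_2(1/v)|\le C|v|^{-\a_2}$ for $|v|$ large, from clause (\textsc{ii.b}) of $e_2$; the superexponential decay $|e_2(1/v)|\le c\,e^{-\o_{\M_2}(1/(k|v|))}$ for $|v|$ small, from clause (\textsc{iii}) of $e_2$; the growth $|E_1(\zeta)|\le Ce^{\o_{\M_1}(|\zeta|/K)}$ from clause (\textsc{v}) of $e_1$; and Lemmas~\ref{lemma.MG.associated.function} and~\ref{lemma.propierties.ommega.funct.M1M2}. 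Splitting $\delta_{\o_1}$ into its two segments and its arc and estimating each piece along the lines of the proof of Proposition~\ref{prop.doubleintegral.mod.B1L2} gives $|T^-_{e_1}(e_2(1/u))(w)|\le C|w|^{-\a}$ for a suitable $\a>0$, whose precise value is irrelevant by Remark~\ref{rema.change.alpha.beta}. Once (\textsc{i})--(\textsc{iv}) hold, $m(\lambda):=\int_0^\infty t^{\lambda-1}(e_1\triangleleft e_2)(t)\,dt$ converges for $\Re\lambda\ge0$ (integrability at $0$ from (\textsc{ii.b}), at $\infty$ from (\textsc{iii})). To identify it, apply Proposition~\ref{prop.doubleintegral.mod.B1L2} to the monomial $f_\lambda(z)=z^\lambda\in\mathcal{O}^{\M_2}(S)$: its inner integral is, by the definition of $e_1\triangleleft e_2$, equal to $(e_1\triangleleft e_2)(u/z)$, so for $z>0$, after $u=zt$ and rotation of the ray to $\R_{>0}$ (legitimate since $e_1\triangleleft e_2$ is holomorphic on $S_{\o_2-\o_1}$), $T^-_{e_1}\circ T_{e_2}(z^\lambda)(z)=z^\lambda m(\lambda)$; on the other hand $T^-_{e_1}\circ T_{e_2}(z^\lambda)=T^-_{e_1}(m_{e_2}(\lambda)z^\lambda)=\frac{m_{e_2}(\lambda)}{m_{e_1}(\lambda)}z^\lambda$ by Proposition~\ref{prop.trans.monom}. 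Hence $m(\lambda)=m_{e_2}(\lambda)/m_{e_1}(\lambda)$, and since $\mf_{e_j}\approx\M_j$ (Proposition~\ref{prop.Moment.equiv.M}), $\mf_{e_1\triangleleft e_2}=\mf_{e_2}/\mf_{e_1}\approx\M_2/\M_1$. Clause (\textsc{v}) then follows: $E(z)=\sum_n z^n/m(n)=\sum_n (m_{e_1}(n)/m_{e_2}(n))z^n$ has infinite radius of convergence and, by Proposition~\ref{propKomatsu}, $|E(z)|\le Ce^{\o_{\M_2/\M_1}(K|z|)}$; exchanging sum and Laplace integral by dominated convergence (as in the proof of Proposition~\ref{prop.convolution.kernel}, using that $E_2$ is entire of $\M_1$-growth because $\o_2>\o_1$) and Proposition~\ref{prop.trans.monom} give $T_{e_1}(E_2)=\sum_n (m_{e_1}(n)/m_{e_2}(n))z^n=E$, the claimed identity. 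Clause (\textsc{vi.b}): from $E=T_{e_1}(E_2)(z)=\int_0^{\infty(\tau')}e_1(v/z)E_2(v)\frac{dv}{v}$ with $\tau'$ near $\pi$, split at $|v|=|z|^{1/2}$ and use $|E_2(v)|\le K_2|v|^{-\b_2}$ on $S(\pi,\cdot)$ from clause (\textsc{vi.b}) of $E_2$, the growth $|E_2(v)|\le Ce^{\o_{\M_2}(|v|/K)}$, and $|e_1(v/z)|\le d_1 e^{-\o_{\M_1}(k_1|v|/|z|)}$ from clause (\textsc{iii}) of $e_1$, to obtain $|E(z)|\le K_\tau|z|^{-\b}$ on $S(\pi,\tau)$ for $|z|$ large, parallel to the treatment of (\textsc{vi.b}) for $e_1\ast e_2$ in Proposition~\ref{prop.convolution.kernel}.

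Statement 2 is immediate from Remark~\ref{rema.unique.e}. For Statement 3, since $\o_2-\o_1<\o_2$ makes $\o_{\M_2/\M_1}$ dominate $\o_{\M_2}$ at infinity, $\mathcal{O}^{\M_2}(S)\subset\mathcal{O}^{\M_2/\M_1}(S)$, so $A_{e_1,e_2}f$ is defined for $f\in\mathcal{O}^{\M_2}(S)$ by Proposition~\ref{prop.hol.lap}, and the Laplace kernel in $A_{e_1,e_2}f(z)=\int_0^{\infty(\phi)}(e_1\triangleleft e_2)(u/z)f(u)\frac{du}{u}$ is, by definition, the inner integral in the formula of Proposition~\ref{prop.doubleintegral.mod.B1L2} for $T^-_{e_1}\circ T_{e_2}(f)$, with matching directions and paths by the choices made there, whence $A_{e_1,e_2}f=T^-_{e_1}\circ T_{e_2}(f)$. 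For Statement 4, $f(u)=(1-u)^{-1}$ is holomorphic on $S(\pi,2)$ and bounded on each $S(\pi,\b)$, $\b<2$, hence lies in $\mathcal{O}^{\M_2}(S(\pi,2))$; by Lemma~\ref{lemma.holomorphy.e.laplace.transform.bounded.functions} applied to the kernel $e_1\triangleleft e_2$ and Proposition~\ref{prop.hol.Bor}, $g=(T^-_{e_1}\circ T_{e_2})(f)=A_{e_1,e_2}(f)$ (by Statement 3) is holomorphic on $S(\pi,2+\o_2-\o_1)$, so $e(1/z)=\frac{g(z)-g(ze^{2\pi i})}{2\pi i}$ is holomorphic on $S_{\o_2-\o_1}$; applying Lemma~\ref{lemma.e.Laplace.geom.series} to the kernel $e_1\triangleleft e_2$ (legitimate as $\o(\M_2/\M_1)=\o_2-\o_1<2$) gives $g(z)-g(ze^{2\pi i})=2\pi i\,(e_1\triangleleft e_2)(1/z)$, i.e. $e=e_1\triangleleft e_2$. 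The main obstacle throughout is clause (\textsc{ii.b}) in Statement 1: the acceleration kernel is produced by a Borel-type contour integral for which the off-the-shelf estimate points the wrong way, so one must carefully balance the growth of $E_1$ against the two-sided decay of $e_2(1/\cdot)$ — precisely where the "strong kernel" hypotheses are indispensable — and the estimate is somewhat heavier than, though modelled on, that of Proposition~\ref{prop.doubleintegral.mod.B1L2}.
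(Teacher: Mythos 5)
There is a genuine gap in your sketch of clause (\textsc{ii.b}), which you yourself single out as the ``technical heart''. You explicitly attribute the required bound on $E_1$ to clause (\textsc{v}), i.e.\ $|E_1(\zeta)|\le Ce^{\o_{\M_1}(|\zeta|/K)}$, together with (\textsc{ii.b}) and (\textsc{iii}) for $e_2$. That combination cannot close the estimate. With the radius of the Borel contour $\delta_{\o_1}$ scaled as $R\sim 1/|z|$, the segments run from the origin out to the arc, so on them $|u|=r\in(0,R)$ and the argument $|1/(uz)|=1/(r|z|)$ of $E_1$ ranges over $[M_1,\infty)$ and grows without bound as $r\to 0$. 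The growth bound of (\textsc{v}) then makes $|E_1(1/(uz))|$ explode, and since $\o(\M_1)<\o(\M_2)$ forces $\o_{\M_1}$ to outgrow $\o_{\M_2}$ at infinity (by Theorem~\ref{theo.limsup.dM.omega}), the increase $e^{\o_{\M_1}(1/(K r|z|))}$ is \emph{not} compensated by the decay $e^{-\o_{\M_2}(1/(k r))}$ of $e_2(1/u)$ furnished by (\textsc{iii}); the segment integrals diverge. What the paper actually uses is that on the segments $1/(uz)\in S(\pi,\tau_1)$ with $|1/(uz)|\ge M_1$, so the \emph{power decay} of $E_1$ from (\textsc{vi.b}) applies. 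This is exactly the ``strong kernel'' hypothesis you correctly describe as indispensable and model your argument on (via Proposition~\ref{prop.doubleintegral.mod.B1L2}, which itself relies on (\textsc{vi.b}) for $E_1$), yet fail to list; if your itemized estimate were carried out literally, it would not close. (The paper combines (\textsc{vi.b}) for $E_1$ with (\textsc{ii.b}) and (\textsc{iii}) for $e_2$, a split of the rays at $r=1/(|z|^{1/2}M_1)$, and the bound $|E_1|\le H_1$ from (\textsc{v}) only on the arc, arriving at $\a=\min(\b_1/2,\a_2)$.)

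Aside from this, your proof follows the paper's architecture and two differences are worth noting. First, your derivation of $m(\lambda)=m_{e_2}(\lambda)/m_{e_1}(\lambda)$ is genuinely different and valid: you read it off from the action of $T^{-}_{e_1}\circ T_{e_2}$ on monomials, via Proposition~\ref{prop.trans.monom} and the identification of the inner integral in Proposition~\ref{prop.doubleintegral.mod.B1L2} with $(e_1\triangleleft e_2)(u/z)$, whereas the paper computes the Mellin integral of $e_1\triangleleft e_2$ directly by the substitutions $t=1/(xu)$, $s=xt$, a deformation of the contour, and Cauchy's formula for $E_1$. Your route is shorter but leans on the already-established double-integral machinery; the paper's is self-contained. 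Second, your deferral of the positivity half of clause (\textsc{iv}) to ``the Gevrey case treated by W.~Balser'' is vague, but so is the paper's own treatment, which in fact only verifies the reality $\overline{(e_1\triangleleft e_2)(x)}=(e_1\triangleleft e_2)(x)$ via the conjugation symmetry. Statements 2--4 of your proof track the paper's.
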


\begin{proof}

 For simplicity we write $\o_1=\o(\M_1)$ and $\o_2=\o(\M_2)$. We observe that $\o( \M_2/\M_1) = \o_2-\o_1$
(see Remark~\ref{rema.omega.index.quotient.sequence}).

\begin{enumerate}

\item  Let us show that the function $(e_1\triangleleft e_2)(z)=T_{e_1}^{-}(e_2(1/u))(1/z)$ has the necessary properties for a strong kernel function of $\M_2/\M_1-$summability, as listed in Definition~\ref{NEWdefikernelMsumm}.\para

Since the function $e_2(1/u)$ is holomorphic in $S_{\o_2}$, continuous at the origin and $\o_2>\o_1$,
by Proposition~\ref{prop.hol.Bor} we have that   $T_{e_1}^{-}(e_2(1/u)) (z) $ is holomorphic in $S_{\o_2-\o_1}$ which proves requirement (\textsc{i}).\para

Regarding the integrability condition (\textsc{ii.b}), we fix $\tau\in(0,\o_2-\o_1)$ and we take
$\tau_1\in(0,2-\o_1)$ and $\tau_2\in(0,\o_2)$ such that $2+\tau\in (0,\tau_1+\tau_2-(2-\o_1-\tau_1))$ and we can choose
$\ep\in(2-\o_1-\tau_1,\tau_1+\tau_2-2-\tau)$.
By Definition~\ref{NEWdefikernelMsumm}.(\textsc{vi.b}) for $E_1$ and  (\textsc{ii.b}) for $e_2$ we know that there exist $\b_1,\a_2>0$ (not depending on $\tau_1$ and $\tau_2$), and contants $K_1,C_2>0$, $M_1\geq1$, $\ep_2\in(0,1)$ such that
\begin{align}
 |E_1(z)|\leq& \frac{K_1}{ |z|^{\b_1}}, \qquad z\in S(\pi,{\tau_1}), \qquad |z|\geq M_1, \label{ineq.Condition.(VI.B).E1}\\
 |e_2(z)|\leq& C_2 |z|^{\a_2}, \qquad z\in S_{\tau_2}, \qquad |z|\leq \ep_2. \label{ineq.Condition.(II.B).e2}
\end{align}
We fix $z\in S_\tau$ with $|z|\leq\ep^2_2/ M^2_1$, then $|z|\leq 1$. We write $\phi=\arg(z)\in(-\pi\tau/2,\pi\tau/2)$ and we may consider a path
$\delta_{\o_1}(-\phi)$ (see Definition~\ref{def.e.Borel}). We can write  $\delta_{\o_1}(-\phi)=\delta_1+\delta_2+\delta_3$ ($\delta_1$ and $\delta_3$ are segments in directions $\theta_1=-\phi+(\pi/2)(\o_1+\ep)$ and $\theta_3=-\phi-(\pi/2)(\o_1+\ep)$, respectively, and $\delta_2$ is a circular arc with radius $R=1/(|z|M_1)$, that can be chosen in this way because $e_2$ is holomorphic in $S_{\o_2}$, that is unbounded, and the path
$\delta_{\o_1}(-\phi)$ stays inside $S_{\tau_2}$). Then,  by definition, we see that
$$
e_1\triangleleft e_2 (z)=\frac{-1}{2\pi i}\int_{\delta_{\o_1}(-\phi)} E_{1} \left(\frac{1}{uz}\right) e_{2} \left(1/u\right)\frac{du}{u}.
$$
We have that
\begin{align}
 |e_1\triangleleft e_2 (z) | \leq& \frac{1}{2\pi} \left( \int_{0}^{1/(|z|M_1)} \left|E_{1} \left(\frac{1}{zre^{i\theta_1}}\right) e_{2} \left(\frac{1}{re^{i\theta_1}}\right)\right|\frac{dr}{r}\right.\no\\ +& \int_{\theta_3}^{\theta_1} \left|E_{1} \left(\frac{|z|M_1}{ze^{i\theta}}\right) e_{2} \left(\frac{|z|M_1}{e^{i\theta}}\right)\right| d\theta\no \\
+& \left. \int_{0}^{1/(|z|M_1)} \left|E_{1} \left(\frac{1}{zre^{i\theta_3}}\right)  e_{2} \left(\frac{1}{re^{i\theta_3}}\right)\right| \frac{dr}{r} \right).\label{ineq.e1.tring.e2.rays.arc.VIB.condition}
\end{align}

First, we study the second integral in~\eqref{ineq.e1.tring.e2.rays.arc.VIB.condition}.
By condition (\textsc{v}) for $E_1$, we know that there exists a constant $H_1$ such that
$|E_1(w)|\leq H_1$ for every $w\in D(0,M_1+1)$.
Using the bounds for $\phi$, we see that
$$ [\theta_3,\theta_1]\en ((\pi/2)(-\tau-\o_1-\ep) , (\pi/2) (\tau+\o_1+\ep)).$$
Employing the upper bound for $\ep$, we obtain that  $\tau+\o_1+\ep<\tau_2-(2-\o_1-\tau_1)$, then
we deduce that
\begin{equation}\label{arg.of.theta.in.S.tau2}
|z|M_1e^{-i\theta}\in S_{\tau_2}, \qquad \theta\in [\theta_3,\theta_1].
\end{equation}
Since $|z|\leq |z|^{1/2}\leq \ep_2/M_1$, we have that $|ze^{-i\theta}M_1|\leq \ep_2$ and, by~\eqref{ineq.Condition.(II.B).e2}, we conclude that
\begin{equation}\label{ineq.e1.tring.e2.arc}
\int_{\theta_3}^{\theta_1} \left|E_{1} \left(\frac{|z|M_1}{ze^{i\theta}}\right) e_{2} \left(\frac{|z|M_1}{e^{i\theta}}\right)\right| d\theta \leq
H_1 C_2 |z|^{\a_2} M^{\a_2}_1 \pi\tau_2.
\end{equation}

We study now the first and the last integrals in~\eqref{ineq.e1.tring.e2.rays.arc.VIB.condition}.  If $r\in(0,1/(|z|M_1))$, we observe that $|1/(zre^{i\theta_j})|\geq M_1$.   Since $\o_1+\tau_1<2$ and $\tau_2<2$, using the bounds for $\ep$, we have that
$$\o_1+\ep \in (2-\tau_1,\tau_2+\o_1+\tau_1-2-\tau  )\en (2-\tau_1, 2 ) $$
and we deduce that $\arg(1/(zre^{i\theta_3}))= (\pi/2)(\o_1+\ep)\in ((\pi/2)(2-\tau_1), \pi)$ and also that	 $\arg(1/(zre^{i\theta_1})) \in (-\pi, -(\pi/2)(2-\tau_1))$.
Then  for $j=1,3$ we  see that $1/(zre^{i\theta_j})\in S(\pi,\tau_1)$ and, by~\eqref{ineq.Condition.(VI.B).E1},  we show that
\begin{equation*}
\int_{0}^{(|z|M_1)^{-1}}  \left|E_{1} \left(\frac{1}{zre^{i\theta_j}}\right) e_{2} \left(\frac{1}{re^{i\theta_j}}\right)\right| \frac{dr}{r}\leq
K_1 |z|^{\b_1}\int_{0}^{(|z|M_1)^{-1}}r^{\b_1} \left| e_{2} \left(\frac{1}{re^{i\theta_j}}\right)\right| \frac{dr}{r}.
\end{equation*}
Since $|z|\leq 1$, we can split $(0,1/(|z|M_1))$ as the union of the intervals $(0,1/(|z|^{1/2}M_1))$ and $[1/(|z|^{1/2}M_1),1/(|z|M_1))$.
By condition (\textsc{iii}) for $e_2$, we know that there exists a constant $D_2$ such that
$|e_2(w)|\leq D_2$ for every $w\in S_{\tau_2}$.
If $r\geq1/ (|z|^{1/2}M_1)$, then $1/r\leq \ep_2 $, by~\eqref{arg.of.theta.in.S.tau2}, we can apply~\eqref{ineq.Condition.(II.B).e2} and we obtain
 $$ \int_{0}^{(|z|M_1)^{-1}}  \left|E_{1} \left(\frac{1}{zre^{i\theta_j}}\right) e_{2} \left(\frac{1}{re^{i\theta_j}}\right)\right| \frac{dr}{r}\leq
\frac{K_1 |z|^{\b_1/2} D_2 }{M_1^{\b_1}\b_1}+ K_1 C_2 |z|^{\b_1}\int_{(|z|^{1/2}M_1)^{-1}}^{(|z|M_1)^{-1}} r^{\b_1-\a_2} \frac{dr}{r}.$$
According to Remark~\ref{rema.change.alpha.beta},  we may assume that $\a_2<\b_1$.
Then for $j=1,3$ and  for every $u\in S_\tau$ with $|z|\leq\ep^2_2/ M^2_1$ we have that
\begin{equation}\label{ineq.e1.tring.e2.rays}
\int_{0}^{(|z|M_1)^{-1}}  \left|E_{1} \left(\frac{1}{zre^{i\theta_j}}\right) e_{2} \left(\frac{1}{re^{i\theta_j}}\right)\right| \frac{dr}{r} \leq
\frac{K_1  D_2|z|^{\b_1/2}  }{M_1^{\b_1}\b_1}+ \frac{K_1C_2 |z|^{\a_2}}{ (\b_1-\a_2) M_1^{\b_1-\a_2}}.
\end{equation}
Consequently, by~\eqref{ineq.e1.tring.e2.arc} and~\eqref{ineq.e1.tring.e2.rays}, condition (\textsc{ii.b}) is satisfied
 with $\a=\min(\b_1/2,\a_2)$.\para

By condition (\textsc{iii}) for $e_2$, for every $\ep>0$ there exist $c,k>0$ such that
$$|e_2(1/u)| \le  c\,e^{-\o_{\M_2}(k/|u|)},\qquad u\in S_{\o_2-\ep}.$$
Then, by Proposition~\ref{prop.thilliez.flat.function.M}, we have that $e_2(1/u)\sim_{\M_2} \widehat{0}$ in $S_{\o_2}$.

By Theorem~\ref{teorrelacdesartransfBL}, we see that $e_1 \triangleleft e_2(1/z)=T^{-}_{e_1}(e_2(1/u))(z)\sim_{\M_2/\M_1} \widehat{0}$ in  $S_{\o_2-\o_1}$ which implies, again by Proposition~\ref{prop.thilliez.flat.function.M}, that for every $\ep>0$ there exist $c,k,r>0$ such that
\begin{equation}\label{ineq.condition.III.e1.triang.e2}
|e_1 \triangleleft e_2 (z)|\le c\,e^{-\o_{\M_2/\M_1}(|z|/k)},\qquad z\in S_{\o_2-\o_1-\varepsilon},\qquad |z|>r.
\end{equation}
By condition (\textsc{ii.b}) for $e_1 \triangleleft e_2$, we know that   $|e_1 \triangleleft e_2(z)|\leq C$ for $z\in S_{\o_2-\o_1-\ep}$ with $|z|\leq \de $. Since $e_1 \triangleleft e_2(z)$ is continuous, \eqref{ineq.condition.III.e1.triang.e2} holds for every $z\in S_{\o_2-\o_1-\ep}$ and  we conclude that condition (\textsc{iii}) is satisfied.\para

For $x>0$ we have that
\begin{align*}
\overline{e_1\triangleleft e_2(x)}=&
\frac{1}{2\pi i} \left( \int_{0}^{R} \overline{ E_{1} \left(\frac{1}{xre^{i\theta_1}}\right) e_{2} \left(\frac{1}{re^{i\theta_1}}\right) } \frac{dr}{r} \right. \\
 -&\int_{\theta_1}^{\theta_3} \overline{ E_{1} \left(\frac{1}{xRe^{i\theta}}\right) e_{2} \left(\frac{1}{Re^{i\theta}}\right)} i d\theta
+\left.  \int_{R}^{0} \overline{ E_{1} \left(\frac{1}{xre^{i\theta_3}}\right) e_{2} \left(\frac{1}{re^{i\theta_3}}\right)}  \frac{dr}{r} \right)
\end{align*}
with $\theta_1=(\pi/2)(\o_1+\ep)$ and $\theta_3=-(\pi/2)(\o_1+\ep)$.
Since $E_1$ and $e_2$ are positive real over the positive real axis and holomorphic in $S_{\o_2}$, we deduce that
\begin{align*}
\overline{e_1\triangleleft e_2(x)}=&
\frac{1}{2\pi i} \left( \int_{0}^{R} E_{1} \left(\frac{1}{xre^{-i\theta_1}}\right) e_{2} \left(\frac{1}{re^{-i\theta_1}}\right)  \frac{dr}{r} \right. \\
 -&\int_{\theta_1}^{\theta_3}  E_{1} \left(\frac{1}{xRe^{-i\theta}}\right) e_{2} \left(\frac{1}{Re^{-i\theta}}\right) i d\theta
+\left.  \int_{R}^{0} E_{1} \left(\frac{1}{xre^{-i\theta_3}}\right) e_{2} \left(\frac{1}{re^{-i\theta_3}}\right)  \frac{dr}{r} \right).
\end{align*}
Since $\theta_1=-\theta_3$, we observe that $\overline{e_1\triangleleft e_2(x)}=e_1\triangleleft e_2(x)$, then (\textsc{iv}) holds.\para

Let us show that
\begin{equation}\label{eq.moment.equality.quotient}
m_{e_1\triangleleft e_2} (\lambda)=m_{e_2}(\lambda)/m_{e_1}(\lambda)
\end{equation}
for $\Re(\lambda)\geq 0$. We have that
\begin{align*}
 m_{e_1\triangleleft e_2}(\lambda)=&\int_{0}^\oo x^{\lambda-1} (e_1\triangleleft e_2) (x) dx  \\
= & \frac{-1}{2\pi i} \int_{0}^\oo x^{\lambda-1}  \int_{\delta_{\o_1}(0)} E_{1} \left(\frac{1}{ux}\right) e_{2} \left(1/u\right)\frac{du}{u} dx.
\end{align*}
We make the change of variables $t=1/(xu)$. Then the path $\delta_{\o_1}(0)$ (with radius $R=1/x$, that can be chosen in this way because $e_2$ is holomorphic in $S_{\o_2}$) stays inside $S_{\o_2}$ and it is transformed into $\De_{\o_1}(0)$, with  $\De_{\o_1}(0)=\De_3+\De_2+\De_1$ ($\De_3$ is the line in direction $\theta_3=-(\pi/2)(\o_1+\ep)$ from infinity to $e^{i\theta_3}$, $\De_2$ is a circular arc with radius $R=1$,  and $\De_1$ is the line from $e^{i\theta_1}$ in direction $\theta_1=(\pi/2)(\o_1+\ep)$ to infinity)
and we get
\begin{align*}
 m_{e_1\triangleleft e_2}(\lambda)=
 & \frac{1}{2\pi i} \int_{0}^\oo x^{\lambda}  \int_{\De_{\o_1}(0)} E_{1} \left(t\right) e_{2} \left(xt\right)\frac{dt}{t} \frac{dx}{x}.
\end{align*}
We make the change of variables $xt=s$, we see that
\begin{align*}
 m_{e_1\triangleleft e_2}(\lambda)=
 & \frac{1}{2\pi i} \int_{0}^\oo s^{\lambda} e_{2} \left(s\right) \int_{\De_{\o_1}(0)} E_{1} \left(t\right) \frac{dt}{t^{\lambda+1}} \frac{ds}{s}.
\end{align*}
Using condition (\textsc{vi.b}) for $E_1$ and Cauchy's theorem to deform the path of
integration and replace $\De_{\o_1}(0)$ by the disc $D(0,1)$, we obtain
\begin{align*}
 m_{e_1\triangleleft e_2}(\lambda)=
 &  \int_{0}^\oo s^{\lambda} e_{2} \left(s\right) \left( \frac{1}{2\pi i}\int_{D(0,1)} E_{1} \left(t\right) \frac{dt}{t^{\lambda+1}} \right)\frac{ds}{s}.
\end{align*}
By Cauchy's formula for $E_1$, we see that
\begin{align*}
 m_{e_1\triangleleft e_2}(\lambda)= \int_{0}^\oo s^{\lambda} e_{2} \left(s\right) \frac{1}{m_{e_1}(\lambda)}\frac{ds}{s}.
\end{align*}
Finally, by condition (\textsc{v}) for $e_2$ we show that \eqref{eq.moment.equality.quotient} is satisfied.


We deduce that $m_{e_1\triangleleft e_2}(\lambda)$ is continuous in $\{\Re(\lambda)\geq0\}$, holomorphic in $\{\Re(\lambda)>0\}$ and  $m_{e_1\triangleleft e_2}(x)>0$ for every $x\geq 0$.
We define the function $E_{\triangleleft}$ by
$$E_{\triangleleft}(z):= \sum^\oo_{p=0} \frac{z^n}{m_{e_1\triangleleft e_2}(p)}, \qquad z\in\C.$$
If we compute the radius of convergence of this series, using \eqref{eq.moment.equality.quotient} and that $\o_1<\o_2$
(see Proposition~\ref{prop.quotient.sequence.compara.cond}.(iii))  we show that
$$r=\liminf_{p\to\oo} \sqrt[n]{m_{e_1\triangleleft e_2}(p)} = \liminf_{p\to\oo} \sqrt[p]{\frac{m_{e_2}(p)}{m_{e_1}(p)}} =\oo, $$
hence $E_{\triangleleft}$ is entire. We see that $(m_{e_1\triangleleft e_2}(p))_{p\in\N_0}$, again by~\eqref{eq.moment.equality.quotient}, is equivalent to the sequence $\M_2/\M_1$. Then, by Proposition~\ref{propKomatsu}, we see that there exist $C,K>0$ such that
$|E_{\triangleleft}(z)|\le C\exp(\o_{\M_2/\M_1}(K|z|))$, for all $z\in\C$, then (\textsc{v}) holds.

Finally, regarding condition (\textsc{vi.b}),  we need first to show that
\begin{equation}\label{eq.E.Laplace1.E2}
E_{\triangleleft}(u)=(T_{e_1} E_2)(u), \qquad u\in \C^{*}.
\end{equation}
We prove this equality for $u\in (0,\oo)$, and we conclude using the identity principle since $E_{\triangleleft}(u)$ is entire and, by Proposition~\ref{prop.hol.lap}, $(T_{e_1} E_2)(u)$  is holomorphic in a sectorial region $G(0,2+\o_1)$.

We fix $u\in(0,\oo)$,  we have that
$$(T_{e_1} E_2)(u) =\int^\oo_{0} e_{1} \left(\frac{z}{u}\right) E_2(z) \frac{dz}{z}.$$
Since $e_1$ and $E_2$ are positive over $(0,\oo)$, then we can exchange integral and sum and applying~\eqref{eq.moment.equality.quotient} we see that
\begin{align*}
 (T_{e_1} E_2)(u)=& \int^\oo_{0} e_{1} \left(\frac{z}{u}\right) E_2(z) \frac{dz}{z}
= \int^\oo_{0} e_{1} \left(\frac{z}{u}\right) \sum_{n=0}^\oo \frac{z^n}{m_{e_2}(n)} \frac{dz}{z} \\
=& \sum_{n=0}^\oo (T_{e_1} (z^n/m_{e_2}(n)))(u)= \sum_{n=0}^\oo \frac{u^n}{m_{e_1\triangleleft e_2}(n)}=E_{\triangleleft}(u).
\end{align*}

Now, we fix $\tau\in(0,2-\o_2+\o_1)$, and we take $\tau_1\in(0,\o_1)$ and $\tau_2\in(0,2-\o_2)$ such that
$\tau_2<\tau<\tau_1+\tau_2$.
By Definition~\ref{NEWdefikernelMsumm}.(\textsc{ii.b}) for $e_1$ and  (\textsc{vi.b}) for $E_2$ we know that there exist $\a_1,\b_2>0$ (not depending on $\tau_1$ and $\tau_2$), and constants $C_1,K_2>0$, $\ep_1\in(0,1)$, $M_2\geq1$ such that
\begin{align}
 |e_1(z)|\leq& C_1 |z|^{\a_1}, \qquad z\in S_{\tau_1}, \qquad |z|\leq \ep_1, \label{ineq.Condition.(II.B).e1}\\
 |E_2(z)|\leq&\frac{K_2}{ |z|^{\b_2}} , \qquad z\in S(\pi,{\tau_2}), \qquad |z|\geq M_2. \label{ineq.Condition.(VI.B).E2}
\end{align}
We fix $u\in S(\pi,\tau)$ with $|u|\geq M_2/\ep_1$. If $u\in S(\pi,\tau_2)$, we define $\theta_u:=\arg(u)$ so we have that
\begin{equation}\label{eq.def.domains.T1E2}
 \frac{e^{i\theta_u}}{u} \in S_{\tau_1}, \qquad e^{i\theta_u}\in S(\pi,\tau_2).
\end{equation}
If $\arg(u)\in ((\pi/2)(2-\tau), (\pi/2)(2-\tau_2)]$, we define $\theta_u:=\arg(u)+\ep_u$ with $$\ep_u\in ((\pi/2) (2-\tau_2)-\arg(u),\min((\pi/2) (2+\tau_2)-\arg(u) ,(\pi/2)\tau_1)).$$
This interval is not empty since $\arg(u)(2/\pi)>2-\tau>2-\tau_2-\tau_1$, then $(\pi/2) (2-\tau_2)-\arg(u)<\tau_1\pi/2$.
We observe that $\arg(e^{i\theta_u}/u)= \ep_u \in [0, (\pi/2)\tau_1)$
and we also have that $ e^{i\theta_u}\in S(\pi,\tau_2)$ and we deduce~\eqref{eq.def.domains.T1E2}.

Analogously, if $\arg(u)\in [(\pi/2)(2+\tau_2), (\pi/2)(2+\tau))$ we choose $\theta_u:=\arg(u)-\ep_u$ with
 $$\ep_u\in (-(\pi/2) (2+\tau_2)+\arg(u),\min(-(\pi/2) (2-\tau_2)+\arg(u) ,(\pi/2)\tau_1)),$$
and we also obtain~\eqref{eq.def.domains.T1E2} for this choice of $\theta_u$.
By \eqref{eq.E.Laplace1.E2}, since $|u|\geq M_2/\ep_1$  we have that
\begin{align*}
 |E_{\triangleleft}(u) | =& \left|\int_{0}^{\oo(\theta_u)} e_{1} \left(\frac{z}{u}\right) E_2(z) \frac{dz}{z}  \right|
  \leq \int_{0}^{M_2}\left| e_{1} \left(\frac{re^{i\theta_u}}{u}\right) E_2(re^{i\theta_u})\right| \frac{dr}{r}\\
  +& \int_{M_2}^{|u|\ep_1}\left| e_{1} \left(\frac{re^{i\theta_u}}{u}\right) E_2(re^{i\theta_u})\right| \frac{dr}{r} +
  \int_{|u|\ep_1}^{\oo} \left| e_{1} \left(\frac{re^{i\theta_u}}{u}\right) E_2(re^{i\theta_u})\right| \frac{dr}{r}.
\end{align*}
 If $r\leq |u|\ep_1$, we have that $|re^{i\theta_u}/u|\leq \ep_1$, by~\eqref{eq.def.domains.T1E2} we can apply \eqref{ineq.Condition.(II.B).e1} and we obtain
\begin{align*}
 |E_{\triangleleft}(u) |
  \leq&\frac{C_1}{|u|^{\a_1}}\left( \int_{0}^{M_2} r^{\a_1} \left| E_2(re^{i\theta_u})\right| \frac{dr}{r}
  + \int_{M_2}^{|u|\ep_1} r^{\a_1} \left| E_2(re^{i\theta_u})\right| \frac{dr}{r} \right)\\
  +&\int_{|u|\ep_1}^{\oo} \left| e_{1} \left(\frac{re^{i\theta_u}}{u}\right) E_2(re^{i\theta_u})\right| \frac{dr}{r}.
\end{align*}
By condition (\textsc{iii}) for $e_1$, and (\textsc{v}) for $E_2$  we know that there exist constants $D_1,H_2$ such that
$|e_1(w)|\leq D_1$ for every $w\in S_{\tau_1}$ and $|E_2(w)|\leq H_2$ for every $w\in D(0,M_2+1)$. We deduce that
\begin{align*}
 |E_{\triangleleft}(u) |
  \leq&\frac{C_1 H_2 M_2^{\a_1}}{\a_1|u|^{\a_1}}
  + \frac{C_1 }{|u|^{\a_1}}  \int_{M_2}^{|u|\ep_1} r^{\a_1} \left| E_2(re^{i\theta_u})\right| \frac{dr}{r} + D_1 \int_{|u|\ep_1}^{\oo} \left|  E_2(re^{i\theta_u})\right| \frac{dr}{r}.
\end{align*}
If $r\geq M_2$, by~\eqref{eq.def.domains.T1E2} we can apply~\eqref{ineq.Condition.(VI.B).E2} and we have
\begin{align*}
 |E_{\triangleleft}(u) |
  \leq&\frac{C_1 H_2 M_2^{\a_1}}{\a_1|u|^{\a_1}}
  + \frac{C_1 K_2 }{|u|^{\a_1}}  \int_{M_2}^{|u|\ep_1} r^{\a_1-\b_2}  \frac{dr}{r} +
   \frac{D_1 K_2}{\b_2|u|^{\b_2} \ep_1^{\b_2}}.
\end{align*}
According to Remark~\ref{rema.change.alpha.beta},  we may assume that $\a_1>\b_2$, then
\begin{align*}
 |E_{\triangleleft}(u) |
  \leq&\frac{C_1 H_2 M_2^{\a_1}}{\a_1|u|^{\a_1}}
  + \frac{C_1 K_2 \ep_1^{\a_1-\b_2}}{(\a_1-\b_2)|u|^{\b_2}}   +
    \frac{D_1K_2}{\b_2|u|^{\b_2} \ep_1^{\b_2}}.
\end{align*}
Consequently, condition (\textsc{vi.b}) is satisfied with $\b=\min(\a_1,\b_2)$.

\item  Uniqueness follows from Remark~\ref{rema.unique.e}.

\item We take $f\in\mathcal{O}^{\M_2}(S(d,\a))\en \mathcal{O}^{\M_2/\M_1}(S(d,\a))$. Since $e_1\triangleleft  e_2$ is a kernel of $\M_2/\M_1-$ summability,  by Proposition~\ref{prop.hol.lap} we know that $A_{e_1,e_2}(f)$ is holomorphic in
a sectorial region $G(d,\a+\o_2-\o_1)$. We fix  $z\in G(d,\a+\o_2-\o_1)$,  there exists $\phi$ with $|d-\phi|<\pi\a/2$,
such that if $\arg(u)=\phi$, then $u/z\in S_{\o_2-\o_1}$.  We write $\eta=\arg(z/u)=\arg(z)-\phi$  and we consider a path $\delta_{\o_1}(\eta)$ chosen as in Proposition~\ref{prop.doubleintegral.mod.B1L2}, what is possible because $e_2$ is holomorphic in $S_{\o_2}$ and the path $\delta_{\o_1}(\eta)$ stays inside $S_{\o_2}$. We have that
$$A_{e_1,e_2} (f) (z) =
\int_{0}^{\oo(\phi)}  f(u) \left(\frac{-1}{2\pi i}\int_{\delta_{\o_1} (\eta)}   E_1\left(\frac{z}{wu}\right) e_2 (1/w)  \frac{dw}{w}\right) \frac{du}{u}.$$
We conclude, using Proposition~\ref{prop.doubleintegral.mod.B1L2}, that this last expression equals $T^{-}_{e_1}\circ T_{e_2} (f) (z)$.

\item We consider $f(u)= (1-u)^{-1}$ and we define $g(z):= ((T^{-}_{e_1} \circ T_{e_2} )f)(z)$. By Lemma~\ref{lemma.e.Laplace.geom.series},  we know that $T_{e_2} f$ is holomorphic in $S(\pi,2+\o_2)$. Moreover,  $T_{e_2} f\sim_{\M_2} \sum_{n=0}^\oo m_2(n) z^n$  on $S(\pi,2+\o_2)$, then it is continuous at the origin.
We can apply  the $T^{-}_{e_1}$ transform  to $T_{e_2} f$ and, by Proposition~\ref{prop.hol.Bor}, we have that $g$ is holomorphic in $S=S(\pi,2+\o_2-\o_1)$. Then, the function
\begin{equation*}
e(1/z):=\frac{g(z)-g(ze^{2\pi i})}{2\pi i},
\end{equation*}
is holomorphic in  $S_{\o_2-\o_1}$. Since $f\in \mathcal{O}^{\M_2}(S_{\o_2-\o_1})$,  by statement 3 we have
$A_{e_1,e_2}(f)= (T^{-}_{e_1}\circ T_{e_2})(f)$ and, by Lemma~\ref{lemma.e.Laplace.geom.series},  we deduce that
\begin{equation*}
e(1/z)=\frac{g(z)-g(ze^{2\pi i})}{2\pi i} = \frac{ A_{e_1,e_2}(f)(z)-A_{e_1,e_2}(f)(ze^{2\pi i})}{2\pi i}= e_1\triangleleft e_2(1/z).
\end{equation*}

\end{enumerate}

\end{proof}

\begin{rema} Under the hypotheses of Proposition~\ref{prop.acceleration.kernel}, we know that
$\M_2/\M_1$ is equivalent to a weight sequence admitting nonzero proximate order (see Proposition~\ref{prop.quotient.seq}). As indicated in Remark~\ref{rema.non.stab.asymp.product}, a strong kernel of $\M_2/\M_1-$summability can be constructed, but it might not behave well with respect to asymptotic relations. The kernel $e_1\triangleleft e_2$ will be suitable in this regard.

Note that $\mathcal{O}^{\M_2}(S)\subseteq \mathcal{O}^{\M_2/\M_1}(S)$.
Consequently, $A_{e_1,e_2}$  extends the operator $T^{-1}_{e_1} \circ T_{e_2}$. Moreover, by Propositions~\ref{prop.acceleration.kernel}.3 and Proposition~\ref{prop.hol.Bor}, for every $f\in\mathcal{O}^{\M_2}(S(d,\ga))$ we deduce  $A_{e_1,e_2}f\in\mathcal{O}^{\M_1}(S(d,\ga+\o(\M_2)-\o(\M_1)))$, which justifies the name of the operator.
\end{rema}

\begin{rema}
 From the uniqueness of the convolution and the acceleration kernels we deduce some basic properties:
 \begin{center}
  $e_1 \ast e_2=e_2\ast e_1, \qquad e_1 \ast (e_1\triangleleft e_2)= e_2, \qquad e_1 \triangleleft  (e_1\ast e_2)= e_2, \qquad e_2 \triangleleft  (e_1\ast e_2)= e_1.$
 \end{center}
\end{rema}

\subsection{Multisummability through acceleration}
\label{subsect.multisum.through.accel}

In order to describe the procedure to recover the multisum of a formal power series presented below, we need to analyze the behavior of asymptotics under the operator $A_{e_1,e_2}$ defined in Proposition~\ref{prop.acceleration.kernel} and to extend what was known for the Gevrey case (see \cite[Th.\ 55 and 56]{Balser2000}).

\begin{theo}\label{th.asymp.acceleration}
Let $\M_j$, $e_j$, $m_{e_j}$,  and $T_{e_j},T_{e_j}^{-}$, $j=1,2$, be as in Proposition~\ref{prop.convolution.kernel}.
Assume  $\o(\M_1)<\o(\M_2)<2$ . Let $A_{e_1,e_2}$ denote the Laplace-like integral operator associated with $e_1\triangleleft e_2$ (see Proposition~\ref{prop.acceleration.kernel}) and $\M'$ be any sequence of positive real numbers. Then,
\begin{enumerate}[(i)]
 \item If $f\in\mathcal{O}^{\M_2/\M_1}(S(d,\a))$
and $f\sim_{\M'}\widehat{f}$, then $A_{e_1,e_2}f\sim_{\M'\cdot(\M_2/\M_1)}\widehat{A}_{e_1,e_2}\widehat{f}$ in a
sectorial region $G(d,\a+\o(\M_2/\M_1))$, where
$$
\widehat{A}_{e_1,e_2}\left(\sum_{p=0}^{\infty}a_pz^p\right):= \sum_{p=0}^{\infty}\frac{a_pm_{e_2}(p)}{m_{e_1}(p)}z^p.
$$
\item If, moreover, $f\in\mathcal{O}^{\M_2}(S(d,\a))$, then $A_{e_1,e_2}f\in\mathcal{O}^{\M_1}(S(d,\a+\o(\M_2/\M_1)))$ and
$$
T_{e_1}(A_{e_1,e_2}f)=T_{e_2}f.
$$
\end{enumerate}

\end{theo}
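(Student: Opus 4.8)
The plan is to reduce statement (i) to the already established behaviour of the iterated Borel--Laplace operators under asymptotics, i.e.\ to Theorem~\ref{teorrelacdesartransfBL}, using the identification of $A_{e_1,e_2}$ with the Laplace-like operator attached to the kernel $e_1\triangleleft e_2$ of $\M_2/\M_1$-summability. First I would recall from Proposition~\ref{prop.acceleration.kernel}.1 that $e_1\triangleleft e_2$ is a (strong) kernel for $\M_2/\M_1$-summability whose moment function is $m_{e_2}(\lambda)/m_{e_1}(\lambda)$; by Remark~\ref{rema.moment.equiv} we may normalize so that $m_{e_1}(0)=m_{e_2}(0)=1$, hence $\widehat{A}_{e_1,e_2}$ is exactly the formal $(e_1\triangleleft e_2)$-Laplace transform $\widehat{T}_{e_1\triangleleft e_2}$ as in the general theory. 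Since $f\in\mathcal{O}^{\M_2/\M_1}(S(d,\a))$ with $f\sim_{\M'}\widehat f$, Theorem~\ref{teorrelacdesartransfBL}.(i) applied with the sequence $\M_2/\M_1$ in the role of ``$\M$'' and $\M'$ unchanged gives immediately that $T_{e_1\triangleleft e_2}f\sim_{\M'\cdot(\M_2/\M_1)}\widehat{T}_{e_1\triangleleft e_2}\widehat f$ in a sectorial region $G(d,\a+\o(\M_2/\M_1))$. Here one should note that $\o(\M_2/\M_1)=\o(\M_2)-\o(\M_1)>0$ by Remark~\ref{rema.omega.index.quotient.sequence}, so the opening genuinely increases; this proves (i).

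For statement (ii), the point is to upgrade the asymptotic-expansion statement to a growth statement and then to apply the ``three-transform'' identity $T_{e_1}\circ T_{e_1\triangleleft e_2}^{\,(\text{suitable})}$. More precisely, when $f\in\mathcal{O}^{\M_2}(S(d,\a))$ (rather than merely $\mathcal{O}^{\M_2/\M_1}$), Proposition~\ref{prop.acceleration.kernel}.3 tells us that $A_{e_1,e_2}f=T^{-}_{e_1}\circ T_{e_2}(f)$; by Proposition~\ref{prop.hol.Bor} applied to $T_{e_2}f$ (which, by Proposition~\ref{prop.hol.lap}, is holomorphic with $\M_2$-growth on a sectorial region of opening $\a+\o(\M_2)$), the function $T^{-}_{e_1}(T_{e_2}f)$ is holomorphic and of $\M_1$-growth on $S(d,\a+\o(\M_2)-\o(\M_1))=S(d,\a+\o(\M_2/\M_1))$. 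Thus $A_{e_1,e_2}f\in\mathcal{O}^{\M_1}(S(d,\a+\o(\M_2/\M_1)))$. It then remains to prove the identity $T_{e_1}(A_{e_1,e_2}f)=T_{e_2}f$. Using $A_{e_1,e_2}f=T^{-}_{e_1}(T_{e_2}f)$, this amounts to $T_{e_1}(T^{-}_{e_1}(T_{e_2}f))=T_{e_2}f$, i.e.\ to the fact that $T_{e_1}$ and $T^{-}_{e_1}$ are mutually inverse on $\mathcal{O}^{\M_1}$-type functions with suitable domains; this is precisely the content underlying Theorem~\ref{th.Msummable.equiv.esummable} (the reconstruction $\mathcal{S}_{\M_1,d}=T_{e_1}\circ\widehat{T}^{-}_{e_1}$ combined with the analytic versions $T_{e_1}\circ T^{-}_{e_1}=\mathrm{id}$ used already in the proof of Proposition~\ref{prop.IntersectQuasianalClasses}.(iii)). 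One applies it to $g:=T_{e_2}f$, which lies in $\mathcal{O}^{\M_1}$ of a wide enough sector.

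The main obstacle I anticipate is the careful bookkeeping of the openings and directions so that the composition $T_{e_1}\circ T^{-}_{e_1}$ can legitimately be applied: $T^{-}_{e_1}$ requires its argument to be defined on a sectorial region of opening strictly larger than $\o(\M_1)$, and one must check that $T_{e_2}f$, defined on $G(d,\a+\o(\M_2))$ with $\a+\o(\M_2)>\o(\M_2)>\o(\M_1)$, indeed satisfies this; conversely, after applying $T^{-}_{e_1}$ the opening drops by $\o(\M_1)$, and one needs it still to exceed $0$ and to be compatible with reapplying $T_{e_1}$, which widens it again by $\o(\M_1)$, recovering an opening $\ge\a+\o(\M_2)$, on which $T_{e_2}f$ is already defined. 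Invoking Watson's Lemma (Theorem~\ref{TheoWatsonlemma}) on the overlap — both $T_{e_1}(A_{e_1,e_2}f)$ and $T_{e_2}f$ are holomorphic and $\M_1\cdot(\text{something})$-asymptotic to the same series on a quasianalytic class — then forces the two holomorphic functions to coincide, as was done in Remark~\ref{rema.unique.e}. A secondary, purely technical point is to verify that the normalization $m_{e_j}(0)=1$ entails no loss of generality, which is guaranteed by Remark~\ref{rema.moment.equiv}, and that the moment sequence $(m_{e_2}(p)/m_{e_1}(p))_{p}$ is $(\mathrm{lc})$ and equivalent to $\M_2/\M_1$, already recorded in Proposition~\ref{prop.acceleration.kernel}.1, so that $\mathcal{O}^{\M_2/\M_1}$ may be replaced by $\mathcal{O}^{\mathfrak m_{e_1\triangleleft e_2}}$ without changing anything.
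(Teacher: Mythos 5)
Your proposal is correct and follows essentially the same route as the paper: part (i) is exactly the paper's one-line reduction to Theorem~\ref{teorrelacdesartransfBL}.(i) via the fact that $e_1\triangleleft e_2$ is a strong kernel with moment sequence $(m_{e_2}(p)/m_{e_1}(p))_p$, and part (ii) is the same chain $A_{e_1,e_2}f=T^{-}_{e_1}T_{e_2}f$, then Propositions~\ref{prop.hol.lap} and~\ref{prop.hol.Bor} for the $\M_1$-growth statement, then $T_{e_1}T^{-}_{e_1}=\mathrm{id}$. The only small slip is attributing $\M_2$-growth of $T_{e_2}f$ to Proposition~\ref{prop.hol.lap}, which only yields holomorphy; this does not affect the argument since Proposition~\ref{prop.hol.Bor} only needs holomorphy on a wide enough region and continuity at the origin.
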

\begin{proof}

\begin{enumerate}[(i)]
 \item By Proposition~\ref{prop.acceleration.kernel}, $e_1\triangleleft e_2$  is a strong kernel of $\M_2/\M_1-$ summability. Then, the conclusion follows applying Theorem~\ref{teorrelacdesartransfBL}.

 \item By Proposition~\ref{prop.acceleration.kernel}.3, we know that
 $$A_{e_1,e_2}f=(T^{-}_{e_1} \circ T_{e_2}) f.$$
 By Proposition~\ref{prop.hol.lap}, $T_{e_2}f$ is holomorphic in a sectorial region $G(d,\a+\o(\M_2))$. Since $\o(\M_2)>\o(\M_1)$, by Proposition~\ref{prop.hol.Bor}
 $(T^{-}_{e_1} \circ T_{e_2}) f$ is holomorphic in $S=S(d,\a+\o(\M_2)-\o(\M_1))$ and it is of $\M_1-$growth in $S$.
 We observe that $\o( \M_2/\M_1) = \o(\M_2)-\o(\M_1)$ (see Remark~\ref{rema.omega.index.quotient.sequence}), then $A_{e_1,e_2}f= (T^{-}_{e_1} \circ T_{e_2}) f\in\mathcal{O}^{\M_1}(S)$. We can apply $T_{e_1}$ to $A_{e_1,e_2}f$, and we get
$$
T_{e_1}(A_{e_1,e_2}f)= T_{e_1} T^{-}_{e_1} T_{e_2} f =T_{e_2}f.
$$
\end{enumerate}
\end{proof}

In a natural way, we define
$
\widehat{A}^{-}_{e_1,e_2}\left(\sum_{p=0}^{\infty}a_pz^p\right):= \sum_{p=0}^{\infty} (a_pm_{e_1}(p)/m_{e_2}(p))z^p.
$ With the tools presented in the previous subsections and in the conditions of Proposition~\ref{prop.acceleration.kernel}, we are ready for giving a definition of multisummability in a multidirection with respect to the multikernel $(e_1,e_2)$.

\begin{defi}\label{def.multi.sumable.kernel}
In the conditions of Proposition~\ref{prop.acceleration.kernel}, we say that the series $\widehat{f}=\sum_{p\ge 0} a_p z^p$ is \textit{$(e_1,e_2)-$summable in the multidirection $(d_1,d_2)$} with  $|d_1-d_2|<\pi(\o(\M_2)-\o(\M_1))/2$ and $d_1,d_2\in\R$  if:
 \begin{itemize}
 \item[(i)]  $\widehat{g}:=\widehat{T}_{e_1}^{-}\widehat{f}= \displaystyle \sum_{p\ge 0}\frac{a_p}{m_{e_1}(p)}z^p$ is $\M_2/\M_1-$summable in direction $d_2$.
\item[(ii)] The sum $\mathcal{S}_{\M_2/\M_1, d_2} \widehat{g}$ admits analytic continuation $g$ in a sector $S=S(d_1,\ep)$ for some $\ep>0$, and $g\in\mathcal{O}^{\M_1}(S)$.
\end{itemize}
In this situation we can define the corresponding multisum by:
$$\mathcal{S}_{(e_1,e_2),(d_1,d_2)}\widehat{f} := T_{e_1}\circ A_{e_1,e_{2}}\circ \widehat{A}^{-}_{e_1,e_2} \circ  \widehat{T}^{-}_{e_1}\widehat{f}. $$
\end{defi}

The next result states the equivalence between $(\M_1,\M_2)-$multisummability and $(e_1,e_2)-$ multisummability in a multidirection, and provides a way to recover the multisum by means of the formal and analytic acceleration operators previously introduced
(see \cite[Ch.\ 10]{Balser2000} for the Gevrey case).

\begin{theo}\label{th.multisum.construct}
Given  $\M_1,\M_2$  weight sequences admitting a nonzero proximate order with $\o(\M_1)<\o(\M_2)<2$, directions $d_1,d_2\in\R$ with $|d_1-d_2|<\pi(\o(\M_2)-\o(\M_1))/2$ and a formal power series $\widehat f$, the following are equivalent:
\begin{itemize}
\item[(i)] $\widehat{f}\in\C\{z\}_{(\M_1,\M_2),(d_1,d_2)}$.
\item[(ii)] For every pair of strong kernels, $e_1$ of $\M_1-$summability and $e_2$ of $\M_2-$summability, $\widehat f$ is $(e_1,e_2)-$multisummable in multidirection $(d_1,d_2)$.
\item[(iii)] For some pair of strong kernels, $e_1$ of $\M_1-$summability and $e_2$ of $\M_2-$summability, $\widehat f$ is $(e_1,e_2)-$multisummable in multidirection $(d_1,d_2)$.
\end{itemize}
In case any of the previous holds, we deduce that the $(\M_1,\M_2)-$sum of $\widehat{f}$ on the multidirection $(d_1,d_2)$ is given by
$$
\mathcal{S}_{(\M_1,\M_2),(d_1,d_2)}\widehat{f} =T_{e_1}\circ A_{e_1,e_{2}}\circ \widehat{T}^{-}_{e_2}\widehat{f}.
$$
for any pair of kernels $e_1,e_2$.
\end{theo}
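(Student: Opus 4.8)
The plan is to establish the cycle of implications $(i)\Rightarrow(ii)\Rightarrow(iii)\Rightarrow(i)$, the reconstruction formula falling out of the first of them. The step $(ii)\Rightarrow(iii)$ is immediate: by Remarks~\ref{rema.def.exist.kernels} and~\ref{rema.strong.kernels.norestriction} every weight sequence $\M_j$ admitting a nonzero proximate order (with $\o(\M_j)<2$) possesses a \emph{strong} kernel of summability, e.g. $e_V(z)=z\exp(-V(z))$ with $V\in MF(2\o(\M_j),\ro(t))$. Throughout I write $e_3:=e_1\triangleleft e_2$, which by Proposition~\ref{prop.acceleration.kernel} is a strong kernel of $\M_2/\M_1$-summability with moment function $m_{e_2}(\lambda)/m_{e_1}(\lambda)$; by Remark~\ref{rema.MoverL.welldefined} I may assume $\M_2/\M_1$ is a (lc) weight sequence admitting a nonzero proximate order and, as in the rest of the section, all $\M_j$ normalized, so that $\mathcal O^{\M_2}(S)\subseteq\mathcal O^{\M_2/\M_1}(S)$. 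I use repeatedly the formal identities $\widehat{T}^-_{e_1}\circ\widehat{T}_{e_2}=\widehat{A}_{e_1,e_2}$ and $\widehat{A}^-_{e_1,e_2}\circ\widehat{T}^-_{e_1}=\widehat{T}^-_{e_2}=\widehat{T}^-_{e_3}\circ\widehat{T}^-_{e_1}$, the equivalences $\mathfrak m_{e_j}\approx\M_j$, $\mathfrak m_{e_3}\approx\M_2/\M_1$ (Proposition~\ref{prop.Moment.equiv.M}), and $\o(\M_2/\M_1)=\o(\M_2)-\o(\M_1)$ (Remark~\ref{rema.omega.index.quotient.sequence}).

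For $(i)\Rightarrow(ii)$ and the formula, fix any strong kernels $e_1,e_2$ of $\M_1$- and $\M_2$-summability and a splitting $\widehat f=\widehat f_1+\widehat f_2$ as in Definition~\ref{def.multisum}. By Theorem~\ref{th.Msummable.equiv.esummable} each $\widehat f_j$ is $e_j$-summable in $d_j$: the convergent series $\widehat{T}^-_{e_j}\widehat f_j$ extends analytically to $\psi_j$, with $\psi_1\in\mathcal O^{\M_1}(S(d_1,\ep_1))$, $\psi_2\in\mathcal O^{\M_2}(S(d_2,\ep_2))$ and $f_j=T_{e_j}\psi_j$. Since $\psi_2\in\mathcal O^{\M_2/\M_1}(S(d_2,\ep_2))$ is convergent at the origin, Theorem~\ref{th.asymp.acceleration}(i) gives $A_{e_1,e_2}\psi_2\sim_{\M_2/\M_1}\widehat{A}_{e_1,e_2}\widehat\psi_2$ on a sectorial region of opening $\pi(\ep_2+\o(\M_2/\M_1))>\pi\o(\M_2/\M_1)$, and Watson's Lemma (Theorem~\ref{TheoWatsonlemma} for $\M_2/\M_1$) makes $\widehat{A}_{e_1,e_2}\widehat\psi_2$ $\M_2/\M_1$-summable in $d_2$ with sum $A_{e_1,e_2}\psi_2$. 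As $\widehat g:=\widehat{T}^-_{e_1}\widehat f=\widehat\psi_1+\widehat{A}_{e_1,e_2}\widehat\psi_2$ with $\widehat\psi_1$ convergent, Lemma~\ref{basic.properties.Msumm}(i) and Proposition~\ref{prop.summability.algebras} give that $\widehat g$ is $\M_2/\M_1$-summable in $d_2$ with $\mathcal S_{\M_2/\M_1,d_2}\widehat g=\psi_1+A_{e_1,e_2}\psi_2$ near the origin — condition (i) of Definition~\ref{def.multi.sumable.kernel}. The hypothesis $|d_1-d_2|<\tfrac{\pi}{2}\o(\M_2/\M_1)<\tfrac{\pi}{2}(\ep_2+\o(\M_2/\M_1))$ places $d_1$ in the interior of the domain of $A_{e_1,e_2}\psi_2$, so for a suitable $\ep>0$ the function $\psi:=\psi_1+A_{e_1,e_2}\psi_2$ lies in $\mathcal O^{\M_1}(S(d_1,\ep))$ and continues $\mathcal S_{\M_2/\M_1,d_2}\widehat g$ — condition (ii). Thus $\widehat f$ is $(e_1,e_2)$-summable in $(d_1,d_2)$, and since $T_{e_1}(A_{e_1,e_2}\psi_2)=T_{e_2}\psi_2=f_2$ by Theorem~\ref{th.asymp.acceleration}(ii),
$$\mathcal S_{(e_1,e_2),(d_1,d_2)}\widehat f=T_{e_1}\psi=T_{e_1}\psi_1+T_{e_1}(A_{e_1,e_2}\psi_2)=f_1+f_2=\mathcal S_{(\M_1,\M_2),(d_1,d_2)}\widehat f,$$
which, $e_1,e_2$ being arbitrary, yields $(ii)$ together with the kernel-independent reconstruction formula.

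For $(iii)\Rightarrow(i)$, fix strong kernels $e_1,e_2$ for which $\widehat f$ is $(e_1,e_2)$-summable in $(d_1,d_2)$. Then $\widehat g:=\widehat{T}^-_{e_1}\widehat f$ is $\M_2/\M_1$-summable in $d_2$; its sum $g$, holomorphic on some $G(d_2,\a)$ with $\a>\o(\M_2/\M_1)$, extends to $\widetilde g\in\mathcal O^{\M_1}(S(d_1,\ep))$, and since $|d_1-d_2|<\tfrac{\pi}{2}\o(\M_2/\M_1)<\tfrac{\pi}{2}\a$ the two determinations glue to a single holomorphic $g$ on $\Omega:=G(d_2,\a)\cup S(d_1,\ep)$, with $g\sim_{\M_2/\M_1}\widehat g$ on $G(d_2,\a)$ and $\M_1$-growth on $S(d_1,\ep)$. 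I seek a splitting $\widehat g=\widehat g_1+\widehat g_2$ such that $\widehat f_j:=\widehat{T}_{e_1}\widehat g_j$ satisfies Definition~\ref{def.multisum}; by Theorem~\ref{th.Msummable.equiv.esummable} this amounts to requiring that $\widehat g_1$ be $e_1$-summable in $d_1$ and that the series $\widehat{T}^-_{e_3}\widehat g_2=\widehat{T}^-_{e_2}\widehat f_2$ converge and admit an analytic continuation of $\M_2$-growth on a sector about $d_2$ (recall $\o(\M_2)=\o(\M_1)+\o(\M_2/\M_1)$ and $\mathcal O^{\M_2}\subseteq\mathcal O^{\mathfrak m_{e_3}}$, so this makes $\widehat f_2$ $\M_2$-summable in $d_2$). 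Such a splitting $g=g^{(1)}+g^{(2)}$ is produced by a Cauchy–Heine decomposition of $g$ along the two bounding rays of $\Omega$ lying in $S(d_1,\ep)$, as in \cite[Ch.\ 10]{Balser2000}: the Cauchy–Heine integral along those rays is holomorphic with $\M_1$-growth on a full sector bisected by $d_1$ (since $g$ has $\M_1$-growth there), giving the $e_1$-summable summand through $\widehat g_1$ and $\widehat f_1=\widehat{T}_{e_1}\widehat g_1$, while the complementary part $g^{(2)}$ is holomorphic on a sector about $d_2$ of opening exceeding $\pi\o(\M_2/\M_1)$ and its $e_3$-Borel transform obeys a bound of $\M_2$-growth type, yielding $\widehat f_2$. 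Finally $\mathcal S_{\M_1,d_1}\widehat f_1+\mathcal S_{\M_2,d_2}\widehat f_2=T_{e_1}g=\mathcal S_{(e_1,e_2),(d_1,d_2)}\widehat f$ by $T_{e_1}\circ A_{e_1,e_2}=T_{e_2}$ on $\mathcal O^{\M_2}$ (Theorem~\ref{th.asymp.acceleration}(ii)), so $\widehat f\in\C\{z\}_{(\M_1,\M_2),(d_1,d_2)}$ and the reconstruction formula holds in all cases.

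I expect the Cauchy–Heine step of $(iii)\Rightarrow(i)$ to be the real obstacle: the two pieces carry estimates pulling against each other — $\M_1$-growth near $d_1$ for $\widehat g_1$, $\M_2$-growth near $d_2$ for the $e_3$-Borel transform of $\widehat g_2$ — and deriving the latter from the $\M_1$-growth of $g$ and the $\M_2/\M_1$-asymptotics forces one to replace the elementary power-law manipulations of the Gevrey treatment by the scaling properties of the associated functions: the flatness criterion of Proposition~\ref{prop.thilliez.flat.function.M}, the strong-kernel bounds (\textsc{ii.b}), (\textsc{vi.b}) for $e_1$ and $e_2$, and the comparison inequalities among $\o_{\M_1}$, $\o_{\M_2}$ and $\o_{\M_2/\M_1}$ provided by Lemmas~\ref{lemma.M.scaling}, \ref{lemma.MG.associated.function} and \ref{lemma.propierties.ommega.funct.M1M2}.
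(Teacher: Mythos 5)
Your $(i)\Rightarrow(ii)$ argument (and the reconstruction formula) is correct and essentially coincides with the paper's: both split $\widehat{f}=\widehat{f}_1+\widehat{f}_2$, Borel-transform by $e_1$, observe that the formal $e_3$-Borel transform $\widehat{T}^-_{e_3}\widehat{g}=\widehat{T}^-_{e_2}\widehat{f}$ of $\widehat{g}=\widehat{T}^-_{e_1}\widehat{f}$ converges and extends with $\M_2/\M_1$-growth (the paper examines $\widehat{T}^-_{e_2}\widehat{f}$ directly, you accelerate $\psi_2$ term by term — same content), and verify condition (ii) of Definition~\ref{def.multi.sumable.kernel} by writing the $\M_2/\M_1$-sum as $\psi_1+A_{e_1,e_2}\psi_2$.

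The gap is in $(iii)\Rightarrow(i)$. Your Cauchy--Heine step is not carried out, and as described it would not work. The contour you propose — ``the two bounding rays of $\Omega$ lying in $S(d_1,\ep)$'' — is ill-defined: $\Omega=G(d_2,\a)\cup S(d_1,\ep)$ typically has at most one boundary ray meeting $\overline{S(d_1,\ep)}$, since $d_1$ lies in the interior of $G(d_2,\a)$. More importantly you have the roles of the two pieces reversed: a Cauchy--Heine integral along a bounded arc of a ray produces a function that is \emph{flat} (carries the divergent asymptotics on a wider sector), not one holomorphic at the origin; in the paper's scheme that is the piece yielding the $\M_2$-summable summand, not the $\M_1$-one. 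Finally, you anticipate a hard ``estimates-pulling-against-each-other'' argument requiring Proposition~\ref{prop.thilliez.flat.function.M}, the strong-kernel bounds (\textsc{ii.b})/(\textsc{vi.b}) and Lemmas~\ref{lemma.M.scaling}--\ref{lemma.propierties.ommega.funct.M1M2}; none of that is needed. The paper's actual route: take a bounded proper subsector $T$ of $G(d_2,\a)$ bisected by $d_2$ with opening $\pi\b$, $\b\in(\o(\M_2)-\o(\M_1),2)$, and split the Cauchy integral $g=\frac{1}{2\pi i}\int_{\partial T}\frac{g(w)}{w-z}\,dw$ into the arc part $g_1$ (holomorphic at the origin, so $\widehat{g}_1\in\C\{z\}$ and $\widehat{f}_1=\widehat{T}_{e_1}\widehat{g}_1\in\C[[z]]_{\M_1}$) and the radial part $g_2$ (holomorphic on $S(d_2,\b)$, bounded, tends to $0$ at infinity, hence in $\mathcal{O}^{\M_1}(S(d_2,\b))$, with $g_2\sim_{\M_2/\M_1}\widehat{g}_2$). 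One application of $T_{e_1}$ and Theorem~\ref{teorrelacdesartransfBL}.(i) gives $T_{e_1}g_2\sim_{\M_1\cdot(\M_2/\M_1)}\widehat{T}_{e_1}\widehat{g}_2=\widehat{f}_2$ on a sector of opening $\pi(\b+\o(\M_1))>\pi\o(\M_2)$, so $\widehat{f}_2$ is $\M_2$-summable in $d_2$; and $g_1=g-g_2$ extends with $\M_1$-growth to $S(d_1,\ep)$ since both $g$ (by Definition~\ref{def.multi.sumable.kernel}(ii)) and $g_2$ do, giving $\M_1$-summability of $\widehat{f}_1$ in $d_1$. No associated-function estimates beyond those already packaged in Theorem~\ref{teorrelacdesartransfBL} are required.
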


\begin{proof}
 For simplicity we write $\o_1=\o(\M_1)$ and $\o_2=\o(\M_2)$. We observe that $\o( \M_2/\M_1) = \o_2-\o_1$
(see Remark~\ref{rema.omega.index.quotient.sequence}).

 (i)$\implies$(ii) With the notation in Definition~\ref{def.multisum}, we write $\widehat{f}=\widehat{f_1}+\widehat{f_2}$. We put $\widehat{g}:=\widehat{T}_{e_1}^{-} \widehat{f} $ and
 we observe that $\widehat{A}_{e_1,e_2}^{-} \widehat{g}= \widehat{A}_{e_1,e_2}^{-} \widehat{T}_{e_1}^{-} \widehat{f}= \widehat{T}_{e_2}^{-} \widehat{f}$. By Theorem~\ref{th.Msummable.equiv.esummable}, we know $\widehat{h}_2:=\widehat{T}_{e_2}^{-} \widehat{f_2}$
 converges in a disc, admits analytic continuation $h_2$ in a sector $S_2=S(d_2,\varepsilon_2)$ for some $\varepsilon_2>0$, and $g_2\in\mathcal{O}^{\M_2}(S_2)$.
 Since $\widehat{f}_1\in\C[[z]]_{\M_1}$, we see that $\widehat{h}_1=\widehat{T}_{e_2}^{-} \widehat{f_1}$ defines an entire function $h_1$ and, by Proposition~\ref{propKomatsu}, we have that $h_1$ is of $\M_2/\M_1-$growth on $S_2$.\para

 Hence, $\widehat{h}:=\widehat{T}_{e_2}^{-} \widehat{f}$ converges in a disc, admits analytic continuation $h$ in the sector $S_2$ where $h$ is of $\M_2/\M_1-$growth there because $\mathcal{O}^{\M_2}(S_2)\en \mathcal{O}^{\M_2/\M_1}(S_2)$. By Theorem~\ref{th.Msummable.equiv.esummable}, this means that the formal power series $\widehat{A}_{e_1,e_2}  \widehat{T}_{e_2}^{-} \widehat{f}= \widehat{T}_{e_1}^{-} \widehat{f} = \widehat{g}$ is $\M_2/\M_1-$summable  in direction $d_2$, so Definition~\ref{def.multi.sumable.kernel}.(i) is valid.\para

On the other hand, we observe that
$$\mathcal{S}_{\M_2/\M_1, d_2}\widehat{g}= A_{e_1,e_2} \widehat{A}_{e_1,e_2}^{-}\widehat{g} =  A_{e_1,e_2} \widehat{A}_{e_1,e_2}^{-} (\widehat{g}_1 +\widehat{g}_2)$$
where $\widehat{g}_1:=\widehat{T}_{e_1}^{-} \widehat{f}_1$ and $\widehat{g}_2:=\widehat{T}_{e_1}^{-} \widehat{f}_2$. Since $\widehat{f}_1\in\C\{z\}_{\M_1,d_1}$, we have that $\widehat{g_1}=\widehat{T}_{e_1}^{-} \widehat{f}_1$  converges in a disc, admits analytic continuation $g_1$ in the sector $S_1=(d_1,\ep_1)$ for some $\varepsilon_1\in(0,\ep_2)$ and $g_1\in\mathcal{O}^{\M_1}(S_1)$. Moreover, thanks to the convergence, $A_{e_1,e_2}\widehat{A}_{e_1,e_2}^{-} \widehat{g}_1 =  \mathcal{S} \widehat{g}_1$. Regarding $\widehat{g}_2$, we observe that
$$A_{e_1,e_2} \widehat{A}_{e_1,e_2}^{-} \widehat{g}_2= A_{e_1,e_2} \widehat{A}_{e_1,e_2}^{-} \widehat{T}_{e_1}^{-} \widehat{f}_2= A_{e_1,e_2} \widehat{T}_{e_2}^{-} \widehat{f}_2.$$
Since $\widehat{h}_2=\widehat{T}_{e_2}^{-} \widehat{f_2}$ converges in a disc and admits analytic continuation $h_2\in\mathcal{O}^{\M_2}(S_2)$, by Theorem~\ref{th.asymp.acceleration}, we see that
$A_{e_1,e_2} h_2=T^{-}_{e_1} T_{e_2}  h_2$. Furthermore,  $T^{-}_{e_1} T_{e_2}  h_2$ is holomorphic in the sector $S(d_2,\o_2-\o_1+\ep_2)$, which contains the sector $S_1$ because $|d_1-d_2|<\pi(\o_2-\o_1)/2$, and $T^{-}_{e_1} T_{e_2}  h_2 \in \mathcal{O}^{\M_1}(S_1)$, so
$\mathcal{S}_{\M_2/\M_1, d_2}\widehat{g}$ can be written as the sum of two functions $A_{e_1,e_2}\widehat{h}_2$ and $\mathcal{S} \widehat{g}_1$ whose analytic continuations  in $S_1$, $T^{-}_{e_1} T_{e_2}  h_2$ and $g_1$, have $\M_1-$growth there, that is, Definition~\ref{def.multi.sumable.kernel}.(ii) holds.

\par\noindent
(ii)$\implies$(iii) Trivial.\par
\noindent
(iii)$\implies$(i) By Definition~\ref{def.multi.sumable.kernel}.(i),   $g=\mathcal{S}_{\M_2/\M_1, d_2} \widehat{g}$ is holomorphic in a sectorial region $G=G(d_2, \a)$ with $\a>\o_2-\o_1$ and $g\sim_{\M_2/\M_1} \widehat{g}$ in $G$. Let $T$ be a subsector of $G$, bisected by direction $d_2$ and of opening $\pi\b$ with $\b\in(\o_2-\o_1, 2)$, such that $\overline{T}\en G$ and let $\ga$ denote the positively oriented boundary of $\overline{T}$. Decomposing $\ga=\ga_1+\ga_2$ where $\ga_1$ is the circular part and $\ga_2$ is the radial part, we define
$$g_{j}(z):=\frac{1}{2\pi i} \int_{\ga_j} \frac{g(w)}{w-z} dw,\qquad \text{for all} \,\,\, z\in T, \quad j=1,2.$$
Since $g$ is continuous at the origin, by Cauchy's Formula, we can write $g=g_1+g_2$. By Leibniz's rule we see that $g_1$ is holomorphic at the origin. Hence, $g_2=g-g_1 \sim_{\M_2/\M_1} \widehat{g}_2 $ where $\widehat{g}_2:=\widehat{g}-\widehat{g_1}$ and $\widehat{g}_1$ is the Taylor series of $g_1$ at the origin.\para

We define $\widehat{f}_1:=\widehat{T}_{e_1} \widehat{g}_1$ and we immediately observe that $\widehat{f}_1 \in \C[[z]]_{\M_1}$. By (ii) in Definition~\ref{def.multi.sumable.kernel}, $g$ admits analytic continuation in a sector $S_1=S(d_1,\varepsilon)$ for some $\varepsilon>0$, and this analytic continuation has $\M_1-$growth there. Again by the Leibniz's rule, we can see that $g_2$ is holomorphic in $S(d_2,\b)$ and we can prove that tends to $0$ as $|z|\to\oo$ therein, so $g_2\in \mathcal{O}^{\M_1}(S(d_2,\b))$.  Since $|d_1-d_2|<\pi(\o_2-\o_1)/2$, we may assume, by suitably reducing $\ep$, that $S_1\en S(d_2,\b)$. Hence, $g_1=g-g_2$ has an analytic continuation to $S_1$ and has $\M_1-$growth there, this means by Theorem~\ref{th.Msummable.equiv.esummable}  that $\widehat{f}_1$ is $\M_1-$summable in direction $d_1$. \para

Now, we consider $\widehat{f}_2:=\widehat{T}_{e_1}\widehat{g}_2$, we can apply Theorem~\ref{teorrelacdesartransfBL}.(i) to $g_2$ and we deduce that $T_{e_1} g_2\sim_{\M_2} \widehat{f}_2$
on a sectorial region $G(d_2,\b+\o_1)$. Since $\b+\o_1>\o_2$, this means that $\widehat{f}_2$ is $\M_2-$summable in direction $d_2$. Consequently, we can write $\widehat{f}=\widehat{T}_{e_1} \widehat{g}=\widehat{f_1}+\widehat{f_2}$, so $\widehat{f}\in\C\{z\}_{(\M_1,\M_2),(d_1,d_2)}$.\para

\noindent
In case any of the previous equivalent conditions holds, we have seen that
by Theorem~\ref{th.asymp.acceleration},
$$f_2=T_{e_2} \widehat{T}^{-}_{e_2} \widehat{f}_2=T_{e_1} A_{e_1,e_2}  \widehat{T}^{-}_{e_2} \widehat{f}_2,$$
and, thanks to the convergence of $\widehat{T}^{-}_{e_1} \widehat{f}_1$, we have shown that
$$f_1= T_{e_1} \widehat{T}^{-}_{e_1} \widehat{f}_1= T_{e_1} A_{e_1,e_2}\widehat{A}_{e_1,e_2}^{-} \widehat{T}^{-}_{e_1} \widehat{f}_1= T_{e_1} A_{e_1,e_2}  \widehat{T}^{-}_{e_2} \widehat{f}_1.$$
Hence, we conclude that $\mathcal{S}_{(\M_1,\M_2),(d_1,d_2)}\widehat{f}=\mathcal{S}_{(e_1,e_2),(d_1,d_2)}\widehat{f}$.
\end{proof}





\section{Cohomological approach to multisummable power series}
\label{sect.cohomologicalapproach}

Classical multisummability theory can be also stated in a cohomological form. In this general context, this approach is also possible and one can provide a version of the relative Watson’s Lemma (see \cite[Th.\ 7.2.1]{Loday16} for the Gevrey case), which is the cohomological equivalent of the Tauberian Theorem~\ref{th.tauberian.distinct.index}. Apart from the Watson's Lemma and the Borel-Ritt Theorem,
a Ramis-Sibuya-like result given by A. Lastra, S. Malek and J. Sanz in~\cite[Lemma 3]{lastramaleksanz16} is necessary for the proof (for a reference on the classical version of Ramis–Sibuya theorem, the reader may consult~\cite[Th. XI-2-3]{HsiehSibuya1999}).
In this section,
we will follow the discussion
in \cite{Balser1994} and \cite{MR}.

\subsection{Relative Watson's lemma}
\label{subsect.relativeWatson}

We take a strongly regular sequence $\M$ with $0<\o(\M)<2$.
Let
$\uC=\R/2\pi\Z$
be the unit circle and let
$I=(d-\ga\pi/2,d+\ga\pi/2)\ \text{mod}\ 2\pi\Z$
be an open arc of $\uC$.
We set
$\ds
\widetilde{\sA}_{\M}(I)
:=\varinjlim
\widetilde{\cA}_{\M}(G),
$
where $G$ runs over the sectorial regions with
bisecting direction $d\in\R$ and opening $\ga\pi$.
Let $\sA_{\M}$ denote the sheaf on $\uC$
associated with $\widetilde{\sA}_{\M}$.
We naturally have a morphism
$\ds
\cT_{\M}:\sA_{\M}\lto\fS_{\M}
$
and set
$\ds
\sA_{\M}^{<0}:={\rm Ker}\cT_{\M}.
$
By the Borel-Ritt theorem for $\M$-asymptotics
(cf. \cite{thilliez03} and \cite{lastramaleksanz12}),
we have an exact sequence
\begin{equation}\label{es1}
\xymatrix{
0
\ar[r]
&\sA_{\M}^{<0}
\ar[r]
&\sA_{\M}
\ar[r]
&\fS_{\M}
\ar[r]
&0.
}
\end{equation}
%
%
%
%
In what follows,
we consider \v{C}ech cohomology of sheaves.
We first prove the following
\begin{pro}\label{prp:CH}
Let $I$ be an open arc of $\uC$.
Then,
we have
$$
{\rm Im}
\big[
\iota^1:
H^1\big(I;\sA_{\M}^{<0}\,\big)
\lto
H^1\big(I;\sA_{\M})\big]
=0.
$$
\end{pro}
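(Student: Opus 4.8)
The plan is to prove the (formally stronger) statement that $\iota^1$ is the zero morphism, i.e. that \emph{every} $1$-cocycle with values in $\sA_{\M}^{<0}$ becomes a coboundary once regarded as a cocycle with values in $\sA_{\M}$. It is useful to keep in mind the long exact cohomology sequence attached to \eqref{es1} over $I$,
\[
\cdots\lto H^0(I;\fS_{\M})\overset{\delta}{\lto}H^1\big(I;\sA_{\M}^{<0}\big)\overset{\iota^1}{\lto}H^1(I;\sA_{\M})\lto H^1(I;\fS_{\M})\lto\cdots,
\]
in which $\fS_{\M}$ is a constant sheaf and $I$, being a proper arc of $\uC$, is homeomorphic to an interval, hence contractible; thus $H^1(I;\fS_{\M})=0$ and $H^0(I;\fS_{\M})=\C[[z]]_{\M}$, and by exactness the assertion $\operatorname{Im}\iota^1=0$ is equivalent to the surjectivity of the connecting map $\delta\colon\C[[z]]_{\M}\to H^1(I;\sA_{\M}^{<0})$, and also to $H^1(I;\sA_{\M})=0$. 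I would establish it directly, by splitting.

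First I would pass to a convenient cover. A class in $H^1(I;\sA_{\M}^{<0})$ is represented by a cocycle on some open cover of $I$, and since $I$ is an interval this cover admits a refinement to a finite, linearly ordered \emph{good} cover $\mathfrak{V}=(V_0,\dots,V_N)$ by arcs ordered by increasing argument, with $V_j\cap V_{j+1}$ a nonempty arc and $V_j\cap V_k=\emptyset$ for $|j-k|\ge2$; refining does not change the image of the class in $H^1(I;\sA_{\M})$. For such a cover all triple intersections are empty, so the cocycle is simply a tuple $(g_j)_{0\le j\le N-1}$ with $g_j$ a flat function on a sectorial region of opening $V_j\cap V_{j+1}$ (flatness meaning the estimate of Proposition~\ref{prop.thilliez.flat.function.M}), while a $0$-cochain with values in $\sA_{\M}$ is a tuple $(G_k)_{0\le k\le N}$, $G_k\in\sA_{\M}(V_k)$, with coboundary $(G_{k+1}-G_k)_k$.

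The core step is the Cauchy--Heine splitting. For each $j$ I would pick a direction $\vartheta_j\in V_j\cap V_{j+1}$ and a radius $\rho_j>0$ and form $h_j(z):=\tfrac1{2\pi i}\int_0^{\rho_j e^{i\vartheta_j}}\frac{g_j(w)}{w-z}\,dw$. Since $g_j$ is holomorphic on a whole sector around $\vartheta_j$, the boundary value $h_j^{-}$ of $h_j$ from the side $\arg z<\vartheta_j$ continues holomorphically across $\vartheta_j$ throughout $V_j\cap V_{j+1}$ (there $h_j^{-}=h_j^{+}-g_j$, with $h_j^{+}$ the opposite boundary value), so that $h_j^{-}$ is holomorphic on $V_j$ and on every arc before it, $h_j^{+}$ on $V_{j+1}$ and on every arc after it, and $h_j^{+}-h_j^{-}=g_j$ on $V_j\cap V_{j+1}$. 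Moreover, expanding $\frac1{w-z}$ geometrically and using $|g_j(w)|\le c_1 e^{-\o_{\M}(1/(c_2|w|))}=c_1 h_{\M}(c_2|w|)\le c_1M_{p+2}(c_2|w|)^{p+2}$ together with the moderate growth of $\M$ (a strongly regular sequence is $(\mathrm{mg})$, hence $(\mathrm{dc})$), the remainders of $h_j^{\pm}$ after the partial sum $\sum_{n<p}c_{j,n}z^n$, $c_{j,n}=\tfrac1{2\pi i}\int_0^{\rho_j e^{i\vartheta_j}}g_j(w)w^{-n-1}dw$, are $O(A^pM_p|z|^p)$ on proper subsectors --- this is exactly the Borel--Ritt-type estimate for $\M$-asymptotics --- so $h_j^{-}\in\sA_{\M}(V_k)$ for $k\le j$ and $h_j^{+}\in\sA_{\M}(V_k)$ for $k\ge j+1$. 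Setting
\[
G_k:=\sum_{j<k}h_j^{+}\big|_{V_k}+\sum_{j\ge k}h_j^{-}\big|_{V_k},\qquad 0\le k\le N,
\]
each $G_k$ lies in $\sA_{\M}(V_k)$ and $G_{k+1}-G_k=h_k^{+}-h_k^{-}=g_k$; since each $g_j$ is flat, all the $G_k$ moreover share one asymptotic expansion $\widehat{G}\in\C[[z]]_{\M}$. This is precisely the content of the Ramis--Sibuya-type Lemma~3 of~\cite{lastramaleksanz16} (whose proof is the construction just sketched, in the spirit of~\cite{Balser1994,MR}), so in a write-up I would invoke it rather than redo the estimates.

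Finally, the germs of the $G_k$ form a $0$-cochain for $\mathfrak{V}$ with values in $\sA_{\M}$ whose coboundary is $(g_j)$; hence the given class is killed by $\iota^1$, and since it was arbitrary, $\operatorname{Im}\iota^1=0$ (whence also $\delta$ is onto and $H^1(I;\sA_{\M})=0$). The only genuinely delicate point is the asymptotic control in the splitting step --- that the Cauchy--Heine transform of an $\M$-flat function belongs to $\sA_{\M}$ --- which is the same estimate underlying the Borel--Ritt theorem for $\M$-asymptotics and is already at our disposal.
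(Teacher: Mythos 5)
Your proposal is correct and follows essentially the same route as the paper: reduce to a finite good cover and then split the $1$-cocycle via the Cauchy--Heine construction, which is exactly what the paper delegates to \cite[Lemma 3]{lastramaleksanz16} after reducing to the two-arc case via \cite[Prop.~7.24]{vdPS}. You merely unpack what those two citations black-box (the explicit Cauchy--Heine integrals, the $\M$-asymptotic estimate using moderate growth, and the telescoping assembly of the $0$-cochain), and you yourself correctly observe at the end that the splitting step is precisely the Ramis--Sibuya-type lemma invoked in the paper.
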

\begin{proof}
By considering a good covering of $I$,
it is reduced to the following
(see the proof of \cite[Prop.~7.24]{vdPS}
for the details):
\begin{eq-text}\label{ref1}
Let
$
I_1
=(a_1,b_1)\ \text{mod}\ 2\pi\Z
$
and
$
I_2
=(a_2,b_2)\ \text{mod}\ 2\pi\Z
$
be open arcs of $\uC$
satisfying
$a_1<a_2<b_1<b_2$
and $b_2-a_1<2\pi$.
Then,
for every $\fy\in\Ga\big(I_1\cap I_2\,;\sA_{\M}^{<0}\,\big)$,
there exist $f_1\in\Ga\big(I_1\,;\sA_{\M}\,\big)$
and $f_2\in\Ga\big(I_2\,;\sA_{\M}\,\big)$
such that $\fy=f_2-f_1$.
\end{eq-text}
The statement \eqref{ref1}
follows from
\cite[Lemma 3]{lastramaleksanz16}.
It concludes the proof.
\end{proof}

Let $J^d_{\M}$ be a closed arc of $\uC$ defined by
\begin{equation}\label{eq.OptimalInterval}
J^d_{\M}
:=
\Big[d-\frac{\o(\M)\pi}{2},d+\frac{\o(\M)\pi}{2}\Big]
\ \text{mod}\ 2\pi\Z.
\end{equation}
By Definition~\ref{def.Msummable.direction},
we find that
\begin{equation*}
\cS_{\M,d}=
{\rm Im}\big[
\cT_{\M}:
\Ga\big(J^d_{\M}\,;\sA_{\M}\,\big)
\lto\fS_{\M}
\big].
\end{equation*}
By Watson's lemma for $\M$-asymptotics
(Theorem~\ref{TheoWatsonlemma}),
we see that
\begin{equation}\label{Wat}
\Ga\big(J^d_{\M}\,;\sA_{\M}^{<0}\,\big)=0
\quad
\text{
and
}
\quad
\cT_{\M}:
\Ga\big(J^d_{\M}\,;\sA_{\M}\,\big)\slto\cS_{\M,d}.
\end{equation}

Now, we show the following
``relative Watson's lemma'' for $\M$-asymptotics:
\begin{theo}\label{thm:relWat}
Let $\bL$ and $\M$ be weight sequences
admitting nonzero proximate order that
satisfy $0<\o(\bL)\leq \o(\M)<2$ and
$\bL \precsim\M$.
Then, we have
\begin{equation}\label{relWat}
\Ga\big(
J^d_{\M}\,;\sA_{\M}^{<0}/\sA_{\bL}^{<0}\,\big)=0.
\end{equation}
\end{theo}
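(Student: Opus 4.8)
The goal is to show that the quotient sheaf $\sA_{\M}^{<0}/\sA_{\bL}^{<0}$ has no nonzero global sections over the closed arc $J^d_{\M}$ of length $\o(\M)\pi$. The natural strategy is to reduce this to the already-established Tauberian Theorem~\ref{th.tauberian.distinct.index} via the standard dictionary between sectorial germs and sums of summable series, exactly as in the Gevrey case (see \cite[Th.\ 7.2.1]{Loday16}). First I would record the inclusion $\sA_{\bL}^{<0}\subseteq \sA_{\M}^{<0}$: this follows because $\bL\precsim\M$, so a function flat with respect to $\bL$ is a fortiori flat with respect to $\M$ (equivalently, from Proposition~\ref{prop.thilliez.flat.function.M} and the monotonicity estimate $\o_{\bL}\gtrsim\o_{\M}$ coming from $\bL\precsim\M$). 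Thus the quotient sheaf makes sense. If $\o(\bL)=\o(\M)$ one can argue more directly; the substantive case is $\o(\bL)<\o(\M)$, on which I now focus.

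\textbf{Key steps.} Let $\varphi$ be a global section of $\sA_{\M}^{<0}/\sA_{\bL}^{<0}$ over $J^d_{\M}$. Lifting locally, $\varphi$ is represented by a finite collection of sections $f_j\in\Ga(I_j;\sA_{\M}^{<0})$ on a good open covering $\{I_j\}$ of (a neighborhood of) $J^d_{\M}$, whose pairwise differences $f_j-f_k$ lie in $\sA_{\bL}^{<0}$ on the overlaps. The differences $g_{jk}:=f_j-f_k$ define a $1$-cocycle for $\sA_{\bL}^{<0}$. By Proposition~\ref{prp:CH} applied to $\bL$ (valid since $\bL$ admits a nonzero proximate order, hence is strongly regular with $\o(\bL)\in(0,2)$ by Corollary~\ref{coroConseqRegularVariation}), this cocycle, viewed in $\sA_{\bL}$, is a coboundary: there exist $h_j\in\Ga(I_j;\sA_{\bL})$ with $g_{jk}=h_j-h_k$. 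Hence $f_j-h_j$ glue to a single section $F$ of $\sA_{\M}$ over an open arc containing $J^d_{\M}$; since the $f_j$ are flat and the $h_j\in\sA_{\bL}\subseteq\sA_{\M}$ carry a well-defined $\bL$-asymptotic expansion $\widehat f$, we get $F\sim_{\bL}\widehat f$ on a sectorial region of opening slightly larger than $\o(\M)\pi$, and $F\sim_{\M}\widehat 0 + \widehat{T}_{\M}(\cdots)$ — more precisely $F$ has $\widehat f$ as $\M$-asymptotic expansion as well, so $\widehat f\in\C[[z]]_{\bL}$ is $\M$-summable in direction $d$, i.e.\ $\widehat f\in\C\{z\}_{\M,d}\cap\C[[z]]_{\bL}$.

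\textbf{Conclusion and the main obstacle.} Now Theorem~\ref{th.tauberian.distinct.index} (together with Proposition~\ref{prop.IntersectQuasianalClasses}), applied with the roles $\bL\precsim\M$, $\o(\bL)<\o(\M)$, forces $\widehat f$ to be convergent; its natural sum $\mathcal{S}\widehat f$ is then single-valued holomorphic near $0$, and on $J^d_{\M}$ both $F$ and $\mathcal{S}\widehat f$ are $\bL$-asymptotic to $\widehat f$ on a region of opening $>\o(\bL)\pi$, so by Watson's Lemma for $\bL$ (Theorem~\ref{TheoWatsonlemma}) $F=\mathcal{S}\widehat f$. Therefore $F-\mathcal{S}\widehat f=0$, which means that modulo $\sA_{\bL}^{<0}$ each $f_j$ is the restriction of the globally convergent germ, hence $\varphi=0$. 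The main technical obstacle is the bookkeeping in the middle step: one must verify that after subtracting the $h_j$ the resulting global section $F$ genuinely admits $\widehat f$ as its $\M$-asymptotic expansion (not merely as an $\bL$-expansion), so that $\widehat f$ lands in $\C\{z\}_{\M,d}$ and the Tauberian theorem applies; this is where the precise compatibility of the two asymptotic scales, and the fact that $J^d_{\M}$ has the borderline opening $\o(\M)\pi$ while the region obtained is strictly wider, must be handled with care — essentially the same delicacy as in the classical Gevrey relative Watson's lemma. For the remaining case $\o(\bL)=\o(\M)$ one instead uses directly that $\sA_{\M}^{<0}\subseteq\sA_{\bL}^{<0}$ (now the reverse inclusion, again from $\bL\precsim\M$ forcing $\o_{\bL}\asymp\o_{\M}$ up to dilation), so the quotient sheaf is the zero sheaf and \eqref{relWat} is trivial.
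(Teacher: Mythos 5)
Your cocycle-chasing strategy for the main case $\o(\bL)<\o(\M)$ is in spirit the same as the paper's: both reduce to a vanishing result obtained from Proposition~\ref{prp:CH} (the $H^1$-vanishing) and a Tauberian comparison between $\sA_{\bL}$ and $\sA_{\M}$ over $J^d_{\M}$. The paper packages this through the snake-lemma diagram \eqref{diag1} and isolates the needed Tauberian statement as Theorem~\ref{thm:Taub}; you perform the diagram chase by hand with explicit cochains $(f_j)$, $(h_j)$ and the glued section $F$. Up to that point the two proofs are essentially interchangeable. However, there are two genuine gaps.

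First, you invoke Theorem~\ref{th.tauberian.distinct.index} to conclude that $\widehat f$ is \emph{convergent}, and then use $\cS\widehat f$ in the final step. This is invalid: Theorem~\ref{th.tauberian.distinct.index} requires $\widehat f\in\C\{z\}_{\M}$, i.e.\ $\M$-summability in all but finitely many directions, whereas your construction only gives $\M$-summability in the single direction $d$ (the section $\varphi$ lives over the closed arc $J^d_{\M}$ of length $\o(\M)\pi$, not over all of $\uC$). In fact $\widehat f$ need not be convergent at all, and if it is not, $\cS\widehat f$ is undefined and the last step collapses. What the argument actually needs — and what the paper uses via Theorem~\ref{thm:Taub}, whose proof mimics that of Proposition~\ref{prop.IntersectQuasianalClasses}(i)–(ii) — is the \emph{single-direction} Tauberian statement: a section $F\in\Ga(J^d_{\M};\sA_{\M})$ whose $\M$-asymptotic expansion $\widehat f$ lies in $\fS_{\bL}$ already belongs to $\Ga(J^d_{\M};\sA_{\bL})$. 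With this, $f_j=F+h_j\in\sA_{\bL}(I_j)$, and since $f_j\sim_{\M}\widehat 0$, the inclusion $\sA_{\bL}\subset\sA_{\M}$ forces its $\bL$-expansion to be $\widehat 0$ as well, so $f_j\in\sA_{\bL}^{<0}$ and $\varphi=0$. No convergence is needed, and none can be expected.

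Second, your treatment of the case $\o(\bL)=\o(\M)$ is wrong. You assert the reverse inclusion $\sA_{\M}^{<0}\subseteq\sA_{\bL}^{<0}$, arguing that $\bL\precsim\M$ together with equality of indices forces $\o_{\bL}\asymp\o_{\M}$ ``up to dilation''. But $\bL\precsim\M$ only gives $h_{\bL}(t)\le h_{\M}(At)$, i.e.\ $\o_{\bL}(t)\ge\o_{\M}(t/A)$ — one inequality, not two — and equality of the growth indices does not upgrade this to equivalence. A concrete counterexample is $\bL=\M_{1,0}$ and $\M=\M_{1,1}$ from Example~\ref{exampleSequences}: $\o(\bL)=\o(\M)=1$ and $\bL\precsim\M$, yet $\bL\not\approx\M$, so $\sA_{\M}^{<0}\subsetneq\sA_{\bL}^{<0}$ fails and the quotient sheaf is nontrivial. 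Your reduction to ``the quotient is the zero sheaf'' therefore does not cover this case. The only trivial subcase is $\bL\approx\M$; when $\bL\not\approx\M$ one still needs the full argument, now invoking the single-direction Tauberian step in the form of Proposition~\ref{prop.IntersectQuasianalClasses} with the hypotheses $\o(\bL)=\o(\M)$, $\bL\precsim\M$, $\bL\not\approx\M$ and $\M/\bL$ logarithmically convex.
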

\begin{proof}
We first note the following sequence is exact:
\begin{equation}\label{es2}
\xymatrix{
0
\ar[r]
&\sA^{<0}_{\bL}
\ar[r]
&\sA^{<0}_{\M}
\ar[r]
&\sA^{<0}_{\M}/
\sA^{<0}_{\bL}
\ar[r]
&0.
}
\end{equation}
Since
$\ds
\Ga\big(J^d_{\M}\,;\sA_{\M}^{<0}\,\big)=0,
$
we obtain from \eqref{es2} the following exact sequence:
\begin{equation*}
\xymatrix{
0
\ar[r]
&\Ga\big(J^d_{\M}\,;\sA_{\M}^{<0}/\sA_{\bL}^{<0}\,\big)
\ar[r]
&H^1\big(J^d_{\M}\,;\sA_{\bL}^{<0}\,\big)
\ar[r]^{j}
&H^1\big(J^d_{\M}\,;\sA_{\M}^{<0}\,\big).
}
\end{equation*}
Therefore, it suffices to show that $j$ is injective.
Using \eqref{es1} and Proposition \ref{prp:CH},
we see that
\begin{equation}\label{es4}
\xymatrix{
\Ga\big(J^d_{\M}\,;\sA_{\bL}\,\big)
\ar[r]
&\Ga\big(J^d_{\M}\,;\fS_{\bL}\,\big)
\ar[r]
&H^1\big(J^d_{\M}\,;\sA_{\bL}^{<0}\,\big)
\ar[r]
&0
}
\end{equation}
is exact.
Since
$\ds
\Ga\big(J^d_{\M}\,;\fS_{\bL}\,\big)\se\fS_{\bL}
$,
and considering the sequence obtained by substituting $\bL$ by $\M$
in \eqref{es4},
we obtain the following commutative diagram:
\begin{equation}\label{diag1}
\xymatrix{
\Ga\big(J^d_{\M}\,;\sA_{\bL}\,\big)
\ar[r]\ar[d]
&\fS_{\bL}
\ar[r]^{\pd_{\bL}\quad\quad}
\ar[d]
&H^1\big(J^d_{\M}\,;\sA_{\bL}^{<0}\,\big)
\ar[r]^{\quad\qquad\iota^1}
\ar[d]^{j}
&0
\\
\Ga\big(J^d_{\M}\,;\sA_{\M}\,\big)
\ar[r]
&\fS_{\M}
\ar[r]^{\pd_{\M}\quad\quad}
&H^1\big(J^d_{\M}\,;\sA_{\M}^{<0}\,\big)
&
}
\end{equation}
Since the first row of \eqref{diag1} is exact,
for any $\fy\in {\rm Ker}j$ we can take $\widehat{f}\in\fS_{\bL}$
 such that $\pd_{\bL}(\widehat{f})=\fy$.
Therefore,
it suffices to show that
\begin{equation}\label{eq1}
\widehat{f}\in{\rm Im}\big[\cT_{\bL}:
\Ga\big(J^d_{\M}\,;\sA_{\bL}\,\big)
\lto\fS_{\bL}
\big].
\end{equation}
Since
$
\fS_{\bL}\lto\fS_{\M}
$
is injective
and the second row of \eqref{diag1} is exact,
we see that
$
\pd_{\M}(\widehat{f})=\iota^1\circ\pd_{\bL}(\widehat{f})=0,
$
and hence,
$\ds
\widehat{f}\in{\rm Im}\big[\cT_{\M}:
\Ga\big(J^d_{\M}\,;\sA_{\M}\,\big)
\lto\fS_{\M}
\big].
$
Therefore,
the proof of
\eqref{eq1} is reduced to \eqref{Taub1} below.
%
\end{proof}
\begin{theo}\label{thm:Taub}
Let $\bL$ and $\M$ be weight sequences
admitting nonzero proximate order that
satisfy $0<\o(\bL)\leq \o(\M)<2$ and
$\bL \precsim\M$.
Then, we have
\begin{align}
%
\Ga\big(J^d_{\M}\,;\sA_{\bL}\,\big)
&\slto
\Ga\big(J^d_{\M}\,;\sA_{\M}\,\big)
\times_{\fS_{\M}}
\fS_{\bL},
\label{Taub1}
\\
\Ga\big(J^d_{\M}\,;\sA_{\bL}\,\big)
&\slto
\Ga\big(J^d_{\M}\,;\sA_{\M}\,\big)
\times_{\fS_{\M}}
\Ga\big(J^d_{\bL}\,;\sA_{\bL}\,\big).
\label{Taub2}
\end{align}
\end{theo}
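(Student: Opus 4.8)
The plan is to deduce both isomorphisms from the directional Tauberian result of Section~\ref{sect.tauberian.theorems} together with Watson's Lemma. Recall that a section in $\Ga(J^d_{\M}\,;\sA_{\M})$ is the germ along $J^d_{\M}$ of a holomorphic function admitting an $\M$-asymptotic expansion on a sectorial region $G(d,\ga)$ with $\ga>\o(\M)$, and $\cT_{\M}$ sends it to that expansion in $\fS_{\M}=\C[[z]]_{\M}$; the analogous description holds with $\bL$ in place of $\M$. Since $\bL\precsim\M$ one has $\C[[z]]_{\bL}\subseteq\C[[z]]_{\M}$ and $\widetilde{\cA}_{\bL}(G)\subseteq\widetilde{\cA}_{\M}(G)$, hence injections of sheaves $\sA_{\bL}^{<0}\hookrightarrow\sA_{\M}^{<0}$ and $\sA_{\bL}\hookrightarrow\sA_{\M}$ compatible with $\cT$ and with $\fS_{\bL}\hookrightarrow\fS_{\M}$; the morphism in \eqref{Taub1} is $f\mapsto(f,\cT_{\bL}f)$ (first slot read inside $\sA_{\M}$) and the one in \eqref{Taub2} is $f\mapsto(f,f|_{J^d_{\bL}})$, and both clearly land in the stated fibre products. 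We may assume $\bL\not\approx\M$, since otherwise all sheaves and formal-series spaces in sight coincide, $J^d_{\bL}=J^d_{\M}$, and both assertions are immediate (using only that $\cT_{\M}$ is injective on $\Ga(J^d_{\M}\,;\sA_{\M})$, i.e.\ Watson's Lemma, Theorem~\ref{TheoWatsonlemma}). When $\o(\bL)<\o(\M)$ we may moreover, by Remark~\ref{rema.MoverL.welldefined} (equivalent sequences define the same sheaves $\sA$, $\sA^{<0}$, $\fS$ and the same arc $J^d$), replace $\M$ by an equivalent sequence for which $\M/\bL$ is (lc); when $\o(\bL)=\o(\M)$ we take this as part of the comparability hypothesis on the pair (cf.\ the discussion closing Subsection~\ref{subsect.product.quotient.seq}).

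The core is \eqref{Taub1}. For \emph{injectivity}: if $f\in\Ga(J^d_{\M}\,;\sA_{\bL})$ maps to $0$, then $\cT_{\bL}f=0$, so $f$ is a flat element of $\widetilde{\cA}_{\bL}(G(d,\ga))$ with $\ga>\o(\M)\geq\o(\bL)$, whence $f=0$ by quasianalyticity (Theorem~\ref{TheoWatsonlemma}); equivalently $\Ga(J^d_{\M}\,;\sA_{\bL}^{<0})=0$. For \emph{surjectivity}, let $(g,\widehat f)$ lie in the fibre product, so $g\in\widetilde{\cA}_{\M}(G(d,\ga))$ with $\ga>\o(\M)$, $\widehat f\in\C[[z]]_{\bL}$ and $g\sim_{\M}\widehat f$; thus $\widehat f\in\cS_{\M,d}$ and $g$ agrees with $\mathcal{S}_{\M,d}\widehat f$ where both are defined. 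Applying the proof of Proposition~\ref{prop.IntersectQuasianalClasses} direction by direction, we obtain $\widehat f\in\cS_{\bL,d}$ and that $f:=\mathcal{S}_{\bL,d}\widehat f$ satisfies $f\sim_{\bL}\widehat f$ on some $G(d,\a)$ with $\a>\o(\M)$ and $f=g$ wherever both are defined; hence $f\in\Ga(J^d_{\M}\,;\sA_{\bL})$ maps to $(g,\widehat f)$. Observe that the content of \eqref{Taub1} is the \emph{transfer of $\bL$-summability in direction $d$}, not convergence of $\widehat f$, which may genuinely fail when $\o(\bL)=\o(\M)$; this is the integral-transform counterpart of the relative Watson statement.

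Finally, \eqref{Taub2} follows from \eqref{Taub1} and Watson's Lemma for $\bL$. By \eqref{Wat} (with $\bL$ for $\M$), $\cT_{\bL}$ restricts to an isomorphism $\Ga(J^d_{\bL}\,;\sA_{\bL})\slto\cS_{\bL,d}$. On the other hand, the surjectivity argument just given shows that every pair $(g,\widehat f)$ in the fibre product of \eqref{Taub1} automatically has $\widehat f\in\cS_{\bL,d}$ (produce $f\in\Ga(J^d_{\M}\,;\sA_{\bL})$ with $\cT_{\bL}f=\widehat f$ and restrict to $J^d_{\bL}$), and conversely any $\widehat f\in\cS_{\bL,d}$ compatible with a given $g$ is allowed; therefore
\begin{align*}
\Ga\big(J^d_{\M}\,;\sA_{\M}\,\big)\times_{\fS_{\M}}\fS_{\bL}
&=\Ga\big(J^d_{\M}\,;\sA_{\M}\,\big)\times_{\fS_{\M}}\cS_{\bL,d}\\
&\cong\Ga\big(J^d_{\M}\,;\sA_{\M}\,\big)\times_{\fS_{\M}}\Ga\big(J^d_{\bL}\,;\sA_{\bL}\,\big),
\end{align*}
the last bijection being induced by $(\cT_{\bL})^{-1}$. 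Composing with \eqref{Taub1} yields \eqref{Taub2}, and unwinding the maps shows the composite is indeed $f\mapsto(f,f|_{J^d_{\bL}})$.

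The main obstacle is the surjectivity half of \eqref{Taub1}, and in particular making sure the Borel--Laplace machinery behind Proposition~\ref{prop.IntersectQuasianalClasses} (Theorems~\ref{teorrelacdesartransfBL} and~\ref{th.Msummable.equiv.esummable}, together with Watson's Lemma for the quotient sequence $\M/\bL$) applies uniformly, including the boundary regime $\o(\bL)=\o(\M)$ where $J^d_{\bL}=J^d_{\M}$, $\o(\M/\bL)=0$, and the Tauberian Theorem~\ref{th.tauberian.distinct.index} \emph{fails}: one must run the argument so that it transfers summability only and never invokes convergence. A secondary, routine point is the bookkeeping identifying sections over the \emph{closed} arcs $J^d_{\M}$, $J^d_{\bL}$ with functions on sectorial regions of opening strictly larger than $\o(\M)\pi$, respectively $\o(\bL)\pi$, and checking that restriction along $J^d_{\bL}\subseteq J^d_{\M}$ is injective on $\sA_{\bL}$ (immediate by analytic continuation).
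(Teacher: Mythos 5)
Your proof is correct and follows essentially the same route as the paper, whose own proof simply refers back to Proposition~\ref{prop.IntersectQuasianalClasses}(i). You have expanded that one-line reference into the full argument (injectivity of \eqref{Taub1} from Watson's Lemma, surjectivity by the Borel--Laplace transfer of $\bL$-summability in the fixed direction $d$ as in Prop.~\ref{prop.IntersectQuasianalClasses}(i)--(iii), and then \eqref{Taub2} from \eqref{Taub1} together with the isomorphism $\cT_{\bL}:\Ga(J^d_{\bL};\sA_{\bL})\slto\cS_{\bL,d}$), correctly noting that the boundary case $\o(\bL)=\o(\M)$ transfers summability, not convergence.
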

\begin{proof}
The proofs of
\eqref{Taub1} and \eqref{Taub2}
are the same as the proof of Proposition~\ref{prop.IntersectQuasianalClasses}(i).
\end{proof}
%
%

\subsection{Multisummability via quasi-functions}
\label{subsect.quasifunctions}

Let $\un{\M}=(\M_1,\dots,\M_m)$ be a tuple of
weight sequences admitting nonzero proximate order
that satisfy $\M_m\precsim\dots \precsim\M_1$ (so, $\o(\M_{j+1})\leq\o(\M_j)$ for $j=1,\dots,m-1$) and
$0<\o(\M_m)\leq \o(\M_1)<2$.
We take $\un{d}=(d_1,\dots,d_m)\in\R^m$ so that, with the notation in~\eqref{eq.OptimalInterval}, one has
$J^{d_1}_{\M_1}\supset\dots\supset J^{d_m}_{\M_m}$.
We set
$$
\fA_j:=
\Ga\big(J^{d_j}_{\M_j}\,;\sA_{\M_1}/\sA^{<0}_{\M_{j+1}}\,\big)
\quad\text{for}\quad
j=1,\dots,m,
$$
$$
\fB_j:=\Ga\big(J^{d_{j+1}}_{\M_{j+1}}\,;\sA_{\M_1}/\sA^{<0}_{\M_{j+1}}\,\big)
\quad\text{for}\quad
j=1,\dots,m-1,
$$
with the convention
$\sA^{<0}_{\M_{m+1}}=0$.
Then, we naturally have morphisms
$\fA_j\lto\fB_j$ and $\fA_{j+1}\lto\fB_j$,
and hence,
we have the following diagram:
$$
\xymatrix{
\fA_1
\ar[rd]
&
&
\fA_2
\ar[ld]\ar[rd]
&
&
\fA_3
\ar[ld]\ar[rd]
&
&
&
&
\fA_m
\ar[ld]
\\
&
\fB_1
&
&
\fB_2
&
&
\text{\phantom{$\fB_1$}}
&
\dots
&
\text{\phantom{$\fB_1$}}
&
}
$$
We define $\fA_{\un{\M}}^{\un{d}}$ by the fiber product
(in the category of $\C$-vector spaces)
\begin{equation}\label{fibProd}
\fA_1\times_{\fB_1}\fA_{2}\times_{\fB_{2}}
\dots
\times_{\fB_{m-1}}\fA_m
\end{equation}
of the above diagram:
Since there is a morphism from
$
((\fA_1\times_{\fB_1}\fA_{2})\times_{\fB_{2}}
\dots)
\times_{\fB_{j-1}}\fA_j
$
to $\fB_j$
defined by the composition of
the $j$-th projection to $\fA_j$
and the morphism
$\fA_j\lto\fB_j$,
we can inductively define
$
((\fA_1\times_{\fB_1}\fA_{2})\times_{\fB_{2}}
\dots)
\times_{\fB_{m-1}}\fA_m
$,
and the associativity of the fiber product
enables us to write it in the form \eqref{fibProd}.
Let $p_j:\fA_{\un{\M}}^{\un{d}}\lto\fA_j$ be
the $j$-th projection and let
$\cT_{\un{\M}}:\fA_{\un{\M}}^{\un{d}}\lto\fS_{\M_1}$
be the composition of $p_1$ and $\cT_{\M_1}$
(note that $\sA^{<0}_{\M_{2}}$ is
a subsheaf of $\sA^{<0}_{\M_{1}}={\rm Ker}\cT_{\M_1}$).
We set
$$
\cS_{\un{\M},\un{d}}:=
{\rm Im}\big[
\cT_{\un{\M}}:\fA_{\un{\M}}^{\un{d}}\lto\fS_{\M_1}
\big].
$$
\begin{defi}
Given $\un{\M}$ and $\un{d}$,
the elements $\widehat{f}\in\cS_{\un{\M},\un{d}}$ are called
\emph{$\un{\M}$-summable power series in direction $\un{d}$}.
We say that $\un{f}:=(f_1,\dots,f_m)\in\fA_{\un{\M}}^{\un{d}}$ is the
\emph{$\un{M}$-quasi-sum} of $\widehat{f}$
if $\cT_{\un{\M}}(\un{f})=\widehat{f}$.
\end{defi}
\begin{rema}
When $\un{\M}=(\M_1)$ and $\un{d}=(d_1)$,
$\ds
\fA_{\un{\M}}^{\un{d}}
=\Ga\big(J^{d_1}_{\M_1}\,;\sA_{\M_1}\,\big).
$
Therefore,
$\cS_{\un{\M},\un{d}}=\cS_{\M_1,d_1}$
in this case.
\end{rema}

We obtain from
Theorem \ref{thm:relWat} the following
\begin{theo}\label{thm:inj1}
Given $\un{\M}$ and $\un{d}$,
the morphism $\cT_{\un{\M}}:\fA_{\un{\M}}^{\un{d}}\lto\fS_{\M_1}$
is injective.
\end{theo}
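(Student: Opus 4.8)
The plan is to show ${\rm Ker}\,\cT_{\un{\M}}=0$ by proving, for an arbitrary $\un{f}=(f_1,\dots,f_m)\in{\rm Ker}\,\cT_{\un{\M}}$, that $f_1=0$, then $f_2=0$, and so on up to $f_m=0$. Since $\cT_{\un{\M}}=\cT_{\M_1}\circ p_1$, the hypothesis $\cT_{\un{\M}}(\un{f})=0$ is just $\cT_{\M_1}(f_1)=0$ in $\fS_{\M_1}$. As a preliminary I would record that $\M_{k+1}\precsim\M_k$ forces the inclusions of sheaves $\sA^{<0}_{\M_{k+1}}\subseteq\sA^{<0}_{\M_k}$ for $k=1,\dots,m-1$: indeed $\M_{k+1}\precsim\M_k$ gives $\o_{\M_{k+1}}(t)\ge\o_{\M_k}(t/A)$ for a suitable $A>0$, so by Proposition~\ref{prop.thilliez.flat.function.M} any function flat for $\M_{k+1}$ on a sectorial region is flat there for $\M_k$. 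With the convention $\sA^{<0}_{\M_{m+1}}=0$, all the quotient sheaves $\sA^{<0}_{\M_k}/\sA^{<0}_{\M_{k+1}}$ $(k=1,\dots,m)$ are then defined and fit the setting of the relative Watson's Lemma.

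The heart of the argument is the vanishing $\mathfrak{N}_k:=\Ga\big(J^{d_k}_{\M_k}\,;\sA^{<0}_{\M_k}/\sA^{<0}_{\M_{k+1}}\big)=0$ for every $k=1,\dots,m$. For $k<m$ this is Theorem~\ref{thm:relWat} applied to the pair $(\M,\bL)=(\M_k,\M_{k+1})$ and direction $d_k$: the hypotheses $\M_{k+1}\precsim\M_k$ and $0<\o(\M_{k+1})\le\o(\M_k)<2$ all follow from the standing assumptions $\M_m\precsim\dots\precsim\M_1$ and $0<\o(\M_m)\le\o(\M_1)<2$ on the tuple $\un{\M}$. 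For $k=m$ the quotient equals $\sA^{<0}_{\M_m}$ itself, and its vanishing over $J^{d_m}_{\M_m}$ is precisely Watson's Lemma~\eqref{Wat}.

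Next I would combine these with the ``third isomorphism'' short exact sequences of sheaves, left-exactness of $\Ga$, and the defining fiber-product constraints of $\fA_{\un{\M}}^{\un{d}}$. For the base step, from the exact sequence \eqref{es1} (which applies to $\M_1$, strongly regular with $0<\o(\M_1)<2$) one gets $\sA_{\M_1}/\sA^{<0}_{\M_1}\cong\fS_{\M_1}$ and hence the exact sheaf sequence $0\to\sA^{<0}_{\M_1}/\sA^{<0}_{\M_2}\to\sA_{\M_1}/\sA^{<0}_{\M_2}\to\fS_{\M_1}\to 0$; taking sections over the connected arc $J^{d_1}_{\M_1}$, where $\Ga\big(J^{d_1}_{\M_1}\,;\fS_{\M_1}\big)\cong\fS_{\M_1}$, identifies ${\rm Ker}\big[\cT_{\M_1}:\fA_1\to\fS_{\M_1}\big]$ with $\mathfrak{N}_1=0$, so $f_1=0$. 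For the inductive step, assume $f_j=0$; since $(f_1,\dots,f_m)$ lies in $\fA_1\times_{\fB_1}\cdots\times_{\fB_{m-1}}\fA_m$, the images of $f_j$ and $f_{j+1}$ in $\fB_j$ agree, hence the image of $f_{j+1}$ in $\fB_j$ is $0$. But the map $\fA_{j+1}\to\fB_j$ is induced by the sheaf surjection $\sA_{\M_1}/\sA^{<0}_{\M_{j+2}}\to\sA_{\M_1}/\sA^{<0}_{\M_{j+1}}$ with kernel $\sA^{<0}_{\M_{j+1}}/\sA^{<0}_{\M_{j+2}}$, so by left-exactness of $\Ga$ over $J^{d_{j+1}}_{\M_{j+1}}$ its kernel is $\mathfrak{N}_{j+1}=0$; therefore $f_{j+1}=0$. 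Iterating gives $\un{f}=0$, i.e. $\cT_{\un{\M}}$ is injective.

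As for difficulty: there is essentially no new analysis here, since all of it has been concentrated into Theorem~\ref{thm:relWat} and Watson's Lemma~\eqref{Wat}, which are already proved. The only thing that needs care is the homological bookkeeping — writing the correct third-isomorphism exact sequences, checking that the fiber-product projections restrict to the stated maps $\fA_j\to\fB_j$ and $\fA_{j+1}\to\fB_j$ (restriction of sections on the first, a sheaf-quotient on the second), and identifying the kernels of these maps with the spaces $\mathfrak{N}_k$ — so the main, and only mild, obstacle is keeping the indices of the sequences $\M_j$, the arcs $J^{d_j}_{\M_j}$ and the quotient sheaves straight.
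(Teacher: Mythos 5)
Your proposal is correct and follows essentially the same route as the paper: an induction on $j$ that peels off $f_1,f_2,\dots,f_m$ one at a time, using the relative Watson's Lemma (Theorem~\ref{thm:relWat}) for $j<m$ and the ordinary Watson's Lemma~\eqref{Wat} at the last step. You simply spell out the homological bookkeeping (the third-isomorphism exact sequences, left-exactness of $\Ga$, and the identification of the kernels of the fiber-product projection maps with the spaces $\mathfrak{N}_k$) that the paper leaves implicit.
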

\begin{proof}
Let
$
\un{f}\in{\rm Ker}\cT_{\un{\M}}.
$
Since
$\cT_{\M_1}(f_1)=0$,
we see
$
f_1\in
\Ga\big(J^{d_{1}}_{\M_{1}}\,;\sA_{\M_1}^{<0}/\sA^{<0}_{\M_{2}}\,\big).
$
Therefore, Theorem \ref{thm:relWat} asserts
that $f_1=0$.
Then, since $f_1=f_2$ as an element of $\fB_1$,
we see
$
f_2\in
\Ga\big(J^{d_{2}}_{\M_{2}}\,;\sA_{\M_2}^{<0}/\sA^{<0}_{\M_{3}}\,\big).
$
Continuing the discussion,
we inductively obtain $f_j=0$ $(j=1,\dots,m)$
by the use of \eqref{relWat} and \eqref{Wat},
and hence,
$\cT_{\un{\M}}$ is injective.
\end{proof}
Therefore, we find that
\begin{equation}\label{isom}
\cT_{\un{\M}}:\fA_{\un{\M}}^{\un{d}}
\slto
\cS_{\un{\M},\un{d}}.
\end{equation}
\begin{pro}\label{prp:incl}
Given $\un{\M}$ and $\un{d}$,
we set
$\un{\M}'=(\M_{i_1},\dots,\M_{i_n})$
and
$\un{d}'=(d_{i_1},\dots,d_{i_n})$
with
$
1\leq i_1<\dots<i_n\leq m
$
$
(n\leq m).
$
Then,
there exists an injective morphism
$
\iota:
\fA_{\un{\M}'}^{\un{d}'}
\lto
\fA_{\un{\M}}^{\un{d}}
$
such that
$
\cT_{\un{\M}'}=\cT_{\un{\M}}\circ\iota.
$
\end{pro}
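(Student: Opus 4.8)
The plan is to construct the morphism $\iota$ explicitly on fiber products, using the fact that a sub-tuple of sequences produces a sub-tuple of the defining diagrams. First I would note that, by the hypothesis $J^{d_1}_{\M_1}\supset\dots\supset J^{d_m}_{\M_m}$ (and the monotonicity $\M_m\precsim\dots\precsim\M_1$), for each index $i_k$ we have the restriction and quotient morphisms relating the spaces $\fA_j,\fB_j$ built from $\un{\M},\un{d}$ with those built from $\un{\M}',\un{d}'$; in particular there is a natural map $\fA^{\un{d}'}_{i_k}:=\Ga\big(J^{d_{i_k}}_{\M_{i_k}};\sA_{\M_{i_1}}/\sA^{<0}_{\M_{i_{k+1}}}\big)\lto \Ga\big(J^{d_{i_k}}_{\M_{i_k}};\sA_{\M_{1}}/\sA^{<0}_{\M_{i_{k+1}}}\big)$ obtained by the inclusion $\sA_{\M_{i_1}}\hookrightarrow\sA_{\M_1}$ (valid since $\M_1$ dominates $\M_{i_1}$, so $\widetilde{\cA}_{\M_{i_1}}\subseteq\widetilde{\cA}_{\M_1}$) and then, if $i_{k+1}$ is not $i_k+1$, by composing with the quotient map $\sA^{<0}_{\M_{i_k+1}}\twoheadrightarrow\sA^{<0}_{\M_{i_{k+1}}}$ and restricting to the smaller arc $J^{d_{i_k}}_{\M_{i_k}}\subseteq J^{d_{i_k}}_{\M_{i_k'}}$ for the relevant intermediate $k'$. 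Assembling these componentwise gives a candidate map on the fiber products.

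Next I would check that this componentwise map is compatible with the gluing conditions defining the two fiber products, i.e. that it sends an element of $\fA^{\un{d}'}_{\un{\M}'}$ (a tuple agreeing in the appropriate $\fB'_k$) to a tuple agreeing in the corresponding $\fB_j$'s of $\fA^{\un{d}}_{\un{\M}}$. The key observation here is that the gluing morphisms $\fA_j\lto\fB_j\lto\fA_{j+1}$ in the larger diagram, when restricted to the image of the sub-diagram, factor through the gluing morphisms of the smaller one (for the indices in $\{i_1,\dots,i_n\}$), while for the indices $j\notin\{i_1,\dots,i_n\}$ one must supply the "missing" components of the tuple: this is exactly where one uses that the quasi-sum datum for the coarser tuple already determines a section over the finer arc for every intermediate level, by restriction and by the quotient $\sA_{\M_1}/\sA^{<0}_{\M_{j+1}}\twoheadrightarrow \sA_{\M_1}/\sA^{<0}_{\M_{j'+1}}$ whenever $j\le j'$. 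Concretely, given $(g_1,\dots,g_n)\in\fA^{\un{d}'}_{\un{\M}'}$ one defines $f_j$ for $i_k\le j<i_{k+1}$ to be the image of $g_k$ under restriction $J^{d_{i_k}}_{\M_{i_k}}\to$ (its sub-arc — but here one needs $J^{d_j}_{\M_j}\subseteq J^{d_{i_k}}_{\M_{i_k}}$, which follows from $J^{d_{i_k}}_{\M_{i_k}}\supseteq J^{d_{i_{k+1}}}_{\M_{i_{k+1}}}$ together with the nesting hypothesis) composed with the quotient onto $\sA_{\M_1}/\sA^{<0}_{\M_{j+1}}$; one then verifies these $f_j$ agree on the $\fB_j$'s.

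For injectivity of $\iota$, I would simply note that $p_1\circ\iota$ composed with the quotient to the top level recovers $g_1$ up to the identification coming from $\sA_{\M_{i_1}}\hookrightarrow\sA_{\M_1}$, and more generally each $g_k$ is recovered from $f_{i_k}$ by the quotient $\sA_{\M_1}/\sA^{<0}_{\M_{i_k+1}}\to\sA_{\M_1}/\sA^{<0}_{\M_{i_{k+1}}}$ — but to conclude that no information is lost one invokes Theorem \ref{thm:inj1}: since $\cT_{\un{\M}'}=\cT_{\un{\M}}\circ\iota$ (the last identity to be checked, and it is immediate from the construction because both sides are the composition of the first projection with $\cT_{\M_1}$, and $\iota$ preserves the first component up to the inclusion $\sA_{\M_{i_1}}\hookrightarrow\sA_{\M_1}$ which is compatible with $\cT$), injectivity of $\cT_{\un{\M}'}$ forces injectivity of $\iota$. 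The main obstacle I anticipate is purely bookkeeping: carefully matching the nesting of closed arcs $J^{d_j}_{\M_j}$ with the indices dropped from the tuple, and confirming that the compatibility of the quotient sheaves $\sA^{<0}_{\M_{j+1}}$ across the omitted levels is exactly what makes the componentwise map land in the fiber product — there is no hard analysis here, only a diagram chase that must be organized cleanly, ideally by induction on $m-n$ reducing to the case of omitting a single index.
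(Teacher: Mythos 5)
Your componentwise construction is essentially the right idea for indices $j$ with $i_1 \le j \le m$, and it reproduces the paper's ``case (b)'' (where the index dropped is not the first one): there $f_j$ is obtained from $g_k$ (with $i_k \le j < i_{k+1}$) by restriction to the smaller arc $J^{d_j}_{\M_j} \subseteq J^{d_{i_k}}_{\M_{i_k}}$, the inclusion $\sA_{\M_{i_1}}\hookrightarrow\sA_{\M_1}$, and the further quotient $\sA_{\M_1}/\sA^{<0}_{\M_{i_{k+1}}}\twoheadrightarrow\sA_{\M_1}/\sA^{<0}_{\M_{j+1}}$. Your injectivity argument (using $\cT_{\un{\M}'}=\cT_{\un{\M}}\circ\iota$ and Theorem~\ref{thm:inj1}) is also the same as the paper's. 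Organizing the map all at once rather than by induction on one omitted index at a time is a valid alternative.

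However, there is a genuine gap: your recipe does not define $f_j$ when $j < i_1$, which occurs precisely when the \emph{first} sequence $\M_1$ is omitted. For such $j$ the target arc $J^{d_j}_{\M_j}$ is \emph{strictly larger} than $J^{d_{i_1}}_{\M_{i_1}}$, so there is no restriction morphism to appeal to --- one must \emph{extend} a section from a small arc to a large one. Your phrase ``the quasi-sum datum \ldots already determines a section over the finer arc \ldots by restriction'' only treats shrinking the arc, and never produces a section over $J^{d_j}_{\M_j}$ for $j<i_1$. The extension in this case is exactly where the paper uses a nontrivial fact: the map $\sA_{\M_{i_1}}/\sA^{<0}_{\M_{i_1}}\se\fS_{\M_{i_1}}$ is a \emph{constant} sheaf on $\uC$, so that $\Ga\big(J^{d_j}_{\M_j}\,;\sA_{\M_{i_1}}/\sA^{<0}_{\M_{i_1}}\big)\slto\Ga\big(J^{d_{i_1}}_{\M_{i_1}}\,;\sA_{\M_{i_1}}/\sA^{<0}_{\M_{i_1}}\big)$ is an isomorphism; one pushes $g_1$ forward to this constant quotient, lifts over the bigger arc, and then includes into $\sA_{\M_1}/\sA^{<0}_{\M_{j+1}}$ (note $\M_{j+1}\succsim\M_{i_1}$ so the relevant flat subsheaf is larger, which is what makes the lift canonical). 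This is the paper's Case~(a), and without it your $\iota$ is not defined whenever $\M_1\notin\un{\M}'$. You should add this extension argument (and, if you keep the non-inductive presentation, also check the gluing compatibility across the boundary index $j=i_1-1$, where $f_{i_1-1}$ comes from the constant-sheaf extension and $f_{i_1}$ from restriction).
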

\begin{proof}
We first consider the case when $n=m-1$.
Take $j$ such that
$\{1,\dots,m\}=\{j\}\cup\{i_1,\dots,i_n\}$. We treat two subcases:
\begin{enumerate}[(a)]
\item
\emph{Case $j=1$.}
Since
$\ds
\sA_{\M_2}/\sA^{<0}_{\M_{2}}\slto\fS_{\M_2}
$
is a constant sheaf on $\uC$,
we have
$$\ds
\Ga\big(J^{d_{1}}_{\M_{1}}\,;\sA_{\M_2}/\sA^{<0}_{\M_{2}}\,\big)
\slto
\Ga\big(J^{d_{2}}_{\M_{2}}\,;\sA_{\M_2}/\sA^{<0}_{\M_{2}}\,\big).
$$
Therefore,
we can take a unique element
$\ds
f_1\in\Ga\big(J^{d_{1}}_{\M_{1}}\,;\sA_{\M_2}/\sA^{<0}_{\M_{2}}\,\big)
$
for
$(f_2,\dots,f_m)\in\fA_{\un{\M}'}^{\un{d}'}$
so that $f_1=f_2$ as an element
in
$\ds
\Ga\big(J^{d_{2}}_{\M_{2}}\,;\sA_{\M_2}/\sA^{<0}_{\M_{2}}\,\big).
$
Since we have
$\ds
\sA_{\M_2}/\sA^{<0}_{\M_{j}}
\lto
\sA_{\M_1}/\sA^{<0}_{\M_{j}}
$
$(j\geq2)$,
we see that
$(f_1,f_2,\dots,f_m)$ defines an element
in $\fA_{\un{\M}}^{\un{d}}$,
and hence,
we have a $\C$-linear morphism
$
\iota:
\fA_{\un{\M}'}^{\un{d}'}
\lto
\fA_{\un{\M}}^{\un{d}}.
$

\item
\emph{Case $j\neq 1$.}
Since the following diagram
\begin{equation*}
\xymatrix{
\Ga\big(J^{d_{j-1}}_{\M_{j-1}}\,;\sA_{\M_1}/\sA^{<0}_{\M_{j+1}}\,\big)
\ar[r]\ar[d]
&
\Ga\big(J^{d_{j}}_{\M_{j}}\,;\sA_{\M_1}/\sA^{<0}_{\M_{j+1}}\,\big)
\ar[d]
\\
\Ga\big(J^{d_{j-1}}_{\M_{j-1}}\,;\sA_{\M_1}/\sA^{<0}_{\M_{j}}\,\big)
\ar[r]
&\Ga\big(J^{d_{j}}_{\M_{j}}\,;\sA_{\M_1}/\sA^{<0}_{\M_{j}}\,\big)
}
\end{equation*}
is commutative,
we find that the map
$\ds
g
\mapsto
\big(\,g\ {\rm mod}\ \sA^{<0}_{\M_{j}},\,
g|_{J^{d_{j}}_{\M_{j}}}
\big)
$
defines a $\C$-linear morphism
$
\Ga\big(J^{d_{j-1}}_{\M_{j-1}}\,;\sA_{\M_1}/\sA^{<0}_{\M_{j+1}}\,\big)
\lto
\fA_{j-1}\times_{\fB_{j-1}}\fA_{j}.
$
We set
$$
\widetilde{f}:=
\big(f_1,\dots,
f_{j-1}\ {\rm mod}\ \sA^{<0}_{\M_{j}},\,
f_{j-1}|_{J^{d_{j}}_{\M_{j}}},
f_{j+1},\dots,f_m
\big)
$$
for
$(f_1,\dots,f_{j-1},f_{j+1},\dots,f_m)\in\fA_{\un{\M}'}^{\un{d}'}$.
Then,
we see that $\widetilde{f}$ defines an element
in $\fA_{\un{\M}}^{\un{d}}$
and we obtain a $\C$-linear morphism
$
\iota:
\fA_{\un{\M}'}^{\un{d}'}
\lto
\fA_{\un{\M}}^{\un{d}}.
$
\end{enumerate}
We immediately see that the morphism $\iota$
satisfies
$
\cT_{\un{\M}'}=\cT_{\un{\M}}\circ\iota.
$
The injectivity of $\iota$ follows from
that of $\cT_{\un{\M}'}$ and $\cT_{\un{\M}}$.

Next, we consider the case
when $\un{\M}'$ is given in general.
We take a sequence of tuples of weight sequences
$
\un{\M}_1(=\un{\M}),\un{\M}_2,\dots,
\un{\M}_{m-n}(=\un{\M}')
$
and
directions
$
\un{d}_1(=\un{d}),\un{d}_2,\dots,
\un{d}_{m-n}(=\un{d}')
$
by eliminating a weight sequence and a direction
one by one.
Then, we obtain a morphism
$
\iota:
\fA_{\un{\M}'}^{\un{d}'}
\lto
\fA_{\un{\M}}^{\un{d}}.
$
by successively taking the composition of
the above morphisms.
We see by the definition of the morphisms
that the morphism $\iota$
does not depend
on the choice of such a sequence
and satisfies
$
\cT_{\un{\M}'}=\cT_{\un{\M}}\circ\iota.
$
\end{proof}
\begin{cor}\label{crl:incl}
Given $\un{\M}$ and $\un{d}$,
we have an injective morphism
$
\iota:
\fA_{\M_j,d_j}
\lto
\fA_{\un{\M},\un{d}}
$
for $j=1,\dots,m$.
\end{cor}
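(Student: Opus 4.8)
The statement is the one–term specialisation of Proposition~\ref{prp:incl}, so the plan is essentially to invoke that result and unwind a definition. First I would take, in the notation of Proposition~\ref{prp:incl}, the length–one subtuple $\un{\M}'=(\M_j)$ and $\un{d}'=(d_j)$ (that is, $n=1$ and $i_1=j$). This choice is admissible: the hypothesis of Proposition~\ref{prp:incl} only requires $n\le m$; the single sequence $\M_j$ admits a nonzero proximate order with $0<\o(\M_j)<2$ since $0<\o(\M_m)\le\o(\M_j)\le\o(\M_1)<2$; the comparability $\M_j\precsim\M_j$ is trivial; and the nesting $J^{d_1}_{\M_1}\supset\dots\supset J^{d_m}_{\M_m}$ restricts to the (vacuous) nesting of the one selected closed arc. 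Proposition~\ref{prp:incl} then yields an injective $\C$-linear morphism $\iota:\fA_{\un{\M}'}^{\un{d}'}\lto\fA_{\un{\M}}^{\un{d}}$ with $\cT_{\un{\M}'}=\cT_{\un{\M}}\circ\iota$.

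The remaining step is to identify the source $\fA_{\un{\M}'}^{\un{d}'}$ with $\fA_{\M_j,d_j}$. When the tuple of weight sequences has a single entry, the fiber product~\eqref{fibProd} defining $\fA_{\un{\M}'}^{\un{d}'}$ is empty and reduces to $\fA_1$, which (using the convention $\sA^{<0}_{\M_{m+1}}=0$ that governs the $m=1$ case) is just $\Ga\big(J^{d_j}_{\M_j}\,;\sA_{\M_j}\,\big)$; this is precisely what the Remark following the definition of the $\un{\M}$-quasi-sum records, and it is the space denoted $\fA_{\M_j,d_j}$ in the statement, with $\cT_{\un{\M}'}=\cT_{\M_j}$. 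Substituting this identification into the morphism produced above gives the injective $\iota:\fA_{\M_j,d_j}\lto\fA_{\un{\M}}^{\un{d}}$ asserted by the corollary, for every $j=1,\dots,m$.

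I do not expect a genuine obstacle here: the whole content is already contained in Proposition~\ref{prp:incl}, whose injectivity ultimately rests on the relative Watson's lemma (Theorem~\ref{thm:relWat}) via Theorem~\ref{thm:inj1}. The only points deserving an explicit remark are that Proposition~\ref{prp:incl} does cover the degenerate case $n=1$, and the bookkeeping identification of $\fA_{(\M_j)}^{(d_j)}$ with $\fA_{\M_j,d_j}$. If desired, I would close by noting that the intertwining $\cT_{\M_j}=\cT_{\un{\M}}\circ\iota$ immediately gives $\cS_{\M_j,d_j}\subseteq\cS_{\un{\M},\un{d}}$, so that every series summable at the single level $\M_j$ in direction $d_j$ is $\un{\M}$-summable in direction $\un{d}$, in agreement with the splitting definition of multisummability (Definition~\ref{def.multisum}).
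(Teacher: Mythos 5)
Your proof is correct and follows exactly the route the paper intends: the corollary is the $n=1$, $\un{\M}'=(\M_j)$, $\un{d}'=(d_j)$ specialisation of Proposition~\ref{prp:incl}, together with the identification $\fA_{(\M_j)}^{(d_j)}=\Ga\big(J^{d_j}_{\M_j};\sA_{\M_j}\big)=\fA_{\M_j,d_j}$ recorded in the remark following the definition of quasi-sum. The paper leaves this unwinding implicit; you have supplied the bookkeeping, nothing more is required.
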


\subsection{Multisummability via the decomposition}
\label{subsect.decomposition}

Since we have an isomorphism \eqref{isom},
it follows from Corollary \ref{crl:incl} that
linear combinations of $\M_j$-summable power series
are in $\cS_{\un{\M},\un{d}}$.
In this section,
we show more precisely
that every $\un{\M}$-summable power series
is written in this way.
We set
$$
\fC_j:=\fA_{\M_j,d_j}=
\Ga\big(J^{d_j}_{\M_j}\,;\sA_{\M_{j}}\,\big)
\quad\text{for}\quad
j=1,\dots,m,
$$
$$
\fD_j:=\Ga\big(J^{d_{j}}_{\M_{j}}\,;\sA_{\M_{j+1}}\,\big)
\quad\text{for}\quad
j=1,\dots,m-1.
$$
We have morphisms
$\fD_j\lto\fC_j$ and $\fD_{j}\lto\fC_{j+1}$,
and hence,
we have the following diagram:
$$
\xymatrix{
&
\fD_1
\ar[ld]\ar[rd]
&
&
\fD_2
\ar[ld]\ar[rd]
&
&
\text{\phantom{$\fD_1$}}
\ar[ld]
&
\dots
&
\text{\phantom{$\fD_1$}}\ar[rd]
&
\\
\fC_1
&
&
\fC_2
&
&
\fC_3
&
&
&
&
\fC_m
}
$$
We define $\fC_{\un{\M}}^{\un{d}}$ by the fiber coproduct
(in the category of $\C$-vector spaces)
\begin{equation*}
\fC_1+_{\fD_1}\fC_{2}+_{\fD_{2}}
\dots
+_{\fD_{m-1}}\fC_m
\end{equation*}
of the above diagram (notice that the fiber coproduct is associative).
Since we have morphisms
$\fC_j\lto\fS_{\M_j}\lto\fS_{\M_1}$
$(j=1,\dots,m)$
compatible with the diagram,
we see that they define a morphism
$
\cT'_{\un{\M}}:
\fC_{\un{\M}}^{\un{d}}\lto
\fS_{\M_1}.
$
Then,
we have
\begin{theo}\label{thm:inj2}
Given $\un{\M}$ and $\un{d}$,
the morphism
$
\cT'_{\un{\M}}:
\fC_{\un{\M}}^{\un{d}}\lto
\fS_{\M_1}
$
is injective.
\end{theo}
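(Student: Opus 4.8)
The plan is to argue by induction on the length $m$ of the tuple $\un{\M}$, mirroring the proof of Theorem~\ref{thm:inj1} but working with the fiber coproduct in place of the fiber product. Concretely, $\fC_{\un{\M}}^{\un{d}}=\big(\bigoplus_{j=1}^m\fC_j\big)/N$, where $N$ is spanned by the vectors carrying $\alpha_j(\delta)$ in the $j$-th slot, $-\beta_j(\delta)$ in the $(j+1)$-th slot and $0$ elsewhere, for $\delta\in\fD_j$, $j=1,\dots,m-1$; here $\alpha_j\colon\fD_j\lto\fC_j$ is induced by the inclusion $\sA_{\M_{j+1}}\hookrightarrow\sA_{\M_j}$ (legitimate since $\M_{j+1}\precsim\M_j$) and $\beta_j\colon\fD_j\lto\fC_{j+1}$ is restriction from $J^{d_j}_{\M_j}$ to $J^{d_{j+1}}_{\M_{j+1}}$. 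Writing $\widehat{f}_j:=\cT_{\M_j}(f_j)\in\fS_{\M_j}\subseteq\fS_{\M_1}$, one has $\cT'_{\un{\M}}\big([(f_1,\dots,f_m)]\big)=\sum_{j=1}^m\widehat{f}_j$ in $\fS_{\M_1}$, which descends to $\fC_{\un{\M}}^{\un{d}}$ precisely because $\cT_{\M_j}\circ\alpha_j=\cT_{\M_{j+1}}\circ\beta_j$. For $m=1$ the assertion is exactly Watson's lemma~\eqref{Wat}, namely that $\cT_{\M_1}$ is injective on $\fC_1=\Ga\big(J^{d_1}_{\M_1}\,;\sA_{\M_1}\,\big)$.

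For the inductive step, take $(f_1,\dots,f_m)$ with $\sum_{j=1}^m\widehat{f}_j=0$ in $\fS_{\M_1}$. Since $\widehat{f}_j\in\fS_{\M_j}\subseteq\fS_{\M_2}$ for $j\ge2$, we get $\widehat{f}_1=-\sum_{j\ge2}\widehat{f}_j\in\fS_{\M_2}$; thus $f_1\in\Ga\big(J^{d_1}_{\M_1}\,;\sA_{\M_1}\,\big)$ has Taylor series lying in $\fS_{\M_2}$. I would then invoke the Tauberian isomorphism~\eqref{Taub1} with $\M=\M_1$ and $\bL=\M_2$ (the hypotheses $\M_2\precsim\M_1$, $0<\o(\M_2)\le\o(\M_1)<2$ are available) to obtain the unique $\delta_1\in\fD_1=\Ga\big(J^{d_1}_{\M_1}\,;\sA_{\M_2}\,\big)$ with $\alpha_1(\delta_1)=f_1$ and $\cT_{\M_2}(\delta_1)=\widehat{f}_1$.

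Using the pushout relation $i_1\circ\alpha_1=i_2\circ\beta_1$ one has
$$[(f_1,f_2,f_3,\dots,f_m)]=[(f_1-\alpha_1(\delta_1),\,f_2+\beta_1(\delta_1),\,f_3,\dots,f_m)]=[(0,\,f_2',\,f_3,\dots,f_m)],$$
where $f_2':=f_2+\beta_1(\delta_1)\in\fC_2$ and $\cT_{\M_2}(f_2')=\widehat{f}_2+\cT_{\M_2}(\delta_1)=\widehat{f}_1+\widehat{f}_2$. The truncated tuple $(f_2',f_3,\dots,f_m)$ represents an element of $\fC_{\un{\M}'}^{\un{d}'}$ for $\un{\M}'=(\M_2,\dots,\M_m)$, $\un{d}'=(d_2,\dots,d_m)$, which again satisfies the running hypotheses of the theorem, and $\cT'_{\un{\M}'}\big([(f_2',f_3,\dots,f_m)]\big)=(\widehat{f}_1+\widehat{f}_2)+\sum_{j\ge3}\widehat{f}_j=0$; as $\fS_{\M_2}\hookrightarrow\fS_{\M_1}$ is injective this already holds in $\fS_{\M_2}$. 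By the induction hypothesis $[(f_2',f_3,\dots,f_m)]=0$ in $\fC_{\un{\M}'}^{\un{d}'}$, so there are $\delta_j\in\fD_j$ ($j=2,\dots,m-1$) exhibiting $(f_2',f_3,\dots,f_m)$ in the relation subspace of $\fC_{\un{\M}'}^{\un{d}'}$; padding with $\delta_1:=0$, the same combination exhibits $(0,f_2',f_3,\dots,f_m)\in N$, whence $[(f_1,\dots,f_m)]=0$ in $\fC_{\un{\M}}^{\un{d}}$.

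The conceptual heart, and the main obstacle, is the promotion step: the use of~\eqref{Taub1} — hence ultimately of the relative Watson's lemma, Theorem~\ref{thm:relWat} — to upgrade $f_1$ from $\M_1$-asymptotic to $\M_2$-asymptotic over the \emph{whole} arc $J^{d_1}_{\M_1}$. Everything else is bookkeeping with the fiber-coproduct relations, provided one checks carefully that (i) the $\alpha_j,\beta_j$ commute with the $\cT_{\M_j}$ so that $\cT'_{\un{\M}}$ is well defined on the quotient; (ii) $\alpha_1$ is injective, so that the relation subspace of the truncated coproduct embeds into $N$ upon setting $\delta_1=0$; and (iii) the truncated data $\un{\M}'$ still meets the hypotheses. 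I would also record afterwards that, combined with Theorem~\ref{thm:inj1} and the isomorphism~\eqref{isom}, this yields a canonical isomorphism $\fC_{\un{\M}}^{\un{d}}\cong\fA_{\un{\M}}^{\un{d}}$ identifying the decomposition and quasi-function descriptions of $\cS_{\un{\M},\un{d}}$, though that is beyond the present statement.
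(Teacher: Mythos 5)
Your proposal is correct and follows essentially the same route as the paper: induction on $m$, with the base case given by Watson's lemma~\eqref{Wat} and the inductive step driven by the Tauberian isomorphism~\eqref{Taub1} to promote $f_1$ from $\M_1$- to $\M_2$-asymptotics across the whole arc $J^{d_1}_{\M_1}$, after which one absorbs $f_1$ into $\fC_2$ via the pushout relation and invokes the hypothesis for the truncated tuple. The paper states this more tersely (writing ``$g_1\in\fD_1$'' rather than producing $\delta_1$ explicitly and spelling out the relation subspace $N$), but the substance is identical; your concern (ii) about the injectivity of $\alpha_1$ is unnecessary for the concluding step, since the inclusion of the truncated relation subspace into $N$ is automatic, though $\alpha_1$ is of course injective and this is what makes the paper's identification of $g_1$ with an element of $\fD_1$ unambiguous.
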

\begin{proof}
We prove by induction.
When $m=1$,
the injectivity of $\cT'_{\un{\M}}$
follows from \eqref{Wat}.
Next, assume that it holds for $m-1$.
Let $g_j\in\fC_j$
$(j=1,\dots,m)$
and assume that
$g_1+g_2+\dots+g_m\in
{\rm Ker}\big[\cT'_{\un{\M}}:
\fC_{\un{\M}}^{\un{d}}\lto
\fS_{\M_1}\big].
$
Then,
we find
$
\cT'_{\M_1}(g_1)=
\cT'_{\un{\M}}(-g_2-\dots-g_m).
$
Since
$
\cT'_{\un{\M}}(-g_2-\dots-g_m)\in
\fS_{\M_2},
$
it follows from \eqref{Taub1} that
$
g_1\in\fD_1.
$
Therefore,
$(g_1+g_2)+\dots+g_m$ defines an element in
$
{\rm Ker}\big[\cT'_{\un{\M}'}:
\fC_{\un{\M}'}^{\un{d}'}\lto
\fS_{\M_2}\big]
$
with
$\un{\M}'=(\M_{2},\dots,\M_{m})$
and
$\un{d}'=(d_{2},\dots,d_{m})$.
By the induction hypothesis,
we obtain $(g_1+g_2)+\dots+g_m=0$.
This concludes the proof.
\end{proof}
Therefore,
it follows from
\eqref{Wat} and Theorem \ref{thm:inj2} that
\begin{equation*}
\cT'_{\un{\M}}:
\fC_{\un{\M}}^{\un{d}}\slto
\sum_{j=1}^m
\cS_{\M_j,d_j}.
\end{equation*}

Since $\fC_j=\fA_{\M_j}^{d_j}$,
the morphism in Corollary \ref{crl:incl}
defines a morphism
$
\iota:
\fC_{\un{\M}}^{\un{d}}\lto
\fA_{\un{\M}}^{\un{d}},
$
which is compatible with $\cT_{\un{\M}}$ and $\cT'_{\un{\M}}$.
Notice that each morphism
$
\fC_i
\lto
\fA_{\un{\M},\un{d}}
$
is given by the morphisms
\begin{enumerate}[---------]
\item[$(i\leq j)$]
$
\fC_i
\lto
\Ga\big(J^{d_{j}}_{\M_{j}}\,;\sA_{\M_i}\,\big)
\lto
\Ga\big(J^{d_{j}}_{\M_{j}}\,;\sA_{\M_i}/\sA^{<0}_{\M_{j}}\,\big)
\lto
\fA_j,
$

\item[$(i>j)$]
$
\fC_i\lto
\Ga\big(J^{d_{i}}_{\M_{i}}\,;\sA_{\M_i}/\sA^{<0}_{\M_{i}}\,\big)
\slto
\Ga\big(J^{d_{j}}_{\M_{j}}\,;\sA_{\M_i}/\sA^{<0}_{\M_{i}}\,\big)
\lto
\fA_j.
$

\end{enumerate}
Then, we have the following
\begin{theo}\label{thm:isom1}
Given $\un{\M}$ and $\un{d}$,
the morphism $\iota$
gives an isomorphism
\begin{equation*}
\iota:
\fC_{\un{\M}}^{\un{d}}\slto
\fA_{\un{\M}}^{\un{d}}.
\end{equation*}
\end{theo}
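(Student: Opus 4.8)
The strategy is to prove that $\iota$ is injective and surjective separately, both steps relying on the machinery already assembled in this section. Injectivity of $\iota$ is immediate: by Theorem~\ref{thm:inj2} the morphism $\cT'_{\un{\M}}:\fC_{\un{\M}}^{\un{d}}\to\fS_{\M_1}$ is injective, and $\iota$ is compatible with $\cT_{\un{\M}}$ and $\cT'_{\un{\M}}$, so $\cT'_{\un{\M}}=\cT_{\un{\M}}\circ\iota$; hence $\ker\iota\subseteq\ker\cT'_{\un{\M}}=0$. Surjectivity is the real content, and I would prove it by induction on $m$, following the pattern of Theorem~\ref{thm:inj2}. The case $m=1$ is trivial since then $\fC_{\un{\M}}^{\un{d}}=\fC_1=\fA_1=\fA_{\un{\M}}^{\un{d}}$ and $\iota$ is the identity.

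For the inductive step, fix $\un{f}=(f_1,\dots,f_m)\in\fA_{\un{\M}}^{\un{d}}$. The idea is to ``peel off'' the first summand. Set $\widehat{f}_1:=\cT_{\M_1}(f_1)\in\fS_{\M_1}$; this is the Taylor series of $f_1$ along $J^{d_1}_{\M_1}$. One first shows $\widehat{f}_1\in\fS_{\M_2}$: indeed, passing $f_1$ to $\fB_1=\Ga\big(J^{d_2}_{\M_2}\,;\sA_{\M_1}/\sA^{<0}_{\M_2}\,\big)$ it agrees with the image of $f_2$, which lies in $\sA_{\M_2}/\sA^{<0}_{\M_2}$ modulo $\sA^{<0}_{\M_2}$, so $\widehat{f}_1=\cT_{\M_2}(\text{something})$, i.e.\ $\widehat{f}_1$ has $\M_2$-Gevrey coefficients. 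By Theorem~\ref{thm:Taub}, formula~\eqref{Taub1} (applied with $\bL=\M_1$, $\M=\M_2$, using $\M_2\precsim\M_1$ and the index hypothesis), the pair $(f_1|_{J^{d_1}_{\M_1}},\widehat{f}_1)$, being an element of $\Ga\big(J^{d_1}_{\M_1}\,;\sA_{\M_1}\,\big)$ mapping into $\Ga\big(J^{d_1}_{\M_1}\,;\sA_{\M_1}\,\big)\times_{\fS_{\M_1}}\fS_{\M_2}$, in fact comes from a section $g_1\in\Ga\big(J^{d_1}_{\M_1}\,;\sA_{\M_2}\,\big)=\fC_1$. Thus $g_1\in\fC_1$ is a genuine $\M_2$-asymptotic function restricting to $f_1$ and with the same Taylor series.

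Now replace $\un{f}$ by $\un{f}-\iota(g_1)$, where here I use that $g_1\in\fC_1$ maps into $\fA_{\un{\M}}^{\un{d}}$ via the morphisms listed before the statement. By construction the first component of $\un{f}-\iota(g_1)$ becomes a section of $\sA^{<0}_{\M_2}/\sA^{<0}_{\M_2}$ over $J^{d_1}_{\M_1}$, hence $0$ (exactly as in the proof of Theorem~\ref{thm:inj1}); so $\un{f}-\iota(g_1)$ actually lies in the image of $\fA_{\un{\M}'}^{\un{d}'}\hookrightarrow\fA_{\un{\M}}^{\un{d}}$ with $\un{\M}'=(\M_2,\dots,\M_m)$, $\un{d}'=(d_2,\dots,d_m)$ (via Proposition~\ref{prp:incl}). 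By the induction hypothesis, the corresponding isomorphism $\fC_{\un{\M}'}^{\un{d}'}\slto\fA_{\un{\M}'}^{\un{d}'}$ is surjective, so $\un{f}-\iota(g_1)=\iota(g_2+\dots+g_m)$ for some $g_j\in\fC_j$. Therefore $\un{f}=\iota(g_1+g_2+\dots+g_m)$, proving surjectivity. The main obstacle is the bookkeeping in the ``peeling'' step: one must check carefully that the section $g_1$ produced by Theorem~\ref{thm:Taub} maps, under $\iota$, to an element whose components match $f_1$ in the right quotient sheaves, so that the difference $\un{f}-\iota(g_1)$ genuinely lands in the sub-fiber-product indexed by $\un{\M}'$. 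This is a routine but delicate diagram chase using the commutative squares appearing in the proof of Proposition~\ref{prp:incl}, together with Watson's Lemma~\eqref{Wat} to kill the vanishing components.
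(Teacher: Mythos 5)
There is a genuine gap in the proposed ``peeling'' step, and it stems from a misreading of the spaces $\fA_j$. You claim that $\widehat{f}_1:=\cT_{\M_1}(f_1)$ lies in $\fS_{\M_2}$, arguing that the image of $f_2$ in $\fB_1$ lands in $\sA_{\M_2}/\sA^{<0}_{\M_2}$. But $f_2$ is a section of $\sA_{\M_1}/\sA^{<0}_{\M_3}$ (of $\sA_{\M_1}$ if $m=2$), not of $\sA_{\M_2}$: the quasi-sum construction deliberately only records $\M_1$-asymptotics on each arc, with compatibility in the quotients. The claim $\widehat{f}_1\in\fS_{\M_2}$ is in fact false in general. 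Already for $m=2$, if $\widehat{f}\in\cS_{\M_1,d_1}+\cS_{\M_2,d_2}$ is written $\widehat{f}=\widehat{f}_1+\widehat{f}_2$ with $\widehat{f}_1$ an $\M_1$-summable series whose coefficients are not $\M_2$-bounded, then $\widehat{f}\notin\fS_{\M_2}$, yet $\widehat{f}$ has a quasi-sum and so falls under the theorem. Hence Theorem~\ref{thm:Taub}, as you invoke it, does not apply, and the $g_1\in\fC_1=\Ga\big(J^{d_1}_{\M_1};\sA_{\M_2}\big)$ you want simply need not exist.

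There is a second obstruction you do not address: $f_1$ is only a section of the quotient sheaf $\sA_{\M_1}/\sA^{<0}_{\M_2}$ over the closed arc $J^{d_1}_{\M_1}$, and there is a nontrivial cohomological obstruction (living in $H^1$ of the arc with coefficients in $\sA^{<0}_{\M_2}$) to lifting it to an actual section of $\sA_{\M_1}$, let alone of $\sA_{\M_2}$. The paper's proof is built precisely around controlling this obstruction: it peels from the opposite end, the $m$-th slot (narrowest arc, smallest index $\o(\M_m)$), uses the exact sequence $0\to\sA^{<0}_{\M_m}\to\sA_{\M_1}\to\sA_{\M_1}/\sA^{<0}_{\M_m}\to 0$ together with Proposition~\ref{prp:CH} to obtain the commutative diagram \eqref{diag2} and the split sequence \eqref{es6}, and reads off from the splitting isomorphism \eqref{isom3} a candidate $g_m\in\fC_m$ together with the correction term $\widetilde{f}_{m-1}$ that lands in $\fA_{\un{\M}'}^{\un{d}'}$ with $\un{\M}'=(\M_1,\dots,\M_{m-1})$. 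To repair your argument you would have to replace the incorrect appeal to Theorem~\ref{thm:Taub} by this $H^1$-and-splitting machinery, which effectively reproduces the paper's proof with the order of peeling reversed; the reversed order would then itself need a separate justification (the exact sequence used by the paper naturally produces an element of $\sA_{\M_m}$, not of $\sA_{\M_2}$).
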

\begin{proof}
Since $\cT'_{\un{\M}}=\cT_{\un{\M}}\circ\iota$,
injectivity of $\iota$ follows from
the injectivity of $\cT_{\un{\M}}$ and $\cT'_{\un{\M}}$.
We prove surjectivity of $\iota$.
Let us consider the following exact sequence:
\begin{equation}\label{es5}
\xymatrix{
0
\ar[r]
&\sA^{<0}_{\M_{m}}
\ar[r]
&\sA_{\M_{1}}
\ar[r]
&\sA_{\M_{1}}/
\sA^{<0}_{\M_{m}}
\ar[r]
&0.
}
\end{equation}
Since the morphism
$
\sA^{<0}_{\M_{m}}
\lto\sA_{\M_{1}}
$
factorizes through $\sA_{\M_{m}}$,
applying the functors $\Ga(J^{d_{m-1}}_{\M_{m-1}}\,;\,\cdot\,)$
and $\Ga(J^{d_m}_{\M_m}\,;\,\cdot\,)$
to \eqref{es5},
we find by Proposition \ref{prp:CH}
the following commutative diagram:
\begin{equation}\label{diag2}
\xymatrix{
0
\ar[r]
&
\Ga\big(J^{d_{m-1}}_{\M_{m-1}}\,; \sA_{\M_{1}}\big)
\ar[r]\ar[d]
&
\fA_{m-1}
\ar[r]^{\pd_1\quad\quad}
\ar[d]^{j_1}
&H^1\big(J^{d_{m-1}}_{\M_{m-1}}\,;\sA_{\M_m}^{<0}\,\big)
\ar[r]
\ar[d]^{j_2}
&0
\\
0
\ar[r]
&
\fA_m
\ar[r]
&
\fB_{m-1}
\ar[r]^{\pd_2\quad\quad}
&H^1\big(J^{d_m}_{\M_m}\,;\sA_{\M_m}^{<0}\,\big)
\ar[r]
&0.
}
\end{equation}
By the same reasoning,
applying the functor
$\Ga(J\,;\,\cdot\,)$
to \eqref{es1} with
$
\fS_{\M}\se
\sA_{\M}/\sA^{<0}_{\M}
$
and
$\M=\M_m$,
we have the following exact sequence
when an arc $J$ of $\uC$ contains
$J^{d_m}_{\M_m}$:
\begin{equation}\label{es6}
\xymatrix{
0
\ar[r]
&
\Ga\big(J\,;\sA_{\M_m}\,\big)
\ar[r]
&\Ga\big(J\,;\sA_{\M_m}/\sA^{<0}_{\M_m}\,\big)
\ar[r]
&H^1\big(J\,;\sA_{\M_m}^{<0}\,\big)
\ar[r]
&0.
}
\end{equation}
Since \eqref{es6} splits and
$
\sA_{\M}/\sA^{<0}_{\M}\se
\fS_{\M}
$
is a constant sheaf,
we have the following isomorphism:
\begin{equation}\label{isom3}
\Ga\big(J^{d_{m-1}}_{\M_{m-1}}\,; \sA_{\M_{m}}\big)
\oplus
H^1\big(J^{d_{m-1}}_{\M_{m-1}}\,;\sA_{\M_m}^{<0}\,\big)
\slto
\fC_m
\oplus
H^1\big(J^{d_{m}}_{\M_{m}}\,;\sA_{\M_m}^{<0}\,\big).
\end{equation}
Notice that the part
$\ds
H^1\big(J^{d_{m-1}}_{\M_{m-1}}\,;\sA_{\M_m}^{<0}\,\big)
\lto
H^1\big(J^{d_{m}}_{\M_{m}}\,;\sA_{\M_m}^{<0}\,\big)
$
in the morphism \eqref{isom3}
is given by $j_2$.

Now, let
$
f=
(f_1,\dots,f_m)\in
\fA_{\un{\M}}^{\un{d}}.
$
Then,
since $j_1(f_{m-1})\in{\rm Ker}\,\pd_2$
and the diagram \eqref{diag2} is commutative,
we have $\pd_1(f_{m-1})\in{\rm Ker}\,j_2$.
Therefore,
using the isomorphism \eqref{isom3},
we can take $g_m\in\fC_m$
such that
$
\iota(g_m)
=(g_{m,1},\dots,g_{m,m})\in
\fA_{\un{\M}}^{\un{d}}
$
satisfies
$\pd_1(g_{m,m-1})=\pd_1(f_{m-1})$.
(Recall the definition of $\iota$.
$g_{m,m-1}$ is defined by using
the isomorphism
$
\Ga\big(J^{d_{m-1}}_{\M_{m-1}}\,;
\sA_{\M_m}/\sA^{<0}_{\M_m}\,\big)
$
$
\slto
\Ga\big(J^{d_{m}}_{\M_{m}}\,;
\sA_{\M_m}/\sA^{<0}_{\M_m}\,\big)
$
equivalent to \eqref{isom3}.)
Since the first row of \eqref{diag2} is exact,
we can take
$
\widetilde{f}_{m-1}\in
\Ga\big(J^{d_{m-1}}_{\M_{m-1}}\,; \sA_{\M_{1}}\big)
$
such that
$
f_{m-1}-g_{m,m-1}=\widetilde{f}_{m-1}
\,{\rm mod}\,\sA^{<0}_{\M_m}.
$
Then,
$
(f_1-g_{m,1},\dots,f_{m-2}-g_{m,m-2},\widetilde{f}_{m-1})
$
defines an element in
$
\fA_{\un{\M}'}^{\un{d}'}
$
with
$
\un{\M}'=(\M_1,\dots,\M_{m-1})
$
and
$
\un{d}'=(d_1,\dots,d_{m-1}).
$

Now,
repeating the discussion,
we can choose
$g_j\in\fC_j$
$(j=1,\dots,m)$
such that
$
f=\iota(g_1)+\dots+\iota(g_m).
$
This shows the surjectivity of $\iota$.
\end{proof}
\begin{cor}\label{crl:isom1}
Given $\un{\M}$ and $\un{d}$,
we have
\begin{equation*}
\cS_{\un{\M},\un{d}}
=
\sum_{j=1}^m
\cS_{\M_j,d_j}.
\end{equation*}
\end{cor}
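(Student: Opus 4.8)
The plan is to derive Corollary~\ref{crl:isom1} directly from the isomorphism established in Theorem~\ref{thm:isom1}, namely $\iota:\fC_{\un{\M}}^{\un{d}}\slto\fA_{\un{\M}}^{\un{d}}$, together with its compatibility with the respective realization morphisms $\cT'_{\un{\M}}$ and $\cT_{\un{\M}}$, that is, the identity $\cT'_{\un{\M}}=\cT_{\un{\M}}\circ\iota$. This is essentially a bookkeeping argument: everything nontrivial has already been done in Theorems~\ref{thm:inj1}, \ref{thm:inj2} and~\ref{thm:isom1}.

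First I would recall, from the discussion preceding this statement, that by definition $\cS_{\un{\M},\un{d}}={\rm Im}[\cT_{\un{\M}}:\fA_{\un{\M}}^{\un{d}}\lto\fS_{\M_1}]$, and that the morphism $\cT'_{\un{\M}}:\fC_{\un{\M}}^{\un{d}}\lto\fS_{\M_1}$ has image $\sum_{j=1}^m\cS_{\M_j,d_j}$ (this last fact is exactly the displayed isomorphism obtained from~\eqref{Wat} and Theorem~\ref{thm:inj2}, namely $\cT'_{\un{\M}}:\fC_{\un{\M}}^{\un{d}}\slto\sum_{j=1}^m\cS_{\M_j,d_j}$). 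Then I would simply compute
$$
\cS_{\un{\M},\un{d}}
={\rm Im}\,\cT_{\un{\M}}
={\rm Im}(\cT_{\un{\M}}\circ\iota)
={\rm Im}\,\cT'_{\un{\M}}
=\sum_{j=1}^m\cS_{\M_j,d_j},
$$
where the second equality uses that $\iota$ is surjective (indeed bijective, by Theorem~\ref{thm:isom1}), so precomposing $\cT_{\un{\M}}$ with $\iota$ does not change the image, and the third equality is the compatibility $\cT'_{\un{\M}}=\cT_{\un{\M}}\circ\iota$.

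There is really no obstacle here; the only point requiring a word of care is making sure the reader sees that the inclusion $\sum_{j=1}^m\cS_{\M_j,d_j}\subseteq\cS_{\un{\M},\un{d}}$ already follows from Corollary~\ref{crl:incl} and the isomorphism~\eqref{isom} (as remarked at the start of Subsection~\ref{subsect.decomposition}), while the reverse inclusion $\cS_{\un{\M},\un{d}}\subseteq\sum_{j=1}^m\cS_{\M_j,d_j}$ is the content that genuinely relies on the surjectivity of $\iota$ in Theorem~\ref{thm:isom1}. So the proof is a one-line consequence of the preceding results, which I would phrase as: it follows immediately from Theorem~\ref{thm:isom1} and the compatibility of $\iota$ with $\cT_{\un{\M}}$ and $\cT'_{\un{\M}}$, using the already-established identity ${\rm Im}\,\cT'_{\un{\M}}=\sum_{j=1}^m\cS_{\M_j,d_j}$.
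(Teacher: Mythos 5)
Your proof is correct and is exactly the argument the paper leaves implicit: since $\cT'_{\un{\M}}=\cT_{\un{\M}}\circ\iota$ with $\iota$ bijective by Theorem~\ref{thm:isom1}, the two realization morphisms have the same image, and that image was already identified as $\sum_{j=1}^m\cS_{\M_j,d_j}$ in the displayed isomorphism following Theorem~\ref{thm:inj2}. Nothing is missing, and this is the same route the authors intend by stating the result as an unproved corollary immediately after Theorem~\ref{thm:isom1}.
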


\subsection{Multisummability via iterated Laplace transforms}
\label{subsect.iteratedLaplace}

In this section,
we construct the $\un{\M}$-quasi-sum of every
$\un{\M}$-summable power series
using iterated Laplace transforms, so presenting a variation of the treatment of Theorem~\ref{th.multisum.construct}.

Since $J^{d_j}_{\M_j}=J^{d_{j+1}}_{\M_{j+1}}$
when
$\o(\M_j)=\o(\M_{j+1})$,
we see that
$
\fA_j\times_{\fB_j}\fA_{j+1}\se
\fA_{j+1}
$
and
$
\fC_j+_{\fD_j}\fC_{j+1}\se
\fC_{j},
$
and hence,
$
\fA_{\un{\M}}^{\un{d}}\se
\fA_{\un{\M}'}^{\un{d}'}
$
and
$
\fC_{\un{\M}}^{\un{d}}\se
\fC_{\un{\M}'}^{\un{d}'},
$
where
$\un{\M}'$ (resp. $\un{d}'$) is a tuple obtained by
eliminating $\M_{j+1}$ (resp. $d_{j+1}$)
from $\un{\M}$ (resp. $\un{d}$).
Therefore,
we may assume
that the following condition is satisfied:
\begin{equation*}
\o(\M_j)>\o(\M_{j+1})
\quad
(j=1,\dots,m-1).
\end{equation*}

\begin{defi}\label{dfn:e-multi}
Given $\un{\M}$ and $\un{d}$,
we take a strong kernel $e_j$ of $\M_j$-summability
and set
$\un{e}=(e_1,\dots,e_m)$.
Let $\widehat{f}\in\fS$
and we inductively define
$g_1,\dots,g_m$ by
$$
g_1:=\widehat{T}^-_{e_1}(\widehat{f}),
\quad
g_{j+1}:=
\int_0^{\infty(d_j)}
e_{j+1}\triangleleft e_{j}(u/z)g_j(u)\frac{du}{u}.
$$
We say that $\widehat{f}\in\fS$ is
\emph{$T_{\un{e}}$-summable
in direction $\un{d}$}
if $g_1\in\cS$,
$g_j\in\cO^{\M_{j}/\M_{j+1}}(S(d_j,\ep))$
$(j=1,\dots,m-1)$
and $g_m\in\cO^{\M_{m}}(S(d_m,\ep))$
for some $\ep>0$.
We call $T_{e_m}(g_m)$
\emph{the sum of $\widehat{f}$}
and denote it by
$
\calS_{\un{e},\un{d}}\widehat{f}.
$
\end{defi}
\begin{rema}
By the definition of $g_{j}$ $(j\geq2)$ and
Theorem~\ref{teorrelacdesartransfBL},
we see that
$g_{j}\sim_{\M_{1}/\M_{j}}\widehat{T}^-_{e_{j}}(\widehat{f})$
in a sectorial region
$G(d_{j-1},\o(\M_{j-1})-\o(\M_{j})+\ep)$.
\end{rema}

Now, we give a different proof of the important characterization
of $\un{\M}$-summable series, obtained in Theorem~\ref{th.multisum.construct} for the case of two sequences.
\begin{theo}
Given $\un{\M}$ and $\un{d}$,
the following statements are equivalent:
\begin{enumerate}[(i)]
\item
$\widehat{f}\in\cS_{\un{\M},\un{d}}$.

\item There exist $\widehat{f}_j\in\cS_{\M_j,d_j}$
$(j=1,\dots,m)$ such that $\widehat{f}$ is written as
$$
\widehat{f}=\widehat{f}_1+\dots+\widehat{f}_m.
$$

\item
For every tuple $\un{e}$ of
strong kernels $e_j$ of $\M_j$-summability,
$\widehat{f}$ is
$T_{\un{e}}$-summable
in direction $\un{d}$.

\item
For some tuple $\un{e}$ of
strong kernels $e_j$ of $\M_j$-summability,
$\widehat{f}$ is
$T_{\un{e}}$-summable
in direction $\un{d}$.

\end{enumerate}
\end{theo}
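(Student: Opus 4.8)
The plan is to prove the cycle of implications (i)$\Rightarrow$(ii)$\Rightarrow$(iv)$\Rightarrow$(iii)$\Rightarrow$(i), reducing everything to the two-sequence tools already established (Theorem~\ref{th.multisum.construct}, Proposition~\ref{prop.acceleration.kernel}, Theorem~\ref{th.asymp.acceleration}) together with the cohomological identifications of the previous subsections. The equivalence (i)$\Leftrightarrow$(ii) is essentially Corollary~\ref{crl:isom1}: by \eqref{isom} and Theorem~\ref{thm:isom1} every element of $\cS_{\un{\M},\un{d}}$ decomposes as $\widehat f=\widehat f_1+\dots+\widehat f_m$ with $\widehat f_j\in\cS_{\M_j,d_j}$, and conversely any such sum lies in $\cS_{\un{\M},\un{d}}$ by Corollary~\ref{crl:incl}. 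The implication (iii)$\Rightarrow$(iv) is trivial, so the real content lies in (ii)$\Rightarrow$(iii) and (iv)$\Rightarrow$(i).

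For (ii)$\Rightarrow$(iii), I would argue by induction on $m$, imitating the proof of (i)$\Rightarrow$(ii) in Theorem~\ref{th.multisum.construct}. Write $\widehat f=\widehat f_1+\dots+\widehat f_m$. Apply $\widehat T^-_{e_1}$ to get $g_1=\widehat T^-_{e_1}\widehat f = \widehat T^-_{e_1}\widehat f_1+\sum_{j\ge 2}\widehat T^-_{e_1}\widehat f_j$; since $\widehat f_1\in\C\{z\}_{\M_1,d_1}$, Theorem~\ref{th.Msummable.equiv.esummable} gives that $\widehat T^-_{e_1}\widehat f_1$ converges and extends to a function in $\mathcal O^{\M_1}(S(d_1,\ep))$, while for $j\ge2$ the coefficient bound $\widehat f_j\in\C[[z]]_{\M_j}\subset\C[[z]]_{\M_1}$ (using $\M_1\precsim\M_j$, hence $\C[[z]]_{\M_j}\subseteq\C[[z]]_{\M_1}$) and Proposition~\ref{propKomatsu} show $\widehat T^-_{e_1}\widehat f_j$ is entire of $\M_j/\M_1$-growth. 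Consequently $g_1\in\cS$ and in a sector around $d_1$ has the regularity required by Definition~\ref{dfn:e-multi}. Now I would observe that $g_2=A_{e_2,e_1}g_1$ (the acceleration from $e_1$ to $e_2$, using $\o(\M_2)<\o(\M_1)$), and that $g_2=\widehat T^-_{e_2}\widehat f_2+(\text{lower-order pieces of }\M_3/\M_2\text{-growth near }d_2)$; the key computational identity is $A_{e_2,e_1}\circ\widehat T^-_{e_1}=\widehat T^-_{e_2}$ on the level of formal series (from $m_{e_2}/m_{e_1}\cdot 1/m_{e_1}$\,… — more precisely, the formal operator $\widehat A_{e_2,e_1}$ has symbol $m_{e_1}(p)/m_{e_2}(p)$ composed appropriately, matching Theorem~\ref{th.asymp.acceleration} and Proposition~\ref{prop.acceleration.kernel}). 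Feeding $(\widehat f_2+\dots+\widehat f_m)$-data into the inductive hypothesis for the shortened tuple $\un{\M}'=(\M_2,\dots,\M_m)$, $\un{d}'=(d_2,\dots,d_m)$, and patching with the convergent piece coming from $\widehat f_1$, I obtain the required growth of $g_2,\dots,g_m$, so $\widehat f$ is $T_{\un e}$-summable.

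For (iv)$\Rightarrow$(i), I would reverse the process, again by induction, mirroring the (iii)$\Rightarrow$(i) half of Theorem~\ref{th.multisum.construct}. From $T_{\un e}$-summability we have $g_m\in\mathcal O^{\M_m}(S(d_m,\ep))$; actually, since $g_m=A_{e_m,e_{m-1}}g_{m-1}$ with $g_{m-1}\in\mathcal O^{\M_{m-1}/\M_m}(S(d_{m-1},\ep))$ summable in direction $d_m$ for $\M_{m-1}/\M_m$, the sum $\calS_{\M_{m-1}/\M_m,d_m}g_{m-1}$ is holomorphic on a sectorial region $G(d_m,\a)$ with $\a>\o(\M_{m-1}/\M_m)=\o(\M_{m-1})-\o(\M_m)$. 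Using the Cauchy-integral splitting trick from the proof of Theorem~\ref{th.multisum.construct} ((iii)$\Rightarrow$(i)), I split this sum as $g^{(1)}+g^{(2)}$, where $g^{(1)}$ is holomorphic at the origin and $g^{(2)}\in\mathcal O^{\M_m}(S(d_{m-1},\b))$; then $\widehat f_m:=\widehat T_{e_m}(\text{Taylor series of }g^{(1)}\text{ pushed through the right formal transforms})$ lands in $\cS_{\M_m,d_m}$, and what remains, $\widehat f-\widehat f_m$, is seen to be $T_{(e_1,\dots,e_{m-1})}$-summable in direction $(d_1,\dots,d_{m-1})$ by construction, so the inductive hypothesis for the shortened tuple finishes the argument; assembling gives $\widehat f=\widehat f_1+\dots+\widehat f_m$ with $\widehat f_j\in\cS_{\M_j,d_j}$, hence $\widehat f\in\cS_{\un{\M},\un{d}}$ by Corollary~\ref{crl:incl}.

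The main obstacle I anticipate is bookkeeping the induction step cleanly — in particular, verifying that at each stage the ``remainder'' $\widehat f-\widehat f_m$ (resp.\ the intermediate $g_j$) genuinely satisfies the hypotheses of the shortened-tuple statement, which requires carefully tracking the angular openings so that the condition $|d_j-d_{j+1}|<\pi(\o(\M_j)-\o(\M_{j+1}))/2$ is preserved and the various sectorial regions (of openings $\o(\M_{j})$, $\o(\M_j)-\o(\M_{j+1})$, etc.) actually contain one another, exactly as in the two-sequence case. The analytic estimates themselves are all already packaged in Propositions~\ref{prop.convolution.kernel}, \ref{prop.acceleration.kernel}, Theorem~\ref{th.asymp.acceleration} and the cohomological machinery; the reduction $\fA_{\un{\M}}^{\un{d}}\cong\fA_{\un{\M}'}^{\un{d}'}$ when consecutive indices coincide (already noted before Definition~\ref{dfn:e-multi}) lets me assume $\o(\M_j)>\o(\M_{j+1})$ throughout, which is what makes the iterated accelerations well defined.
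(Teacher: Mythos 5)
Your decomposition into the cycle (i)$\Rightarrow$(ii)$\Rightarrow$(iv)$\Rightarrow$(iii)$\Rightarrow$(i) is fine, and (i)$\Leftrightarrow$(ii) via Corollary~\ref{crl:isom1} and (iii)$\Rightarrow$(iv) are exactly as in the paper. For (ii)$\Rightarrow$(iii) and (iv)$\Rightarrow$(i) you take a genuinely different route from the paper's, and while the first of these can be made to work, the second contains a substantive gap.

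For (ii)$\Rightarrow$(iii) the paper observes that $T_{\un{e}}$-summability is preserved under addition and hence reduces at once to the case where $\widehat f\in\cS_{\M_k,d_k}$ for a single $k$, then simply tracks the chain $g_1,\dots,g_m$ for that one summand (distinguishing $j<k$, where the $g_j$ are entire with $\M_j/\M_k$-growth, $j=k$, and $j>k$, where one iterates $T^-_{e_{j+1}}\circ T_{e_j}$ using the inclusion $J^{d_{j+1}}_{\M_{j+1}}\subset J^{d_j}_{\M_j}$). Your inductive-plus-patching version instead tracks the several contributions simultaneously; it can work, but you should be careful with the formal identity $\widehat A_{e_2,e_1}\circ\widehat T^-_{e_1}=\widehat T^-_{e_2}$ (not the one with the misplaced factor you wrote), and note two small slips: the relation is $\M_j\precsim\M_1$ for $j\ge2$ (not the reverse), which is what yields $\C[[z]]_{\M_j}\subseteq\C[[z]]_{\M_1}$, and the resulting entire function $\widehat T^-_{e_1}\widehat f_j$ has $\M_1/\M_j$-growth, not $\M_j/\M_1$-growth (Proposition~\ref{propKomatsu}).

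For (iv)$\Rightarrow$(i) your plan (iterated Cauchy-kernel splitting, extracting $\widehat f_m$ and reducing to the shortened tuple) is a different and more ``analytic'' route than the paper's, which constructs the quasi-function $\un f=(f_1,\dots,f_m)\in\fA_{\un\M}^{\un d}$ directly by taking the finite Laplace transforms $T^{\zeta}_{e_{j+1}}g_{j+1}$ modulo $\sA^{<0}_{\M_{j+1}}$ and gluing via the relative Watson's lemma (Theorem~\ref{thm:relWat}). The gap in your version is the assertion that ``what remains, $\widehat f-\widehat f_m$, is seen to be $T_{(e_1,\dots,e_{m-1})}$-summable in direction $(d_1,\dots,d_{m-1})$ by construction.'' This is not obvious: the $(m-1)$-tuple definition requires $g'_{m-1}\in\cO^{\M_{m-1}}(S(d_{m-1},\ep))$ for the new chain $g'_j$, whereas subtracting the contribution of a generic $\widehat f_m\in\cS_{\M_m,d_m}$ (whose Borel image $\widehat T^-_{e_{m-1}}\widehat f_m$ is merely entire of $\M_{m-1}/\M_m$-growth) from the original $g_{m-1}\in\cO^{\M_{m-1}/\M_m}(S(d_{m-1},\ep))$ does not visibly improve the growth to $\M_{m-1}$. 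You would have to choose the Cauchy splitting so that the flat piece carries exactly the obstruction, and then verify the growth improvement at the right level, and it is also unclear in your sketch which function is being split (you speak both of $g_{m-1}$ and of ``$\calS_{\M_{m-1}/\M_m,d_m}g_{m-1}$,'' and the identification of that sum with $g_{m-1}$ itself requires a Watson-type uniqueness argument on a wide enough sector that you have not supplied). The paper's $T^\zeta$-construction together with the relative Watson's lemma sidesteps precisely these issues and is, in this setting, the cleaner route; if you want to push the iterated-splitting argument through, those two points (which function is split, and why the remainder satisfies the shortened summability definition) must be addressed explicitly.
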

\begin{proof}
The equivalence of (i) and (ii)
follows from Corollary \ref{crl:isom1}
and (iii) $\Longrightarrow$ (iv) is trivial.
Therefore, it suffices to show
(ii) $\Longrightarrow$ (iii) and
(iv) $\Longrightarrow$ (i).

We first prove
(ii) $\Longrightarrow$ (iii).
Let $\un{e}$ be an arbitrary tuple of strong kernels.
Since $\un{e}$-summability is preserved under the summation,
we may assume that $\widehat{f}\in\cS_{\M_k,d_k}$
for some $k$.
Let $g_1,\dots,g_m$ be the functions defined in
Definition \ref{dfn:e-multi}.
Since $\widehat{f}\in\fS_{\M_k}$,
there exist constants $C,A>0$ such that
\begin{equation}\label{est:1}
|g_1(z)|
\leq
C\sum_{p=0}^{\infty}
\frac{M_{k,p}}{M_{1,p}}A^p|z|^p,
\end{equation}
and hence,
we have
$g_1(z)\in\cS$.
When $k=1$,
we find by the assumption and Theorem~\ref{th.Msummable.equiv.esummable} that
$g_1\in\cO^{\M_{1}}(S(d_1,\ep))$ for some $\ep>0$.
Then, we see by
Proposition~\ref{prop.acceleration.kernel}.3
that $g_2=T^{-}_{e_2}\circ T_{e_1}(g_1)$
and
we obtain from Theorem~\ref{prop.hol.Bor}
that
$g_2\in\cO^{\M_{2}}(S(d_1,\o(\M_1)-\o(\M_2)+\ep))$.
Since $J^{d_2}_{\M_2}\subset J^{d_{1}}_{\M_{1}}$,
we see that
$
S(d_2,\ep)
\subset
S(d_1,\o(\M_1)-\o(\M_2)+\ep),
$
and hence,
$g_2\in\cO^{\M_{2}}(S(d_2,\ep))$.
Repeating the same discussion,
we can inductively confirm that
$g_j\in\cO^{\M_{j}}(S(d_j,\ep))$
for $j=1,\dots,m$.
Since
$
\M_{j}\precsim
\M_{j}/\M_{j+1}
$
indicates
$\cO^{\M_{j}}(S(d_j,\ep))
\subset
\cO^{\M_{j}/\M_{j+1}}(S(d_j,\ep))$,
$\widehat{f}$ is
$\un{e}$-summable.
When $k\neq1$,
we can deduce from the estimate \eqref{est:1}
and Proposition~\ref{propKomatsu}
that $g_1\in\cO^{\M_{1}/\M_{k}}(\C)$.
Then,
$
g_2\sim_{\M_1/\M_2}
\widehat{T}^{-}_{e_2}(\widehat{f})\in\cS
$
in a sectorial region
$G(d_1,\o(\M_1)-\o(\M_2)+\ep)$
for $\ep>0$,
and hence,
$
g_2
$
is analytic at the origin
by Theorem~\ref{TheoWatsonlemma}.
Repeating the discussion,
we can inductively confirm that
$
g_j=
\widehat{T}^{-}_{e_j}(\widehat{f})
\in\cO^{\M_{j}/\M_{k}}(\C)
$
for $j=1,\dots,k-1$
and $g_k\in\cS$.
Then,
by the assumption $\widehat{f}\in\cS_{\M_k,d_k}$,
we have $g_k\in\cO^{\M_{k}}(S(d_k,\ep))$,
and hence,
applying the same discussion as in case $k=1$,
we see
$g_j\in\cO^{\M_{j}}(S(d_j,\ep))$
for $j=k,\dots,m$.
Therefore,
we find that
$\widehat{f}$ is
$T_{\un{e}}$-summable.

Next, we show
(iv) $\Longrightarrow$ (i).
Assume that $\widehat{f}$ is
$T_{\un{e}}$-summable.
We first note that
it follows from Theorem~\ref{teorrelacdesartransfBL} and Theorem~\ref{th.asymp.acceleration}.1
that
$
\calS_{\un{e},\un{d}}\widehat{f}
\sim_{\M_1}
\widehat{f}
$
in a sectorial region
$G(d_m,\o(\M_m)+\ep)$
for $\ep>0$,
and hence,
$$
f_m:=
\calS_{\un{e},\un{d}}\widehat{f}
\in\fA_m.
$$
Next, we show that
$
f_m
\ {\rm mod}\ \sA^{<0}_{\M_{m}}
$
extends to $\fA_{m-1}$.
Since $g_1(z)$ is holomorphic
in a sectorial region
$G(d_{m-1},\o(\M_{m-1})-\o(\M_m)+\ep)$
for $\ep>0$,
we can define a family of functions
$$
T^{\ze}_{e_{m}}g_m
:=\int_0^{\ze}
e_{m}(u/z)g_m(u)\frac{du}{u}
$$
for
$
\ze\in
G(d_{m-1},\o(\M_{m-1})-\o(\M_m)+\ep).
$
We note that
$
T^{\ze}_{e_{m}}g_m\sim_{\M_1}\widehat{f}
$
in a sectorial region
$G(\arg \ze,\o(\M_m))$
(see the proof of \cite[Th. 6.1]{SanzFlat}
for the details).
Further,
we consider
$$
T^{\ga}_{e_{m}}g_m:=
\int_{\ga}
e_{m}(u/z)g_m(u)\frac{du}{u}
$$
for a closed $C^1$-path $\ga$ in
$
G(d_{m-1},\o(\M_{m-1})-\o(\M_m)+\ep).
$
Since
$T^{\ga}_{e_{m}}g_m
\sim_{\M_m}0$
in a sectorial region
$G(d,\o(\M_m)-\de)$
when
$\ga$
is a path in
$S(d,\de)$
for $d\in\R$ and $\de>0$,
we find that
$
f_{m-1}:=
T^{\ze}_{e_{m}}g_m
\ {\rm mod}\ \sA^{<0}_{\M_{m}}
$
extends to an element in $\fA_{m-1}$
by moving the end point $\ze$ of
the integration in $T^{\ze}_{e_{m}}g_m$.
Then, we find
$
f_{m-1}-f_m\ {\rm mod}\ \sA^{<0}_{\M_{m}}
\sim_{\M_m}0
$
in a sectorial region
$G(d_m,\o(\M_m)+\ep)$
for $\ep>0$,
and hence,
$
f_{m-1}-f_m\ {\rm mod}\ \sA^{<0}_{\M_{m}}
\in
\Ga\big(J^{d_{m}}_{\M_{m}}\,;\sA^{<0}_{\M_1}/\sA^{<0}_{\M_{m}}\,\big).
$
Therefore,
we see by
Theorem \ref{thm:relWat}
that
$
f_{m-1}=f_m\ {\rm mod}\ \sA^{<0}_{\M_{m}}
$
in
$\fB_{m-1}$.

Now,
we set
$$
f_{j}:=
T^{\ze}_{e_{j+1}}g_{j+1}
\ {\rm mod}\ \sA^{<0}_{\M_{j+1}}
\ \text{for}\
j=1,2,\dots,m-1.
$$
Then,
by the same discussion as in case $j=m-1$,
we find that
$f_j\in\fA_{j}$ and
$
f_{j}=f_{j+1}\ {\rm mod}\ \sA^{<0}_{\M_{j+1}}
$
in
$\fB_{j}$
for $j=1,2,\dots,m-1.$
Therefore,
$\un{f}=(f_1,\dots,f_m)\in\fA_{\un{\M}}^{\un{d}}$
and $\cT_{\un{\M}}(\un{f})=\widehat{f}$,
what concludes the proof.
\end{proof}

\vskip.4cm
\noindent\textbf{Affiliation}:\\
J.~Jim\'{e}nez-Garrido, J.~Sanz:\\
Departamento de \'Algebra, An\'alisis Matem\'atico, Geometr{\'\i}a y Topolog{\'\i}a\\
Universidad de Va\-lla\-do\-lid\\
Facultad de Ciencias, Paseo de Bel\'en 7, 47011 Valladolid, Spain.\\
Instituto de Investigaci\'on en Matem\'aticas IMUVA\\
E-mails: jjjimenez@am.uva.es (J.~Jim\'{e}nez-Garrido), jsanzg@am.uva.es (J. Sanz).
\\
\vskip.1cm
\noindent S. Kamimoto:\\
Graduate School of Sciences\\
Hiroshima University\\
1-3-1 Kagamiyama, Higashi-Hiroshima, Hiroshima 739-8526, Japan\\
E-mail: kamimoto@hiroshima-u.ac.jp
\\
\vskip.1cm
\noindent A. Lastra:\\
Departamento de F\'isica y Matem\'aticas\\
Universidad de Alcal\'a\\
E--28871. Alcal\'a de Henares, Madrid, Spain.\\
E-mail: alberto.lastra@uah.es

\end{document}